\setlist{nolistsep,parsep=0pt}
\numberwithin{equation}{section}
\theoremstyle{definition}
 \newtheorem{definition}{Definition}[section]
 \newtheorem{remark}{Remark}[section]
\theoremstyle{plain}
 \newtheorem{theorem}{Theorem}[section]
 \newtheorem{lemma}{Lemma}[section]
 \newtheorem*{lemma*}{Lemma}
 \newtheorem{corollary}{Corollary}[section]
\begin{document}

\newcommand{\coker}{\mathop{\mathrm{coker}}}

\newcommand{\spL}{\mathrm{L}}
\newcommand{\const}{\mathrm{const}}
\newcommand{\mT}{{\mathcal{T}}}

\newcommand{\Es}{\EuScript}

\newcommand{\mE}{{\mathcal{E}}}
\newcommand{\mV}{{\mathcal{V}}}
\newcommand{\mI}{{\mathcal{I}}}

\newcommand{\R}{{\mathbb R}}
\newcommand{\N}{{\mathbb N}}
\newcommand{\mC}{{\mathbb C}}
\newcommand{\Z}{{\mathbb Z}}

\newcommand{\Span}{\mathop{\mathrm{span}}}

\newcommand{\dotpl}{\mathop{\dot +}}
\newcommand{\timesO}{\mathop{\times}}

\newcommand{\T}{{\mathop{\mathrm{T}}}} %transposition
\newcommand{\sgn}{\mathop{\mathrm{sgn}}}
\newcommand{\range}{\mathop{\mathrm{range}}}
\newcommand{\rank}{\mathop{\mathrm{rank}}}
\newcommand{\res}{\mathop{\mathrm{Res}}}
\newcommand{\ii}{\mathrm{i}}
\newcommand{\dd}{\textup{d}}
\newcommand{\la}{\lambda}
\renewcommand{\Im}{\operatorname{Im}}
\renewcommand{\Re}{\operatorname{Re}}
\newcommand{\grad}{\mathop{\mathrm{grad}}}

\title{\textbf{Existence and qualitative behavior of solutions of abstract differential-algebraic equations}}
\author{\large \textbf{Maria Filipkovska} \\
 \it\small Institute of Analysis and Scientific Computing, Technische Universit\"{a}t Wien, \\
 \it\small Wiedner Hauptstra{\ss}e 8-10, 1040 Wien, Austria \\
 \it\small  B. Verkin Institute for Low Temperature Physics and Engineering \\
 \it\small  of the National Academy of Sciences of Ukraine,\; Nauky Ave. 47, 61103 Kharkiv, Ukraine\\
 \small mariia.filipkovska@asc.tuwien.ac.at;\quad filipkovskaya@ilt.kharkov.ua %maria.filipk@gmail.com
 }%

\date{ }
\maketitle

 \begin{abstract}
Abstract differential-algebraic equations (ADAEs) of a semilinear type are studied.
Theorems on the existence and uniqueness of solutions and the maximal interval of existence, on the global solvability of the ADAEs, the boundedness of solutions and the blow-up of solutions are presented. Previously, an ADAE is reduced to a system of explicit differential equations and algebraic equations by using projectors. The number of equations of the system depends on the index of the characteristic pencil of the ADAE. We consider the pencil of an arbitrarily high index.
 \end{abstract}

{\small\textbf{Keywords:}\; abstract differential-algebraic equation; degenerate evolution equation; higher index; global existence; boundedness; blow-up, maximal interval

\medskip MSC2020: 34A09, 35A01, 35R20, 34A12, 34C11, 58C15 }

 \section{Introduction}\label{Intro}

A system consisting of differential and algebraic equations can be represented in the form of an abstract evolution equation which is often called a differential-algebraic equation (DAE), when it is considered in finite-dimensional spaces, and an abstract differential-algebraic equation (ADAE), when it is considered in infinite-dimensional spaces. If the system contains a PDE, then it is often called a partial differential-algebraic equation (PDAE).
ADAEs is a general class of equations that is intensively used both in solving practical problems and in theoretical research. A significant feature of these equations is that any type of a PDE can be represented as an ADAE in appropriate infinite-dimensional spaces, possibly with a complementary boundary condition. It is often much easier to study the corresponding ADAE than the original PDE. Moreover, this approach allows us to cover wider classes of PDEs, including systems of PDEs and algebraic equations (PDAEs).

In the present paper, we study both the existence and uniqueness of solutions and their behavior with increasing time (boundedness and blow-up of solutions). From a practical point of view, it is important not only to prove that a solution exists on some local interval, but also to determine the maximal interval of its existence. If the solution is global (it exists on an arbitrarily large time interval), then the corresponding real-world system will operate over a sufficiently long duration without interruption. If the solution is ``blow-up'' in finite time (it exists on a finite time interval and is unbounded), then the operation of the corresponding real system will be interrupted. For example, the disruption of a semiconductor is described as the blow-up of a solution of an appropriate initial value or initial boundary value problem.

DAEs and ADAEs arise from the modelling of systems and processes in control problems, mechanics, robotics, gas industry, chemical kinetics, radio engineering, economics and other fields. It is well known that DAEs are used in modelling various objects whose structure is described by directed graphs, e.g., electrical, gas and neural networks. The application of DAEs is described in gas network theory, e.g., in \cite{GNM,Brenan-C-P,  Fil.Sing-GN}, and in electrical circuit theory is shown, e.g., in
\cite{Vlasenko1,Riaza,FR999,Rutkas2007,Fil.CombMeth,Fil.NCombMeth,Brenan-C-P, Lamour-Marz-Tisch}. The DAEs of higher index arise, for example, in electrical engineering, gas industry, robotics, control problems and chemical kinetics (see, e.g., \cite{Brenan-C-P,Vlasenko1,Riaza,FR999,Rutkas2007,Lamour-Marz-Tisch}).

In this paper, we deal with the abstract differential-algebraic equation
 \begin{equation}\label{ADAE}
\frac{d}{dt}[Ax]+Bx=f(t,x),\qquad t>0,
 \end{equation}
where $A$ and $B$ are closed linear operators from $X$ into $Y$ with domains $D_A$ and $D_B$ respectively, $D=D_A\cap D_B\ne \{0\}$ is a lineal  (linear manifold), $X$ and $Y$ are Banach spaces (BSs), $D_A$ and $D_B$ are dense in $X$  (i.e., $\overline{D_A}=\overline{D_B}=X$), $\range(A)=\overline{\range(A)}$ and $f\in C(\R_+\times D,Y)$, $\R_+=[0,\infty)$.
The operator $A$, as well as $B$, can be noninvertible (degenerate). Therefore, the ADAE \eqref{ADAE} is also called a degenerate (or abstract) differential equation and  degenerate (or abstract) evolution equation. According to the DAE terminology, the equation of the type \eqref{ADAE} is called a semilinear ADAE.

The main results of the paper are theorems on the existence and uniqueness of solutions, the maximal interval of existence and the qualitative behaviour of solutions, including theorems on the global solvability (the maximal interval of existence is infinite), on the blow-up of solutions (the maximal interval is finite and the norm of a solution tends to infinity) and on the boundedness of solutions (the Lagrange stability).

The local solvability of the semilinear ADAE ${\frac{d}{dt}(Ax)+Bx(t)=F(t,Kx)}$, where $A$, $B$ and $K$ are closed linear operators in complex Banach spaces, has been studied in \cite{FR999}. More precisely, this ADAE was transformed into the one ${\frac{d}{dt}(Tv)+v(t)=f(t,Nv)}$ with the bounded operator ${T=A(\la A+B)^{-1}}$ by the change of variable ${x=e^{\la t}(\la A+B)^{-1}v}$, where $\la$ is a regular point (here ${N=K(\la A+B)^{-1}}$ and ${f(t,y)=e^{-\la t}F(t,e^{\la t}y)}$). Then conditions for the existence and uniqueness of local solutions of the ADAE with the bounded operator $T$ was found.
Here the assumption similar to that made for the pencil of the operators $A$, $B$ in the present paper (i.e., $\la A+B$ is a regular pencil of index $\nu\in {\mathbb N}\cup \{0\}$; see Section \ref{Sec_Pencil_ind}) was made for the operator $T$. Namely, it is supposed that $\zeta=0$ is a pole of multiplicity $m$ for the resolvent $(T-\zeta I)^{-1}$.  The similar approach was used in \cite{FP86}, where the simple pole case has been considered.
For a semilinear ADAE of the form \eqref{ADAE} with the regular pencil $\la A+B$ of index 2 (in terms of Definition \ref{Def_Pencil_ind}), the existence and uniqueness of a nonlocal (i.e., on a given finite interval in $\R$) solution was studied in \cite{Rut-Khudoshin} by using global restrictions (e.g., a global Lipschitz condition and the global boundedness of the norms of partial derivatives). In the present paper, such global restrictions are not used, which allows one to weaken restrictions on the nonlinear right-hand side of the ADAE.

Also, the results on the local solvability of semilinear ADAEs of the type \eqref{ADAE} have been obtained in the works of R.E. Showalter (see, e.g., \cite{Showalter72}). In \cite{Showalter72} certain coercive estimates and Lipschitz conditions are used.

For linear ADAEs in Hilbert spaces, the condition for the existence and uniqueness of solutions on a finite interval in $\R$ have been presented, e.g., in \cite[Section 12.3.3]{Lamour-Marz-Tisch} (see also references on p.~577). The Galerkin method has been used to obtain these results.
The existence and uniqueness of local solutions of certain semi-explicit and semilinear ADAEs of index 2 (in the sense that a stable semi-discretization leads to the finite-dimensional DAE of tractability index 2) has been studied in \cite{Heiland}.

\paragraph{The paper has the following structure.}  In Section~\ref{Intro}, the problem statement and some definitions are given. Sections \ref{Sec_Ind_DirDec_Proj} and \ref{Sec-Nonloc impl func} contain the preliminary information and  constructions. Section \ref{Sec_Ind_DirDec_Proj} provides the definition of the index of a regular pencil of operators, the direct decompositions of the domain $D$ and the spaces $X$, $Y$ and the associated projectors.
These projectors are used to reduce the ADAE to a system of explicit differential equations and algebraic equations. In Section \ref{Sec-Nonloc impl func} nonlocal (global) implicit function theorems, which are applied in the proofs of the main theorems, are given. The main results including the theorems mentioned above are presented in Section \ref{Sec-Main}.

\paragraph{Notation used throughout the paper:}
 \begin{itemize}[itemsep=2pt,parsep=0pt,topsep=3pt,leftmargin=1cm]
\item $I_X$ denotes the identity operator in the space $X$; $I$ denotes an identity operator (it will be clear from the context in what space);
\item $\overline{D}$ denotes the closure of a set $D$; $\partial D$ is the boundary of $D$; $D^c$ is the complement of $D$; $D_A$ is the domain of a linear operator $A$;
\item  $\spL(X,Y)$ is the space of bounded linear operators from $X$ to $Y$; $\spL(X,X)=\spL(X)$; similarly, $C((a,b),(a,b))=C(a,b)$;
\item $\Span\{x_i\}_{i=0,...,N}$ or $\Span\{x_i,\; i=0,...,N\}$ is the linear span of a system $\{x_i\}_{i=0,...,N}$;
\item $L_1\dotpl L_2$, where $L_1$ and $L_2$ are linear spaces, is the direct sum of $L_1$ and $L_2$; $\dotpl\limits_{i=1}^n L_i=L_1\dotpl \ldots \dotpl L_n$;
\item $X^*$ is the conjugate (or adjoint, or dual) space of $X$;
    $\delta_{ij}$ is the Kronecker delta;
\item $\EuScript A^{(-1)}$ is the semi-inverse operator of an operator $\EuScript A$ \,($A^{-1}$ is the inverse operator of $A$) (the definition of a semi-inverse operator can be found in \cite{Faddeev}; see also \cite{Nashed76} where such an operator is called an algebraic generalized inverse and, in Banach spaces, a topological generalized inverse);
\item the partial derivative $\frac{\partial}{\partial x}$ is denoted by $\partial_x$; $\partial_{(x_1,x_2,...,x_n)}=\frac{\partial}{\partial(x_1,x_2,...,x_n)}= \left(\frac{\partial}{\partial x_1},\frac{\partial}{\partial x_2},...,\frac{\partial}{\partial x_n}\right)$;
\item in some cases, the time derivative $\frac{d}{dt}$ is denoted by a dot~$\dot{ }$~;
\item if $V$ is a continuously differentiable functional, defined in a Banach space $X$, then the gradient of $V$ is denoted by $\partial_x V(x)$ or $\grad V(x)$; % ($\grad V\colon X\to X^*$);
\item $\int\limits^{\infty}U(u)du=\infty$ \,(${<\infty}$) means that the integral $\int\limits_{u_0}^{\infty}U(u)du=\infty$ \,(${<\infty}$), i.e., does not converge (i.e., converges) for any $u_0>0$;
\item $\|\cdot\|$ denotes an appropriate norm in a space, unless it is explicitly stated which norm is considered.
 \end{itemize}

For simplicity of the presentation of further results, we assume the following. If $t\in [a,b]$, $a,b\in \R$, $a\ne b$, then by an open neighborhood $U_\varepsilon(a)$ of the point $a$ we mean a semi-open interval $[a,a+\varepsilon)$, $0<\varepsilon<b-a$, and, similarly, by an open neighborhood $U_\varepsilon(b)$ we mean a semi-open interval $(b-\varepsilon,b]$, $0<\varepsilon<b-a$.

 \paragraph{Definitions used throughout the paper.}\,

The function $x(t)$ is called a \emph{solution} of the ADAE \eqref{ADAE} on an interval $J\subseteq \R_+$ if $x\in C(J,X)$, $(Ax)\in C^1(J,Y)$ (\,if $J=\R_+$, then $(Ax)\in C^1((0,\infty),Y)$\,) and $x(t)$ belongs to $D$ and satisfies equation \eqref{ADAE} for every $t\in J$.

If a solution $x(t)$ exists on the whole interval $[t_0,\infty)$ (where $t_0$ is a given initial value), then it is called \emph{global}.

If a solution $x(t)$ is global and bounded, i.e., $\sup\limits_{t\in [t_0,\infty)} \|x(t)\|<\infty$, then it is called \emph{Lagrange stable}.

A solution $x(t)$ \emph{blows up in finite time} (or \emph{has a finite escape time}) and is also called \emph{Lagrange unstable} if it exists on some finite interval $[t_0,T)$ and $\lim\limits_{t\to T-0} \|x(t)\|=\infty$.

The equation \eqref{ADAE} is called \emph{Lagrange stable} (respectively, \emph{Lagrange unstable}) \emph{for an initial point} or initial values if the solution of the initial value problem for \eqref{ADAE} is Lagrange stable (respectively, Lagrange unstable) for this initial point or the corresponding initial values.

An equation is said to be \emph{Lagrange stable} if each its solution is Lagrange stable.

 \smallskip
Let $x(t)$ be a solution of a differential equation (possibly a DAE) on an interval $J\subseteq \R_+$. The interval $J$ is called a \emph{maximal interval of existence} of $x(t)$ if there does not exist an extension of $x(t)$ over an interval $J_1\supsetneqq J$ so that this extension remains a solution of the equation.  Let the solution $x(t)$ belong to some set $D$ for all $t\in J$. The interval $J$ is called a \emph{maximal interval of existence} of $x(t)$ \emph{in $D$} if there does not exist an extension of $x(t)$ over an interval $J_2\supsetneqq J$ so that this extension remains a solution of the equation and belongs to $D$ for all $t\in J_2$.

Notice that we will extend the solution only to the right and, for brevity, call $J$ a maximal interval, although it could more properly be called the right maximal interval.

 \smallskip
A mapping $f(t,x)$ of a set $J\times M$ into $Y$ is said to \emph{satisfy locally a Lipschitz condition} (or to \emph{be locally Lipschitz continuous}) \emph{with respect to $x$ on} $J\times M$ if for each (fixed) $(t',x')\in J\times M$ there exist open neighborhoods $U(t')$, $\widetilde{U}(x')$ of the points $t'$, $x'$ and a constant $L\ge 0$ such that $\|f(t,x_1)-f(t,x_2)\|\le L\|x_1-x_2\|$ for any $t\in U(t')$ and $x_1,x_2\in \widetilde{U}(x')$.
As usual, the mapping $f(t,x)$ \emph{satisfies a Lipschitz condition} (or to \emph{be Lipschitz continuous}) \emph{with respect to $x$ on} $J\times M$ if there exists a constant $l\ge 0$ such that  $\|f(t,x_1)-f(t,x_2)\|\le l\|x_1-x_2\|$ for any $t\in J$ and $x_1,x_2\in M$.

We denote by $\rho(M_1,M_2)=\inf\limits_{x_1\in M_1,\, x_2\in M_2}\|x_2-x_1\|$  the distance between the closed sets $M_1$, $M_2$ in $X$; $\rho(x,M)=\inf\limits_{y\in M}\|x-y\|$ denotes the distance from the point $x\in X$ to the set $M$.

 \section{Index of a regular pencil of operators, direct decompositions of spaces and the associated projectors}\label{Sec_Ind_DirDec_Proj}

Consider an operator pencil
\begin{equation}\label{Pencil}
P(\la)=\la A+B\colon D\to Y,
\end{equation}
where $A$ and $B$ are the operators defined in  \eqref{ADAE}, and $\la\in\mC$ is a parameter.

A point $\la$ such that there exists the bounded linear operator
$$
R(\la)=P^{-1}(\la)=(\la A+B)^{-1}
$$
defined on $Y$ is called a \emph{regular point} of the pencil $P(\la)$. The operator $R(\la)$ is called the \emph{resolvent} of $P(\la)$. By  $\varrho=\varrho(A,B):=\{\la\in\mC \mid \exists\, (\la A+B)^{-1}\in \spL(Y,X)\}$ (cf. \cite{Rutkas2007}) we denote the set of the regular points $\la$ defined above.
The pencil $P(\la)$ is said to be \emph{regular} if $\varrho\ne\emptyset$ and to be \emph{singular} (nonregular, irregular) if $\varrho=\emptyset$. The set $\varrho$ is open, and the resolvent as the operator function $R\colon \varrho\to \spL(Y,X)$ is holomorphic on $\varrho$.
Indeed, for each $\la_*\in \varrho$, it follows from the relation  $R(\la)=R(\la_*)\big[I_Y-(\la_*-\la)AR(\la_*)\big]^{-1}$ and inclusion $AR(\la_*)\in \spL(Y)$ that $R(\la)=\sum\limits_{k=0}^{\infty} (\la_*-\la)^k R(\la_*)\big(AR(\la_*)\big)^k$ for $|\la-\la_*|< \big\|AR(\la_*)\big\|_{\spL(Y)}^{-1}$ (cf. \cite{Rutkas2007}).
Moreover, the point $\la=\infty$ is said to be a \emph{regular point} of $P(\la)$ if in some neighborhood of the infinity ($|\la|>M$) all the points $\la$ are regular and $\|R(\la)\|\le C |\la|^{-1}$ for $|\la|>M$ \cite{Rutkas2007}.

In this work, the Banach spaces (BSs) $X$, $Y$ are, in general, complex.
If $X$, $Y$ are real BSs, then the pencil $P(\la)$ is called  \emph{regular} if $\varrho=\varrho(\tilde{A},\tilde{B})=\{\la\in\mC \mid \exists\, (\la \tilde{A}+\tilde{B})^{-1}\in \spL(\tilde{Y},\tilde{X})\}\ne \emptyset$, where the operators $\tilde{A}\colon D_{\tilde A}\to \tilde{Y}$, $\tilde{B}\colon D_{\tilde B}\to \tilde{Y}$ and the complex BSs $\tilde{X}$, $\tilde{Y}$ are the complex extensions of $A$, $B$ and the complexifications of $X$, $Y$, respectively,
and $\la \tilde{A}+\tilde{B}\colon \tilde{D}=D_{\tilde A}\cap D_{\tilde B}\to \tilde{Y}$. In this case, a point $\la\in \varrho=\varrho(\tilde{A},\tilde{B})$ is called a \emph{regular point} of $P(\la)$; the pencil $P(\la)$ is singular if $\varrho=\emptyset$; the same definition of the regularity of the point $\la=\infty$ holds as above, with the exception that $R(\la)$ is replaced by
$$
R_\mC(\la)=(\la\tilde{A}+\tilde{B})^{-1}.
$$
Notice that the operators ${\tilde A}^*$, ${\tilde B}^*$ from $\tilde{Y}^*$ into $\tilde{X}^*$ (with the domains $D_{{\tilde A}^*}$ and $D_{{\tilde B}^*}$ respectively) induce the adjoint operators  $A^*$, $B^*$, respectively, from $Y^*$ into $X^*$ (with the domains $D_{A^*}$, $D_{B^*}$ respectively) (see, e.g., \cite[p.~370--373]{Kantorovich-A}).

The set ${\sigma:=\mC\setminus \varrho}$ is called the \emph{spectrum} (or the finite spectrum) of $P(\la)$. If $\la=\infty$ is not a regular point of $P(\la)$, then the set $\widetilde{\sigma}=\sigma\cup\{\infty\}$ is the \emph{extended spectrum} of $P(\la)$ (or the full spectrum \cite{Rutkas2007}).

Note that the lineal $D_A$ equipped with the graph norm of the operator $A$, i.e., $\|x\|_{X_A}=\|x\|_X +\|Ax\|_Y$, forms the Banach space $X_A$, and $A$ is bounded as an operator from $X_A$ into $Y$. The operators $B$ and $P(\la)$ (for each $\la$) as the operators from $X_A$ into $Y$ with the domain $D$ are closed and, if $D_B\supseteq D_A$, are bounded.
The set of all regular points of $P(\la)$ is called the \emph{resolvent set} of the pencil.

The pencil \eqref{Pencil}, associated with the linear part $\frac{d}{dt}[Ax]+Bx$ of the equation \eqref{ADAE}, is called \emph{characteristic}. If the characteristic pencil $P(\la)$ is  regular (respectively, singular), then equation \eqref{ADAE} is called \emph{regular}  (respectively, \emph{singular}, or \emph{nonregular}, or \emph{irregular}).

  \subsection{Index of the pencil}\label{Sec_Pencil_ind}

\begin{definition}\label{Def_Pencil_ind}
Let the following conditions hold:
 \begin{enumerate}[label={\upshape(\alph*)},ref={\upshape(\alph*)}, itemsep=3pt,parsep=0pt,topsep=3pt,leftmargin=1cm]
\item\label{IndexP-1} The pencil $P(\la)=\la A+B$ is regular for all $\la$ from some neighborhood of the infinity, i.e., there exists a number $R>0$ such that each $\la$ with $|\la|>R$ is a regular point of $P(\la)$.

\item\label{IndexP-2} The point $\la=\infty$ is a pole of the resolvent $R(\la)$ of order $r$ or a removable singularity of $R(\la)$ (if the singularity is removable, we will say that it is a pole of order $0$). This is equivalent to the fact that the resolvent
    $$
  \widehat{R}(\mu)=(A+\mu B)^{-1}\quad \text{of the pencil}\quad \widehat{P}(\mu)=A+\mu B
    $$
    has a pole of order $\nu$ at the point $\mu=0$, where $\nu=r+1$ if $\la=\infty$ is the pole of $R(\la)$ of order $r$ and $\nu=1$ if $\la=\infty$ is a removable singularity.
 \end{enumerate}
Then $P(\la)$ is called a \emph{regular pencil of index $\nu$} ($\nu\in{\mathbb N}$).

 \smallskip
If there exists the inverse operator $A^{-1}\in\spL(Y,X)$ (or $\mu=0$ is a regular point of the pencil $A+\mu B$) and $D_B\supseteq D_A$, then $P(\la)$ is a \emph{regular pencil of index $0$}.
\end{definition}

As above, if the spaces $X$, $Y$ are real, then the analogous definitions of the index of $P(\la)$  are introduced by going over to the complex extensions $\tilde{A}$, $\tilde{B}$ of $A$, $B$ and the complexifications $\tilde{X}$, $\tilde{Y}$ of $X$, $Y$. In particular, the order $\nu=r+1$ of the pole of $(\tilde{A}+\mu \tilde{B})^{-1}$ at the point $\mu=0$ is the index of $P(\la)$.

 \smallskip
The above definition can be reformulated in the following way (cf. \cite{Fil.NCombMeth}). First, note that $\widehat{R}(\mu)=\mu^{-1}R(\mu^{-1})$.

Let condition \ref{IndexP-1} hold and $\nu\in{\mathbb N}$ be the least number such that for some constants $C,\, R>0$ the estimate
\begin{equation}\label{pencil_index}
\|R(\la)\|\le C|\la|^{\nu-1},\quad |\la|\ge R,
\end{equation}
or the equivalent estimate
\begin{equation}\label{pencil_indexInv}
\|\widehat{R}(\mu)\|\le \dfrac{C}{|\mu|^\nu},\quad 0<|\mu|\le\dfrac{1}{R},
\end{equation}
holds, then $P(\la)$ is a \emph{regular pencil of index $\nu$}.

Notice that for a regular pencil $P(\la)$ acting in finite-dimensional spaces, there is always a number $\nu\in{\mathbb N}$ for which the conditions \eqref{pencil_index} and \eqref{pencil_indexInv} are satisfied. The use of estimates \eqref{pencil_index} and \eqref{pencil_indexInv} for obtaining the direct decompositions \eqref{DYrr} (see below) and the projectors onto the subspaces from \eqref{DYrr}, as well as references to sources, are described in details in \cite{Vlasenko1}.

 \subsection{Direct decompositions of the operator domains and spaces and the associated projectors}\label{Sec_DirDec}

Let $P(\la)$ be a regular pencil of index $\nu$, i.e., the above conditions \ref{IndexP-1} and \ref{IndexP-2} (or \eqref{pencil_index}) hold. Then by \cite[Theorem 2.1]{Rutkas2007} or \cite[Section 2.3.1]{Vlasenko1},
there exists the pair of mutually complementary projectors $P_k\colon D\to D_k$ (by construction, $P_k$, $k=1,2$, have the domain $D_A$, $P_1D_A=P_1D$,\, $P_iP_jx= \delta_{ij}P_ix$, $(P_1+P_2)x=x$, $x\in D_A$) and the pair of mutually complementary projectors $Q_k\colon Y\to Y_k$ ($Q_iQ_j= \delta_{ij}Q_i$, $Q_1+Q_2=I_Y$), $k=1,2$, which generate the decompositions of $D$ and $Y$ into the direct sums
 \begin{equation}\label{DYrr}
D=D_1\dotpl D_2,\quad Y=Y_1\dotpl Y_2,\quad  D_k=P_kD,\quad Y_k=Q_kY,\quad k=1,2,
 \end{equation}
such that the pairs $\{D_k,Y_k\}$ are invariant under the operators  $A$, $B$, i.e., $A D_k\subset Y_k$ and $B D_k\subset Y_k$, $k=1,2$. The restricted operators
\begin{equation}\label{A_k,B_k}
A_k:=A\big|_{D_k}\colon D_k\to Y_k,\quad B_k:=B\big|_{D_k}\colon D_k\to Y_k, \quad {k=1,2},
\end{equation}
are such that there exist $A_1^{-1} \in \spL(Y_1,\overline{D_1})$ and $B_2^{-1}\in \spL(Y_2,\overline{D_2})$. The projectors can be constructively determined by using contour integration \cite{Rutkas2007,Vlasenko1} or residues \cite{Fil.NCombMeth,Fil.CombMeth}. Here $Q_k\in \spL(Y)$, $k=1,2$. In addition, if we use the norm  $\|\cdot\|_{X_A}$ of the space $X_A$ defined above, then $P_k\in \spL(X_A)$, $k=1,2$, and they generate the direct decomposition $X_A=D_1\dotpl P_2D_A$, where $D_1=P_1D=P_1D_A$ (see \cite{Rutkas2007}).

Recall that $A\in \spL(X_A,Y)$, and if $D_B\supseteq D_A$, then $B\in \spL(X_A,Y)$ as well. Generally, there is a method to go from equation \eqref{ADAE} with the closed operators to a similar equation with bounded operators without using the norm $\|\cdot\|_{X_A}$. Take a regular point $\la_*\in \varrho$ and make the change of variable $x=R(\la_*)v$ (cf. \cite{Rutkas2007}), then equation \eqref{ADAE} passes to the equation
 \begin{equation*}
\frac{d}{dt}[\widetilde{A}v]+\widetilde{B}v=\widetilde{f}(t,v),\qquad t>0,
 \end{equation*}
where $\widetilde{A}=AR(\la_*)\in \spL(Y)$, $\widetilde{B}=BR(\la_*)\in \spL(Y)$ and $\widetilde{f}(t,v)=f(t,R(\la_*)v)$.

If the pencil $P(\la)$ has index 1, the projectors mentioned above allow one to reduce equation \eqref{ADAE} to a system of an explicit ODE and an algebraic equation. However, for the pencil of index higher than 1, additional decompositions of the lineal $D_2$, the subspace $Y_2$ and the projectors $P_2$, $Q_2$ are required. Below we provide another method to construct the projectors.

In what follows, we suppose that $\overline{D}=X$. Denote $D_{20}=\ker A\cap D$. In particular, if $D_B\supseteq D_A$, then $\overline{D}=X$ and $D_{20}=\ker A$.

Assume that $\dim D_{20}=n$ and denote by $\{\varphi_1^1,...,\varphi_n^1\}$ a basis of $D_{20}$. Thus, ${D_{20}=\Span\{\varphi_i^1\}_{i=1,...,n}}$ is closed.

Note that if the operator $AR(\la_*)$ is completely continuous (compact) for some $\la_*\in \varrho$, then the resolvent $R(\la)$ is completely continuous for each $\la\in \varrho$ (since $R(\la)=R(\la_*)+(\la_*-\la)R(\la)AR(\la_*)$), and the same holds for the resolvent $\widehat{R}(\mu)$ \,($\mu=\la^{-1}$).
Therefore, in this case the number of linearly independent eigenvectors of the operator $\widehat{P}(\mu)$ that correspond to the eigenvalue $\mu=0$ is finite \cite{Keldysh1971} (recall that $\varphi\in D$ is an eigenvector of $\widehat{P}(\mu)$, associated with the eigenvalue $\mu=0$, if $\widehat{P}(0)\varphi=A\varphi=0$~), and hence $\dim D_{20}<\infty$.

Using \cite{Keldysh1971}, we obtain that there exists a \emph{canonical system $\{\varphi_i^j\}_{i=1,...,n}^{j=1,...,m_i}$} ($1\le m_i\le \nu$) \emph{of eigenvectors and adjoined vectors} (or \emph{root vectors}) \emph{of the pencil $\widehat{P}(\mu)=A+\mu B$ that correspond to the eigenvalue $\mu=0$}, for which the vectors $\varphi_i^j\in D$ satisfy the equalities
\begin{equation}\label{eavA}
A\varphi_i^1=0,\quad A\varphi_i^j=-B\varphi_i^{j-1},\quad i=1,...,n,\quad j=2,...,m_i,\quad \max\limits_{i=1,...,n}\{m_i\}=\nu \quad (\nu= r+1),
\end{equation}
where $\nu$ is the index of $P(\la)$.  The number $m_i-1$ is called the \emph{order} of the adjoined vector $\varphi_i^{m_i}$, and $m_i$ is called the \emph{multiplicity} of the eigenvector $\varphi_i^1$. If the eigenvector $\varphi_i^1$ does not have adjoined vectors (in this case, $\langle B\varphi_i^1,q_i^1\rangle=1$, where $q_i^1\in \ker A^*$),  then its multiplicity $m_i=1$ and the corresponding chain of root vectors consists only of $\varphi_i^1$. The vectors $\{\varphi_i^j\}_{i=1,...,n}^{j=1,...,m_i}$ and $\{B\varphi_i^j\}_{i=1,...,n}^{j=1,...,m_i}$ form the bases of $D_2$ and $Y_2$, respectively, and the set $D_2$ (note that $D_2$ is closed) is called the \emph{root subspace of $\widehat{P}(\mu)$ that correspond to} $\mu=0$, and $\sum_{i=1}^n m_i$ is the root number of $\widehat{P}(\mu)$ (cf. \cite{Gokhberg-Krein}). Thus, the projectors $P_k$, $Q_k$, $k=1,2$, can be obtained by the formulas:
\begin{equation}\label{rrProj}
P_2x=\sum\limits_{i=1}^n \sum\limits_{j=1}^{m_i}\langle x,p_i^j\rangle \varphi_i^j, \quad Q_2y=\sum\limits_{i=1}^n \sum\limits_{j=1}^{m_i}\langle y,q_i^j\rangle B\varphi_i^j, \quad P_1=I_X-P_2,\quad Q_1=I_Y-Q_2,
\end{equation}
where\, ${p_i^j=B^*q_i^j\in X^*}$,\, ${j=1,...,m_i}$, ${i=1,...,n}$, $x\in X$, $y\in Y$, and the bounded linear functionals $q_i^j\in D^*=D_{A^*}\cap D_{B^*}$ are chosen such that
\begin{equation}\label{eavA*}
A^*q_i^{m_i}=0,\quad A^*q_i^j=-B^*q_i^{j+1},\quad j=1,...,m_i-1,\quad i=1,...,n,
\end{equation}
and such that the systems $\{B\varphi_i^j\}_{i=1,...,n}^{j=1,...,m_i}$ and $\{q_i^j\}_{i=1,...,n}^{j=1,...,m_i}$ are biorthogonal, i.e.,
\begin{equation}\label{biorthogonal}
\langle B\varphi_i^j,q_k^l\rangle = \delta_{ik}\delta_{jl}.
\end{equation}

If $m_i=1$ (for some $i$), then \eqref{eavA*} becomes $A^*q_i^{m_i}=0$. Note that since $\langle B\varphi_i^{m_i},q_i^{m_i}\rangle=1$  \,($q_i^{m_i}\in \ker A^*$), then $B\varphi_i^{m_i}\not\in \range(A)$, $i=1,...,n$.
The formulas \eqref{rrProj} and relations \eqref{biorthogonal}, \eqref{eavA*}
follow from the required properties of the operators $P_k$, $Q_k$, $k=1,2$, namely, they are the pairs of mutually complementary projectors and $\{D_k,Y_k\}$, where $D_k=P_kD$ and $Y_k=Q_kY$, are invariant under $A$, $B$, i.e.,
$$
AP_kx=Q_kAx,\quad  BP_kx=Q_kBx,\quad  x\in D,\quad  k=1,2.
$$

Denote $\hat{q}_i^j=q_i^{m_i+1-j}$, then
$$
A^*\hat{q}_i^1=0,\quad A^*\hat{q}_i^j=-B^*\hat{q}_i^{j-1},\quad i=1,...,n,\; j=2,...,m_i,
$$
and \emph{$\{\hat{q}_i^j=q_i^{m_i+1-j}\}_{i=1,...,n}^{j=1,...,m_i}$ is a canonical system of eigenvectors and adjoined vectors of the pencil $A^*+\mu B^*$ that correspond to the eigenvalue ${\mu=0}$} (this system is determined uniquely when specifying the system ${\{\varphi_i^j\}_{i=1,...,n}^{j=1,...,m_i}}$) \cite{Keldysh1971}. Therefore, ${\{\hat{q}_i^1=q_i^{m_i}\}_{i=1,...,n}}$ forms the basis of $D_{20}^*= \ker A^* \cap D^*$ and hence ${\dim D_{20}^*=n}$. If $D_B\supseteq D_A$, then ${\{\hat{q}_i^1\}_{i=1,...,n}}$ is the basis of $\ker A^*$ and ${\dim\ker A^*=n}$.

The projectors \eqref{rrProj} generate the decompositions of $D$, $X$ and $Y$ into the direct sums
 \begin{equation}\label{XDYrr}
\begin{split}
 & D=D_1\dotpl D_2,\quad D_k=P_kD,\quad X=X_1\dotpl X_2,\quad X_k=P_kX, \quad D_2=X_2,\quad D_1=X_1\cap D, \\
 & Y=Y_1\dotpl Y_2,\quad Y_k=Q_kY\quad (Y_2=BD_2),\quad k=1,2,
\end{split}
 \end{equation}
where the direct decompositions of $D$ and $Y$ are the same as \eqref{DYrr} (obviously, $Q_k\in \spL(Y)$, $P_k\in \spL(X)$ and $P_k\in \spL(X_A)$, $k=1,2$).

For the pencil $P(\la)$ of index 2, a similar construction of the projectors was used in \cite{Rut-Khudoshin} with the reference to \cite{Khudoshin99}. If $D_B\supseteq D_A$, then, according to \cite{Vainberg-Trenogin}, the system $\{\varphi_i^j\}_{i=1,...,n}^{j=1,...,m_i}$ can be referred to as $(-B)$-Jordan set of the operator $A$.

Recall that ${D_{20}=\ker A\cap D=\Span\{\varphi_i^1\}_{i=1,...,n}}$.
Note that if $P(\la)$ has index 1, then $D_2=D_{20}$ and $Y_1=\range\big(A\big|_D\big)$ (if in addition $D_B\supseteq D_A$, then $D_2=\ker A$ and $Y_1=\range(A)$\,).

Define by ${D_{2s}=X_{2s}}$ the linear span of the adjoined vectors of order $s$, ${s=1,...,\nu-1}$ (${\nu= r+1}$), i.e.,
 $$  %% $r=\nu-1$
{D_{2s}:=\Span\{\varphi_i^{s+1},\text{ where $i\in \{1,...,n\}$ are those indices for which } s+1\le m_i\le \nu\}},
 $$
and let ${Y_{2s}:=BD_{2s}}$, ${s=0,...,\nu-1}$. Then ${BD_{2(s-1)}\supseteq AD_{2s}}$, ${s=1,...,\nu-1}$.
Further, define by ${D_{20}^{(j)}=X_{20}^{(j)}}$ the linear span of the eigenvectors of multiplicity $j$ (${j=1,...,\nu}$) and by
${D_{2s}^{(j)}=X_{2s}^{(j)}}$ the linear span of the adjoined vectors of order~$s$ to which the eigenvectors of multiplicity~$j$ correspond (${j=s+1,...,\nu}$, ${s=1,...,\nu-1}$), i.e.,
 $$
D_{2s}^{(j)}:=\Span\{\varphi_i^{s+1},\text{ where $i\in\{1,...,n\}$ are those indices for which } m_i=j\},\, j=s+1,...,\nu,\, s=0,...,\nu-1.
 $$
Also, define $Y_{2s}^{(j)}:=BD_{2s}^{(j)}$, ${j=s+1,...,\nu}$, ${s=0,...,\nu-1}$. Then
$$
{D_2=\dotpl\limits_{s=0}^{\nu-1}D_{2s}},\quad {Y_2=\dotpl\limits_{s=0}^{\nu-1}Y_{2s}},\qquad
{D_{2s}=D_{2s}^{(s+1)}\dot{+} ...\dot{+} D_{2s}^{(\nu)}},\quad {Y_{2s}=Y_{2s}^{(s+1)}\dot{+} ...\dot{+} Y_{2s}^{(\nu)}},\quad {s=0,...,\nu-1}.
$$
Introduce the projectors ${P_{2s}\colon D\to D_{2s}=X_{2s}}$ (generally, ${P_{2s}\colon X\to X_{2s}}$) and ${Q_{2s}\colon Y\to Y_{2s}}$, ${s=0,...,{\nu-1}}$
 \,(obviously, ${P_{2i}P_{2j}=\delta_{ij}P_{2i}}$, ${Q_{2i}Q_{2j}=\delta_{ij}Q_{2i}}$, ${P_2=\sum\limits_{s=0}^{\nu-1} P_{2s}}$ and ${Q_2=\sum\limits_{s=0}^{\nu-1} Q_{2s}}$).
Also, introduce the projectors ${P_{2s}^{(j)}\colon D\to D_{2s}^{(j)}}$ (generally, $P_{2s}^{(j)}\colon X\to X_{2s}^{(j)}$) and ${Q_{2s}^{(j)}\colon Y\to Y_{2s}^{(j)}}$,  ${s=0,...,\nu-1}$, ${j=s+1,...,\nu}$
 \,(obviously, the projectors $P_{2s}^{(j)}$, as well as $Q_{2s}^{(l)}$, are pairwise disjoint, and ${P_{2s}=\sum\limits_{j=s+1}^{\nu} P_{2s}^{(j)}}$, ${Q_{2s}=\sum\limits_{j=s+1}^{\nu} Q_{2s}^{(j)}}$).
The introduced projectors can be obtained from \eqref{rrProj} as the appropriate partial sums, %in particular,
\begin{align*}
P_{2s} &=\sum\limits_{i\in \{1,...,n\}\,:\, m_i=s+1,...,\,\nu} \langle\cdot,B^*q_i^{s+1}\rangle \varphi_i^{s+1}=\sum\limits_{j=s+1}^{\nu} P_{2s}^{(j)}, \quad &
P_{2s}^{(j)} &=\sum\limits_{i\in \{1,...,n\}\,:\, m_i=j} \langle\cdot,B^*q_i^{s+1}\rangle \varphi_i^{s+1}, \\
Q_{2s} &=\sum\limits_{i\in \{1,...,n\}\,:\, m_i=s+1,...,\,\nu} \langle\cdot,q_i^{s+1}\rangle B\varphi_i^{s+1}=\sum\limits_{j=s+1}^{\nu} Q_{2s}^{(j)}, \quad &
Q_{2s}^{(j)} &=\sum\limits_{i\in \{1,...,n\}\,:\, m_i=j} \langle\cdot,q_i^{s+1}\rangle B\varphi_i^{s+1},
\end{align*}
$j=s+1,...,\nu$,\, $s=0,...,\nu-1$, and are such that ${Q_{2(\nu-1)}=Q_{2(\nu-1)}^{(\nu)}}$, ${P_{2(\nu-1)}=P_{2(\nu-1)}^{(\nu)}}$;\,
${Q_{2s}Bx=BP_{2s}x}$,\, ${Q_{2s}^{(j)}Bx=BP_{2s}^{(j)}x}$,\, ${j=s+1,...,\nu}$, ${s=0,...,\nu-1}$, ${x\in D_B}$;\,
${AP_{20}x=0}$,\, ${Q_{2s}Ax=AP_{2(s+1)}x}$, ${s=0,...,\nu-2}$,\, ${Q_{2(\nu-1)}Ax=0}$,\, ${x\in D}$.

Denote
 \begin{equation*}  \begin{split}
  %% &D_{2\Sigma}=D_2\setminus D_{20}= \Span\{\varphi_i^j, \text{ where\, $j=2,...,m_i$,\, $i\in \{1,...,n\}$ are such that $m_i\ne 1$}\}, \\
&D_{2\Sigma}=D_2\setminus D_{20}= \Span\{\varphi_i^j, \text{ where $i\in \{1,...,n\}$ are those indices for which $m_i\ne 1$,\, $j=2,...,m_i$ }\}, \\
  %% &Y_{2*}=\Span\{B\varphi_i^{m_i}\}_{i=1,...,n},\; Y_{2\Sigma}=Y_2\setminus Y_{2*}= \Span\{B\varphi_i^j, \text{ where\, $j=1,...,m_i-1$,\, $i\in \{1,...,n\}$ are such that $m_i\ne 1$}\},  \\
& Y_{2*}=\Span\{B\varphi_i^{m_i},\; i=1,...,n\}, \\
  %% &\qquad**\; D_{20}^*=\Span\{q_i^{m_i},\; i=1,...,n\},\; \langle B\varphi_i^{m_i},q_i^{m_i}\rangle=1,\;  i=1,...,n  \;**\\
&Y_{2\Sigma}=Y_2\setminus Y_{2*}= \Span\{B\varphi_i^j, \text{ where $i\in \{1,...,n\}$ are those indices for which $m_i\ne 1$,\, $j=1,...,m_i-1$} \}.
 \end{split}  \end{equation*}
Then $D_2$ and $Y_2$ can be represented as the direct sums
 \begin{equation}\label{DY2rr}
D_2=D_{2\Sigma}\dotpl D_{20},\quad  Y_2=Y_{2\Sigma}\dotpl Y_{2*},
 \end{equation}
and the restricted operators $A_{20}:=A\big|_{D_{20}}$ and ${A_{2\Sigma}:=A\big|_{D_{2\Sigma}}\colon D_{2\Sigma}\to Y_{2\Sigma}}$ are such that $A_{20}=0$ and there exists
$$
{A_{2\Sigma}^{-1}\in \spL(Y_{2\Sigma},D_{2\Sigma})}.
$$
Notice that ${D_{2\Sigma}=D_2\setminus D_{20}= \dotpl\limits_{s=1}^{\nu-1}D_{2s}}$,\,  ${Y_{2*}=\dotpl\limits_{s=0}^{\nu-1}Y_{2s}^{(s+1)}= \dotpl\limits_{s=0}^{\nu-1}BD_{2s}^{(s+1)}}$, \, ${Y_{2\Sigma}=\range(A_2)}$ and ${Y_1=\range(A_1)}$, where ${A_k=A\big|_{D_k}\colon D_k\to Y_k}$, $k=1,2$ (they were introduced in \eqref{A_k,B_k}), and, accordingly, $\range\big(A\big|_D\big)=Y_1\dotpl Y_{2\Sigma}$.
Here it is supposed that the pencil index ${\nu\ge 2}$. If ${\nu=1}$, then ${D_2=D_{20}= \Span\{\varphi_i^1\}_{i=1,...,n}}$,\, ${Y_2=B D_2}$ and ${Y_1=AD}$.
As mentioned above, the operators $A_1=A\big|_{D_1}$ and $B_2=B\big|_{D_2}$ have the inverses
  $$
A_1^{-1}\in \spL(Y_1,\overline{D_1}), \qquad  B_2^{-1}\in \spL(Y_2,D_2).
 $$
The direct decompositions \eqref{DY2rr} generate the pairs $P_{2\Sigma}$, $P_{20}$ and $Q_{2\Sigma}$, $Q_{2*}$ of the mutually complementary projectors $P_{2\Sigma}\colon D\to D_{2\Sigma}$, $P_{20}\colon D\to D_{20}$  \,(${P_{2\Sigma}+P_{20}=P_2}$) and $Q_{2\Sigma}\colon Y\to Y_{2\Sigma}$, $Q_{2*}\colon Y\to Y_{2*}$ \,(${Q_{2\Sigma}+Q_{2*}=Q_2}$),
which can be obtained from \eqref{rrProj} as appropriate partial sums, namely,
 \begin{align*}
P_{20} &=\sum\limits_{i=1}^n \langle \cdot,B^*q_i^1\rangle \varphi_i^1, &
P_{2\Sigma} &=\sum\limits_{i\in \{1,...,n\}\,:\, m_i\ne 1\,} \sum\limits_{j=2}^{m_i}\langle \cdot,B^*q_i^j\rangle \varphi_i^j= \sum\limits_{s=1}^{\nu-1}P_{2s},   \\
Q_{2*} &=\sum\limits_{i=1}^n \langle \cdot,q_i^{m_i}\rangle B\varphi_i^{m_i}= \sum\limits_{s=0}^{\nu-1}Q_{2s}^{(s+1)}, &
Q_{2\Sigma} &=\sum\limits_{i\in \{1,...,n\}\,:\, m_i\ne 1\,}  \sum\limits_{j=1}^{m_i-1}\langle \cdot,q_i^j\rangle B\varphi_i^j= \sum\limits_{s=0}^{\nu-2}\sum\limits_{l=s+2}^{\nu}Q_{2s}^{(l)},
 \end{align*}
and are such that
 $$
Q_{2\Sigma}Ax=AP_{2\Sigma}x,\quad Q_{2*}Ax=0=AP_{20}x,\quad  x\in D.
 $$
Generally, the projectors $P_{20}$, $P_{2\Sigma}$ are defined on $X$ (recall that $P_2\colon X\to X_2=D_2$): $P_{20}X=X_{20}=D_{20}$,  $P_{2\Sigma}X=X_{2\Sigma}=D_{2\Sigma}$ and $X_2=X_{20}\dotpl X_{2\Sigma}$.

It follows from the above that any $x\in X$ and any $y\in Y$ can be uniquely represented as
\begin{align}
&x=x_1+x_2, &  &x_2=x_{2\Sigma}+x_{20}, &  &x_i=P_ix,\; i=1,2,\quad x_{2\Sigma}=P_{2\Sigma}x,\quad  x_{20}=P_{20}x;  \label{xrr-ind}\\
&y=y_1+y_2,&   &y_2=y_{2\Sigma}+y_{2*}, &  &y_i=Q_iy,\; i=1,2,\quad y_{2\Sigma}=Q_{2\Sigma}y,\quad  y_{2*}=P_{2*}y.  \label{yrr-ind}
\end{align}
Using the projectors $Q_1$, $Q_{2\Sigma}$, $Q_{2*}$, we reduce the DAE \eqref{ADAE} to the equivalent system
 \begin{align}
&Q_1A\dot{x}_1+Q_1Bx_1=Q_1f(t,x), \label{ADAE_DE1} \\
&Q_{2\Sigma}A\dot{x}_{2\Sigma}+Q_{2\Sigma}B(x_{2\Sigma}+x_{20})=Q_{2\Sigma}f(t,x), \label{ADAE_DE2} \\
&Q_{2*}B(x_{2\Sigma}+x_{20})=Q_{2*}f(t,x), \label{ADAE_AE}
 \end{align}
where $x=x_1+x_{2\Sigma}+x_{20}\in D$ (see \eqref{xrr-ind}).
The system \eqref{ADAE_DE1}, \eqref{ADAE_DE2} is equivalent to the equation
  \begin{equation}\label{ADAE_DE1+2}
\frac{d}{dt}[A(x_1+x_{2\Sigma})]+(Q_1+Q_{2\Sigma})B(x_1+x_{2\Sigma}+x_{20})= (Q_1+Q_{2\Sigma})f(t,x_1+x_{2\Sigma}+x_{20}).
 \end{equation}

Denote
\begin{equation}\label{Q_2Sigma-P_2wedge}
\begin{split}
Q_{2\Sigma,s}=Q_{2s}-Q_{2s}^{(s+1)}= \sum\limits_{l=s+2}^{\nu}Q_{2s}^{(l)},\quad
P_{2\wedge,s}=P_{2s}-P_{2s}^{(s+1)}= \sum\limits_{l=s+2}^{\nu}P_{2s}^{(l)},\quad P_{2\wedge}=\sum\limits_{s=0}^{\nu-2}P_{2\wedge,s}  \\
Y_{2\Sigma,s}=Q_{2\Sigma,s}Y, \quad D_{2\wedge,s}=P_{2\wedge,s}D,\quad {s=0,...,\nu-2},\,
\end{split}
\end{equation}
then $Q_{2s}Ax=\big(Q_{2s}-Q_{2s}^{(s+1)}\big)Ax=Q_{2\Sigma,s}Ax=AP_{2(s+1)}x$, $Q_{2\Sigma,s}Bx=BP_{2\wedge,s}x$ and there exists $A_{2(s+1)}^{-1}\in \spL(Y_{2\Sigma,s},D_{2(s+1)})$, where $A_{2(s+1)}=A\big|_{D_{2(s+1)}}= Q_{2\Sigma,s}A\big|_{D_{2(s+1)}}$, ${s=0,...,\nu-2}$.
Obviously, ${Q_{2\Sigma}=\sum\limits_{s=0}^{\nu-2}Q_{2\Sigma,s}}$, $Q_{2\Sigma,s}Q_{2\Sigma,k}=\delta_{sk}Q_{2\Sigma,s}$, and
$P_{2\wedge}\colon D\to D_{2\wedge}$ (generally, $P_{2\wedge}\colon X\to {X_{2\wedge}=D_{2\wedge}}$, $P_{2\wedge,s}\colon X\to {X_{2\wedge,s}=D_{2\wedge,s}}$), $P_{2\wedge,s}P_{2\wedge,k}=\delta_{sk}P_{2\wedge,s}$.
Note that
$P_2= \sum\limits_{s=0}^{\nu-1}P_{2s}^{(s+1)}+ \sum\limits_{s=0}^{\nu-2}P_{2\wedge,s}$,\,
$P_{2\Sigma}= \sum\limits_{s=1}^{\nu-1}P_{2s}^{(s+1)}+ \sum\limits_{s=1}^{\nu-2}P_{2\wedge,s}$
(accordingly, ${D_{2\Sigma}=\dotpl\limits_{s=1}^{\nu-1}D_{2s}^{(s+1)}} \dotpl\dotpl\limits_{s=1}^{\nu-2}D_{2\wedge,s}$\,) and  $P_{2(\nu-1)}^{(\nu)}=P_{2(\nu-1)}$. The projectors generate the direct decompositions  ${Y_{2\Sigma}=\dotpl\limits_{s=0}^{\nu-2}Y_{2\Sigma,s}}$, ${D_{2\wedge}=\dotpl\limits_{s=0}^{\nu-2}D_{2\wedge,s}}$.

  \smallskip
Define
\begin{equation*}
Y_{2*}^{(1)}=Y_{20}^{(1)}, \quad
Y_{2*}^{(2)}=\dotpl\limits_{s=1}^{\nu-1}Y_{2s}^{(s+1)},\quad
D_{2\Sigma}^{(1)}=\dotpl\limits_{s=1}^{\nu-1}D_{2s}^{(s+1)}, \quad
D_{2\Sigma}^{(2)}=\dotpl\limits_{s=1}^{\nu-2}D_{2\wedge,s}= \dotpl\limits_{s=1}^{\nu-2}\dotpl\limits_{l=s+2}^{\nu}D_{2s}^{(l)},
\end{equation*}
then $Y_{2*}=Y_{2*}^{(1)}\dotpl Y_{2*}^{(2)}$ and $D_{2\Sigma}=D_{2\Sigma}^{(1)}\dotpl D_{2\Sigma}^{(2)}$.
The corresponding projectors $Q_{2*}^{(i)}\colon Y\to Y_{2*}^{(i)}$, ${P_{2\Sigma}^{(i)}\colon X\to D_{2\Sigma}^{(i)}}$,  $i=1,2$ (\,$Q_{2*}=Q_{2*}^{(1)}+Q_{2*}^{(2)}$, $P_{2\Sigma}=P_{2\Sigma}^{(1)}+P_{2\Sigma}^{(2)}$\,), take the form
\begin{equation}\label{Q_2*_(i),P_2Sigma_(i)}
Q_{2*}^{(1)}=Q_{20}^{(1)}, \quad
Q_{2*}^{(2)}=\sum\limits_{s=1}^{\nu-1}Q_{2s}^{(s+1)}, \quad P_{2\Sigma}^{(1)}=\sum\limits_{j=1}^{\nu-1}P_{2j}^{(j+1)},  \quad
P_{2\Sigma}^{(2)}=\sum\limits_{j=1}^{\nu-2} P_{2\wedge,j}= \sum\limits_{j=1}^{\nu-2} \sum\limits_{l=j+2}^{\nu} P_{2j}^{(l)}.
\end{equation}
Note that $Q_{2*}^{(1)}Bx=BP_{20}^{(1)}x$ and $Q_{2*}^{(2)}Bx=BP_{2\Sigma}^{(1)}x$, and recall that $P_{20}=P_{20}^{(1)}+ P_{2\wedge,0}$.

 \smallskip
We successively apply the projectors $Q_1$, $Q_{20}$, $Q_{2\Sigma,s}$, $s=1,...,\nu-2$, and $Q_{2s}^{(s+1)}$, $s=1,...,\nu-1$, to the ADAE \eqref{ADAE} and obtain the equivalent system (which has $2\nu-1$ equations and $2\nu-1$ unknowns)
\begin{align}
&[Q_1Ax]'+Q_1Bx=Q_1f(t,x), &\Leftrightarrow& \;
  [AP_1x]'+BP_1x=Q_1f(t,x),  \label{1_1}\\
&[Q_{20}Ax]'+Q_{20}Bx=Q_{20}f(t,x), &\Leftrightarrow& \;
  [AP_{21}x]'+BP_{20}x=Q_{20}f(t,x),  \label{2_20}\\
& \qquad  \vdots && \qquad \vdots \nonumber \\
&[Q_{2\Sigma,s}Ax]'\!+\!Q_{2\Sigma,s}Bx=Q_{2\Sigma,s}f(t,x),\!\!\! &\Leftrightarrow& \;
  [AP_{2(s+1)}x]'\!+\!B\big(P_{2s}\!-\!P_{2s}^{(s+1)}\big)x=Q_{2\Sigma,s}f(t,x),\; s=1,...,\nu\!-\!2 \label{2_2Sigma_s}\\
& \qquad  \vdots && \qquad \vdots
                         \nonumber \\
&Q_{2s}^{(s+1)}Bx=Q_{2s}^{(s+1)}f(t,x), &\Leftrightarrow& \;
  BP_{2s}^{(s+1)}x=Q_{2s}^{(s+1)}f(t,x),\; s=1,...,\nu-1,  \label{3_2*_s}\\
& \qquad  \vdots && \qquad \vdots  \nonumber
\end{align}
where a prime $'$ denotes the time derivative $\frac{d}{dt}$.
The system \eqref{1_1}--\eqref{3_2*_s} can be rewritten  as
\begin{align}
& \frac{d}{dt}[AP_1x]+BP_1x=Q_1f(t,x) %\qquad \Leftrightarrow\qquad [P_1x]'+G^{-1}BP_1x=G^{-1}Q_1f(t,x)
\tag{\ref{1_1}}\\
& \frac{d}{dt}\big[A\big(P_{2\wedge,1}+P_{21}^{(2)}\big)x\big]+ BP_{20}\,x= Q_{20}f(t,x)  \tag{\ref{2_20}}\\
& \qquad\qquad  \vdots  \nonumber \\
& \frac{d}{dt}\big[A\big(P_{2\wedge,s+1}+P_{2(s+1)}^{(s+2)}\big)x\big] +BP_{2\wedge,s}\,x= Q_{2\Sigma,s}f(t,x), \quad  s=1,...,\nu-3, \label{2_2Sigma_s-}\\
& \qquad\qquad  \vdots  \nonumber \\
& \frac{d}{dt}\big[AP_{2(\nu-1)}x\big] +BP_{2\wedge,\nu-2}\,x= Q_{2\Sigma,\nu-2}f(t,x),  \label{2_2Sigma_nu-2}\\
 %%& BP_{20}^{(1)}x=Q_{20}^{(1)}f(t,x),  \label{3_2*_0}\\
& \qquad\qquad  \vdots  \nonumber \\
&BP_{2s}^{(s+1)}x=Q_{2s}^{(s+1)}f(t,x), \quad  s=1,...,\nu-2,  \label{3_2*_s-}\\
& \qquad\qquad  \vdots  \nonumber \\
&BP_{2(\nu-1)}x=Q_{2(\nu-1)}f(t,x).   \label{3_2*_nu-1}
\end{align}

  \section{Nonlocal (global) implicit function theorems}\label{Sec-Nonloc impl func}

In Section \ref{Sec-Main}, which contain the main results, we will consider equations of the type \eqref{F} presented below and find some arguments of the left-hand sides of the equations as implicit functions of others.
In order to simplify the proofs of the main results, we provide the theorems on the existence and uniqueness of a nonlocal (global) implicit function. They are based on the results from the previous papers of the author, namely from \cite{RF1,Fil.MPhAG,Fil.Sing-GN} (which use, in particular, the classical results of the theory of implicit functions).

Consider the equation
\begin{equation}\label{F}
F(t,x,y)=0,
\end{equation}
where $F\colon \mT\times D_X\times D_Y\to Z$, $\mT\subseteq \R$ is an interval, $D_X \subseteq X$ and $D_Y \subseteq Y$ are open sets and $X,\,Y,\,Z$ are Banach spaces.

 \begin{theorem}[existence and uniqueness of a nonlocal implicit function]\label{Th-Nonloc_ImplFunc}
Let $F\in C(\mT\times D_X\times D_Y,Z)$ and let there exist an open set $M_X\subseteq D_X$ and a set $M_Y\subseteq D_Y$ such that the following holds:
 \begin{enumerate}[label={\upshape\arabic*.}, ref={\upshape\arabic*}, itemsep=4pt,parsep=0pt,topsep=4pt,leftmargin=0.6cm]
\item\label{Sogl_F}
 For each ${t\in \mT}$, $x\in M_X$ there exists a unique ${y\in M_Y}$ such that $(t,x,y)$ satisfies \eqref{F}.

\item\label{Inv-Lipsch_F}
 A function $F(t,x,y)$ satisfies locally a Lipschitz condition with respect to $(x,y)$ on $\mT\times D_X\times D_Y$.  \\
 For any fixed $(t',x',y')\in \mT\times M_X\times M_Y$ satisfying \eqref{F}, there exist open neighborhoods
 $U_\delta(t',x')= U_{\delta_1}(t')\times U_{\delta_2}(x')\subset \mT\times D_X$ and $U_\varepsilon(y')\subset D_Y$ and an operator $W_{t',x',y'}\in \spL(Y,Z)$ having the inverse $W_{t',x',y'}^{-1}\in \spL(Z,Y)$ such that for each ${(t,x)\in U_\delta(t',x')}$ and each $y^1,\, y^2\in U_\varepsilon(y')$ the mapping $F$ satisfies the inequality
  \begin{equation}\label{Contractive_F}
 \big\|F(t,x,y^1)- F(t,x,y^2)- W_{t',x',y'}\big[y^1-y^2\big]\big\|\le k(\delta,\varepsilon)\big\|y^1-y^2\big\|,
   \end{equation}
 where $k(\delta,\varepsilon)$ is such that $\lim\limits_{\delta,\,\varepsilon\to 0} k(\delta,\varepsilon)< \|W_{t',x',y'}^{-1}\|^{-1}$  \,\textup{(}the numbers $\delta, \varepsilon>0$ depend on the choice of $t',x'$\textup{)}.
 \end{enumerate}
Then equation \eqref{F} has a unique solution $y=\eta(t,x)$, i.e., $F(t,x,\eta(t,x))=0$, for each $(t,x)\in \mT\times M_X$ and the function $\eta\in C(\mT\times M_X,M_Y)$ satisfies locally a Lipschitz condition with respect to $x$ on $\mT\times M_X$.
 \end{theorem}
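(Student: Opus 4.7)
The plan is to exploit hypothesis \ref{Sogl_F} for the bare existence and uniqueness of the implicit function, and to use hypothesis \ref{Inv-Lipsch_F} as a Banach-space surrogate for the classical condition ``$\partial_y F$ invertible'', so as to obtain continuity and the local Lipschitz bound. First I would define $\eta(t,x)$, for $(t,x)\in \mT\times M_X$, to be the unique element of $M_Y$ satisfying $F(t,x,\eta(t,x))=0$, as supplied by \ref{Sogl_F}. The substantive content of the theorem is then the regularity of $\eta$.

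To prove continuity at a fixed point $(t^*,x^*)\in \mT\times M_X$, I set $y^*:=\eta(t^*,x^*)$ and apply \ref{Inv-Lipsch_F} to the triple $(t^*,x^*,y^*)$, obtaining the operator $W:=W_{t^*,x^*,y^*}$, the neighborhoods $U_{\delta_1}(t^*)\times U_{\delta_2}(x^*)$ and $U_\varepsilon(y^*)$, and the residual bound $k(\delta,\varepsilon)$. The limit condition $\lim_{\delta,\varepsilon\to 0}k(\delta,\varepsilon)<\|W^{-1}\|^{-1}$ lets me shrink the neighborhoods so that $q:=\|W^{-1}\|k(\delta,\varepsilon)<1$, and the continuity of $F$ together with $F(t^*,x^*,y^*)=0$ lets me shrink $\delta_1,\delta_2$ further so that $\|W^{-1}F(t,x,y^*)\|\le (1-q)\varepsilon$ on the resulting neighborhood of $(t^*,x^*)$. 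On $\overline{U_\varepsilon(y^*)}$ I would then consider
\begin{equation*}
T_{t,x}(y):=y-W^{-1}F(t,x,y);
\end{equation*}
inequality \eqref{Contractive_F} gives $\|T_{t,x}(y^1)-T_{t,x}(y^2)\|\le q\|y^1-y^2\|$, and the refined choice of $\delta$ yields the invariance $T_{t,x}(\overline{U_\varepsilon(y^*)})\subseteq \overline{U_\varepsilon(y^*)}$. The Banach contraction principle then produces a unique fixed point $\tilde\eta(t,x)\in \overline{U_\varepsilon(y^*)}$ with $F(t,x,\tilde\eta(t,x))=0$, and the standard iteration estimate shows that $\tilde\eta$ is continuous in $(t,x)$ and satisfies $\tilde\eta(t^*,x^*)=y^*$. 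Uniqueness in $M_Y$ from \ref{Sogl_F} then forces $\tilde\eta=\eta$ locally, so $\eta$ is continuous at $(t^*,x^*)$.

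For the local Lipschitz estimate in $x$, I would apply \eqref{Contractive_F} to the pair $y^1=\eta(t,x^1)$, $y^2=\eta(t,x^2)$ (both near $y^*$ thanks to the continuity just obtained) and combine it with the local Lipschitz dependence of $F$ on $x$. Using $F(t,x^i,\eta(t,x^i))=0$ for $i=1,2$, I would write
\begin{equation*}
\begin{split}
\eta(t,x^1)-\eta(t,x^2)={}& -W^{-1}\bigl[F(t,x^1,\eta(t,x^1))-F(t,x^1,\eta(t,x^2))-W(\eta(t,x^1)-\eta(t,x^2))\bigr] \\
& -W^{-1}\bigl[F(t,x^1,\eta(t,x^2))-F(t,x^2,\eta(t,x^2))\bigr],
\end{split}
\end{equation*}
bound the first bracket by $k(\delta,\varepsilon)\|\eta(t,x^1)-\eta(t,x^2)\|$ via \eqref{Contractive_F} and the second by $L\|x^1-x^2\|$ via the local Lipschitz property, and absorb the term $q\|\eta(t,x^1)-\eta(t,x^2)\|$ to the left-hand side, ending with a Lipschitz constant of order $\|W^{-1}\|L/(1-q)$.

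The step I expect to require the most care is the identification of the contractive fixed point $\tilde\eta$ with the globally defined $\eta$: since $M_Y$ is not assumed open in $D_Y$, mere continuity of $\tilde\eta$ does not immediately place its values in $M_Y$. I would handle this by choosing $\varepsilon$ small enough that any $M_Y$-solution that converges to $y^*$ must enter $\overline{U_\varepsilon(y^*)}$ and therefore coincide with $\tilde\eta$ by the Banach uniqueness, combined with the $M_Y$-uniqueness furnished by \ref{Sogl_F}. Once this patching is done, the remainder is a routine transposition to the present Banach-space setting of the classical implicit-function argument used in \cite{RF1,Fil.MPhAG,Fil.Sing-GN}.
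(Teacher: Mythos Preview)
Your proposal is correct and follows essentially the same route as the paper: rewrite $F(t,x,y)=0$ as a fixed-point equation $y=y-W^{-1}F(t,x,y)$, apply the Banach contraction principle on a closed ball $\overline{U_\varepsilon(y^*)}$ to obtain a continuous local solution, derive the local Lipschitz bound in $x$ from \eqref{Contractive_F} together with the local Lipschitz property of $F$, and then patch using the global uniqueness from condition~\ref{Sogl_F}. The only organisational difference is that the paper packages the local step as a separate Lemma~\ref{Lem-ImplicitFunc} and then defines $\eta$ by evaluating each local $\upsilon_{t',x'}$ at its base point, whereas you define $\eta$ directly via condition~\ref{Sogl_F} and then verify regularity; the identification step you single out as delicate (matching the contractive fixed point $\tilde\eta$ with the $M_Y$-valued $\eta$) is treated at the same level of detail in the paper, which simply asserts that the glued function is continuous.
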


 \begin{proof}
First, we need the following lemma.

 \begin{lemma}\label{Lem-ImplicitFunc}
For any fixed $(t',x',y')\in \mT\times M_X\times M_Y$ satisfying \eqref{F}, there exist open neighborhoods
$U_r(t',x')=U_{r_1}(t')\times U_{r_2}(x')\subset U_\delta(t',x')$ and $U_\rho(y')\subset U_\varepsilon(y')$ \,$($\,$r,\rho>0$ depend on the choice of $t',\,x',\,y'$\,$)$ and a unique function $\upsilon\in C\big(U_r(t',x'),U_\rho(y')\big)$ which is a solution of equation \eqref{F} with respect to $y$, i.e., $F(t,x,\upsilon(t,x))=0$, for each $(t,x)\in U_r(t',x')$ and satisfies the equality $\upsilon(t',x')=y'$ and a Lipschitz condition with respect to $x$ on $U_r(t',x')$ .
 \end{lemma}

 \begin{proof}[Proof of Lemma \ref{Lem-ImplicitFunc}]
The proof of the existence of a unique solution of equation \eqref{F} with respect to $y$ in a certain neighbourhood of a point $(t',x',y')$ satisfying the equation is similar to the proof of the implicit function theorem without the requirement of differentiability \cite[p.~420]{Trenogin}. The Lipschitz continuity of the implicit function with respect to $x$ follows from the local Lipschitz continuity of $F$ with respect to $(x,y)$ and the contractivity of the mapping $\widehat{F}$ defined in \eqref{Lem-F} below. Note that $U_{r_1}(t')$ can be a semi-open interval, as indicated in the introduction. In general, the idea of the proof is the same as for \cite[Lemma 3.1]{Fil.Sing-GN}. However, since in \cite{Fil.Sing-GN} nonregular (singular) semilinear DAEs in finite-dimensional spaces were considered, then the form of components of the phase variable $x$, as well as direct decompositions of spaces and the associated projectors, is different from that of the present paper. The proof of \cite[Lemma 3.1]{Fil.Sing-GN} is quite cumbersome due to the larger number of arguments of an implicit function and some features of the problem, so it will be easier to follow the proof of the lemma given above.
Thus, for clarity, we provide the complete proof of the present lemma.

Rewrite equation \eqref{F} as
 \begin{equation}\label{Lem-F}
y=\widehat{F}(t,x,y),\qquad  \widehat{F}(t,x,y):=y-W_{t',x',y'}^{-1}F(t,x,y),
 \end{equation}
where $(t,x)\in U_\delta(t',x')$, $y\in U_\varepsilon(y')$ and $W_{t',x',y'}$, $U_\delta(t',x')$, $U_\varepsilon(y')$ are defined in condition \ref{Inv-Lipsch_F}.

Take an arbitrary fixed point $(t',x',y')\in \mT\times M_X\times M_Y$ satisfying \eqref{F}.  Due to \eqref{Contractive_F}, there exists a number $\tilde{\delta}\in (0,\delta)$ and, accordingly, $\tilde{\delta_i}\in (0,\delta_i)$, $i=1,2$, and a number $\tilde{\varepsilon}\in (0,\varepsilon)$ such that for every $(t,x)\in \overline{U_{\tilde{\delta}}}(t',x') = \overline{U_{\tilde{\delta_1}}}(t')\times \overline{U_{\tilde{\delta_2}}}(x') \subset U_\delta(t',x')$
and every $y^1,\, y^2\in \overline{U_{\tilde{\varepsilon}}}(y')\subset U_\varepsilon(y')$ the following holds:
$\|\widehat{F}(t,x,y^1)- \widehat{F}(t,x,y^2)\|= \|y^1-y^2-W_{t',x',y'}^{-1} \big[F(t,x,y^1)-F(t,x,y^2)\big]\|\le k(\delta_0,\varepsilon_0) \|W_{t',x',y'}^{-1}\|\,\|y^1-y^2\|$, where $k(\delta_0,\varepsilon_0)\|W_{t',x',y'}^{-1}\|\le C<1$ for every $\delta_0\in (0,\tilde{\delta}]$, $\varepsilon_0\in (0,\tilde{\varepsilon}]$.
Hence, for any fixed point $(t',x',y')\in \mT\times M_X\times M_Y$ satisfying \eqref{F} there exist  (closed) neighborhoods $\overline{U_{\tilde{\delta}}}(t',x')$ and $\overline{U_{\tilde{\varepsilon}}}(y')$  \,($\tilde{\delta}\in (0,\delta)$, $\tilde{\varepsilon}\in (0,\varepsilon)$) such that $\widehat{F}$ is a contractive mapping with respect to $y$, uniformly in $(t,x)$, on $\overline{U_{\tilde{\delta}}}(t',x')\times \overline{U_{\tilde{\varepsilon}}}(y')$, i.e.,
 \begin{equation}\label{ContractiveMap-hatF}
\|\widehat{F}(t,x,y^1)- \widehat{F}(t,x,y^2)\|\le C\|y^1-y^2\|,\quad \text{where $C<1$ is a constant,}
 \end{equation}
for every $(t,x)\in \overline{U_{\tilde{\delta}}}(t',x')$ and every $y^i\in \overline{U_{\tilde{\varepsilon}}}(y')$, $i=1,2$.

Again, we take an arbitrary fixed $(t',x',y')\in \mT\times M_X\times M_Y$ satisfying \eqref{F}.
Since $F\in C(\mT\times D_X\times D_Y,Z)$, then $F(t,x,y')\to F(t',x',y')=0$ as $(t,x)\to (t',x')$, and therefore there exists a closed neighborhood $\overline{U_{\delta_*}}(t',x')= \overline{U_{\delta_1^*}}(t')\times \overline{U_{\delta_2^*}}(x') \subseteq \overline{U_{\tilde{\delta}}}(t',x')$, where
$\delta_*\in (0,\tilde{\delta}]$, $\delta_i^*\in (0,\tilde{\delta_i}]$, $i=1,2$, such that $\|W_{t',x',y'}^{-1}\|\, \|F(t,x,y')\|\le (1-C)\tilde{\varepsilon}$ \,(where $C$ is the constant from \eqref{ContractiveMap-hatF}) for every $(t,x)\in \overline{U_{\delta_*}}(t',x')$. Consequently, for each (fixed) $(t,x)\in \overline{U_{\delta_*}}(t',x')$ and every $y\in\overline{U_{\tilde{\varepsilon}}}(y')$ we have $\|\widehat{F}(t,x,y)-y'\|\le \|\widehat{F}(t,x,y)-\widehat{F}(t,x,y')\|+ \|W_{t',x',y'}^{-1}\|\, \|F(t,x,y')\| \le C\tilde{\varepsilon} +(1-C)\tilde{\varepsilon}= \tilde{\varepsilon}$.
Hence, for each (fixed) $(t,x)\in \overline{U_{\delta_*}}(t',x')$, \,$\widehat{F}(t,x,y)$ maps  $\overline{U_{\tilde{\varepsilon}}}(y')$ into itself.

It follows from the foregoing that, by the Banach's fixed point theorem and the theorem on continuous dependence of a fixed point on a parameter (see, e.g., \cite[Theorems 46, $46_2$]{Schwartz1}), the mapping $\widehat{F}(t,x,y)$ as a function of $y$, depending on the parameter $(t,x)$, has a unique fixed point $\upsilon_{(t,x)}=\upsilon(t,x)$ (i.e., $\widehat{F}(t,x,\upsilon(t,x))=\upsilon(t,x)$)  in $\overline{U_{\tilde{\varepsilon}}}(y')$ for each $(t,x)\in \overline{U_{\delta_*}}(t',x')$, which satisfies the equality $\upsilon(t',x')=y'$ and depends continuously on $(t,x)$. The continuity of the function $\upsilon\colon \overline{U_{\delta_*}}(t',x')\to \overline{U_{\tilde{\varepsilon}}}(y')$ is proved in the same way as in \cite[Theorem $46_2$]{Schwartz1}.

Let us prove that $\upsilon(t,x)$ satisfies a Lipschitz condition with respect to $x$ on $U_r(t',x')$, where the neighborhood $U_r(t',x')$ is specified below.
It follows from condition \ref{Inv-Lipsch_F} that $F(t,x,y)$ satisfies locally a Lipschitz condition with respect to $(x,y)$ on $\mT\times D_X\times D_Y$. Therefore, there exists an open neighborhood $U(t',x',y')=\widehat{U}(t')\times \widetilde{U}(x',y')$ and a constant ${L\ge 0}$ such that for any $(t,x^i,y^i)\in U(t',x',y')$, $i=1,2$, the following holds:
 \begin{equation}\label{LocLip_f}
\|F(t,x^1,y^1)- F(t,x^2,y^2)\|\le L\|(x^1,y^1)-(x^2,y^2)\|.
 \end{equation}
Choose numbers ${r\in (0,\delta_*]}$, ${r_i\in (0,\delta_i^*]}$, ${i=1,2}$, and a number ${\rho\in (0,\tilde{\varepsilon}]}$ so that $U_r(t',x')=U_{r_1}(t')\times U_{r_2}(x')\subset \overline{U_{\delta_*}}(t',x')$,   \,${U_\rho(y')\subset \overline{U_{\tilde{\varepsilon}}}(y')}$,   \,$U_r(t',x')\times U_\rho(y')\subseteq U(t',x',y')$
and $\upsilon\colon U_r(t',x')\to U_\rho(y')$.
Then, carrying out certain transformations and using \eqref{ContractiveMap-hatF}, \eqref{LocLip_f}, we obtain that the relation
$\|\upsilon(t,x^1)-\upsilon(t,x^2)\|\le C\|\upsilon(t,x^1)-\upsilon(t,x^2)\|+L\,\|W_{t',x',y'}^{-1}\|\, \|x^1-x^2\|$ and, hence, the relation
 \begin{equation*}
\|\upsilon(t,x^1)-\upsilon(t,x^2)\|\le \widehat{L}\,\|x^1-x^2\|, \qquad \widehat{L}=L\,\|W_{t',x',y'}^{-1}\|/(1-C)\ge 0,
 \end{equation*}
hold for any $(t,x^i)\in U_r(t',x')$, ${i=1,2}$. Hence, $\upsilon(t,x)$ satisfies a Lipschitz condition with respect to $x$ on $U_r(t',x')$.
 \end{proof}

It follows from Lemma~\ref{Lem-ImplicitFunc} (we also take into account condition \ref{Sogl_F}\,) that in some open neighborhood $U_r(t',x')=U_{r_1}(t')\times U_{r_2}(x')$ of each (fixed) point $(t',x')\in \mT\times M_X$ equation \eqref{F} has a unique solution ${y=\upsilon_{t',x'}(t,x)}$ such that $\upsilon_{t',x'}\in C\big(U_r(t',x'), U_\rho(y')\big)$, where $U_\rho(y')$ is an open neighborhood of the point $y'\in M_Y$ such that $(t',x',y')$ satisfies \eqref{F}, and it satisfies a Lipschitz condition with respect to $x$ and the equality $\upsilon_{t',x'}(t',x')=y'$.
Further, we define the function
\begin{equation}\label{eta_y}
\eta\colon \mT\times M_X\to M_Y
\end{equation}
by $\eta(t,x)= \upsilon_{t',x'}(t,x)$ at the point \,$(t,x)=(t',x')$ for each point $(t',x')\in \mT\times M_X$. Then the function $\eta\in C(\mT\times M_X,M_Y)$ satisfies locally a Lipschitz condition with respect to $x$ on $\mT\times M_X$ and is a unique solution of equation \eqref{F} with respect to $y$ for each $(t,x)\in \mT\times M_X$.  The uniqueness of the solution follows from condition~\ref{Sogl_set}.
Indeed, let us assume that there exists another function $\sigma(t,x)$ which is defined in the same way as $\eta$ and, accordingly, has the same properties as $\eta$, but differs from it at some point $(t',x')\in \mT\times M_X$. Then $\sigma(t',x')=\eta(t',x')=y'$ due to condition~\ref{Sogl_set}, which contradicts the assumption. This holds for each $(t',x')\in \mT\times M_X$, and therefore $\eta(t,x)\equiv\sigma(t,x)$.

In general, the method for constructing the nonlocal implicit function \eqref{eta_y} and proving its uniqueness is similar to that used in \cite{RF1,Fil.MPhAG}, where the global implicit function is constructed in finite-dimensional spaces and require differentiability with respect to the phase variable instead of local Lipschitz continuity.
\end{proof}

 \begin{theorem}[existence and uniqueness of a nonlocal implicit function]\label{Th-Nonloc_ImplFunc-2}
Let $F\in C(\mT\times D_X\times D_Y,Z)$ and let there exist an open set $M_X\subseteq D_X$ and a set $M_Y\subseteq D_Y$ such that condition \ref{Sogl_F} of Theorem~\ref{Th-Nonloc_ImplFunc} and the following condition hold:
\begin{enumerate}[label={\upshape\arabic*.}, ref={\upshape\arabic*}, itemsep=4pt,parsep=0pt,topsep=4pt,leftmargin=0.6cm]
\addtocounter{enumi}{1}
\item\label{InvReg_F}
  A function $F(t,x,y)$  has continuous (strong) partial derivatives with respect to $x,\, y$ on $\mT\times D_X\times D_Y$.  \\
  For any fixed $(t',x',y')\in \mT\times M_X\times M_Y$ satisfying \eqref{F}, the operator
  %\begin{equation}\label{funcPhiInvReg}
  $W_{t',x',y'}:=\partial_{y}F(t',x',y')\in \spL(Y,Z)$
   %\end{equation}
  has the inverse $W_{t',x',y'}^{-1}\in \spL(Z,Y)$.
\end{enumerate}
Then equation \eqref{F} has a unique solution $y=\eta(t,x)$, i.e., $F(t,x,\eta(t,x))=0$, for each $(t,x)\in \mT\times M_X$ and the function $\eta\in C(\mT\times M_X,M_Y)$ is continuously differentiable with respect to $x$ on $\mT\times M_X$.
 \end{theorem}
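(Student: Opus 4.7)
The plan is to derive this theorem as a corollary of Theorem~\ref{Th-Nonloc_ImplFunc} by checking that condition~\ref{InvReg_F} implies the Lipschitz/contractivity hypothesis~\ref{Inv-Lipsch_F} of Theorem~\ref{Th-Nonloc_ImplFunc}, and then to upgrade the regularity of the resulting implicit function from locally Lipschitz in $x$ to continuously differentiable in $x$ by invoking the classical (local) implicit function theorem in Banach spaces at each point. Condition~\ref{Sogl_F} is shared between the two statements and requires no further work.

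First I would establish local Lipschitz continuity of $F$ with respect to $(x, y)$ on $\mT \times D_X \times D_Y$. Around any fixed $(t', x', y')$ the continuous partial derivatives $\partial_x F$ and $\partial_y F$ are bounded on a small closed ball neighborhood, so the Banach-space mean-value inequality yields a uniform Lipschitz constant on the corresponding open neighborhood. Next I would set $W_{t', x', y'} := \partial_y F(t', x', y') \in \spL(Y, Z)$, which is boundedly invertible by hypothesis, and on balls $U_\delta(t', x') \subset \mT \times D_X$ and $U_\varepsilon(y') \subset D_Y$ apply the mean-value inequality to the map $y \mapsto F(t, x, y) - W_{t', x', y'} y$ on the segment joining $y^2$ to $y^1$:
\begin{equation*}
\|F(t, x, y^1) - F(t, x, y^2) - W_{t', x', y'}[y^1 - y^2]\| \le \sup_{\xi \in [y^2, y^1]} \|\partial_y F(t, x, \xi) - W_{t', x', y'}\|\, \|y^1 - y^2\|.
\end{equation*}
Defining $k(\delta, \varepsilon)$ to be the supremum of $\|\partial_y F(t, x, \xi) - W_{t', x', y'}\|$ over $(t, x) \in U_\delta(t', x')$ and $\xi \in U_\varepsilon(y')$, continuity of $\partial_y F$ forces $k(\delta, \varepsilon) \to 0$ as $\delta, \varepsilon \to 0$, which for sufficiently small $\delta, \varepsilon$ is strictly less than $\|W_{t', x', y'}^{-1}\|^{-1} > 0$. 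This verifies condition~\ref{Inv-Lipsch_F}, and Theorem~\ref{Th-Nonloc_ImplFunc} then produces the unique $\eta \in C(\mT \times M_X, M_Y)$ solving \eqref{F} and locally Lipschitz in $x$.

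To finish, at each $(t', x') \in \mT \times M_X$ with $y' := \eta(t', x')$ the classical Banach-space implicit function theorem (cf.~\cite{Trenogin,Kantorovich-A}) applies since its hypotheses are exactly condition~\ref{InvReg_F}; it yields a local $C^1$-in-$x$ implicit function passing through $(t', x', y')$ with derivative $-[\partial_y F(t, x, \eta(t, x))]^{-1} \partial_x F(t, x, \eta(t, x))$. By the uniqueness already secured by Theorem~\ref{Th-Nonloc_ImplFunc}, this local solution coincides with $\eta$ on a neighborhood of $(t', x')$, so $\eta$ is continuously differentiable with respect to $x$ on all of $\mT \times M_X$ with the stated derivative formula. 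I do not anticipate any serious obstacle: the only care required is the nesting of neighborhoods so that the segment $[y^2, y^1]$ lies in a convex ball on which $\partial_y F$ is defined and close to $W_{t', x', y'}$; the heavy lifting of gluing local implicit functions into a single globally defined continuous one has already been done in Theorem~\ref{Th-Nonloc_ImplFunc}.
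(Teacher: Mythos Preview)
Your argument is correct. The route you take is a minor variation on the paper's own proof: the paper does not detour through Theorem~\ref{Th-Nonloc_ImplFunc} at all, but instead applies the classical Banach-space implicit function theorem (with $C^1$ regularity) directly at each point $(t',x')\in\mT\times M_X$ and then performs the same gluing construction used in the proof of Theorem~\ref{Th-Nonloc_ImplFunc}. You, by contrast, first verify that condition~\ref{InvReg_F} implies condition~\ref{Inv-Lipsch_F}, invoke Theorem~\ref{Th-Nonloc_ImplFunc} to obtain the global $\eta$, and only then upgrade the regularity pointwise via the classical implicit function theorem. The paper explicitly records your reduction step (that \ref{InvReg_F} $\Rightarrow$ \ref{Inv-Lipsch_F}) as a separate remark (Remark~\ref{Rem__Nonloc_ImplFunc-2}), so your approach is in fact anticipated there as an alternative; it simply costs one extra pass through Theorem~\ref{Th-Nonloc_ImplFunc}, whereas the paper's direct use of the $C^1$ implicit function theorem gets existence, uniqueness, and differentiability of the local pieces in one stroke before gluing.
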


\begin{proof}
The ideas for the proof of the given theorem (in particular, the method for constructing a nonlocal implicit function has been already mentioned in the proof of Theorem~\ref{Th-Nonloc_ImplFunc}) are similar to those used in the papers \cite{RF1,Fil.MPhAG} where regular DAEs in finite-dimensional spaces have been considered. Therefore, we will give a brief proof of the present theorem.

Choose an arbitrary fixed point $(t',x')\in \mT\times M_X$. Then, due to conditions \ref{Sogl_F} and \ref{InvReg_F}, there exists a unique ${y'\in M_Y}$ such that $(t',x',y')$ satisfies \eqref{F} and the operator $\partial_{y}F(t',x',y')$ has the bounded inverse $[\partial_{y}F(t',x',y')]^{-1}\in \spL(Z,Y)$. Using the theorems on the existence and differentiability of an implicit function \cite[Theorems~25,~28]{Schwartz1}, we obtain that there exist open neighborhoods $U_{\delta_1}(t')$, $U_{\delta_2}(x')$ and $U_\varepsilon(y')$ and a unique function $\upsilon_{t',x'}\in C\big(U_{\delta_1}(t')\times U_{\delta_2}(x'),U_\varepsilon(y')\big)$ such that $\upsilon_{t',x'}(t',x')=y'$, $\upsilon_{t',x'}(t,x)$ is a solution of equation \eqref{F} with respect to $y$, and it is continuously differentiable with respect to $x$ on $U_{\delta_1}(t')\times U_{\delta_2}(x')$.
Recall that $U_{\delta_1}(t')$ can be a semi-open interval in $\mT$. Generally, the implicit function theorems \cite{Schwartz1} assume that the set of variables is open, and therefore, to prove the existence of an implicit function when the interval $U_{\delta_1}(t')$ is semi-open, the fixed point theorems \cite[Theorems 46, $46_2$]{Schwartz1} as well as the proofs of the implicit function theorems \cite[Theorems~25,~28]{Schwartz1} (which in turn use the same fixed point theorems) are used.

Now, in the same way as in the proof of Theorem \ref{Th-Nonloc_ImplFunc}, we define the function $\eta\colon \mT\times M_X\to M_Y$ by $\eta(t,x)= \upsilon_{t',x'}(t,x)$ at the point \,$(t,x)=(t',x')$ for each point $(t',x')\in \mT\times M_X$, and obtain that $\eta(t,x)$ is continuous in $(t,x)$, continuously differentiable in $x$ and is a unique solution of equation \eqref{F} with respect to $y$ for any $(t,x)\in \mT\times M_X$.
\end{proof}

\begin{corollary}\label{Cor_Nonloc_ImplFunc-2}
Theorem~\ref{Th-Nonloc_ImplFunc} remains valid if condition \ref{Inv-Lipsch_F} of this theorem is replaced by condition \ref{InvReg_F} of Theorem~\ref{Th-Nonloc_ImplFunc-2}.
\end{corollary}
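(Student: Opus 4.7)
The plan is to reduce the corollary to Theorem~\ref{Th-Nonloc_ImplFunc} by showing that condition~\ref{InvReg_F} of Theorem~\ref{Th-Nonloc_ImplFunc-2} implies condition~\ref{Inv-Lipsch_F} of Theorem~\ref{Th-Nonloc_ImplFunc}, with the natural choice $W_{t',x',y'}:=\partial_y F(t',x',y')$. By hypothesis this operator lies in $\spL(Y,Z)$ and has bounded inverse, so only two items remain to check: local Lipschitz continuity of $F$ in $(x,y)$ on $\mT\times D_X\times D_Y$, and the estimate~\eqref{Contractive_F} with the required limiting inequality on $k(\delta,\varepsilon)$.

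For the first item I would exploit that continuous (strong) partial derivatives $\partial_x F$ and $\partial_y F$ are automatically locally bounded on $\mT\times D_X\times D_Y$. Fixing $(t',x',y')$ and choosing a convex product neighborhood $\widehat{U}(t')\times\widetilde{U}(x',y')$ on which $\|\partial_x F\|$ and $\|\partial_y F\|$ are bounded by a common constant, the standard mean-value inequality for Fr\'echet-differentiable maps (applied to $(x,y)\mapsto F(t,x,y)$ along the segment joining $(x^1,y^1)$ and $(x^2,y^2)$ with $t$ held fixed) yields $\|F(t,x^1,y^1)-F(t,x^2,y^2)\|\le L\|(x^1,y^1)-(x^2,y^2)\|$, which is exactly the required local Lipschitz property.

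For the contractive estimate I would apply the fundamental theorem of calculus in $Z$ to the curve $s\mapsto F(t,x,y^2+s(y^1-y^2))$ on a convex neighborhood $U_\varepsilon(y')\subset D_Y$, obtaining
\[
F(t,x,y^1)-F(t,x,y^2)=\int_0^1\partial_y F\bigl(t,x,y^2+s(y^1-y^2)\bigr)[y^1-y^2]\,ds.
\]
Subtracting $W_{t',x',y'}[y^1-y^2]$ and pulling the norm under the integral produces~\eqref{Contractive_F} with
\[
k(\delta,\varepsilon):=\sup\bigl\{\|\partial_y F(t,x,y)-\partial_y F(t',x',y')\|:(t,x)\in U_\delta(t',x'),\ y\in U_\varepsilon(y')\bigr\}.
\]
Continuity of $\partial_y F$ at $(t',x',y')$ forces $k(\delta,\varepsilon)\to 0$ as $\delta,\varepsilon\to 0$, which is strictly less than the positive number $\|W_{t',x',y'}^{-1}\|^{-1}$. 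Hence condition~\ref{Inv-Lipsch_F} is met, and Theorem~\ref{Th-Nonloc_ImplFunc} immediately delivers the unique nonlocal implicit function $y=\eta(t,x)$ with the asserted continuity and local Lipschitz behavior in $x$. There is no substantive obstacle here; the only mild technicality is that $U_\delta(t')$ may be semi-open in $t$, but since $t$ is held fixed both in the segment argument and in the $y$-integration, this causes no difficulty.
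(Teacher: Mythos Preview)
Your argument is correct. However, the paper's main proof of the corollary takes a shorter route: it simply invokes Theorem~\ref{Th-Nonloc_ImplFunc-2} to obtain the implicit function $\eta$ that is continuously differentiable in $x$, and then observes that continuous differentiability in $x$ implies local Lipschitz continuity in $x$, which is all that Theorem~\ref{Th-Nonloc_ImplFunc} asserts about $\eta$. Your approach instead verifies that condition~\ref{InvReg_F} implies condition~\ref{Inv-Lipsch_F} and then applies Theorem~\ref{Th-Nonloc_ImplFunc} directly; this is precisely the alternative proof that the paper itself outlines in Remark~\ref{Rem__Nonloc_ImplFunc-2}, with the same integral mean-value estimate and the same definition of $k(\delta,\varepsilon)$. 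The paper's route is more economical since Theorem~\ref{Th-Nonloc_ImplFunc-2} is already available, while your route has the virtue of being self-contained relative to Theorem~\ref{Th-Nonloc_ImplFunc} alone.
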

\begin{proof}
The corollary follows directly from Theorem~\ref{Th-Nonloc_ImplFunc-2}, since the continuous differentiability of $\eta$ in $x$ implies its local Lipschitz continuity with respect to $x$.
\end{proof}

\begin{remark}\label{Rem__Nonloc_ImplFunc-2}
It is easy to verify that if condition \ref{InvReg_F} of Theorem~\ref{Th-Nonloc_ImplFunc-2} holds, then  condition \ref{Inv-Lipsch_F} of Theorem~\ref{Th-Nonloc_ImplFunc} holds as well. This gives another way of proving Corollary \ref{Cor_Nonloc_ImplFunc-2}.

Let us show this. Since $F(t,x,y)$ has a continuous partial derivatives with respect to $(x,y)$ on $\mT\times D_X\times D_Y$, then it is locally Lipschitz continuous with respect to $(x,y)$. Choose an arbitrary fixed point $(t',x',y')\in \mT\times M_X\times M_Y$ at which equation \eqref{F} holds, and take the operator $W_{t',x',y'}=\partial_{y}F(t',x',y')$ as the operator $W_{t',x',y'}$ appearing in condition \ref{Inv-Lipsch_F} of Theorem~\ref{Th-Nonloc_ImplFunc}. Then, due to condition~\ref{InvReg_F} of Theorem~\ref{Th-Nonloc_ImplFunc-2}, there exist open neighborhoods $U_\delta(t',x')\subset \mT\times D_X$ and $U_\varepsilon(y')\subset D_Y$ such that for each $(t,x)\in \overline{U_\delta(t',x')}$, $y^1,\,y^2\in \overline{U_\varepsilon(y')}$ the following holds:
$\big\|F(t,x,y^1)- F(t,x,y^2)- W_{t',x',y'} \big[y^1-y^2\big]\big\| \le\int\limits_0^1\big\|\partial_{y}F\big(t,x,y^2+\theta (y^1-y^2)\big) - W_{t',x',y'}\big\| d\theta\, \|y^1-y^2\|\le k(\delta,\varepsilon)\|y^1-y^2\|$,
where  $k(\delta,\varepsilon)=\sup\limits_{(t,x)\in \overline{U_\delta(t',x')},\, y''\in \overline{U_\varepsilon(y')}} \big\|\partial_{y}F(t,x,y'') -W_{t',x',y'}\big\|\to 0$
as $\delta,\varepsilon\to 0$ since $\partial_{y}F$ is continuous at $(t',x',y')$ (cf. \cite[p.~422, the proof of the theorem]{Trenogin}). Hence, taking into account that $W_{t',x',y'}$ has a bounded inverse, condition~\ref{Inv-Lipsch_F} of Theorem~\ref{Th-Nonloc_ImplFunc} holds.
\end{remark}

\begin{theorem}[existence, uniqueness and differentiability of a nonlocal implicit function]\label{Th_Nonloc_ImplFunc-3}
Let conditions of Theorem~\ref{Th-Nonloc_ImplFunc-2} hold and, in addition, let $F\in C^n(\mT\times D_X\times D_Y,Z)$ and $M_Y$ be open. Then the solution $y=\eta(t,x)$, defined in Theorem~\ref{Th-Nonloc_ImplFunc-2}, belongs to $C^n(\mT\times M_X,M_Y)$.
\end{theorem}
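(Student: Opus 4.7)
The plan is to piece $\eta$ together from local $C^n$ implicit functions, just as in the proof of Theorem~\ref{Th-Nonloc_ImplFunc-2}, and invoke a $C^n$ version of the classical implicit function theorem. Since being $C^n$ is a local property, it suffices to show that $\eta$ coincides, on an open neighborhood of each point $(t',x')\in \mT\times M_X$, with a $C^n$ map into $M_Y$.

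First, I would fix an arbitrary $(t',x')\in \mT\times M_X$. By condition~\ref{Sogl_F} there is a unique $y'\in M_Y$ with $F(t',x',y')=0$, and by condition~\ref{InvReg_F} the operator $\partial_y F(t',x',y')\in \spL(Y,Z)$ has a bounded inverse. Then I would apply the classical Banach-space implicit function theorem for $C^n$ mappings (see, e.g., \cite[Theorem~28]{Schwartz1} and its higher-order variant, which is obtained by iterating the formula $\partial_{(t,x)}\upsilon(t,x)=-[\partial_y F(t,x,\upsilon(t,x))]^{-1}\partial_{(t,x)}F(t,x,\upsilon(t,x))$ and using that inversion in $\spL(Y,Z)$ is $C^\infty$ on the set of invertible operators). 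This produces open neighborhoods $U_{\delta_1}(t')\times U_{\delta_2}(x')$ and $U_\varepsilon(y')$, together with a unique $C^n$ map $\upsilon_{t',x'}\colon U_{\delta_1}(t')\times U_{\delta_2}(x')\to U_\varepsilon(y')$ satisfying $F(t,x,\upsilon_{t',x'}(t,x))=0$ and $\upsilon_{t',x'}(t',x')=y'$. Since $M_Y$ is open and $y'\in M_Y$, I can shrink $\varepsilon$ so that $U_\varepsilon(y')\subset M_Y$.

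Next, by the uniqueness statement already established in Theorem~\ref{Th-Nonloc_ImplFunc-2}, the nonlocal implicit function $\eta$ must agree with $\upsilon_{t',x'}$ on $U_{\delta_1}(t')\times U_{\delta_2}(x')$: indeed, for each $(t,x)$ in this neighborhood $\upsilon_{t',x'}(t,x)\in M_Y$ is a solution of \eqref{F} with respect to $y$, and condition~\ref{Sogl_F} guarantees uniqueness in $M_Y$, so it must coincide with $\eta(t,x)$. Hence $\eta$ is $C^n$ on $U_{\delta_1}(t')\times U_{\delta_2}(x')$; since $(t',x')$ was arbitrary, $\eta\in C^n(\mT\times M_X,M_Y)$.

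The only subtlety, and the point worth flagging, is that $\mT$ may be a semi-open interval or have endpoints of the types declared in the paper's convention, so $U_{\delta_1}(t')$ might be semi-open. This is already addressed in the proof of Theorem~\ref{Th-Nonloc_ImplFunc-2}: at such a boundary point one runs the contraction/fixed-point argument on a one-sided closed interval and reads off the higher regularity from the recursive formula for $\partial_{(t,x)}^k\upsilon$, where each right-hand side is a composition of $C^{n-k}$ mappings. Apart from this bookkeeping, no new analytic difficulty arises.
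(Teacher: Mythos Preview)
Your proposal is correct and follows essentially the same route as the paper: the paper's proof is a one-liner stating that the argument of Theorem~\ref{Th-Nonloc_ImplFunc-2} goes through verbatim, with the additional use of the theorem on higher derivatives of an implicit function \cite[Theorem~31]{Schwartz1}. You have simply written out what this means in detail, including the use of the openness of $M_Y$ and the semi-open interval bookkeeping already discussed in the proof of Theorem~\ref{Th-Nonloc_ImplFunc-2}.
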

\begin{proof}
The proof is similar to the proof of Theorem~\ref{Th-Nonloc_ImplFunc-2}, except that the theorem on higher derivatives of an implicit function \cite[Theorem~31]{Schwartz1} is also used here.
\end{proof}

Another nonlocal (global) implicit function theorems have been presented, e.g., in  \cite{AZM,Rheinboldt69,Clarke} (see also references therein). In \cite{AZM}, a nonlocal implicit function theorem for a closed mapping with a parameter from one Asplund space to another has been obtained in terms of the regular coderivative of the mapping.
In \cite{Rheinboldt69}, nonlocal implicit function theorems for mappings in Banach spaces has been obtained by using the concept of ``continuation property'' and local mapping relations. %%``path-lifting property'' or
For mappings in real finite-dimensional spaces, a nonlocal implicit function theorem is presented in \cite{Clarke} with the use of the maximal rank condition.

As mentioned above, in \cite{RF1,Fil.MPhAG,Fil.Sing-GN} conditions of the existence and uniqueness of global implicit functions for mappings in finite-dimensional spaces have been obtained, but they were not presented in the form of separate theorems since the aims of these papers were to study DAEs and the results were formulated in the form of theorems immediately for the DAEs.

 \section{Global solvability, Lagrange stability and instability of the ADAEs}\label{Sec-Main}

 \subsection{First approach}\label{Approach_1}

\emph{In this subsection, we consider the case when the component $x_{20}=P_{20}x$ can be defined as an implicit function from equation \eqref{AE} given below} (i.e., when the projection $Q_{2*}\big[f(t,x)-B(x_{2\Sigma}+x_{20})\big]$ depends on $x_{20}$; for clarity, notice that $Q_{2*}Bx_{20}=Bx_{20}^{(1)}$ where $x_{20}^{(1)}=P_{20}^{(1)}x$). \emph{In the case when the pencil $P(\la)$ has index 1, this always holds,} since $D_{20}=D_2$, $x_{20}=x_2=P_2x$, $x_{2\Sigma}=0$, $Q_{2*}=Q_2$  ($P_{2\Sigma}=0$, $Q_{2\Sigma}=0$) and $B_2=(Q_2B)\big|_{D_2}$ has the inverse  $B_2^{-1}\in \spL(Y_2,D_2)$.  %%**$Q_2B=BP_2$**

Recall that the representation of $x$ in the form ${x=x_1+x_{2\Sigma}+x_{20}}$ (see \eqref{xrr-ind}) is uniquely determined for each $x\in X$.

\begin{remark}\label{Rem_A_D}
In what follows, we assume that $D_B\supseteq D_A$ (then $D=D_A$), but this condition is not necessary (we make this assumption to simplify the presentation of further results). If we do not require that $D_B\supseteq D_A$, then it is necessary to require that $\overline{D}=X$ and to replace $A$ by the restricted operator $A\big|_D\colon D\to Y$ and, accordingly, the semi-inverse operator $A^{(-1)}$ by the semi-inverse $\big(A\big|_D\big)^{(-1)}$ below (a semi-inverse operator \cite{Faddeev} is called an algebraic generalized inverse and, in Banach spaces, a topological generalized inverse in \cite{Nashed76}). Then the results of Section \ref{Sec-Main} remain valid.
\end{remark}

The system \eqref{ADAE_DE1+2}, \eqref{ADAE_AE} can be rewritten as
 \begin{align}
&\dot{x}_1+\dot{x}_{2\Sigma}=A^{(-1)}\big[f(t,x_1+x_{2\Sigma}+x_{20})- B(x_1+x_{2\Sigma}+x_{20})\big], \label{DE1+2} \\
&0=Q_{2*}\big[f(t,x_1+x_{2\Sigma}+x_{20})-B(x_{2\Sigma}+x_{20})\big], \label{AE}
 \end{align}
or
 \begin{align}
& \frac{d}{dt}[x_1+x_{2\Sigma}]=\Pi(t,x), \quad \Pi(t,x):=\widetilde{A}^{-1}(Q_1+Q_{2\Sigma})\big[f(t,x)-Bx\big],  \label{ADAE_DE_Pi} \\
& F_{2*}(t,x_1,x_{2\Sigma},x_{20})=0, \quad
  F_{2*}(t,x_1,x_{2\Sigma},x_{20}):=Q_{2*}f(t,x)-Q_{2*}B(x_{2\Sigma}+x_{20}),  \label{ADAE_AE_F2*}
 \end{align}
where the operator $\widetilde{A}^{-1}\in \spL(Y,X)$ is the inverse of
\begin{equation}\label{widetilde_A}
{\widetilde{A}=A+\sum\limits_{i=1}^n \langle \cdot,B^*q_i^1\rangle B\varphi_i^{m_i}\colon D\to Y},
\end{equation}
and $A^{(-1)}\in \spL(Y,X)$ is the semi-inverse operator to $A$, which can be obtained by ${A^{(-1)}=\widetilde{A}^{-1}(Q_1+Q_{2\Sigma})}$ (or by using the relations mentioned below).
The existence and continuity of $\widetilde{A}^{-1}$ is proved in a similar way as the Schmidt lemma (see \cite[p.~232]{Trenogin} or \cite{Vainberg-Trenogin}). It is readily verified that the operator $\widetilde{A}^{-1}=A^{(-1)}+\sum\limits_{i=1}^n \langle \cdot,q_i^{m_i}\rangle \varphi_i^1$ is the inverse of $\widetilde{A}$. Notice that $Y_1=\widetilde{A}D_1=AD_1$,\;  $Y_{2\Sigma}=\widetilde{A}D_{2\Sigma}=AD_{2\Sigma}$,\; $\widetilde{A}^{-1}Ax= (P_1+P_{2\Sigma})x$,  $x\in D$, and $A\widetilde{A}^{-1}y= (Q_1+Q_{2\Sigma})y$, $y\in Y$.
In addition, $Q_1\widetilde{A}x=Q_1Ax=AP_1x=\widetilde{A}P_1x$ and $Q_{2\Sigma}\widetilde{A}x=Q_{2\Sigma}Ax=AP_{2\Sigma}x=\widetilde{A}P_{2\Sigma}x$, $x\in D$.

The semi-inverse operator $A^{(-1)}$ is defined by the relations
 \begin{equation}\label{Semi-inverse}
\begin{split}
& A^{(-1)}A=(P_1+P_{2\Sigma})\big|_{D_A},\quad
AA^{(-1)}=Q_1+Q_{2\Sigma},  \\
& A^{(-1)}=(P_1+P_{2\Sigma}) A^{(-1)}\quad (\text{or } A^{(-1)}= A^{(-1)}(Q_1+Q_{2\Sigma})\,).
\end{split}
 \end{equation}
Note that from these relations the equalities $AA^{(-1)}A=A$ and $A^{(-1)}AA^{(-1)}=A^{(-1)}$ follows, and the converse is also true with respect to the direct decompositions \eqref{XDYrr}, \eqref{DY2rr} and the corresponding projectors.

 \smallskip
\emph{For the regular pencil $P(\la)$ of index 1, the system  \eqref{ADAE_DE_Pi}, \eqref{ADAE_AE_F2*} takes the form}
 \begin{align}
&\dot{x}_1=\widetilde{A}^{-1}Q_1\big[f(t,x_1+x_2)- Bx_1\big], \label{DE_ind-1} \\
&0=\widetilde{A}^{-1}Q_2f(t,x_1+x_2)-x_2, \label{AE_ind-1}
 \end{align}
where $\widetilde{A}=A+BP_2=Q_1A+Q_2B$ coincide with the operator  $G=Q_1A+Q_2B$ from \cite[Subsection~3.3]{Vlasenko1} which has the inverse $G^{-1}=\widetilde{A}^{-1}\in \spL(Y,X)$.

 \smallskip
We will consider the initial value problem (IVP) for equation \eqref{ADAE}, and, accordingly, for the system \eqref{ADAE_DE_Pi}, \eqref{ADAE_AE_F2*}, with the initial condition
 \begin{equation}\label{ini}
x(t_0)=x_0.
 \end{equation}

The ``algebraic part'' \eqref{AE} of the ADAE generates the manifold
 \begin{equation}\label{L_0reg}
L_0=\{(t,x)\in \R_+\times D \mid Q_{2*}[f(t,x)-Bx]=0\} = \{(t,x)\in \R_+\times D \mid \text{$(t,x)$ satisfies \eqref{AE} or \eqref{ADAE_AE_F2*}}\}.
 \end{equation}
It is clear that the graph of a solution (i.e., the set of points $(t,x(t))$ where $t$ from the domain of definition of the solution $x(t)$) of the IVP \eqref{ADAE}, \eqref{ini} must lie in the manifold \eqref{L_0reg}.

The initial condition for the system \eqref{ADAE_DE_Pi}, \eqref{ADAE_AE_F2*} (accordingly, for the DAE \eqref{ADAE}) can also be considered in the form
  \begin{equation*}
(P_1+P_{2\Sigma})x(t_0)=x_1(t_0)+x_{2\Sigma}(t_0)=\omega_0
 \end{equation*}
or $Ax(t_0)=w_0$ (then $(P_1+P_{2\Sigma})x(t_0)=A^{(-1)}w_0= \widetilde{A}^{-1}w_0$\,).
However, the point $(t_0,x_0)$, where $x_0=\omega_0+P_{20}x(t_0)$ if the initial condition has the form $(P_1+P_{2\Sigma})x(t_0)=\omega_0$, must belong to $L_0$ regardless of its specification.

\begin{remark}\label{Rem_Norms}
Note that the spaces $X_1\dotpl X_{2\Sigma}\dotpl X_{20}$ and $X_1\times X_{2\Sigma}\times X_{20}$ are isomorphic (there exists a one-to-one correspondence between an element $x=x_1+x_{2\Sigma}+x_{20}$ and $x=(x_1,x_{2\Sigma},x_{20})$) and we identify the representations of $x\in X$ in the form  $x=x_1+x_{2\Sigma}+x_{20}$ (see \eqref{xrr-ind}) and the form $x=(x_1,x_{2\Sigma},x_{20})$. Below, when proving the theorems, we use the norm $\|\cdot\|$ in $X=X_1\dotpl X_{2\Sigma}\dotpl X_{20}$, defined by
$\|x\|=\|x_1\|+\|x_{2\Sigma}\|+ \|x_{20}\|$ where $\|x_1\|=\|x_1\|_{X_1}$, $\|x_{2\Sigma}\|=\|x_{2\Sigma}\|_{X_{2\Sigma}}$ and $\|x_{20}\|=\|x_{20}\|_{X_{20}}$ denote the norms of the components $x_1$, $x_{2\Sigma}$ and $x_{20}$ in the subspaces $X_1$, $X_{2\Sigma}$ and $X_{20}$, respectively. The norm $\|x\|$ of $x\in X_1\times X_{2\Sigma}\times X_{20}$ is defined in the same way and it coincides with the above-defined norm of the corresponding element $x\in X_1\dotpl X_{2\Sigma}\dotpl X_{20}$.
\end{remark}

  %% [Existence and uniqueness of a noncontinuable solution]
 \begin{theorem}[\textbf{Solvability}]\label{Th_Exist-Lip_set}
Let $f\in C(\R_+\times D,Y)$, $D_B\supseteq D_A=D$, $\dim\ker A=n<\infty$, and let $\la A+B$ be a regular pencil of index $\nu$. Assume that there exists an open set $\widetilde{M}\subseteq W:=Y_1\dotpl Y_{2\Sigma}$ and a set $M_{20}\subseteq D_{20}$ such that the following conditions where $M_{1,2\Sigma}=\widetilde{A}^{-1}\widetilde{M}\subseteq D_1\dotpl  D_{2\Sigma}$ hold:
 \begin{enumerate}[label={\upshape\arabic*.}, ref={\upshape\arabic*}, itemsep=4pt,parsep=0pt,topsep=4pt,leftmargin=0.6cm]
\item\label{Sogl_set}
 For each  ${t\in \R_+}$, ${x_1+x_{2\Sigma}\in M_{1,2\Sigma}}$ there exists a unique ${x_{20}\in M_{20}}$ such that ${(t,x_1+x_{2\Sigma}+x_{20})\in L_0}$.

\item\label{Inv-Lipsch_set}
 A function $f(t,x)$ satisfies locally a Lipschitz condition with respect to $x$ on $\R_+\times D$.  \\
 For any fixed $t'$, $x'=x'_1+x'_{2\Sigma}+x'_{20}$ such that $x'_1+x'_{2\Sigma}\in M_{1,2\Sigma}$, $x'_{20}\in M_{20}$ and ${(t',x')\in L_0}$, there exist open neighborhoods
 $U_\delta(t',x'_1,x'_{2\Sigma})= U_{\delta_1}(t')\times U_{\delta_2}(x'_1)\times U_{\delta_3}(x'_{2\Sigma})\subset \R_+\times D_1\times D_{2\Sigma}$ and $U_\varepsilon(x'_{20})\subset D_{20}$ and an operator $W_{t',x'}\in \spL(D_{20},Y_{2*})$ having the inverse $W_{t',x'}^{-1}\in \spL(Y_{2*},D_{20})$ such that for each $(t,x_1,x_{2\Sigma})\in U_\delta(t',x'_1,x'_{2\Sigma})$ and each $x_{20}^1,\, x_{20}^2\in U_\varepsilon(x'_{20})$ the mapping $F_{2*}$ defined in \eqref{ADAE_AE_F2*} satisfies the inequality
  \begin{equation}\label{ContractiveMap}
 \big\|F_{2*}(t,x_1,x_{2\Sigma},x_{20}^1)- F_{2*}(t,x_1,x_{2\Sigma},x_{20}^2)- W_{t',x'}\big[x_{20}^1-x_{20}^2\big]\big\|\le q(\delta,\varepsilon)\big\|x_{20}^1-x_{20}^2\big\|,
   \end{equation}
 where $q(\delta,\varepsilon)$ is such that $\lim\limits_{\delta,\,\varepsilon\to 0} q(\delta,\varepsilon)< \|W_{t',x'}^{-1}\|^{-1}$  \,\textup{(}the numbers $\delta, \varepsilon>0$ depend on the choice of $t',x'$\textup{)}.
 \end{enumerate}
Then for each initial point $(t_0,x_0)\in L_0$, for which $(P_1+P_{2\Sigma})x_0\in M_{1,2\Sigma}$ and $P_{20}x_0\in M_{20}$, there exists a $t_{max}\le \infty$ such that the IVP \eqref{ADAE}, \eqref{ini} has a unique solution $x(t)$ in $M_{1,2\Sigma}\dotpl  M_{20}$ on the maximal interval of existence $[t_0,t_{max})$.
 \end{theorem}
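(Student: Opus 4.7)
The plan is to reduce the ADAE to an explicit evolution equation in the Banach subspace $X_1 \dotpl X_{2\Sigma}$ by eliminating the component $x_{20}$ via the implicit function theorem, and then invoke standard local existence/uniqueness for ODEs with locally Lipschitz right-hand side in a Banach space, followed by the usual continuation argument.

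First, I would apply Theorem \ref{Th-Nonloc_ImplFunc} to the algebraic equation $F_{2*}(t,x_1,x_{2\Sigma},x_{20})=0$ (see \eqref{ADAE_AE_F2*}), with the identifications $\mT=\R_+$, $D_X=D_1\dotpl D_{2\Sigma}$, $D_Y=D_{20}$, $Z=Y_{2*}$, $M_X=M_{1,2\Sigma}$, $M_Y=M_{20}$. Conditions \ref{Sogl_set} and \ref{Inv-Lipsch_set} of the present theorem are precisely conditions \ref{Sogl_F} and \ref{Inv-Lipsch_F} of Theorem \ref{Th-Nonloc_ImplFunc} (the local Lipschitz continuity of $F_{2*}$ in $(x_1,x_{2\Sigma},x_{20})$ follows from that of $f$ in $x$ together with boundedness of the projectors $Q_{2*}$, $P_1$, $P_{2\Sigma}$, $P_{20}$ and of $B$ on $D_2$). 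This yields a function $\eta\in C(\R_+\times M_{1,2\Sigma},M_{20})$, locally Lipschitz in its second argument, such that $x_{20}=\eta(t,x_1+x_{2\Sigma})$ is the unique element of $M_{20}$ satisfying the algebraic equation for every $(t,x_1+x_{2\Sigma})\in\R_+\times M_{1,2\Sigma}$.

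Substituting this into the differential part \eqref{ADAE_DE_Pi}, I obtain the reduced explicit ODE
\begin{equation*}
\frac{d}{dt}[x_1+x_{2\Sigma}]=\widetilde{\Pi}(t,x_1+x_{2\Sigma}),\qquad \widetilde{\Pi}(t,u):=\Pi\bigl(t,u+\eta(t,u)\bigr),
\end{equation*}
in the Banach space $X_1\dotpl X_{2\Sigma}$. The right-hand side is continuous on $\R_+\times M_{1,2\Sigma}$ and locally Lipschitz in $u$, because $\widetilde{A}^{-1}$, $Q_1+Q_{2\Sigma}$ and $B|_{D}$ are bounded (since $D_B\supseteq D_A$), $f$ is locally Lipschitz in $x$, and $\eta$ is locally Lipschitz in $u$. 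For the prescribed initial data $(t_0,x_0)\in L_0$ with $u_0:=(P_1+P_{2\Sigma})x_0\in M_{1,2\Sigma}$ and $P_{20}x_0\in M_{20}$, the uniqueness in Theorem \ref{Th-Nonloc_ImplFunc} forces $P_{20}x_0=\eta(t_0,u_0)$, so the reduced ODE carries consistent initial data $u(t_0)=u_0$.

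Next I would apply the classical Picard--Lindel\"{o}f theorem for ODEs with locally Lipschitz right-hand side in a Banach space to get a unique local solution $u\in C^1([t_0,t_0+\tau),M_{1,2\Sigma})$, set $x_1(t)=P_1u(t)$, $x_{2\Sigma}(t)=P_{2\Sigma}u(t)$, and define $x_{20}(t):=\eta(t,u(t))\in M_{20}$. A standard continuation argument (extending the local solution as long as the graph does not leave $M_{1,2\Sigma}\dotpl M_{20}$ and $\|u(t)\|$ stays finite) produces the maximal interval $[t_0,t_{max})$, $t_{max}\le\infty$. Finally, I would verify that $x(t):=x_1(t)+x_{2\Sigma}(t)+x_{20}(t)$ solves the original ADAE on $[t_0,t_{max})$: the projection $Q_{2*}$ of \eqref{ADAE} holds by construction of $\eta$, i.e., $(t,x(t))\in L_0$, while the projections $Q_1$ and $Q_{2\Sigma}$ of \eqref{ADAE} recombine through \eqref{ADAE_DE1+2} into the reduced ODE satisfied by $u(t)$; uniqueness of $x(t)$ in $M_{1,2\Sigma}\dotpl M_{20}$ is inherited from uniqueness for the reduced ODE together with uniqueness of the implicit function. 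The chief technical point I expect to handle carefully is the verification that the substitution $u\mapsto u+\eta(t,u)$ preserves the local Lipschitz regularity needed for Picard--Lindel\"{o}f (both in the Banach-space norm and in compatibility with the fact that Picard--Lindel\"{o}f is applied on the lineal $D_1\dotpl D_{2\Sigma}$ equipped with the norm of Remark \ref{Rem_Norms}); everything else is a direct composition of Theorem \ref{Th-Nonloc_ImplFunc} with the standard ODE machinery.
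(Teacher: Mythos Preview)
Your overall strategy matches the paper's: solve the algebraic constraint for $x_{20}$ via the nonlocal implicit function theorem, substitute into the differential part, and invoke Picard--Lindel\"of plus continuation. The paper, however, inserts one step that you omit: after obtaining $\eta$, it multiplies \eqref{ADAE_DE_Pi} by $\widetilde{A}$ and performs the change of variable $w=\widetilde{A}(x_1+x_{2\Sigma})$, so that the reduced ODE \eqref{ADAE_DE_eta_w} lives on the open set $\widetilde{M}$ in the \emph{Banach} space $W=Y_1\dotpl Y_{2\Sigma}$ (a closed subspace of $Y$).

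That change of variable is not cosmetic; without it your argument has a genuine gap when $A$ is unbounded. In that case $D_1=X_1\cap D$ is only a dense lineal in $X_1$, so $D_1\dotpl D_{2\Sigma}$ with the $X$-norm of Remark~\ref{Rem_Norms} is not complete, and $M_{1,2\Sigma}=\widetilde{A}^{-1}\widetilde{M}$ need not be open in $X_1\dotpl X_{2\Sigma}$. More seriously, your claim that ``$B|_D$ is bounded (since $D_B\supseteq D_A$)'' is false in the $X$-norm: the inclusion $D_B\supseteq D_A$ only yields $B\in\spL(X_A,Y)$, i.e.\ boundedness with respect to the graph norm of $A$, not the $X$-norm. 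Hence $u\mapsto\widetilde{\Pi}(t,u)$ need not be locally Lipschitz on $M_{1,2\Sigma}$ in the $X$-norm, and Picard--Lindel\"of does not apply as stated. The paper's substitution $w=\widetilde{A}u$ fixes all three issues simultaneously: $W$ is Banach, $\widetilde{M}\subseteq W$ is open by hypothesis, and $(Q_1+Q_{2\Sigma})B\widetilde{A}^{-1}\in\spL(Y)$ because $B\widetilde{A}^{-1}$ is closed and everywhere defined on $Y$. The ``chief technical point'' you flag at the end is thus exactly where the paper's extra step is required, and it is not merely a regularity check but a change of ambient space.
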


 \begin{corollary}[\textbf{Solvability}]\label{Corollary_Exist_set}
Theorem~\ref{Th_Exist-Lip_set} remains valid if condition \ref{Inv-Lipsch_set} is replaced by
\begin{enumerate}[label={\upshape\arabic*.}, ref={\upshape\arabic*}, itemsep=4pt,parsep=0pt,topsep=4pt,leftmargin=0.6cm]
\addtocounter{enumi}{1}
\item\label{InvReg} A function $f(t,x)$ has a continuous (strong) partial derivative with respect to $x$ on $\R_+\times D$.  \\
    For any fixed $t'$, $x'=x'_1+x'_{2\Sigma}+x'_{20}$ such that $x'_1+x'_{2\Sigma}\in M_{1,2\Sigma}$, $x'_{20}\in M_{20}$ and ${(t',x')\in L_0}$, the operator
     \begin{equation}\label{funcPhiInvReg}
    W_{t',x'}:=\partial_{x_{20}}F_{2*}(t',x'_1,x'_{2\Sigma},x'_{20})= \left[\partial_x (Q_{2*}f)(t',x')-Q_{2*}B\right]P_{20}\big|_{D_{20}} \colon D_{20}\to Y_{2*}
     \end{equation}
    has the inverse $W_{t',x'}^{-1}\in \spL(Y_{2*},D_{20})$.
\end{enumerate}
 \end{corollary}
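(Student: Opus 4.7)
The plan is to reduce this corollary directly to Theorem~\ref{Th_Exist-Lip_set} by verifying that condition~\ref{InvReg} of the corollary implies condition~\ref{Inv-Lipsch_set} of the theorem at every admissible base point. This reduction is the ADAE-level analogue of Corollary~\ref{Cor_Nonloc_ImplFunc-2}, and the key computation has already been carried out for the implicit function setting in Remark~\ref{Rem__Nonloc_ImplFunc-2}; I would simply transport that argument to the mapping $F_{2*}$ defined in \eqref{ADAE_AE_F2*}.

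First, since $f$ is continuously (strongly) differentiable with respect to $x$ on $\R_+\times D$, it is automatically locally Lipschitz with respect to $x$, which gives the first half of condition~\ref{Inv-Lipsch_set}. For the quantitative estimate, I would fix an arbitrary $(t',x')$ with $x'_1+x'_{2\Sigma}\in M_{1,2\Sigma}$, $x'_{20}\in M_{20}$, and $(t',x')\in L_0$, and take the operator $W_{t',x'}:=\partial_{x_{20}}F_{2*}(t',x'_1,x'_{2\Sigma},x'_{20})$ as in \eqref{funcPhiInvReg}. The continuity of $\partial_{x_{20}}F_{2*}$ at $(t',x')$ yields neighborhoods $U_\delta(t',x'_1,x'_{2\Sigma})\subset \R_+\times D_1\times D_{2\Sigma}$ and $U_\varepsilon(x'_{20})\subset D_{20}$ on which $\|\partial_{x_{20}}F_{2*}-W_{t',x'}\|$ is arbitrarily small once $\delta,\varepsilon$ are small. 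An application of the fundamental theorem of calculus to $F_{2*}$ in the variable $x_{20}$ then produces \eqref{ContractiveMap} with a modulus $q(\delta,\varepsilon)\to 0$, and since $W_{t',x'}^{-1}\in\spL(Y_{2*},D_{20})$ by hypothesis, the required smallness $\lim_{\delta,\varepsilon\to 0} q(\delta,\varepsilon)<\|W_{t',x'}^{-1}\|^{-1}$ holds automatically.

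With condition~\ref{Inv-Lipsch_set} of Theorem~\ref{Th_Exist-Lip_set} so verified at every admissible $(t',x')$, the conclusion of that theorem, namely existence of some $t_{\max}\le\infty$ and of a unique solution of the IVP \eqref{ADAE}, \eqref{ini} in $M_{1,2\Sigma}\dotpl M_{20}$ on the maximal interval $[t_0,t_{\max})$, is immediate. There is no real obstacle: the content of the corollary is the observation that $C^1$-regularity of $f$ together with bounded invertibility of the partial derivative at each base point is strictly stronger than the contractive estimate \eqref{ContractiveMap} with an invertible leading term $W_{t',x'}$. Alternatively, one could repeat the proof scheme of Theorem~\ref{Th_Exist-Lip_set} verbatim and invoke Theorem~\ref{Th-Nonloc_ImplFunc-2} (rather than Theorem~\ref{Th-Nonloc_ImplFunc}) when solving for the component $x_{20}$; the two routes yield the same implicit function $x_{20}=\eta(t,x_1+x_{2\Sigma})$ and hence the same global solution.
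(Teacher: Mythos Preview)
Your proposal is correct and matches the paper's own proof essentially verbatim: the paper notes that condition~\ref{InvReg} implies condition~\ref{Inv-Lipsch_set} of Theorem~\ref{Th_Exist-Lip_set} via the argument of Remark~\ref{Rem__Nonloc_ImplFunc-2}, and also remarks that one may alternatively invoke Theorem~\ref{Th-Nonloc_ImplFunc-2} (or Corollary~\ref{Cor_Nonloc_ImplFunc-2}) directly. You have spelled out both routes in the same spirit.
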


In Lemma \ref{Lem-Alternative} and Theorems \ref{Th_GlobReg}, \ref{Th_GlobReg2}, \ref{Th_Lagr_Stab} and \ref{Th_Lagr_Instab} below, it is required that ${M_{1,2\Sigma}=D_1\dotpl D_{2\Sigma}}$ and ${M_{20}=D_{20}}$, and therefore the statements of the above theorem and corollary take the following form.

 \begin{corollary}[\textbf{Solvability}]\label{Corol_Exist-Lip_set}
Let the conditions of Theorem~\ref{Th_Exist-Lip_set} (or Corollary \ref{Corollary_Exist_set}), where ${\widetilde{M}=W}$ $($accordingly, ${M_{1,2\Sigma}=D_1\dotpl D_{2\Sigma}}$$)$ and ${M_{20}=D_{20}}$, hold.
Then for each initial point $(t_0,x_0)\in L_0$ there exists a $t_{max}\le \infty$ such that the IVP \eqref{ADAE}, \eqref{ini} has a unique solution $x(t)$ on the maximal interval of existence $[t_0,t_{max})$.
 \end{corollary}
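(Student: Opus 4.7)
The plan is to derive the corollary as an immediate specialization of Theorem~\ref{Th_Exist-Lip_set} (or Corollary~\ref{Corollary_Exist_set}): once the special sets are substituted, the auxiliary constraints in the theorem become vacuous and its conclusion is exactly what the corollary asserts.

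First, I would unpack the choice $\widetilde{M}=W=Y_1\dotpl Y_{2\Sigma}$ and show that $M_{1,2\Sigma}=\widetilde{A}^{-1}\widetilde{M}$ equals $D_1\dotpl D_{2\Sigma}$. This uses the properties of the operator $\widetilde{A}$ in \eqref{widetilde_A} established in Subsection~\ref{Approach_1}, namely that $\widetilde{A}$ restricts to bijections $D_1\to Y_1$ and $D_{2\Sigma}\to Y_{2\Sigma}$ (so $Y_1=\widetilde{A}D_1$, $Y_{2\Sigma}=\widetilde{A}D_{2\Sigma}$) and $\widetilde{A}^{-1}\in \spL(Y,D)$. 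Hence $\widetilde{A}^{-1}W=D_1\dotpl D_{2\Sigma}$. Together with $M_{20}=D_{20}$, this yields $M_{1,2\Sigma}\dotpl M_{20}=D_1\dotpl D_{2\Sigma}\dotpl D_{20}=D$.

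Next, I would check that the initial-data clause of Theorem~\ref{Th_Exist-Lip_set} is automatically satisfied. Given any $(t_0,x_0)\in L_0$, the unique decomposition in \eqref{xrr-ind} gives $(P_1+P_{2\Sigma})x_0\in D_1\dotpl D_{2\Sigma}=M_{1,2\Sigma}$ and $P_{20}x_0\in D_{20}=M_{20}$, so the additional restrictions on the initial point reduce to the bare assumption $(t_0,x_0)\in L_0$. Then I would apply Theorem~\ref{Th_Exist-Lip_set} (or Corollary~\ref{Corollary_Exist_set}), whose hypotheses are, by the statement of the present corollary, assumed to hold for these specific $M_{1,2\Sigma}$ and $M_{20}$. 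It yields a $t_{max}\le\infty$ and a unique solution $x(t)$ of the IVP \eqref{ADAE}, \eqref{ini} on the maximal interval $[t_0,t_{max})$, whose values a priori lie in $M_{1,2\Sigma}\dotpl M_{20}$. Since this set coincides with $D$, the localization clause disappears and the corollary's statement follows verbatim.

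There is no real obstacle: the whole argument is a bookkeeping check that the two chosen sets exhaust the direct-sum decomposition of $D$, so the theorem's conclusion specializes into the form stated. The only point that requires a brief justification is the identification $\widetilde{A}^{-1}W=D_1\dotpl D_{2\Sigma}$, which is immediate from the invariance and invertibility properties of $\widetilde{A}$ recalled in Subsection~\ref{Approach_1}.
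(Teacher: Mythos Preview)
Your proposal is correct and matches the paper's approach: the paper does not give a separate proof of this corollary at all, presenting it simply as the specialization of Theorem~\ref{Th_Exist-Lip_set} (or Corollary~\ref{Corollary_Exist_set}) to the case $\widetilde{M}=W$, $M_{20}=D_{20}$. Your added justification that $\widetilde{A}^{-1}W=D_1\dotpl D_{2\Sigma}$ and that the initial-data constraints become vacuous is exactly the bookkeeping the paper leaves implicit in the parenthetical ``accordingly, $M_{1,2\Sigma}=D_1\dotpl D_{2\Sigma}$''.
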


 \begin{proof}[Proof of Theorem \ref{Th_Exist-Lip_set}]
As shown above, the DAE \eqref{ADAE} is equivalent to the system \eqref{ADAE_DE_Pi}, \eqref{ADAE_AE_F2*}, and $(t,x)\in L_0$  if and only if $(t,x)$ satisfies \eqref{ADAE_AE_F2*} (where $x=x_1+x_{2\Sigma}+x_{20}$).

Obviously, $F_{2*}\in C(\R_+\times D_1 \times D_{2\Sigma}\times D_{20},Y_{2*})$ satisfies locally a Lipschitz condition with respect to $(x_1,x_{2\Sigma},x_{20})$ on $\R_+\times D_1 \times D_{2\Sigma}\times D_{20}$.

Below, we provide the following lemma which is proved in a similar way as Lemma \ref{Lem-ImplicitFunc}.
  \begin{lemma}\label{Lem-ImplicitFuncReg_set}
For any fixed $t'$, $x'=x'_1+x'_{2\Sigma}+x'_{20}$ such that $x'_1+x'_{2\Sigma}\in M_{1,2\Sigma}$, $x'_{20}\in M_{20}$ and ${(t',x')\in L_0}$, there exist open neighborhoods
$U_r(t',x'_1,x'_{2\Sigma})=U_{r_1}(t')\times U_{r_2}(x'_1)\times U_{r_3}(x'_{2\Sigma})\subset U_\delta(t',x'_1,x'_{2\Sigma})$ and $U_\rho(x'_{20})\subset U_\varepsilon(x'_{20})$ \,$($\,$r,\rho>0$ depend on the choice of $t'$, $x'$\,$)$ and a unique function  $\varkappa\in C(U_r(t',x'_1,x'_{2\Sigma}),U_\rho(x'_{20}))$  such that $\varkappa(t,x_1,x_{2\Sigma})$ satisfies a Lipschitz condition with respect to $(x_1,x_{2\Sigma})$ on $U_r(t',x'_1,x'_{2\Sigma})$ and the equality $\varkappa(t',x'_1,x'_{2\Sigma})=x'_{20}$ and is a solution of equation \eqref{ADAE_AE_F2*} with respect to $x_{20}$, i.e., $F_{2*}(t,x_1,x_{2\Sigma},\varkappa(t,x_1,x_{2\Sigma}))=0$ for every $(t,x_1,x_{2\Sigma})\in U_r(t',x'_1,x'_{2\Sigma})$.
  \end{lemma}

Note that equation \eqref{ADAE_AE_F2*} can be written in the form $x_{20}=x_{20}-W_{t',x'}^{-1}F_{2*}(t,x_1,x_{2\Sigma},x_{20})$, where $(t,x_1,x_{2\Sigma})\in U_\delta(t',x'_1,x'_{2\Sigma})$, $x_{20}\in U_\varepsilon(x'_{20})$ and $U_\delta(t',x'_1,x'_{2\Sigma})$, $U_\varepsilon(x'_{20})$ and $W_{t',x'}$ are defined in condition~\ref{Inv-Lipsch_set}, or in the form
\begin{equation}\label{ADAE_AE_F2*equiv2}
x_{20}=x_{20}-W_{t,\,x}^{-1}F_{2*}(t,x_1,x_{2\Sigma},x_{20}),
 \end{equation}
where $t$, $x=x_1+x_{2\Sigma}+x_{20}$ are such that ${x_1+x_{2\Sigma}\in M_{1,2\Sigma}}$, ${x_{20}\in M_{20}}$ and ${(t,x)\in L_0}$. Generally, the above form of equation \eqref{ADAE_AE_F2*} is used in the proof of Lemma \ref{Lem-ImplicitFuncReg_set}, but equation \eqref{ADAE_AE_F2*equiv2} is also used later, in the proof of Lemma \ref{Lem-Alternative}, and therefore it is given here.

Further, in the same way as in the proof of Theorem \ref{Th-Nonloc_ImplFunc} on a nonlocal implicit function, we construct the function
 \begin{equation}\label{eta}
\eta\colon \R_+\times M_1\times M_{2\Sigma}\to M_{20}, \quad \text{where} \quad M_1=P_1M_{1,2\Sigma},\quad M_{2\Sigma}=P_{2\Sigma}M_{1,2\Sigma},
 \end{equation}
so that $\eta(t,x_1,x_{2\Sigma})$ is continuous in $(t,x_1,x_{2\Sigma})$, satisfies locally a Lipschitz condition with respect to $(x_1,x_{2\Sigma})$ on $\R_+\times M_1\times M_{2\Sigma}$ and is a unique solution of  \eqref{ADAE_AE_F2*} with respect to $x_{20}$  (for each $(t,x_1,x_{2\Sigma})\in \R_+\times M_1\times M_{2\Sigma}$).

Let us substitute $x_{20}=\eta(t,x_1,x_{2\Sigma})$ in equation \eqref{ADAE_DE_Pi}, multiply it by  $\widetilde{A}$ (defined in \eqref{widetilde_A}) and make the change of variables
 $$
w=\widetilde{A}(x_1+x_{2\Sigma}) \qquad  (x_1+x_{2\Sigma}\in M_{1,2\Sigma}).
 $$
Then \,$x_1=P_1\widetilde{A}^{-1}w$, \,$x_{2\Sigma}=P_{2\Sigma}\widetilde{A}^{-1}w$,\, $w\in \widetilde{M}$, and we obtain the equation
 \begin{equation}\label{ADAE_DE_eta_w}
\begin{split}
\dot{w}=\widehat{\Pi}(t,w),\qquad
 \widehat{\Pi}(t,w) & := \widetilde{A}\,\Pi(t,\widetilde{A}^{-1}w+\widehat{\eta}(t,w)) =  \\
 & = (Q_1+Q_{2\Sigma})f(t,\widetilde{A}^{-1}w+\widehat{\eta}(t,w))- (Q_1+Q_{2\Sigma})B\big[\widetilde{A}^{-1}w+\widehat{\eta}(t,w)\big],  \\
 & \widehat{\eta}(t,w) := \eta(t,P_1\widetilde{A}^{-1}w,P_{2\Sigma}\widetilde{A}^{-1}w).
\end{split}
 \end{equation}
Since $\widetilde{A}^{-1}\in \spL(Y,X)$, $Q_1+Q_{2\Sigma}\in \spL(Y)$, $(Q_1+Q_{2\Sigma})B\widetilde{A}^{-1}\in \spL(Y)$ and $(Q_1+Q_{2\Sigma})BP_{20}\in \spL(D,Y)$, then, due to the properties of $f$ and $\eta$, the function
$\widehat{\Pi}\colon \R_+\times \widetilde{M}\to W$ (\,${W=Y_1\dotpl  Y_{2\Sigma}}$) is continuous and satisfies locally a Lipschitz condition with respect to $w$ on $\R_+\times \widetilde{M}$.
By \cite[p.~9, Theorem~1]{Schwartz2} we infer that for each initial point $(t_0,x_0)\in L_0$ such that $(P_1+P_{2\Sigma})x_0\in M_{1,2\Sigma}$ and $P_{20}x_0\in M_{20}$ the IVP for equation \eqref{ADAE_DE_eta_w} with the initial condition
\begin{equation}\label{RegDEeta_ini}
w(t_0)=w_0,\quad w_0=\widetilde{A}(P_1+P_{2\Sigma})x_0=Ax_0\in \widetilde{M},
\end{equation}
has a unique solution $w(t)$ on some interval $J\subset \R_+$ that contains $t_0$.
Using \cite[p.~16-17, Corollary]{Schwartz2}, we also infer that the solution $w(t)$ can be extended over a maximal interval of existence $[t_0,t_{max})$ (${t_{max}\le \infty}$) of $w(t)$ in $\widetilde{M}$, and the extended solution is unique.  Notice that the function $x(t)=\widetilde{A}^{-1}w(t)+\widehat{\eta}(t,w(t))$ is continuous and  $Ax(t)=(Q_1+Q_{2\Sigma})w(t)=w(t)$ has the continuous derivative $\dfrac{dAx}{dt}(t)=\widehat{\Pi}(t,w(t))$ on $[t_0,t_{max})$ (for $t_0=0$, $Ax(t)$ has the derivative on the right at $t_0$).  Thus, the IVP \eqref{ADAE}, \eqref{ini} has the unique solution
$$
x(t)=\widetilde{A}^{-1}w(t)+\widehat{\eta}(t,w(t))
$$
in $M_{1,2\Sigma}\dotpl M_{20}$ on the maximal interval of existence $[t_0,t_{max})$.
 \end{proof}

 \begin{proof}[Proof of Corollary \ref{Corollary_Exist_set}]
Corollary \ref{Corollary_Exist_set} follows from Theorem \ref{Th-Nonloc_ImplFunc-2} or Corollary \ref{Cor_Nonloc_ImplFunc-2}.

In addition, it is readily verified that condition \ref{Inv-Lipsch_set} of Theorem~\ref{Th_Exist-Lip_set} holds since condition \ref{InvReg} of Corollary~\ref{Corollary_Exist_set} holds (see Remark \ref{Rem__Nonloc_ImplFunc-2}) and hence all the conditions of Theorem~\ref{Th_Exist-Lip_set} are fulfilled.
 \end{proof}

The lemma below will be used in the sequel. Its assertion is generally clear and can be deduced from the results of \cite{KK1956}, as can be seen from its proof. But such a lemma has not been found in the literature and therefore it is given here.

 \begin{lemma}\label{Lem_Exist-Blow_up}
Let $H\in C(\R_+\times Z,Z)$, where $Z$ is a Banach space, and let the IVP
 \begin{equation}\label{Lem_IVP}
\begin{split}
&\dot{z}=H(t,z), \\
&z(t_0)=z_0
\end{split}
 \end{equation}
have a solution for each initial point $(t_0,z_0)\in \R_+\times Z$.  Assume that $\sup\limits_{t\,\in\, [t_0,b_1],\, \|z\|\,\le\, b_2}\|H(t,z)\|<\infty$ for every fixed $b_1>t_0$, $b_2>0$.
Then there exists a maximal interval of existence $[t_0,t_{max})$ of a solution $z(t)$ of  the IVP \eqref{Lem_IVP}, and if $t_{max}<\infty$, then $\lim\limits_{t\to t_{max}-0}\|z(t)\|=\infty$.
 \end{lemma}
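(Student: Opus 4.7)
The plan is to first produce the maximal interval of existence via a Zorn's lemma argument, then establish the blow-up alternative by a contradiction argument exploiting the completeness of $Z$ together with the standing local existence hypothesis.

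For the first step, start from any solution of \eqref{Lem_IVP} on some interval $[t_0,T)$, guaranteed by hypothesis. Consider the poset $\mathcal{S}$ of all solutions $\zeta\colon[t_0,T_\zeta)\to Z$ of \eqref{Lem_IVP} that extend the chosen one, ordered by graph inclusion. Any chain in $\mathcal{S}$ admits the pointwise-common extension over the union of its domains as an upper bound (continuity on the union is inherited, and the integral form of the ODE passes to the union). Zorn's lemma then yields a maximal element $z\colon[t_0,t_{max})\to Z$ for some $t_{max}\in(t_0,\infty]$.

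For the second step, assume $t_{max}<\infty$ and show that $\|z(t)\|$ is unbounded on $[t_0,t_{max})$. Suppose, to the contrary, that $\|z(t)\|\le R$ on the whole interval. Then by the boundedness hypothesis on $H$ we have $\|H(t,z(t))\|\le C:=\sup_{t\in[t_0,t_{max}],\,\|\zeta\|\le R}\|H(t,\zeta)\|<\infty$, so $\dot z$ is uniformly bounded and $z$ is Lipschitz on $[t_0,t_{max})$. Since $Z$ is complete, the limit $z^*:=\lim_{t\to t_{max}-}z(t)$ exists in $Z$, and setting $z(t_{max}):=z^*$ extends $z$ continuously to $[t_0,t_{max}]$ while still satisfying the integral form of the ODE up to the endpoint. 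Applying the standing local existence hypothesis at the initial point $(t_{max},z^*)$ gives a solution on some $[t_{max},t_{max}+\delta)$, which concatenates with $z$ to give a strictly larger element of $\mathcal{S}$, contradicting maximality. Hence $\sup_{t\in[t_0,t_{max})}\|z(t)\|=\infty$.

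To upgrade unboundedness to the full statement $\lim_{t\to t_{max}-}\|z(t)\|=\infty$, I would argue by contradiction: if $\liminf_{t\to t_{max}-}\|z(t)\|=L<\infty$, choose $R>L$ and a sequence $t_n\uparrow t_{max}$ with $\|z(t_n)\|\le R$. The hypothesis gives $C_R:=\sup_{t\in[t_0,t_{max}],\,\|\zeta\|\le R+1}\|H(t,\zeta)\|<\infty$. For $n$ large enough that $t_{max}-t_n<1/C_R$, a standard escape-time argument using the integral inequality $\|z(t)-z(t_n)\|\le C_R(t-t_n)$ — valid as long as $\|z(s)\|\le R+1$ on $[t_n,t]$ — shows $\|z(t)\|\le R+1$ on the whole of $[t_n,t_{max})$, returning to the bounded case and yielding the same contradiction as above. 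The main obstacle is this last step, since without uniqueness of solutions one must rule out oscillation of $\|z(t)\|$ between bounded and large values; the escape-time formalization of the continuity of $z$ on a short trailing interval is what makes it go through.
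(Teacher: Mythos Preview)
Your argument is correct and follows the same overall strategy as the paper's: obtain the maximal interval (the paper cites an external reference rather than invoking Zorn), then show via a Cauchy-sequence argument that boundedness of $z$ near $t_{max}$ would force $\lim_{t\to t_{max}-}z(t)$ to exist in $Z$, permitting a strict extension and hence a contradiction. Your escape-time step is in fact more careful than the paper's treatment of the same point: the paper passes directly from ``$\|z(t)\|\not\to\infty$'' to ``$z(t)\in\overline{B}_{r_0}(0)$ for all $t\in[t_0,t_{max})$'' without justifying why oscillation between bounded and unbounded values is excluded, whereas you correctly recognize that this requires the additional trapping argument you supply.
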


 \begin{proof}
The existence of the maximal interval of existence $[t_0,t_{max})$ of the solution $z(t)$ follows from \cite[Lemma 3.2]{KK1956}.
Let us prove that $\lim\limits_{t\to t_{max}-0}\|z(t)\|=\infty$ if $t_{max}<\infty$.

Note that from \cite[Lemma 3.3]{KK1956} it follows that $\lim\limits_{t\to t_{max}-0} z(t)$ does not exist (the same follows from \cite[p.~17, the proof of Corollary]{Schwartz2}), i.e., there is no point $z_*\in Z$ such that $\lim\limits_{t\to t_{max}-0} z(t)=z_*$.

Assume that $\|z(t)\|\not\to \infty$ as ${t\to t_{max}-0}$. Then there exists a number $r_0>0$ such that $z(t)\in \overline{B}_{r_0}(0)=\{z\in Z\mid \|z\|\le r_0\}$ for all $t\in [t_0,t_{max})$. Hence, $\|H(t,z(t))\|\le C$, where $C>0$ is some constant, for all $t\in [t_0,t_{max})$. Consider a sequence $\{t_n\}$ such that $t_0\le t_n <t_{max}$ and ${t_n\to t_{max}}$ as ${n\to \infty}$. Since the solution $z(t)$ of the IVP \eqref{Lem_IVP} satisfies the integral equation ${z(t)=z_0+\int\limits_{t_0}^t H(\tau,z(\tau)) d\tau}$, then $\|z(t_n)-z(t_m)\|=\bigg\|\int\limits_{t_m}^{t_n} H(\tau,z(\tau)) d\tau\bigg\|\le C|t_n-t_m|\to 0$ as ${m,n\to \infty}$ for any $t_m$, $t_n$. Hence, $z(t_n)$ is a Cauchy sequence and, therefore, it converges to some point $z_*\in Z$. Consequently, there exists $\lim\limits_{t\to t_{max}-0} z(t)=z_*\in Z$, which contradicts the fact mentioned above.
Thus, it is proved that for any $r>0$ there exists $t_*$ \,($t_0<t_*<t_{max}$) such that $z(t)\not\in \overline{B}_{r}(0)$ for $t\in [t_*,t_{max})$ and, therefore, ${\lim\limits_{t\to t_{max}-0}\|z(t)\|=\infty}$.
 \end{proof}

  \begin{lemma}\label{Lem-Alternative}
Let the conditions of Theorem~\ref{Th_Exist-Lip_set} \textup{(}or Corollary \ref{Corollary_Exist_set}\textup{)}, where ${\widetilde{M}=W}$ $($accordingly, ${M_{1,2\Sigma}=D_1\dotpl D_{2\Sigma}}$$)$ and ${M_{20}=D_{20}}$, and the following condition be satisfied:
 \begin{enumerate}[label={\upshape\arabic*.}, ref={\upshape\arabic*}, itemsep=4pt,parsep=0pt,topsep=4pt,leftmargin=0.6cm]
 \addtocounter{enumi}{2}
\item\label{Alternative} For every fixed ${a_1,a_2>0}$ the following holds:
  \begin{equation}\label{Alt-1}
 \sup\limits_{(t,x)\,\in\, L_0,\; t\,\in\, [0,a_1],\; \|x_1+x_{2\Sigma}\|\,\le\, a_2} \|x_{20}-W_{t,x}^{-1}F_{2*}(t,x_1,x_{2\Sigma},x_{20})\|\le K,
  \end{equation}
  \begin{equation}\label{Alt-2}
 \sup\limits_{(t,x)\,\in\, L_0,\; t\,\in\, [0,a_1],\; \|x_1+x_{2\Sigma}\|\,\le\, a_2,\; \|x_{20}\|\,\le\, K} \|(Q_1+Q_{2\Sigma})f(t,x)\|<\infty \qquad (x=x_1+x_{2\Sigma}+x_{20}),
  \end{equation}
  where ${K=K(a_1,a_2)>0}$ is some constant \textup{(}depending on the choice of  $a_1$, $a_2$\textup{)}.
 \end{enumerate}
Then for each initial point $(t_0,x_0)\in L_0$ there exists a $t_{max}\le \infty$ such that the IVP \eqref{ADAE}, \eqref{ini} has a unique solution $x(t)$ on the maximal interval of existence $[t_0,t_{max})$, and if $t_{max}<\infty$, then $\lim\limits_{t\to t_{max}-0}\|Ax(t)\|=\infty$. In addition, if $D=X$ or the operator $A$ is bounded, then $\lim\limits_{t\to t_{max}-0}\|x(t)\|=\infty$ when $t_{max}<\infty$.
 \end{lemma}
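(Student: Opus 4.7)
The plan is to reduce the IVP \eqref{ADAE}, \eqref{ini} to the ODE $\dot{w} = \widehat{\Pi}(t,w)$ in the Banach space $W = Y_1 \dotpl Y_{2\Sigma}$ that was already constructed in the proof of Theorem \ref{Th_Exist-Lip_set}, and then apply Lemma \ref{Lem_Exist-Blow_up}. Existence, uniqueness and the maximal interval $[t_0, t_{max})$ are already furnished by Corollary \ref{Corol_Exist-Lip_set}, so only the blow-up assertion needs new work. First I would observe that the two problems are equivalent in a strong sense: because $\widehat{\eta}(t,w(t)) \in D_{20} = \ker A$ and, by the defining formula \eqref{widetilde_A}, $\widetilde{A}$ agrees with $A$ on $D_1 \dotpl D_{2\Sigma}$ (the coefficients $\langle x_1+x_{2\Sigma}, B^* q_i^1\rangle$ vanish there, since $P_{20}(x_1+x_{2\Sigma})=0$), one gets $w(t) = A(x_1(t) + x_{2\Sigma}(t)) = Ax(t)$. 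In particular $\|w(t)\| = \|Ax(t)\|$, so blow-up of $w$ is exactly blow-up of $Ax$.

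The heart of the argument is verifying that $\widehat{\Pi}$ is bounded on every slab $[t_0, b_1] \times \{w \in W : \|w\| \le b_2\}$, which -- together with the continuity of $\widehat{\Pi}$ already shown in the proof of Theorem \ref{Th_Exist-Lip_set} and the solvability of the reduced IVP for every initial datum in $W$ -- allows invoking Lemma \ref{Lem_Exist-Blow_up}. Fixing $b_1, b_2$, setting $a_1 = b_1$ and $a_2 = \|\widetilde{A}^{-1}\|\, b_2$, and writing $x_1 + x_{2\Sigma} = \widetilde{A}^{-1} w$, $x_{20} = \widehat{\eta}(t,w)$, the key observation is the tautology $\widehat{\eta}(t,w) = \widehat{\eta}(t,w) - W_{t,x}^{-1} F_{2*}(t, x_1, x_{2\Sigma}, \widehat{\eta}(t,w))$, valid because $F_{2*}$ vanishes at $\widehat{\eta}(t,w)$; hypothesis \eqref{Alt-1} then converts it into the genuine \emph{a priori} bound $\|\widehat{\eta}(t,w)\| \le K(a_1, a_2)$. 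Plugging this into the decomposition
\[
\widehat{\Pi}(t, w) = (Q_1 + Q_{2\Sigma}) f\bigl(t,\, \widetilde{A}^{-1} w + \widehat{\eta}(t,w)\bigr) - (Q_1 + Q_{2\Sigma}) B\,\widetilde{A}^{-1} w - (Q_1 + Q_{2\Sigma}) B\,\widehat{\eta}(t,w),
\]
the first term is controlled by \eqref{Alt-2}, the second by the boundedness of $(Q_1 + Q_{2\Sigma}) B\, \widetilde{A}^{-1} \in \spL(Y)$, and the third by the boundedness of $B$ on the finite-dimensional subspace $D_{20}$.

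Lemma \ref{Lem_Exist-Blow_up} then yields $\|w(t)\| = \|Ax(t)\| \to \infty$ as $t \to t_{max}-0$ whenever $t_{max} < \infty$. For the final assertion, if $A \in \spL(X,Y)$ the estimate $\|x(t)\| \ge \|Ax(t)\|/\|A\|$ immediately promotes the $Ax$-blow-up to $x$-blow-up; the case $D=X$ reduces to this, since then $D_A = X$, so the closed operator $A$ is globally defined and hence bounded by the closed graph theorem. The main obstacle I expect is the central step of the second paragraph: extracting a quantitative bound on the implicit function $\widehat{\eta}$ from what is formally a trivial identity, and arranging the three-term decomposition of $\widehat{\Pi}$ so that each summand falls under one of the available hypotheses (\eqref{Alt-1}, \eqref{Alt-2}, or boundedness of the linear ingredients). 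The remaining steps -- identifying $w = Ax$, verifying the prerequisites of Lemma \ref{Lem_Exist-Blow_up}, and the closed-graph argument -- are routine bookkeeping.
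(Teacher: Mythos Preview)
Your proposal is correct and follows essentially the same approach as the paper: reduce to the ODE $\dot w=\widehat\Pi(t,w)$ from the proof of Theorem~\ref{Th_Exist-Lip_set}, use the identity $x_{20}=\widehat\eta(t,w)=\widehat\eta(t,w)-W_{t,x}^{-1}F_{2*}(\cdots)$ together with \eqref{Alt-1} to bound $\widehat\eta$, then \eqref{Alt-2} and boundedness of the linear parts to bound $\widehat\Pi$ on slabs, invoke Lemma~\ref{Lem_Exist-Blow_up}, and finish via $w(t)=Ax(t)$ and the closed graph theorem. Your three-term decomposition of $\widehat\Pi$ and your explicit verification that $\widetilde A$ coincides with $A$ on $D_1\dotpl D_{2\Sigma}$ are slightly more detailed than the paper's treatment, but the argument is the same.
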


 \begin{remark}\label{Rem_Alternative}
The condition \eqref{Alt-1} in Lemma \ref{Lem-Alternative} can be replaced by the following:  \\
For any fixed  ${a_1,a_2>0}$ there exists a constant ${K=K(a_1,a_2)>0}$ (depending on $a_1$, $a_2$) such that ${\|x_{20}\|\le K}$ for each ${(t,x_1+x_{2\Sigma}+x_{20})\in L_0}$, for which $t\in [0,a_1]$ and $\|x_1+x_{2\Sigma}\|\le a_2$.
 \end{remark}

 \begin{theorem}[\textbf{Global solvability}]\label{Th_GlobReg}
Let the conditions of Lemma \ref{Lem-Alternative} and the following condition hold:
\begin{enumerate}[label={\upshape\arabic*.}, ref={\upshape\arabic*}, itemsep=4pt,parsep=0pt,topsep=4pt,leftmargin=0.6cm]
\addtocounter{enumi}{3}
\item\label{Extens} There exists a number ${R>0}$, a functional
  \begin{equation}\label{Vmax}
   V(w)=\max\limits_{i=1,...,m}V_i(w),
  \end{equation}
  where $w=\widetilde{A}(x_1+x_{2\Sigma})\in W=Y_1\dotpl Y_{2\Sigma}$ and $V_i\in C(W,\R_+)$, $i=1,...,m$, are continuously differentiable functionals on $U_R^c(0)=\{w\in W\mid \|w\|\ge R\}$, and functionals $U\in C(\R_+)$, $\psi\colon \R_+\to \R_+$ such that $\psi(t)$ is integrable on each finite interval in $\R_+$,
   \begin{equation}\label{Extens_lim}
  \int\limits^{\infty}\dfrac{du}{U(u)} =\infty,\qquad  \lim\limits_{\|w\|\to\infty}V(w)=\infty,
   \end{equation}
  and for each $(t,\widetilde{A}^{-1}w+x_{20})\in L_0$ for which $w\in U_R^c(0)$ the following inequality holds:
   \begin{equation}\label{ExtensGlobReg}
  \big\langle \widetilde{A}\,\Pi(t,\widetilde{A}^{-1}w+x_{20}), \partial_{w}V_{j(w)}(w) \big\rangle \le U\big(V(w)\big)\psi(t),
   \end{equation}
  where $j(w)\in \{1,...,m\}$ is any index for which $V(w)=V_{j(w)}(w)$ for a given $w$.
\end{enumerate}
Then for each initial point $(t_0,x_0)\in L_0$ there exists a unique global (i.e., $t_{max}=\infty$) solution $x(t)$ of the IVP \eqref{ADAE}, \eqref{ini}.
 \end{theorem}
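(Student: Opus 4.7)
The plan is to combine the blow-up dichotomy from Lemma~\ref{Lem-Alternative} with an Osgood-type comparison argument applied to the scalar function $v(t):=V(w(t))$, where $w(t)=Ax(t)$ is the evolving component from the proof of Theorem~\ref{Th_Exist-Lip_set}.

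First, I would invoke Lemma~\ref{Lem-Alternative} to obtain, for each $(t_0,x_0)\in L_0$, a unique solution $x(t)$ on a maximal interval $[t_0,t_{max})$ together with the implication ``$t_{max}<\infty\Rightarrow\|Ax(t)\|\to\infty$ as $t\to t_{max}-0$''. Recalling the reduced equation $\dot{w}=\widehat{\Pi}(t,w)$ built in the proof of Theorem~\ref{Th_Exist-Lip_set}, where $w(t)=Ax(t)\in W$ and $\widehat{\Pi}(t,w)=\widetilde{A}\,\Pi(t,\widetilde{A}^{-1}w+\widehat{\eta}(t,w))$, and observing that $(t,\widetilde{A}^{-1}w(t)+\widehat{\eta}(t,w(t)))=(t,x(t))\in L_0$ for all $t\in[t_0,t_{max})$, the hypothesis \eqref{ExtensGlobReg} applies with $x_{20}=\widehat{\eta}(t,w(t))$ and gives, whenever $\|w(t)\|\ge R$ and $j\in J(w(t)):=\{i:V_i(w(t))=V(w(t))\}$,
\begin{equation*}
\frac{d}{dt}V_j(w(t))=\langle \widehat{\Pi}(t,w(t)),\partial_w V_j(w(t))\rangle \le U(v(t))\psi(t).
\end{equation*}

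Second, since $V=\max_{i=1,\dots,m}V_i$ is the pointwise maximum of finitely many $C^1$ functionals on $U_R^c(0)$, a Danskin-type identity yields $D^+v(t)\le \max_{j\in J(w(t))}\frac{d}{dt}V_j(w(t))\le U(v(t))\psi(t)$ whenever $\|w(t)\|\ge R$. Argue by contradiction: if $t_{max}<\infty$ then $\|w(t)\|\to\infty$, and by the coercivity relation $\lim_{\|w\|\to\infty}V(w)=\infty$, also $v(t)\to\infty$; in particular one may choose $t_1\in[t_0,t_{max})$ with $\|w(t)\|\ge R$ for every $t\in[t_1,t_{max})$. Separating variables in the Dini inequality produces
\begin{equation*}
\int_{v(t_1)}^{v(t)}\frac{du}{U(u)}\le \int_{t_1}^{t}\psi(\tau)\,d\tau\le \int_{t_1}^{t_{max}}\psi(\tau)\,d\tau <\infty
\end{equation*}
for all $t\in[t_1,t_{max})$, and as $t\to t_{max}-0$ the left-hand side diverges by the first relation in \eqref{Extens_lim}, contradicting finiteness of the right-hand side. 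Hence $t_{max}=\infty$, and uniqueness is inherited from Lemma~\ref{Lem-Alternative}.

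The principal technical hurdle will be the nondifferentiability of $V=\max V_i$: the usual chain rule must be replaced by the Danskin-type formula for the upper Dini derivative of a finite maximum of $C^1$ functionals composed with the $C^1$ curve $w(t)$, together with the absolute continuity of $v(t)$ needed to justify the separation of variables. Existence, uniqueness and the blow-up alternative are already supplied by Lemma~\ref{Lem-Alternative} and by the reduction to \eqref{ADAE_DE_eta_w} in the proof of Theorem~\ref{Th_Exist-Lip_set}, so the only substantive new ingredient is the a priori bound on $v(t)$ obtained from the comparison argument.
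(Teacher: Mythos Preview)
Your proposal is correct and follows essentially the same approach as the paper: reduce to the blow-up alternative from Lemma~\ref{Lem-Alternative}, then run an Osgood/Bihari comparison on $v(t)=V(w(t))$ to rule out $t_{max}<\infty$. The paper outsources the two technical steps you flag (the chain-rule inequality for $V=\max_i V_i$ along $w(t)$, and the passage from the differential inequality to the separated-variable integral bound) to Lemmas~2.2 and~2.1 of \cite{KK1956}, and it also inserts an $\varepsilon>0$ into $U$ (working with $U+\varepsilon$ and letting $\varepsilon\to 0$ at the end) to avoid sign issues with $U$; apart from these packaging differences, the argument is the same as yours.
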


  \begin{theorem}[\textbf{Global solvability}]\label{Th_GlobReg2}
Theorem~\ref{Th_GlobReg} remains valid if condition \ref{Extens} is replaced by
 \begin{enumerate}[label={\upshape\arabic*.}, ref={\upshape\arabic*}, itemsep=4pt,parsep=0pt,topsep=4pt,leftmargin=0.6cm]
\addtocounter{enumi}{3}
\item\label{Extens-2} There exists a number ${R>0}$, a functional $V\in C(W,\R_+)$, where $w=\widetilde{A}(x_1+x_{2\Sigma})\in W=Y_1\dotpl Y_{2\Sigma}$, and functionals $U\in C(\R_+)$, $\psi\colon \R_+\to \R_+$ such that $\psi(t)$ is integrable on each finite interval in $\R_+$, \eqref{Extens_lim} holds,
    $V(w)$ satisfies a Lipschitz condition on $U_R^c(0)=\{w\in W\mid \|w\|\ge R\}$   and for each $(t,\widetilde{A}^{-1}w+x_{20})\in L_0$ such that $w\in U_R^c(0)$ the following inequality holds:
   \begin{equation}\label{ExtensGlobReg-2}
  \big\| \widetilde{A}\,\Pi(t,\widetilde{A}^{-1}w+x_{20})\big\|\le U\big(V(w)\big)\psi(t).
   \end{equation}
 \end{enumerate}
 \end{theorem}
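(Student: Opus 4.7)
The plan is to re-run the scheme of Theorem~\ref{Th_GlobReg}, replacing only the final step---the pointwise chain-rule estimate along $V\circ w$---with an argument based solely on the Lipschitz character of $V$ on $U_R^c(0)$ and on the norm bound \eqref{ExtensGlobReg-2}.

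First I would apply Theorem~\ref{Th_Exist-Lip_set} together with Lemma~\ref{Lem-Alternative} to reduce the IVP \eqref{ADAE}, \eqref{ini} to the explicit evolution equation $\dot w=\widehat\Pi(t,w)$ from \eqref{ADAE_DE_eta_w}, where $w(t)=Ax(t)\in W$ and $w\in C^1([t_0,t_{max}),W)$ on the maximal interval of existence. The same lemma supplies the blow-up alternative: if $t_{max}<\infty$ then $\|w(t)\|\to\infty$ as $t\to t_{max}-0$. I then argue by contradiction---assume $t_{max}<\infty$---and choose $t^*\in [t_0,t_{max})$ beyond which $\|w(t)\|\ge R$, so that $w(t)\in U_R^c(0)$ for every $t\in[t^*,t_{max})$.

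On this terminal subinterval the curve $w(t)$ lies entirely in the region where $V$ is Lipschitz with some constant $L\ge 0$. Hence $V\circ w$ is absolutely continuous on every compact subinterval of $[t^*,t_{max})$, and the Dini-type estimate
\[
\Big|\tfrac{d}{dt}V(w(t))\Big|\le L\,\|\dot w(t)\|=L\,\|\widehat\Pi(t,w(t))\|\qquad\text{a.e.}
\]
holds. Since $(t,x(t))=(t,\widetilde A^{-1}w(t)+\widehat\eta(t,w(t)))\in L_0$ for every $t$, inequality \eqref{ExtensGlobReg-2} applies with $x_{20}=\widehat\eta(t,w(t))$ and delivers $\|\widehat\Pi(t,w(t))\|\le U(V(w(t)))\,\psi(t)$. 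Combining the two bounds, separating variables, and integrating over $[t^*,t)\subset[t^*,t_{max})$ yields
\[
\int_{V(w(t^*))}^{V(w(t))}\frac{du}{U(u)}\le L\int_{t^*}^{t}\psi(s)\,ds\le L\int_{0}^{t_{max}}\psi(s)\,ds<\infty .
\]
The right-hand side is uniformly bounded on $[t^*,t_{max})$, while the left-hand side must tend to $+\infty$ as $t\to t_{max}-0$: indeed $\|w(t)\|\to\infty$ implies $V(w(t))\to\infty$ by \eqref{Extens_lim}, and $\int^{\infty}du/U(u)=\infty$ by assumption. The resulting contradiction forces $t_{max}=\infty$, which is the claimed global solvability.

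The main obstacle is not the differential inequality itself but the mismatch between the \emph{global} conclusion (extension to $[t_0,\infty)$) and the \emph{exterior} nature of the hypotheses: the Lipschitz bound on $V$ and the norm bound on $\widehat\Pi$ are imposed only on $U_R^c(0)$, and inside $\{\|w\|<R\}$ the trajectory can behave arbitrarily, so neither a chain-rule estimate nor an Osgood-type comparison is available there. The repair is precisely Lemma~\ref{Lem-Alternative}: under the contradiction hypothesis $t_{max}<\infty$, the blow-up alternative traps $w(t)$ in $U_R^c(0)$ on a terminal subinterval, reducing the problem to the region where all assumptions hold and making the local estimate sufficient to close the argument.
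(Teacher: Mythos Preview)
Your argument is correct and matches the paper's proof: both reduce to the ODE $\dot w=\widehat\Pi(t,w)$, invoke Lemma~\ref{Lem-Alternative} so that a hypothetical finite $t_{max}$ forces $\|w(t)\|\to\infty$ and hence $w(t)\in U_R^c(0)$ on a terminal subinterval, and then combine the Lipschitz bound on $V$ with \eqref{ExtensGlobReg-2} to obtain a Bihari-type integral inequality contradicting \eqref{Extens_lim}. The paper phrases the final step along a sequence $t_n\to t_{max}$ via \cite[Lemmas~2.1 and~2.3]{KK1956} and, notably, replaces $U$ by $U+\varepsilon$ before dividing and integrating---a regularization you should add, since the hypotheses allow $U\in C(\R_+)$ to vanish and your separation-of-variables step tacitly assumes $U>0$ on the relevant range.
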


 \begin{proof}[Proof of Lemma \ref{Lem-Alternative}]
The existence of a unique solution of the IVP \eqref{ADAE}, \eqref{ini} on a maximal interval of existence follows from Theorem~\ref{Th_Exist-Lip_set} (recall that if the conditions of Corollary~\ref{Corollary_Exist_set} hold, then the conditions of Theorem~\ref{Th_Exist-Lip_set} also hold).  More precisely, from the proof of Theorem~\ref{Th_Exist-Lip_set} (in the case when ${\widetilde{M}=W}$, ${M_{1,2\Sigma}=D_1\dotpl D_{2\Sigma}}$ and ${M_{20}=D_{20}}$) it follows that for an arbitrary initial point $(t_0,x_0)\in L_0$ there exists the unique solution $x(t)=\widetilde{A}^{-1}w(t)+\widehat{\eta}(t,w(t))$ of the IVP \eqref{ADAE}, \eqref{ini} on the maximal interval of existence $[t_0,t_{max})$.

We will prove that if $t_{max}<\infty$, then $\lim\limits_{t\to t_{max}-0}\|w(t)\|=\infty$, where $w(t)$ is the solution of the IVP \eqref{ADAE_DE_eta_w}, \eqref{RegDEeta_ini} on $[t_0,t_{max})$.
Let us estimate the functions $\widehat{\eta}$ and $\widehat{\Pi}(t,w)$ that are defined in \eqref{ADAE_DE_eta_w}. Since $x_{20}=\eta(t,x_1,x_{2\Sigma})$ satisfies equation \eqref{ADAE_AE_F2*equiv2},
  %%i.e., ${x_{20}=x_{20}-W_{t,x}^{-1}F_{2*}(t,x_1,x_{2\Sigma},x_{20})}$,
then \eqref{Alt-1} yields the estimate $\sup\limits_{t\,\in\, [0,a_1],\, \|x_1+x_{2\Sigma}\|\,\le\, a_2} \|\eta(t,x_1,x_{2\Sigma})\|\le K$ with some constant ${K=K(a_1,a_2)}$ for any fixed $a_1>0$, $a_2>0$.
Consequently, for any fixed $a_1>0$, $a_3>0$ the inequality $\|\widehat{\eta}(t,w)\|\le K_1$, where ${K_1=K_1(a_1,a_3)}$ is some constant, holds for every $t\in [0,a_1]$, $\|w\|\le a_3$ (since $\|x_1+x_{2\Sigma}\|=\|\widetilde{A}^{-1}w\|\le \|\widetilde{A}^{-1}\|a_3<\infty$).
Thus, $\sup\limits_{t\,\in\, [0,a_1],\, \|w\|\,\le\, a_3} \|\widehat{\eta}(t,w)\|\le K_1$ and, using \eqref{Alt-2}, we obtain   $\sup\limits_{t\,\in\,[0,a_1],\, \|w\|\,\le\, a_3} \|(Q_1+Q_{2\Sigma})f(t,\widetilde{A}^{-1}w+\widehat{\eta}(t,w))\|<\infty$ for any fixed $a_1>0$, $a_3>0$. Hence,
$$
\sup\limits_{t\,\in\, [0,b_1],\, \|w\|\,\le\, b_2} \|\widehat{\Pi}(t,w)\|<\infty \quad \text{for any fixed $b_1>0$, $b_2>0$}.
$$

It follows from Lemma~\ref{Lem_Exist-Blow_up} that $\lim\limits_{t\to t_{max}-0}\|w(t)\|=\infty$ if $t_{max}<\infty$.  As shown in the proof of Theorem \ref{Th_Exist-Lip_set}, $Ax(t)=w(t)$, $t\in [t_0,t_{max})$. Hence, $\|Ax(t)\|=\|w(t)\|$ and, therefore, $\lim\limits_{t\to t_{max}-0}\|Ax(t)\|=\infty$ when $t_{max}<\infty$.

Notice that if $D=X$, then $A$ is a bounded operator (since it is closed and $D_A=D$).
If the operator $A$ is bounded, then $\lim\limits_{t\to t_{max}-0}\|Ax(t)\|=\infty$ implies $\lim\limits_{t\to t_{max}-0}\|x(t)\|=\infty$.
\end{proof}

 \begin{proof}[Proof of Theorem \ref{Th_GlobReg}]
Let $(t_0,x_0)\in L_0$ is an arbitrary initial point.
Since all the conditions of Lemma \ref{Lem-Alternative}  (consequently, the conditions of Theorem~\ref{Th_Exist-Lip_set}) are satisfied, then the IVP \eqref{ADAE}, \eqref{ini} has the unique solution ${x(t)=\widetilde{A}^{-1}w(t)+\widehat{\eta}(t,w(t))}$ on the maximal interval of existence $[t_0,t_{max})$ (see the proof of Theorem~\ref{Th_Exist-Lip_set}). Here $w(t)$ is the solution of the IVP \eqref{ADAE_DE_eta_w}, \eqref{RegDEeta_ini} on $[t_0,t_{max})$, $\widetilde{A}^{-1}\in \spL(Y,X)$ and $\widehat{\eta}\in C(\R_+\times W,D_{20})$ is the function defined in \eqref{ADAE_DE_eta_w}. Also, it follows  from the proof of Lemma \ref{Lem-Alternative} that $\lim\limits_{t\to t_{max}-0}\|w(t)\|=\infty$ if $t_{max}<\infty$.

Below we will prove that $w(t)$ exists on $[t_0,t_{max})=[t_0,\infty)$. Then $x(t)$ exists on $[t_0,\infty)$  as well.

Suppose that $t_{max}<\infty$. Then there exists a sequence $\{t_n\}$, $t_0\le t_n <t_{max}$, such that $t_n\to t_{max}$ and $\|w(t_n)\|\to\infty$ as ${n\to\infty}$. The further proof is carried out, in general, in the same way as the main part of the proof of the theorem \cite[Theorem~3]{KK1956} on the global solvability of ODEs.  But in \cite[Theorem~3]{KK1956} the condition, similar to condition \ref{Extens} for the whole space $W$ (not only for $w\in U_R^c(0)$) and hence being more restrictive, was used. So we will give the proof for our case.

It follows from condition~\ref{Extens} that there exists a number ${R>0}$ such that the inequality  $\big\langle \widehat{\Pi}(t,w), \partial_{w}V_{j(w)}(w) \big\rangle \le U\big(V(w)\big)\psi(t)$, where $V\in C(W,\R_+)$, $U\in C(\R_+)$ and $\psi\colon \R_+\to \R_+$ are some functionals satisfying condition~\ref{Extens}, holds for each $t\in \R_+$, $w\in U_R^c(0)$. Hence, for any $\varepsilon>0$ and for each $t\in \R_+$, $w\in U_R^c(0)$,
\begin{equation}\label{Extens-widehat_Pi}
\big\langle \widehat{\Pi}(t,w), \partial_{w}V_{j(w)}(w) \big\rangle \le \big[U\big(V(w)\big)+\varepsilon\big]\psi(t).
\end{equation}
Since, by virtue of the above assumption, $\lim\limits_{n\to\infty}\|w(t_n)\|= \infty$, then $\lim\limits_{n\to\infty}V(w(t_n))=\infty$, and also there exists a number $N$ such that $w(t_n)\in U_R^c(0)$ for all $n\ge N$.
We deduce from \eqref{Extens-widehat_Pi} and \cite[Lemma~2.2]{KK1956} that
 $$
V(w(t_n))-V(w(t_N))\le \int\limits_{t_N}^{t_n}\Big[U\big(V(w(\tau))\big)+\varepsilon\Big]\psi(\tau)\, d\tau,
 $$
and therefore, due to \cite[Lemma~2.1]{KK1956},
 $$
\int\limits_{V(w(t_N))}^{V(w(t_n))} \dfrac{du}{U(u)+\varepsilon}\le \int\limits_{t_N}^{t_n}\psi(\tau)\, d\tau
 $$
for any $\varepsilon>0$ and $n\ge N$. Further, passing to the limit as ${\varepsilon\to +0}$ and ${n\to \infty}$, we see that
 $$
\infty=\int\limits_{V(w(t_N))}^\infty \dfrac{du}{U(u)}\le \int\limits_{t_N}^{t_{max}}\psi(\tau)\, d\tau < \infty,
 $$
since \eqref{Extens_lim} holds and $\psi$ is integrable on every finite interval in $\R_+$. Thus, we have been led to a contradiction, so that $t_{max}=\infty$.
 \end{proof}

 \begin{proof}[Proof of Theorem \ref{Th_GlobReg2}]
The beginning of the proof is the same as for Theorem \ref{Th_GlobReg} and, as well as in the proof of Theorem \ref{Th_GlobReg}, we suppose that $t_{max}<\infty$ and consider a sequence $\{t_n\}$, $t_0\le t_n <t_{max}$, such that $t_n\to t_{max}$ and $\|w(t_n)\|\to\infty$ as $n\to\infty$.

It follows from condition~\ref{Extens-2} of Theorem \ref{Th_GlobReg2} that there exists a number ${R>0}$ such that the inequality
\begin{equation}\label{Extens-2-widehat_Pi}
\big\| \widehat{\Pi}(t,w)\big\| \le \big[U\big(V(w)\big)+\varepsilon\big]\psi(t),
\end{equation}
where $V$, $U$ and $\psi$ are some functionals satisfying condition~\ref{Extens-2}, hods for each $t\in \R_+$, $w\in U_R^c(0)$ and any $\varepsilon>0$. Since $V$ satisfies a Lipschitz condition on $U_R^c(0)$, then there exists a constant $l>0$ such that $|V(w^1)-V(w^2)|\le l\|w^1-w^2\|$ for any $w^1,w^2\in U_R^c(0)$. In addition, $\lim\limits_{n\to\infty}\|w(t_n)\|= \infty$, $\lim\limits_{n\to\infty}V(w(t_n))=\infty$ and, therefore, there exists a number $N$ such that $w(t_n)\in U_R^c(0)$ and $V(w(t_n))\ge V(w(t_N))$ for all $n\ge N$. Then, using \eqref{Extens-2-widehat_Pi} and \cite[Lemma~2.3]{KK1956}, we conclude that
$$
V(w(t_n))-V(w(t_N))=\big|V(w(t_n))-V(w(t_N))\big|\le l\int\limits_{t_N}^{t_n}\Big[U\big(V(w(\tau))\big)+\varepsilon\Big]\psi(\tau)\, d\tau,
$$
for any $\varepsilon>0$ and $n\ge N$. Hence, due to \cite[Lemma~2.1]{KK1956},\, $\dfrac{1}{l}\int\limits_{V(w(t_N))}^{V(w(t_n))} \dfrac{du}{U(u)+\varepsilon}\le \int\limits_{t_N}^{t_n} \psi(\tau)\, d\tau$ for any $\varepsilon>0$, $n\ge N$.
Further, as in the proof of Theorem \ref{Th_GlobReg}, we obtain the contradiction:
 $$
\infty=\dfrac{1}{l}\int\limits_{V(w(t_N))}^\infty \dfrac{du}{U(u)}\le \int\limits_{t_N}^{t_{max}}\psi(\tau)\, d\tau < \infty.
 $$
This implies $t_{max}=\infty$. Thus, $w(t)$ and, hence, $x(t)$ exist on $[t_0,\infty)$.
 \end{proof}

 \begin{theorem}[\textbf{Lagrange stability}]\label{Th_Lagr_Stab}
Let the conditions of Theorem~\ref{Th_Exist-Lip_set} (or Corollary \ref{Corollary_Exist_set}), where ${\widetilde{M}=W}$ $($accordingly, ${M_{1,2\Sigma}=D_1\dotpl D_{2\Sigma}}$$)$ and ${M_{20}=D_{20}}$, and the following conditions be satisfied:
 \begin{enumerate}[label={\upshape\arabic*.}, ref={\upshape\arabic*}, itemsep=4pt,parsep=0pt,topsep=4pt,leftmargin=0.6cm]
 \addtocounter{enumi}{2}
\item\label{Lagr-Alternative} For every fixed $b>0$, $d>0$ the following holds:
  \begin{equation}\label{Lagr-Alt-1}
 \sup\limits_{(t,x)\,\in\, L_0,\; \|x_1+x_{2\Sigma}\|\,\le\, b} \|x_{20}-W_{t,x}^{-1}F_{2*}(t,x_1,x_{2\Sigma},x_{20})\|\le K,
  \end{equation}
  \begin{equation}\label{Lagr-Alt-2}
 \sup\limits_{(t,x)\,\in\, L_0,\; t\,\in\, [0,d],\; \|x_1+x_{2\Sigma}\|\,\le\, b,\; \|x_{20}\|\,\le\, K} \|(Q_1+Q_{2\Sigma})f(t,x)\|<\infty \qquad (x=x_1+x_{2\Sigma}+x_{20}),
  \end{equation}
  where ${K=K(b)>0}$ is some constant \textup{(}depending on the choice of $b$\textup{)}.

 \item  Let condition \ref{Extens} of Theorem~\ref{Th_GlobReg} or condition \ref{Extens-2} of Theorem \ref{Th_GlobReg2} hold and let the functional $\psi$ defined in these conditions be integrable on $\R_+$, i.e.,
  \begin{equation}\label{Stab-psi}
 \int\limits_0^\infty \psi(t)dt<\infty.
  \end{equation}
   \end{enumerate}
Then the ADAE \eqref{ADAE} is Lagrange stable \textup{(}for the initial points $(t_0,x_0)\in L_0$\textup{)}.
 \end{theorem}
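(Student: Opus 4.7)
The theorem reduces to two claims about the solution $w(t)$ of the explicit ODE $\dot{w}=\widehat{\Pi}(t,w)$ from \eqref{ADAE_DE_eta_w}: first, that $w$ is defined on all of $[t_0,\infty)$; second, that $\sup_{t\ge t_0}\|w(t)\|<\infty$. Boundedness of $\|x(t)\|$ then follows from the representation $x(t)=\widetilde{A}^{-1}w(t)+\widehat{\eta}(t,w(t))$ provided by Theorem~\ref{Th_Exist-Lip_set}, together with condition~\ref{Lagr-Alt-1}, which controls $\widehat{\eta}$ in terms of $\|w\|$ uniformly in $t$.

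\textbf{Step 1: global existence.} I would first observe that condition~\ref{Lagr-Alternative} is strictly stronger than condition~\ref{Alternative} of Lemma~\ref{Lem-Alternative}: the sup in \eqref{Lagr-Alt-1} is taken over \emph{all} $t\in\R_+$, hence in particular gives \eqref{Alt-1}, and restricting \eqref{Lagr-Alt-2} to $t\in[0,a_1]$ yields \eqref{Alt-2}. Combined with condition~\ref{Extens} or \ref{Extens-2}, the hypotheses of Theorem~\ref{Th_GlobReg} or Theorem~\ref{Th_GlobReg2} are therefore satisfied, so $t_{max}=\infty$.

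\textbf{Step 2: bounding $V(w(t))$.} This is the crux and relies on the new ingredient \eqref{Stab-psi}. Assume for contradiction that $\sup_{t\ge t_0}\|w(t)\|=\infty$; pick a sequence $t_n\to\infty$ with $\|w(t_n)\|\to\infty$, so by \eqref{Extens_lim} one has $V(w(t_n))\to\infty$ and $w(t_n)\in U_R^c(0)$ for $n\ge N$. Repeating the calculation from the proof of Theorem~\ref{Th_GlobReg} (respectively \ref{Th_GlobReg2}), i.e.\ using Kurzweil's lemmas 2.1--2.3 of \cite{KK1956} together with \eqref{ExtensGlobReg} (respectively the Lipschitz estimate \eqref{ExtensGlobReg-2}), I would obtain, for every $\varepsilon>0$ and every $n\ge N$,
\begin{equation*}
\int_{V(w(t_N))}^{V(w(t_n))}\frac{du}{U(u)+\varepsilon}\le c\int_{t_N}^{t_n}\psi(\tau)\,d\tau\le c\int_0^{\infty}\psi(\tau)\,d\tau<\infty,
\end{equation*}
with $c=1$ or $c=l^{-1}$ depending on which form of the growth condition is used. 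Letting $n\to\infty$ and $\varepsilon\to +0$, the left-hand side tends to $\int^{\infty}du/U(u)=\infty$ by \eqref{Extens_lim}, a contradiction. Hence $V(w(t))$ is bounded on $[t_0,\infty)$, and the second part of \eqref{Extens_lim} then gives $R_1:=\sup_{t\ge t_0}\|w(t)\|<\infty$.

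\textbf{Step 3: bounding $\|x(t)\|$, and the main obstacle.} Setting $b:=\|\widetilde{A}^{-1}\|R_1$, one has $\|x_1(t)+x_{2\Sigma}(t)\|=\|\widetilde{A}^{-1}w(t)\|\le b$. Since $(t,x(t))\in L_0$ we have $F_{2*}(t,x_1(t),x_{2\Sigma}(t),x_{20}(t))=0$, so the expression inside the sup of \eqref{Lagr-Alt-1} collapses to $\|x_{20}(t)\|$, yielding $\|\widehat{\eta}(t,w(t))\|=\|x_{20}(t)\|\le K(b)$ uniformly in $t\ge t_0$. Adding these two bounds gives $\sup_{t\ge t_0}\|x(t)\|\le b+K(b)<\infty$. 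The delicate point I expect to struggle with is in Step~2: the differential inequality for $V$ is only assumed on $U_R^c(0)$, so a trajectory that oscillates in and out of this region must be handled carefully. The correct fix is to replace $t_N$ by the last time $t_n^{*}\in[t_0,t_n]$ at which $\|w(t_n^{*})\|=R$, so that on $[t_n^{*},t_n]$ the trajectory stays in $\{\|w\|\ge R\}$ where the estimate is valid, and then bound $V(w(t_n^{*}))$ from the initial data (or by continuity of the $V_i$). A second technicality, already present in the proofs above, is differentiating the $\max$-functional $V$ along $w(\cdot)$, which is precisely what the Kurzweil lemmas are designed to handle.
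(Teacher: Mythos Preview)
Your proposal is correct and follows essentially the same route as the paper: verify that condition~\ref{Lagr-Alternative} implies condition~\ref{Alternative} of Lemma~\ref{Lem-Alternative} so that Theorem~\ref{Th_GlobReg} or~\ref{Th_GlobReg2} gives $t_{max}=\infty$; then run the same contradiction argument against \eqref{Stab-psi} to bound $\sup_t\|w(t)\|$; finally use \eqref{Lagr-Alt-1} to bound $\widehat{\eta}(t,w(t))$ uniformly in $t$. Two minor remarks: in the Lipschitz case the constant should be $c=l$, not $l^{-1}$ (cf.\ the proof of Theorem~\ref{Th_GlobReg2}); and at the end of Step~2 the contradiction already yields $\sup_t\|w(t)\|<\infty$ directly, so the detour through ``$V(w(t))$ bounded $\Rightarrow$ $\|w(t)\|$ bounded via \eqref{Extens_lim}'' is unnecessary (and in fact \eqref{Extens_lim} goes the other way).
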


 \begin{remark}\label{Lag-Alt-1_2}
The condition \eqref{Lagr-Alt-1} in Theorem \ref{Th_Lagr_Stab} can be replaced by the following:  \\
For any fixed ${b>0}$ there exists a constant ${K=K(b)>0}$ (depending on $b$) such that ${\|x_{20}\|\le K}$ for each ${(t,x_1+x_{2\Sigma}+x_{20})\in L_0}$ for which $\|x_1+x_{2\Sigma}\|\le b$.
 \end{remark}
 \begin{remark}\label{Alternative-connect}

Note that if condition \ref{Lagr-Alternative} of Theorem~\ref{Th_Lagr_Stab} holds, then condition \ref{Alternative} of Lemma \ref{Lem-Alternative} holds.
 \end{remark}

 \begin{proof}[Proof of Theorem \ref{Th_Lagr_Stab}]
Since condition \ref{Alternative} of Lemma \ref{Lem-Alternative} is fulfilled (see Remark \ref{Alternative-connect}), then all the conditions of Theorem~\ref{Th_GlobReg} or \ref{Th_GlobReg2} are satisfied. Hence, for an arbitrary initial point $(t_0,x_0)\in L_0$ the IVP \eqref{ADAE}, \eqref{ini} has the unique solution $x(t)=\widetilde{A}^{-1}w(t)+\widehat{\eta}(t,w(t))$ on $[t_0,\infty)$, where $w(t)$ is the solution of the IVP \eqref{ADAE_DE_eta_w}, \eqref{RegDEeta_ini} on $[t_0,\infty)$ and $\widehat{\eta}\in C(\R_+\times W,D_{20})$ is the function defined in \eqref{ADAE_DE_eta_w} (see the proofs of Theorem~\ref{Th_Exist-Lip_set} and Theorem \ref{Th_GlobReg} or \ref{Th_GlobReg2}).

Assume that $\sup\limits_{t\in [t_0,\infty)}\|w(t)\|=\infty$.
Then there exists a sequence $\{t_n\}$ such that $t_n\ge t_0$, $t_n\to \infty$ and $\|w(t_n)\|\to \infty$ as $n\to \infty$. Then, in the same way as in the proof of Theorem~\ref{Th_GlobReg} or \ref{Th_GlobReg2} (depending on the condition that we use), we obtain that $\infty= \int\limits_{0}^{\infty}\psi(\tau)\, d\tau$, which contradicts \eqref{Stab-psi}. Thus, $\sup\limits_{t\in [t_0,\infty)}\|w(t)\|<\infty$ and, hence, $\sup\limits_{t\in [t_0,\infty)}\|\widetilde{A}^{-1}w(t)\|<\infty$. From this estimate and \eqref{Lagr-Alt-1} it follows that $\sup\limits_{t\,\in\, [t_0,\infty)} \|\widehat{\eta}(t,w(t))\|\le K_1$, where $K_1$ is some constant. Hence, $\sup\limits_{t\in [t_0,\infty)}\|x(t)\|=\sup\limits_{t\in [t_0,\infty)}\|\widetilde{A}^{-1}w(t)+\widehat{\eta}(t,w(t))\| <\infty$.
Thus, the ADAE \eqref{ADAE} is Lagrange stable for each initial point $(t_0,x_0)\in L_0$.

Since a solution for the IVP \eqref{ADAE}, \eqref{ini} exists only for the consistent initial points, that is, $(t_0,x_0)\in L_0$, then equation \eqref{ADAE} is Lagrange stable (in the sense that each solution is Lagrange stable).
 \end{proof}

 \begin{remark}
Generally, it is not necessary to require that ${M_{20}=D_{20}}$ in Lemma \ref{Lem-Alternative} and Theorems \ref{Th_GlobReg}, \ref{Th_GlobReg2}, \ref{Th_Lagr_Stab}, but then it is necessary to add $x_{20}\in M_{20}$ to their conditions and statements. In particular, the statement of Theorem \ref{Th_Lagr_Stab} will change to the following:
Then the ADAE \eqref{ADAE} is Lagrange stable for the initial points $(t_0,x_0)\in L_0$ for which $x_{20}\in M_{20}$.
In addition, it is necessary to add the equality ${M_{20}=D_{20}}$ to the requirement $D=X$ in the statement of Lemma \ref{Lem-Alternative} (i.e., the last statement of the lemma will take the following form: if $M_{20}=D_{20}$ and $D=X$ or if the operator $A$ is bounded, then $\lim\limits_{t\to t_{max}-0}\|x(t)\|=\infty$ when $t_{max}<\infty$).
 \end{remark}

The lemmas below will be used in the sequel.

 \begin{lemma}\label{Lem_2.2-ineq_ge}
Let $H\in C(\R_+\times D,W)$, where $W$ is a Banach space and $D\subseteq W$. Let there exist a functional $V$ of the form \eqref{Vmin}, where $V_i\in C(W,\R_+)$, $i=1,...,m$, are continuously differentiable functionals on $D$, and functionals $U\in C(\R_+)$, $\psi\colon \R_+\to \R_+$ such that $\psi(t)$ is integrable on each finite interval in $\R_+$, and the inequality
$$
\langle H(t,w),\partial_{w}V_{j(w)}(w) \rangle \ge U(V(w))\psi(t),
$$
where $j(w)\in \{1,...,m\}$ is any index for which $V(w)=V_{j(w)}(w)$ for a given $w$, holds for each $w\in D$.
Assume that the equation $\dot{w}=H(t,w)$ has a solution $w(t)$ on an interval $[t_1,t_2]\subset \R_+$ and $w(t)\in D$ for all $t\in [t_1,t_2]$.
Then the following inequality holds:
 $$
V(w(t_2))-V(w(t_1))\ge \int\limits_{t_1}^{t_2}U\big(V(w(\tau))\big)\psi(\tau)\, d\tau.
 $$
 \end{lemma}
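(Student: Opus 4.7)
The plan is to reduce the statement about the non-smooth functional $V$ to analogous statements about the smooth components $V_i$ via a Dini-derivative argument, then integrate the resulting one-sided differential inequality. Set $\Phi(t):=V(w(t))$ and $\phi_i(t):=V_i(w(t))$, $i=1,\dots,m$. Each $\phi_i$ is continuously differentiable on $[t_1,t_2]$ with $\phi_i'(t)=\langle H(t,w(t)),\partial_w V_i(w(t))\rangle$ by the chain rule (since $w$ solves $\dot w=H(t,w)$ and $V_i\in C^1$ on $D$), and $\Phi$ is continuous as the pointwise minimum of finitely many continuous functions.

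The central step will be to establish the pointwise Dini inequality
$$
D^{+}\Phi(t^*)\;:=\;\limsup_{h\to 0^+}\frac{\Phi(t^*+h)-\Phi(t^*)}{h}\;\ge\;U(\Phi(t^*))\,\psi(t^*),\qquad t^*\in[t_1,t_2).
$$
For this, fix $t^*$ and for each $h>0$ pick some index $j(h)\in J(w(t^*+h))$, so that $\Phi(t^*+h)=\phi_{j(h)}(t^*+h)$ while $\Phi(t^*)\le\phi_{j(h)}(t^*)$ by definition of the minimum. This gives
$$
\Phi(t^*+h)-\Phi(t^*)\;\ge\;\phi_{j(h)}(t^*+h)-\phi_{j(h)}(t^*).
$$
Because $j(h)$ takes only finitely many values, the pigeonhole principle yields a sequence $h_n\downarrow 0$ along which $j(h_n)\equiv j^*$ is constant; continuity of $\phi_{j^*}$ and $\Phi$ then forces $V_{j^*}(w(t^*))=\Phi(t^*)$, i.e.\ $j^*\in J(w(t^*))$. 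Dividing by $h_n$ and passing to the limit produces $D^+\Phi(t^*)\ge\phi_{j^*}'(t^*)$, and the hypothesis applied with $j(w(t^*))=j^*$ gives the desired lower bound $\phi_{j^*}'(t^*)\ge U(\Phi(t^*))\psi(t^*)$.

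Finally, define $G(t):=\int_{t_1}^{t}U(\Phi(\tau))\psi(\tau)\,d\tau$, which is well-defined since $U(\Phi(\cdot))$ is continuous (hence bounded on $[t_1,t_2]$) and $\psi$ is integrable on $[t_1,t_2]$. The continuous function $\Psi(t):=\Phi(t)-G(t)$ satisfies $D^+\Psi(t)\ge 0$ on $[t_1,t_2)$, and the standard fact that a continuous real function whose upper right Dini derivative is everywhere nonnegative must be nondecreasing gives $\Psi(t_2)\ge\Psi(t_1)$, which is exactly the claimed integral inequality.

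The main obstacle is the Dankin-type analysis of step two: $\Phi$ is in general not differentiable at times $t^*$ where several $V_i$ simultaneously realize the minimum, so one cannot simply apply the chain rule to $\Phi$. The clean way around this uses the finiteness of the index set, which permits a pigeonhole argument selecting a limiting ``active'' index $j^*\in J(w(t^*))$; it is precisely at this point that the hypothesis must hold for \emph{every} index achieving the minimum. The remainder is routine calculus, with step three reducible either to a direct appeal to \cite[Lemma~2.1]{KK1956} (as is done elsewhere in the paper) or to the elementary monotonicity lemma for continuous functions with nonnegative Dini derivative.
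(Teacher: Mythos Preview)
Your approach is correct in spirit and matches what the paper does --- the paper's own proof is simply a pointer to \cite[Lemma~2.2]{KK1956} with the observation that one replaces $\max$ by $\min$ and reverses the inequalities, so your Dini-derivative/pigeonhole argument is presumably exactly the content of that reference.

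There is, however, a small technical slip in your final step. You assert that $\Psi(t)=\Phi(t)-G(t)$ satisfies $D^{+}\Psi(t)\ge 0$ for \emph{all} $t\in[t_1,t_2)$, but this does not follow from $D^{+}\Phi(t)\ge U(\Phi(t))\psi(t)$ alone: since $\psi$ is merely integrable, $G$ need not be differentiable at every point, and Dini derivatives do not in general satisfy $D^{+}(\Phi-G)=D^{+}\Phi-G'$ when $G'$ fails to exist. The monotonicity lemma you invoke requires the Dini inequality everywhere (or outside a countable set), not merely almost everywhere. The fix is immediate: $\Phi=\min_i\phi_i$ is Lipschitz on $[t_1,t_2]$ (as a minimum of finitely many $C^1$ functions on a compact interval), hence absolutely continuous, so $\Phi'(t)$ exists a.e.\ and equals $D^{+}\Phi(t)$ there; your Dini argument then gives $\Phi'(t)\ge U(\Phi(t))\psi(t)$ a.e., and integrating directly via absolute continuity yields
\[
\Phi(t_2)-\Phi(t_1)=\int_{t_1}^{t_2}\Phi'(\tau)\,d\tau\ge\int_{t_1}^{t_2}U(\Phi(\tau))\psi(\tau)\,d\tau,
\]
which is the claim. (Your parenthetical reference to \cite[Lemma~2.1]{KK1956} for this step is a mislabel: that lemma is the one converting an integral inequality into the $\int du/U(u)$ form used later in Theorems~\ref{Th_GlobReg}--\ref{Th_Lagr_Instab}, not the integration step here.)
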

 \begin{proof}[Proof of Lemma \ref{Lem_2.2-ineq_ge}]
The proof is carried out in the same way as the proof of \cite[Lemma~2.2]{KK1956}. In the lemma \cite[Lemma~2.2]{KK1956}, a functional of the form \eqref{Vmax} instead of \eqref{Vmin} and other inequality signs are used. Therefore, it is not difficult to make appropriate changes to the proof of this lemma and to verify that the present lemma is true.
 \end{proof}

 \begin{lemma}\label{Lem_Blow_up}
Let $H\in C(\R_+\times D,Z)$, where $Z$ is a Banach space and $D\subset Z$ is a region, and let the IVP \eqref{Lem_IVP}, i.e.,
 $\left\{ \begin{array}{ll}
    \dot{z}=H(t,z), \\
    z(t_0)=z_0,
  \end{array} \right.$
have a solution for each initial point $(t_0,z_0)\in \R_+\times D$. Assume that
$$
\sup\limits_{t\,\in\, [t_0,b],\, z\,\in\,  \Theta}\|H(t,z)\|<\infty
$$
for any fixed $b>t_0$ and any closed bounded set $\Theta\subset D$ such that $\rho(\Theta,\partial D)>0$. Assume also that the solution $z(t)$ remains in $D$ for all $t$ from the maximal interval of existence of $z(t)$ \textup{(}i.e., it can never leave~$D$\textup{)}.
Then $\lim\limits_{t\to t_{max}-0}\|z(t)\|=\infty$ if $t_{max}<\infty$.
 \end{lemma}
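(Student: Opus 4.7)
The plan is to adapt the proof of Lemma \ref{Lem_Exist-Blow_up} to this more general setting, in which $D$ is a region properly contained in $Z$ and the bound on $H$ is only available on closed bounded subsets $\Theta\subset D$ having $\rho(\Theta,\partial D)>0$. I would first establish an analog of the opening observation in Lemma \ref{Lem_Exist-Blow_up}: if $t_{max}<\infty$ and the limit $\lim_{t\to t_{max}-0}z(t)=z_*$ existed as a point of $D$, then by the solvability of the IVP \eqref{Lem_IVP} at $(t_{max},z_*)$ (granted by hypothesis because $z_*\in D$) the solution $z$ could be continued past $t_{max}$ while staying in $D$, contradicting the maximality of $t_{max}$. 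Hence no such limit in $D$ can exist.

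Next, assuming for contradiction that $t_{max}<\infty$ but $\|z(t)\|\not\to\infty$, the same step as in Lemma \ref{Lem_Exist-Blow_up} produces $r_0>0$ with $\|z(t)\|\le r_0$ throughout $[t_0,t_{max})$. Set $\Theta:=\overline{\{z(t):t\in[t_0,t_{max})\}}$, a closed bounded subset of $Z$. The main step is to show $\rho(\Theta,\partial D)>0$. Suppose otherwise: there would be a sequence $s_n\in[t_0,t_{max})$ with $\rho(z(s_n),\partial D)\to 0$; after extraction, $s_n\to s^*\in[t_0,t_{max}]$. The case $s^*<t_{max}$ is incompatible with the continuity of $z$ at $s^*$ and the openness of $D$, since $z(s^*)\in D$ forces $\rho(z(s^*),\partial D)>0$, contradicting $\rho(z(s_n),\partial D)\to 0$. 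The case $s^*=t_{max}$ describes a trajectory that limits to $\partial D$ as $t\to t_{max}$, which must be precluded by interpreting the ``never leaves $D$'' hypothesis as also excluding such limiting behavior—this is essentially the substantive content of that hypothesis in our setting.

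Once $\rho(\Theta,\partial D)>0$ is secured, the boundedness hypothesis on $H$ supplies a constant $C>0$ with $\|H(t,z(t))\|\le C$ for all $t\in[t_0,t_{max})$. Then, exactly as in Lemma \ref{Lem_Exist-Blow_up}, for any sequence $t_n\to t_{max}$ the identity $z(t_n)-z(t_m)=\int_{t_m}^{t_n}H(\tau,z(\tau))\,d\tau$ yields $\|z(t_n)-z(t_m)\|\le C|t_n-t_m|\to 0$ as $m,n\to\infty$, so $\{z(t_n)\}$ is Cauchy in $Z$ and converges to some $z_*\in Z$. Closedness of $\Theta$ together with $\rho(\Theta,\partial D)>0$ gives $z_*\in\Theta\subset D$, and the standard continuity argument upgrades this to $\lim_{t\to t_{max}-0}z(t)=z_*\in D$, contradicting the first step and proving $\|z(t)\|\to\infty$. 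The principal obstacle is the middle step—uniformly separating the bounded trajectory from $\partial D$—which depends on ruling out trajectories that remain norm-bounded yet drift to $\partial D$ in the limit; without a sufficiently strong reading of the ``can never leave $D$'' hypothesis, the Cauchy estimate cannot be applied uniformly along the whole trajectory.
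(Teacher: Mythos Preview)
Your approach mirrors the paper's proof: both argue by contradiction, assume the trajectory does not escape to infinity, trap it in a closed bounded set $\Theta\subset D$ with $\rho(\Theta,\partial D)>0$, invoke the hypothesis on $H$ to get a uniform bound along the trajectory, run the Cauchy-sequence argument exactly as in Lemma~\ref{Lem_Exist-Blow_up}, and obtain a limit $z_*\in D$ contradicting maximality of $[t_0,t_{max})$.

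The only substantive divergence is precisely the step you flag as the ``principal obstacle''. The paper does not take $\Theta$ to be the closure of the trajectory and then argue that $\rho(\Theta,\partial D)>0$; instead it simply \emph{asserts} that from $\|z(t)\|\not\to\infty$ there exists a closed bounded $\Theta\subset D$ with $\rho(\Theta,\partial D)>0$ containing the whole trajectory, offering no further justification. In effect the paper silently adopts the strong reading of ``can never leave $D$'' that you make explicit --- namely, that bounded trajectories cannot drift toward $\partial D$ in the limit. So your argument is not weaker than the paper's on this point: you have isolated exactly where the hypothesis does real work, while the paper buries the same dependence in an unargued existence claim. (The paper's preliminary step does cite \cite{KK1956} to rule out limits in $\overline{D}$ rather than just in $D$, but this does not by itself produce the required $\Theta$.)
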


 \begin{proof}
The existence of the maximal interval of existence $[t_0,t_{max})$ of the solution $z(t)$ in $D$ follows from \cite[Lemma 3.2]{KK1956}.
From \cite[Lemma 3.3]{KK1956} (or \cite[p.~17, the proof of Corollary]{Schwartz2}) and the fact that the solution cannot leave $D$, we conclude that there is no point $z_*\in \overline{D}$ such that $\lim\limits_{t\to t_{max}-0} z(t)=z_*$. In addition, if $D$ is bounded, then due to \cite[Theorem 2 and its Corollary]{KK1956} the solution exists on $[t_0,\infty)$. Also, note that if $\sup\limits_{t\,\in\, [t_0,t_{max}),\, z\,\in\, D}\|H(t,z)\|<\infty$, then there exists $\lim\limits_{t\to t_{max}-0} z(t)\in \partial D$.

Let us prove that $\lim\limits_{t\to t_{max}-0}\|z(t)\|=\infty$ if $t_{max}<\infty$. It follows from the above that $D$ is unbounded, since $t_{max}=\infty$ otherwise.

Assume that $\|z(t)\|\not\to \infty$ as $t\to t_{max}-0$. Then there exists a closed bounded set $\Theta\subset D$ such that $\rho(\Theta,\partial D)>0$ and $z(t)\in \Theta$ for all $t\in [t_0,t_{max})$. Hence, $\|H(t,z(t))\|\le C$, where $C>0$ is some constant, for all $t\in [t_0,t_{max})$. Consider a sequence $\{t_n\}$ such that $t_0\le t_n <t_{max}$, $t_n\to t_{max}$ as $n\to \infty$. Since $z(t)$ remains in $D$ for all $t$, then $z(t_n)\in D$ for all $n$. As in the proof of Lemma \ref{Lem_Exist-Blow_up}, we obtain that  $\|z(t_n)-z(t_m)\|\le C|t_n-t_m|\to 0$ as $m,n\to \infty$ for any $t_m$, $t_n$. Hence, $z(t_n)$ is a Cauchy sequence and, therefore, it converges to some point $z_*\in D$. Consequently, there exists $\lim\limits_{t\to t_{max}-0} z(t)=z_*\in D$, which contradicts the fact mentioned above.
Thus, it is proved that for any closed bounded set $\Theta\subset D$ such that $\rho(\Theta,\partial D)>0$ there exists $t_*$ \,($t_0<t_*<t_{max}$) such that $z(t)\not\in \Theta$ for $t\in [t_*,t_{max})$ and, therefore, $\lim\limits_{t\to t_{max}-0}\|z(t)\|=\infty$.
 \end{proof}

 \begin{theorem}[\textbf{Lagrange instability (blow-up of solutions)}]\label{Th_Lagr_Instab}
Let the conditions of Lemma \ref{Lem-Alternative} and the following conditions hold:
 \begin{enumerate}[label={\upshape\arabic*.}, ref={\upshape\arabic*}, itemsep=4pt,parsep=0pt,topsep=4pt,leftmargin=0.6cm]
\addtocounter{enumi}{3}
\item\label{InstLagr-a}  There exists an open set $\widetilde{\Omega}\subseteq W=Y_1\dotpl Y_{2\Sigma}$ such that the component $(P_1+P_{2\Sigma})x(t)=(x_1+x_{2\Sigma})(t)$ of each solution $x(t)$ with the initial point $(t_0,x_0)\in L_0$, for which $(P_1+P_{2\Sigma})x_0\in \Omega=\widetilde{A}^{-1}\widetilde{\Omega}$, remains in $\Omega$ for all $t$ from the maximal interval of existence of $x(t)$.

\item\label{InstLagr-b}  There exists a functional
  \begin{equation}\label{Vmin}
   V(w)=\min\limits_{i=1,...,m}V_i(w),
  \end{equation}
  where $w=\widetilde{A}(x_1+x_{2\Sigma})\in W$ and $V_i\in C(W,\R_+)$, $i=1,...,m$, are positive and continuously differentiable functionals on $\widetilde{\Omega}$, and functionals $U\in C(0,\infty)$, $\psi\colon \R_+\to \R_+$ such that $\psi(t)$ is integrable on each finite interval in $\R_+$,
   \begin{equation}\label{Blow-up}
  \int\limits^{\infty}\dfrac{du}{U(u)}<\infty,\qquad   \int\limits^{\infty}\psi(t)dt=\infty,
    \end{equation}
   and for each $(t,\widetilde{A}^{-1}w+x_{20})\in L_0$, for which $w\in \widetilde{\Omega}$, the following inequality holds:
   \begin{equation}\label{InstLagrReg}
  \big\langle \widetilde{A}\Pi(t,\widetilde{A}^{-1}w+x_{20}), \partial_{w}V_{j(w)}(w) \big\rangle \ge U\big(V(w)\big)\psi(t),
   \end{equation}
  where $j(w)\in \{1,...,m\}$ is any index for which $V(w)=V_{j(w)}(w)$ for a given $w$.
 \end{enumerate}
Then for each initial point $(t_0,x_0)\in L_0$ such that $(P_1+P_{2\Sigma})x_0\in \Omega$, there exists a $t_{max}<\infty$ such that the IVP \eqref{ADAE}, \eqref{ini} has a unique solution $x(t)$ on the maximal interval of existence $[t_0,t_{max})$ and $\lim\limits_{t\to t_{max}-0}\|Ax(t)\|=\infty$. In addition, if $D=X$ or the operator $A$ is bounded, then $\lim\limits_{t\to t_{max}-0}\|x(t)\|=\infty$ $($the solution is blow-up in the finite time $[t_0,t_{max})$$)$.
 \end{theorem}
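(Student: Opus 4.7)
The plan is to follow the same strategy as in the proof of Theorem~\ref{Th_GlobReg}, but with the direction of all inequalities reversed and with the roles of convergence/divergence of the integrals in \eqref{Blow-up} swapped. Because the conditions of Lemma~\ref{Lem-Alternative} are in force, for every initial point $(t_0,x_0)\in L_0$ the IVP \eqref{ADAE}, \eqref{ini} has a unique solution $x(t)=\widetilde{A}^{-1}w(t)+\widehat{\eta}(t,w(t))$ on a maximal interval $[t_0,t_{max})$, where $w(t)$ solves the IVP \eqref{ADAE_DE_eta_w}, \eqref{RegDEeta_ini}, and Lemma~\ref{Lem-Alternative} already gives the blow-up conclusion $\lim_{t\to t_{max}-0}\|Ax(t)\|=\infty$ (and also $\lim_{t\to t_{max}-0}\|x(t)\|=\infty$ when $D=X$ or $A$ is bounded) provided we can show $t_{max}<\infty$. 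So the whole theorem reduces to proving $t_{max}<\infty$ for the solution $w(t)$ of the reduced equation $\dot{w}=\widehat{\Pi}(t,w)=\widetilde{A}\,\Pi(t,\widetilde{A}^{-1}w+\widehat{\eta}(t,w))$.

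Next I would use condition \ref{InstLagr-a} to trap $w(t)$ in $\widetilde{\Omega}$. Under the isomorphism $w=\widetilde{A}(x_1+x_{2\Sigma})$, the requirement $(P_1+P_{2\Sigma})x(t)\in \Omega=\widetilde{A}^{-1}\widetilde{\Omega}$ is equivalent to $w(t)\in \widetilde{\Omega}$, so $w(t)\in \widetilde{\Omega}$ for all $t$ in the maximal interval. Together with $(t,\widetilde{A}^{-1}w(t)+\widehat{\eta}(t,w(t)))\in L_0$, the differential inequality \eqref{InstLagrReg} can be applied along the trajectory and gives
\begin{equation*}
\langle \widehat{\Pi}(t,w(t)),\partial_{w}V_{j(w(t))}(w(t))\rangle \ge U(V(w(t)))\psi(t),\qquad t\in[t_0,t_{max}).
\end{equation*}
Now I would invoke Lemma~\ref{Lem_2.2-ineq_ge} (which is the $\min$-functional analogue of \cite[Lemma~2.2]{KK1956} used in the proof of Theorem~\ref{Th_GlobReg}) to integrate this inequality and obtain
\begin{equation*}
V(w(t))-V(w(t_0))\ge \int_{t_0}^{t} U(V(w(\tau)))\psi(\tau)\,d\tau,\qquad t\in[t_0,t_{max}).
\end{equation*}

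Assume, for contradiction, that $t_{max}=\infty$. Since $V_i>0$ on $\widetilde{\Omega}$, the quantity $V(w(\tau))>0$ along the trajectory, so $U\circ V$ is well defined. Applying the change-of-variable lemma \cite[Lemma~2.1]{KK1956} yields
\begin{equation*}
\int_{V(w(t_0))}^{V(w(t))}\frac{du}{U(u)}\ge \int_{t_0}^{t}\psi(\tau)\,d\tau,\qquad t\in[t_0,\infty).
\end{equation*}
Letting $t\to\infty$, the right-hand side diverges by the second part of \eqref{Blow-up}. The left-hand side, however, is uniformly bounded: if $V(w(t))$ stays bounded then so is the integral, and if $V(w(t))\to\infty$ along some sequence, then the left-hand side is still majorized by $\int_{V(w(t_0))}^{\infty}du/U(u)<\infty$ in view of the first part of \eqref{Blow-up}. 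This contradiction forces $t_{max}<\infty$, and the blow-up assertions then follow from Lemma~\ref{Lem-Alternative} as noted above.

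The only delicate step I see is the application of Lemma~\ref{Lem_2.2-ineq_ge}: one must verify that $\widehat{\Pi}(t,w)$ together with $w(t)$ satisfies the hypotheses of that lemma on $\widetilde{\Omega}$, in particular that the composition $V\circ w$ admits the chain-rule-type lower bound when $V$ is given by a $\min$ of continuously differentiable functionals (that is why the lemma is tailored for \eqref{Vmin} rather than \eqref{Vmax}). This is purely technical, since \eqref{InstLagrReg} supplies the required pointwise bound on the right index $j(w)$; the rest is essentially bookkeeping paralleling the proof of Theorem~\ref{Th_GlobReg}.
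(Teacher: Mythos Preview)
Your proposal is correct and follows essentially the same approach as the paper: reduce via Lemma~\ref{Lem-Alternative} to showing $t_{max}<\infty$ for $w(t)$, use condition~\ref{InstLagr-a} to keep $w(t)\in\widetilde{\Omega}$, integrate the differential inequality with Lemma~\ref{Lem_2.2-ineq_ge}, apply the \cite[Lemma~2.1]{KK1956}-type comparison, and derive a contradiction from \eqref{Blow-up}. The only cosmetic difference is that the paper works along a sequence $t_n\to\infty$ starting at $t_1>t_0$, whereas you pass to the limit $t\to\infty$ directly; the substance is identical.
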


 \begin{proof}
From Lemma \ref{Lem-Alternative} and its proof we deduce that for an arbitrary initial point $(t_0,x_0)\in L_0$  the IVP \eqref{ADAE}, \eqref{ini} has the unique solution $x(t)=\widetilde{A}^{-1}w(t)+\widehat{\eta}(t,w(t))$ on $[t_0,t_{max})$, where $w(t)$ is the solution of the IVP \eqref{ADAE_DE_eta_w}, \eqref{RegDEeta_ini} on $[t_0,t_{max})$ and $\widehat{\eta}\in C(\R_+\times W,D_{20})$ is the function defined in \eqref{ADAE_DE_eta_w}. Also, it follows from Lemma \ref{Lem-Alternative} that if $t_{max}<\infty$, then the theorem statement is fulfilled.

Assume that $t_{max}=\infty$. Consider a sequence $\{t_n\}$ such that $t_n> t_0$ and $t_n\to \infty$ as $n\to \infty$. Due to condition \ref{InstLagr-a}, $w(t_n)\in \widetilde{\Omega}$ for all $n$.
It follows from condition~\ref{InstLagr-b} that the inequality $\big\langle \widehat{\Pi}(t,w), \partial_{w}V_{j(w)}(w)\big\rangle\ge U\big(V(w)\big)\psi(t)$, where $V$, $U$ and $\psi$ are some functionals satisfying condition~\ref{InstLagr-b}, holds for each $t\in \R_+$, $w\in \widetilde{\Omega}$. Then
\begin{equation}\label{InstLagr-ineq}
V(w(t_n))-V(w(t_1))\ge \int\limits_{t_1}^{t_n}U\big(V(w(\tau))\big)\psi(\tau)\, d\tau
\end{equation}
for all $n\ge 1$ due to Lemma \ref{Lem_2.2-ineq_ge}.
From \eqref{InstLagr-ineq} we infer that
\begin{equation}\label{InstLagr-ineq-2}
\int\limits_{V(w(t_1))}^{V(w(t_n))} \dfrac{du}{U(u)}\ge \int\limits_{t_1}^{t_n}\psi(\tau)\, d\tau
\end{equation}
for all $n\ge 1$, which is proved in the same way as \cite[Lemma~2.1]{KK1956}.
Further, passing to the limit as $n\to \infty$, we obtain
\begin{equation}\label{InstLagr-ineq-3}
\infty>\int\limits_{V(w(t_1))}^\infty \dfrac{du}{U(u)}\ge \lim\limits_{n\to\infty}\int\limits_{V(w(t_1))}^{V(w(t_n))} \dfrac{du}{U(u)}\ge \int\limits_{t_1}^{\infty}\psi(\tau)\, d\tau = \infty
\end{equation}
by virtue of \eqref{Blow-up}, which is a contradiction. Hence $t_{max}<\infty$ and the statement of the theorem holds.
\end{proof}

 \begin{theorem}[\textbf{Lagrange instability}]\label{Th_Lagr_Instab_specify}
Let the conditions of Theorem~\ref{Th_Exist-Lip_set} (or Corollary \ref{Corollary_Exist_set}) hold, and let the following conditions, where $\widetilde{M}$, $M_{20}$ and $M_{1,2\Sigma}=\widetilde{A}^{-1}\widetilde{M}$  are defined in Theorem~\ref{Th_Exist-Lip_set}, be satisfied:
 \begin{enumerate}[label={\upshape\arabic*.}, ref={\upshape\arabic*}, itemsep=4pt,parsep=0pt,topsep=4pt,leftmargin=0.6cm]
\addtocounter{enumi}{2}
 \item\label{Alternative_set}
 For any fixed ${a>0}$ and any closed bounded set $\widetilde{S}\subset \widetilde{M}$ such that $\rho(\widetilde{S},\partial \widetilde{M})>0$ the following holds:
  \begin{equation}\label{Alt-1_set}
 \sup\limits_{(t,x)\,\in\, L_0,\; t\,\in\, [0,a],\; w\,\in\, \widetilde{S}} \big\|x_{20}-W_{t,x}^{-1} F_{2*}(t,P_1\widetilde{A}^{-1}w,P_{2\Sigma}\widetilde{A}^{-1}w,x_{20})\big\| \le K \qquad (w=\widetilde{A}(x_1+x_{2\Sigma})),
  \end{equation}
  \begin{equation}\label{Alt-2_set}
 \sup\limits_{(t,x)\,\in\, L_0,\; t\,\in\, [0,a],\; w\,\in\, \widetilde{S},\; \|x_{20}\|\,\le\, K} \|(Q_1+Q_{2\Sigma})f(t,x)\|<\infty \qquad (x=\widetilde{A}^{-1}w+x_{20}),
  \end{equation}
  where ${K=K\big(a,\rho(\widetilde{S},\partial \widetilde{M})\big)}$ is some constant \textup{(}depending on the choice of $a$, $\widetilde{S}$\textup{)}.

\item\label{InstLagr-a_set}
  The component $(P_1+P_{2\Sigma})x(t)=(x_1+x_{2\Sigma})(t)$ of each solution $x(t)$ with the initial point $(t_0,x_0)\in L_0$ such that $(P_1+P_{2\Sigma})x_0\in M_{1,2\Sigma}$, $P_{20}x_0\in M_{20}$ remains in $M_{1,2\Sigma}$ for all $t$ from the maximal interval of existence of $x(t)$.

\item\label{InstLagr-b_set}
  There exists a functional of the form \eqref{Vmin}, i.e., $V(w)=\min\limits_{i=1,...,m}V_i(w)$,
  where $w=\widetilde{A}(x_1+x_{2\Sigma})\in W$ and $V_i\in C(W,\R_+)$, $i=1,...,m$, are positive and continuously differentiable functionals on $\widetilde{M}$, and functionals $U\in C(0,\infty)$, $\psi\colon \R_+\to \R_+$ satisfying \eqref{Blow-up}, such that $\psi(t)$ is integrable on each finite interval in $\R_+$  and for each $(t,\widetilde{A}^{-1}w+x_{20})\in L_0$ for which $w\in \widetilde{M}$ the  inequality \eqref{InstLagrReg}, where $j(w)\in \{1,...,m\}$ is any index for which $V(w)=V_{j(w)}(w)$ for a given $w$, holds.
 \end{enumerate}
Then for each initial point $(t_0,x_0)\in L_0$ such that $(P_1+P_{2\Sigma})x_0\in M_{1,2\Sigma}$ and $P_{20}x_0\in M_{20}$, there exists a $t_{max}<\infty$ such that the IVP \eqref{ADAE}, \eqref{ini} has a unique solution $x(t)$ on the maximal interval of existence $[t_0,t_{max})$ and $\lim\limits_{t\to t_{max}-0}\|Ax(t)\|=\infty$. In addition, if $D=X$ or the operator $A$ is bounded, then $\lim\limits_{t\to t_{max}-0}\|x(t)\|=\infty$ $($the solution blows up in the finite time $[t_0,t_{max})$$)$.
 \end{theorem}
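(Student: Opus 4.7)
The plan is to adapt the proof of Theorem \ref{Th_Lagr_Instab} to the setting where solutions are constrained to the open set $\widetilde{M}$ rather than the whole space $W$, replacing the role of Lemma \ref{Lem-Alternative} by an argument based on Lemma \ref{Lem_Blow_up}. First, I would apply Theorem \ref{Th_Exist-Lip_set} to obtain, for each admissible initial point $(t_0,x_0)\in L_0$, the unique solution $x(t)=\widetilde{A}^{-1}w(t)+\widehat{\eta}(t,w(t))$ on the maximal interval of existence $[t_0,t_{max})$, where $w(t)$ solves the IVP \eqref{ADAE_DE_eta_w}, \eqref{RegDEeta_ini} with $w(t)\in\widetilde{M}$. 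By condition \ref{InstLagr-a_set}, we have $w(t)=\widetilde{A}(x_1+x_{2\Sigma})(t)\in\widetilde{M}$ for all $t\in[t_0,t_{max})$.

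Next, I would establish the blow-up alternative: if $t_{max}<\infty$, then $\|w(t)\|\to\infty$ as $t\to t_{max}-0$. To this end, I would mimic the estimate of $\widehat{\Pi}$ from the proof of Lemma \ref{Lem-Alternative}, but using condition \ref{Alternative_set} to conclude that for every $a>0$ and every closed bounded $\widetilde{S}\subset \widetilde{M}$ with $\rho(\widetilde{S},\partial\widetilde{M})>0$,
\begin{equation*}
\sup_{t\in[0,a],\, w\in\widetilde{S}}\|\widehat{\eta}(t,w)\|\le K \quad \text{and hence} \quad \sup_{t\in[0,a],\, w\in\widetilde{S}}\|\widehat{\Pi}(t,w)\|<\infty.
\end{equation*}
Applying Lemma \ref{Lem_Blow_up} with $D=\widetilde{M}$ then yields the desired alternative.

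Now I would argue by contradiction: suppose $t_{max}=\infty$. Taking a sequence $t_n>t_0$ with $t_n\to\infty$, condition \ref{InstLagr-a_set} ensures $w(t_n)\in\widetilde{M}$, and by condition \ref{InstLagr-b_set} together with \eqref{ADAE_DE_eta_w} we have $\langle \widehat{\Pi}(t,w),\partial_w V_{j(w)}(w)\rangle\ge U(V(w))\psi(t)$ on $\R_+\times\widetilde{M}$. Applying Lemma \ref{Lem_2.2-ineq_ge} on $[t_1,t_n]$ (noting $w(t)\in\widetilde{M}$ throughout), followed by the same change-of-variable argument as in \cite[Lemma~2.1]{KK1956}, yields
\begin{equation*}
\int_{V(w(t_1))}^{V(w(t_n))}\frac{du}{U(u)}\ge \int_{t_1}^{t_n}\psi(\tau)\,d\tau.
\end{equation*}
Passing to the limit $n\to\infty$ and using \eqref{Blow-up} gives the contradiction $\infty>\int^\infty du/U(u)\ge \int^\infty \psi(\tau)\,d\tau=\infty$. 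Hence $t_{max}<\infty$, and by the alternative established above, $\|w(t)\|=\|Ax(t)\|\to\infty$ as $t\to t_{max}-0$. The last assertion follows because, if $D=X$, the closed operator $A$ is bounded by the closed graph theorem, and in either case $\|Ax(t)\|\le\|A\|\,\|x(t)\|$ forces $\|x(t)\|\to\infty$.

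The main obstacle I anticipate is the careful verification of the local boundedness estimate for $\widehat{\Pi}$ needed to invoke Lemma \ref{Lem_Blow_up}: one must show that when $w$ ranges over a closed bounded $\widetilde{S}\subset\widetilde{M}$ at positive distance from $\partial\widetilde{M}$, the implicitly defined $\widehat{\eta}(t,w)$ remains bounded (via \eqref{Alt-1_set}) and then that \eqref{Alt-2_set} controls the remaining term. This is precisely where condition \ref{Alternative_set} is tailored to replace the simpler norm-bound version used in Lemma \ref{Lem-Alternative}, and getting the quantifiers right between $\widetilde{S}$, $K(a,\rho(\widetilde{S},\partial\widetilde{M}))$, and the image of $w(t)$ is the subtle point.
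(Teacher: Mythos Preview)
Your proposal is correct and follows essentially the same approach as the paper's own proof: invoke Theorem~\ref{Th_Exist-Lip_set} for existence, use condition~\ref{Alternative_set} to bound $\widehat{\eta}$ and then $\widehat{\Pi}$ on closed bounded sets $\widetilde{S}\subset\widetilde{M}$ at positive distance from $\partial\widetilde{M}$, apply Lemma~\ref{Lem_Blow_up} for the blow-up alternative, and then derive a contradiction from $t_{max}=\infty$ via Lemma~\ref{Lem_2.2-ineq_ge} and the \cite[Lemma~2.1]{KK1956}-type inequality together with \eqref{Blow-up}. The paper carries out exactly these steps in the same order, and your identification of the ``main obstacle'' (controlling $\widehat{\eta}$ and $\widehat{\Pi}$ via \eqref{Alt-1_set}--\eqref{Alt-2_set}) matches precisely the point the paper handles explicitly.
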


 \begin{proof}%[Proof of Theorem \ref{Th_Lagr_Instab_specify}]
It follows from the proof of Theorem~\ref{Th_Exist-Lip_set} that for an arbitrary initial point $(t_0,x_0)\in L_0$, for which $(P_1+P_{2\Sigma})x_0\in M_{1,2\Sigma}$ and $P_{20}x_0\in M_{20}$, there exists the unique solution $x(t)=\widetilde{A}^{-1}w(t)+\widehat{\eta}(t,w(t))$ of the IVP \eqref{ADAE}, \eqref{ini} on the maximal interval of existence $[t_0,t_{max})$. Here $w(t)$ is the solution of the IVP \eqref{ADAE_DE_eta_w}, \eqref{RegDEeta_ini} on $[t_0,t_{max})$ and $\widehat{\eta}\in C(\R_+\times \widetilde{M},M_{20})$ is the function defined in \eqref{ADAE_DE_eta_w}.

From \eqref{Alt-1_set} we obtain that  $\sup\limits_{t\,\in\, [0,a],\; w\,\in\, \widetilde{S}}\|\widehat{\eta}(t,w)\|\le K$ with some constant ${K=K\big(a,\rho(\widetilde{S},\partial \widetilde{M})\big)}$ for any fixed ${a>0}$ and any closed bounded set $\widetilde{S}\subset \widetilde{M}$ such that $\rho(\widetilde{S},\partial \widetilde{M})>0$.
Then, using \eqref{Alt-2_set}, we have
$\sup\limits_{t\,\in\,[0,a],\, w\,\in\, \widetilde{S}} \|(Q_1+Q_{2\Sigma})f(t,\widetilde{A}^{-1}w+\widehat{\eta}(t,w))\| <\infty$ and, hence,
$\sup\limits_{t\,\in\, [0,b],\, w\,\in\, \widetilde{S}} \|\widehat{\Pi}(t,w)\|<\infty$
for any fixed $b>0$ and any closed bounded set $\widetilde{S}\subset \widetilde{M}$ such that $\rho(\widetilde{S},\partial \widetilde{M})>0$.

Thus, it follows from Lemma \ref{Lem_Blow_up} that if $t_{max}<\infty$, then $\lim\limits_{t\to t_{max}-0}\|w(t)\|=\infty$.

Assume that $t_{max}=\infty$. Consider a sequence $\{t_n\}$ such that $t_n> t_0$ and $t_n\to \infty$ as $n\to \infty$. Due to condition \ref{InstLagr-a_set}, $w(t_n)\in \widetilde{M}$ for all $n$.
It follows from condition~\ref{InstLagr-b_set} that the inequality $\big\langle \widehat{\Pi}(t,w), \partial_{w}V_{j(w)}(w)\big\rangle\ge U\big(V(w)\big)\psi(t)$, where $V$, $U$ and $\psi$ are some functionals satisfying condition~\ref{InstLagr-b_set}, holds for each $t\in \R_+$, $w\in \widetilde{M}$. Then the inequality \eqref{InstLagr-ineq} holds for all $n\ge 1$ due to Lemma \ref{Lem_2.2-ineq_ge}.
From \eqref{InstLagr-ineq} we obtain that \eqref{InstLagr-ineq-2} holds for all $n\ge 1$, which is proved in the same way as \cite[Lemma~2.1]{KK1956}.
Further, passing to the limit as $n\to \infty$, we obtain \eqref{InstLagr-ineq-3} by virtue of \eqref{Blow-up}, which is a contradiction. Hence $t_{max}<\infty$ and the statement of the theorem holds.
\end{proof}

 \begin{remark}\label{Rem_Appr1_Semi-inv}
In the above theorems, lemmas and corollaries one can use the semi-inverse operator $A^{(-1)}$ defined by \eqref{Semi-inverse} instead  of the inverse $\widetilde{A}^{-1}$ of \eqref{widetilde_A}, taking into account that ${A^{(-1)}=\widetilde{A}^{-1}(Q_1+Q_{2\Sigma})}$,\; $\widetilde{A}^{-1}\widetilde{M}=A^{(-1)}\widetilde{M}$ \;($\widetilde{M}\subseteq Y_1\dotpl Y_{2\Sigma}$),\; $\widetilde{A}(x_1+x_{2\Sigma})=A(x_1+x_{2\Sigma})$,\; $\widetilde{A}^{-1}w=A^{(-1)}w$ \;($w\in Y_1\dotpl Y_{2\Sigma}$),\;
$\widetilde{A}\,\Pi(t,x)=A\,\Pi(t,x)$,\; $\widetilde{A}^{-1}\widetilde{\Omega}=A^{(-1)}\widetilde{\Omega}$ \;($\widetilde{\Omega}\subseteq Y_1\dotpl Y_{2\Sigma}$).
 \end{remark}

 \subsection{Second approach}\label{Approach_2}

In Section \ref{Approach_1}, the case when the component $x_{20}$ can be defined as an implicit function from equation \eqref{AE} (i.e., when $Q_{2*}f(t,x)$ depends on $x_{20}$) was considered.
In the present section this is not required, i.e., a more general case is considered, but on the other hand it is assumed that the function $f$ will have a special structure.

As mentioned in Remark \ref{Rem_A_D}, for convenience we assume that $D_B\supseteq D_A$.

Equation \eqref{1_1} is equivalent to $\frac{d}{dt}[P_1x]+\widetilde{A}^{-1}BP_1x=\widetilde{A}^{-1}Q_1f(t,x)$, or $\frac{d}{dt}[P_1x]+A^{(-1)}BP_1x=A^{(-1)}Q_1f(t,x)$, or $\frac{d}{dt}[P_1x]+G^{-1}BP_1x=G^{-1}Q_1f(t,x)$, where $\widetilde{A}^{-1}\in \spL(Y,X)$ is the inverse of
\eqref{widetilde_A}, $A^{(-1)}\in \spL(Y,X)$ is the semi-inverse of $A$, and $G^{-1}\in \spL(Y,X)$ is the inverse of $G=Q_1A+Q_2B$ \cite[Subsection~3.3]{Vlasenko1}, since $\widetilde{A}^{-1}AP_1=A^{(-1)}AP_1=G^{-1}AP_1=P_1$.

Denote
\begin{equation}\label{P_2wedge}
x_{2j}^{(j+1)}=P_{2j}^{(j+1)}x,\; j=0,...,\nu-1;\quad x_{2\wedge,j}=P_{2\wedge,j}\,x,\; j=0,...,\nu-2;\quad
x_{2j}=P_{2j}x,\; j=0,...,\nu-1,
\end{equation}
and note that $x_{2(\nu-1)}=x_{2(\nu-1)}^{(\nu)}$ and $x_{2j}=x_{2j}^{(j+1)}+x_{2\wedge,j}$, $j=0,...,\nu-2$ (see \eqref{Q_2Sigma-P_2wedge}). As above, $x_1=P_1x$. Thus,
 \begin{equation}\label{xrr-ind_Appr-2}
x=x_1+x_2,\quad
x_2=\sum\limits_{j=0}^{\nu-1} x_{2j}= \sum\limits_{j=0}^{\nu-1}x_{2j}^{(j+1)}+ \sum\limits_{j=0}^{\nu-2}x_{2\wedge,j}= x_{20}+x_{2\Sigma},\quad x_{2\Sigma}=\sum\limits_{j=1}^{\nu-1}x_{2j}^{(j+1)}+ \sum\limits_{j=1}^{\nu-2}x_{2\wedge,j},
 \end{equation}
This representation of $x_2$ correspond to the direct decomposition (cf. \eqref{DY2rr})
\begin{equation}\label{D2_Appr-2}
D_2=D_{20}\dotpl D_{2\Sigma}.
\end{equation}

Assume that the function $f$ has the following structure:
 \begin{equation}\label{f-Appr-2}
f(t,x)=\big(Q_1+Q_{20}\big)f(t,x)+  \sum\limits_{s=1}^{\nu-2}Q_{2\Sigma,s} f\bigg(t, \sum\limits_{j=1}^{\nu-1}x_{2j}^{(j+1)}+ \sum\limits_{j=s}^{\nu-2}x_{2\wedge,j}\bigg)+ \sum\limits_{s=1}^{\nu-1}Q_{2s}^{(s+1)} f\bigg(t,\sum\limits_{j=s}^{\nu-1}x_{2j}^{(j+1)}\bigg).
 \end{equation}
Then, taking into account the aforementioned, the system \eqref{1_1}, \eqref{2_20},  \eqref{2_2Sigma_s-}, \eqref{2_2Sigma_nu-2}, \eqref{3_2*_s-}, \eqref{3_2*_nu-1} takes the form
 \begin{align}
 \frac{d}{dt}x_1+\widetilde{A}^{-1}Bx_1 &=\widetilde{A}^{-1}Q_1f(t,x), \label{x_1}\\
 \frac{d}{dt}\Big[A\Big(x_{21}^{(2)}+x_{2\wedge,1}\Big)\Big]+ Bx_{20} &= Q_{20}f(t,x)   \qquad\qquad (\nu\ge 2), \label{x_20}
 \end{align}
\begin{subequations}\label{x_2wedge,sSyst}
\renewcommand{\theequation}{\theparentequation.s}
 \begin{align}
  \qquad\qquad  & \vdots  && \nonumber\\
 \frac{d}{dt}\Big[A\Big(x_{2(s+1)}^{(s+2)}+x_{2\wedge,s+1}\Big)\Big]+ Bx_{2\wedge,s} &= Q_{2\Sigma,s}f\bigg(t, \sum\limits_{j=1}^{\nu-1}x_{2j}^{(j+1)}+ \sum\limits_{i=s}^{\nu-2}x_{2\wedge,i}\bigg)\quad & (s=1,...,\nu-3) &
     \label{x_2wedge,s}\\
  \qquad\qquad  & \vdots  & (\nu\ge 4), &   \nonumber
 \end{align}
\end{subequations}
\begin{equation}
 \frac{d}{dt}\Big[Ax_{2(\nu-1)}\Big]+Bx_{2\wedge,\nu-2} = Q_{2\Sigma,\nu-2}f\bigg(t, \sum\limits_{j=1}^{\nu-1}x_{2j}^{(j+1)}+ x_{2\wedge,\nu-2}\bigg)
   \qquad(\nu\ge 3), \label{x_2wedge,nu-2}
\end{equation}
\begin{subequations}\label{x_2s_(s+1)Syst}
\renewcommand{\theequation}{\theparentequation.s}
 \begin{align}
   \qquad\qquad  & \vdots  \nonumber\\
 Bx_{2s}^{(s+1)} &=Q_{2s}^{(s+1)}f\bigg(t, \sum\limits_{j=s}^{\nu-1}x_{2j}^{(j+1)}\bigg) \qquad (s=1,...,\nu-2)
  \qquad\quad(\nu\ge 3),   \label{x_2s_(s+1)} \\
    \qquad\qquad  & \vdots  \nonumber
 \end{align}
\end{subequations}
 \begin{equation}
Bx_{2(\nu-1)} =Q_{2(\nu-1)}f\big(t,x_{2(\nu-1)}\big)
  \qquad\qquad(\nu\ge 2). \hspace{2cm} \label{x_2(nu-1)}
 \end{equation}
This system consists of $2\nu-1$ equations with $2\nu-1$ variables and, hence, is closed. It is equivalent to the DAE \eqref{ADAE} where $f$ is of the form \eqref{f-Appr-2}. In \eqref{x_1} we can take $\widetilde{A}^{-1}Q_1=A^{(-1)}Q_1$, $\widetilde{A}^{-1}Bx_1=A^{(-1)}Bx_1$.

Notice that the reference \eqref{x_2s_(s+1)} (respectively, (\ref{x_2s_(s+1)Syst}.i)\,) means that we consider the equation from \eqref{x_2s_(s+1)} number $s$ (respectively, number $i$), for example, \eqref{x_2s_(s+1)}, $s=\nu-2$, means $Bx_{2(\nu-2)}^{(\nu-1)} =Q_{2(\nu-2)}^{(\nu-1)}f\bigg(t,x_{2(\nu-2)}^{(\nu-1)}+x_{2(\nu-1)}\bigg)$, and (\ref{x_2s_(s+1)Syst}.i), $i=\nu-2$, means the same.  Similarly for the reference \eqref{x_2wedge,s} (respectively, (\ref{x_2wedge,sSyst}.i)\,).

Note that in the case when $P(\la)$ has index 1,  $P_{20}^{(1)}=P_2$ and $Q_{20}^{(1)}=Q_2$ (since $D_{20}^1=D_2$, $Y_{20}^{(1)}=Y_2$). Thus, \emph{for the pencil $P(\la)$ of index 1, the system  \eqref{x_1}--\eqref{x_2(nu-1)} becomes the system of equations \eqref{x_1},  $Bx_2=Q_2f(t,x)$} (\emph{that is, \eqref{x_20} where $x_{20}=x_2$, $Q_{20}=Q_2$, $x_{2\wedge,1}=0$, $x_{21}^{(2)}=0$}) and it is equivalent to the system \eqref{DE_ind-1}, \eqref{AE_ind-1}.

The initial condition for the system \eqref{x_1}--\eqref{x_2(nu-1)} (accordingly, for the DAE \eqref{ADAE} with $f$ of the form \eqref{f-Appr-2}) will be considered as
  \begin{equation}\label{ini-2}
P_1x(t_0)=x_{0,1},
 \end{equation}
or $Q_1Ax(t_0)=u_0$ (then $P_1x(t_0)=\widetilde{A}^{-1}u_0=A^{(-1)}u_0$\,).

Below we use the following functions and the following equations equivalent to \eqref{x_2s_(s+1)}, ${s=1,...,\nu-2}$, \eqref{x_2(nu-1)}:
  \begin{subequations}\label{x_2s_(s+1)Syst+}
  \renewcommand{\theequation}{\theparentequation.s}
 \begin{equation}
\begin{split}
F_{2s}^{(s+1)} & \big(t,x_{2j}^{(j+1)},j=s,...,\nu-1\big)=0,  \\
& F_{2s}^{(s+1)}\big(t,x_{2j}^{(j+1)},j=s,...,\nu-1\big):= Q_{2s}^{(s+1)}f\bigg(t, \sum\limits_{j=s}^{\nu-1}x_{2j}^{(j+1)}\bigg) -Bx_{2s}^{(s+1)} \quad (s=1,...,\nu-2),  \label{x_2s_(s+1)+}
\end{split}
 \end{equation}
  \end{subequations}
where $\big(t,x_{2j}^{(j+1)},j=s,...,\nu-1\big)= \big(t,x_{2s}^{(s+1)},x_{2(s+1)}^{(s+2)},...,x_{2\nu-1}\big)$,
 \begin{equation}
F_{2(\nu-1)}\big(t,x_{2(\nu-1)}\big)=0,\qquad
F_{2(\nu-1)}\big(t,x_{2(\nu-1)}\big):= Q_{2(\nu-1)}f\big(t,x_{2(\nu-1)}\big)-Bx_{2(\nu-1)}. \label{x_2(nu-1)+}
 \end{equation}
Also, we will use the following functions and the following equations equivalent to \eqref{x_2wedge,s}, ${s=1,...,\nu-3}$, \eqref{x_2wedge,nu-2}, \eqref{x_20} where $\frac{d}{dt}x_{2(s+1)}^{(s+2)}$, $\frac{d}{dt}x_{2\wedge,s+1}$ (${s=0,...,\nu-3}$) and $\frac{d}{dt}x_{2(\nu-1)}$ are replaced by $d_{2(s+1)}^{(s+2)}$, $d_{2\wedge,s+1}$ and $d_{2(\nu-1)}$:
   \begin{subequations}\label{x_2wedge,sSyst+}
   \renewcommand{\theequation}{\theparentequation.s}
 \begin{equation}
\begin{split}
F_{2\Sigma,s} & \Big(t, x_{2j}^{(j+1)},j=1,...,\nu-1, x_{2\wedge,i},i=s,...,\nu-2, d_{2(s+1)}^{(s+2)}, d_{2\wedge,s+1}\Big) =0, \\
& F_{2\Sigma,s} \Big(t, x_{2j}^{(j+1)},j=1,...,\nu-1, x_{2\wedge,i},i=s,...,\nu-2, d_{2(s+1)}^{(s+2)}, d_{2\wedge,s+1}\Big):=  \\
& :=Q_{2\Sigma,s}f\bigg(t, \sum\limits_{j=1}^{\nu-1}x_{2j}^{(j+1)}+ \sum\limits_{i=s}^{\nu-2}x_{2\wedge,i}\bigg) -Bx_{2\wedge,s} -A\Big[d_{2(s+1)}^{(s+2)}+d_{2\wedge,s+1}\Big] \quad (s=1,...,\nu-3), \label{x_2wedge,s+}
\end{split}
 \end{equation}
   \end{subequations}
 \begin{equation}
\begin{split}
F_{2\Sigma,\nu-2} \big(t,x_{2j}^{(j+1)}, &\, j=1,...,\nu-1, x_{2\wedge,\nu-2}, d_{2(\nu-1)}\big)=0, \\
F_{2\Sigma,\nu-2} & \big(t, x_{2j}^{(j+1)},j=1,...,\nu-1,x_{2\wedge,\nu-2}, d_{2(\nu-1)}\big):= \\
& :=Q_{2\Sigma,\nu-2}f\bigg(t, \sum\limits_{j=1}^{\nu-1}x_{2j}^{(j+1)}+x_{2\wedge,\nu-2}\bigg) -Bx_{2\wedge,\nu-2} -Ad_{2(\nu-1)},  \label{x_2wedge,nu-2+}
\end{split}
 \end{equation}
 \begin{equation}
\begin{split}
F_{20} & \big(t,x_1,x_{20},x_{2\Sigma},d_{21}^{(2)},d_{2\wedge,1}\big)=0, \\
& F_{20}\big(t,x_1,x_{20},x_{2\Sigma},d_{21}^{(2)},d_{2\wedge,1}\big):= Q_{20}f\big(t,x_1+x_{20}+x_{2\Sigma}\big) - Bx_{20} -Q_{20}A\Big[d_{21}^{(2)}+d_{2\wedge,1}\Big].  \label{x_20+}
\end{split}
 \end{equation}

Note that in this subsection we use norms defined in the same way as in Remark~\ref{Rem_Norms}.

 \begin{theorem}[\textbf{Solvability}]\label{Th_Exist-Lip_set-2}
Let the function $f\in C(\R_+\times D,Y)$ have the structure \eqref{f-Appr-2}, ${D_B\supseteq D_A=D}$, ${\dim\ker A=n<\infty}$, and let ${\la A+B}$ be a regular pencil of index $\nu$.
Assume that there exists open sets ${\widetilde{M}_1\subseteq Y_1}$ and ${M_{20}\subseteq D_{20}}$, ${M_{2j}^{(j+1)}\subseteq D_{2j}^{(j+1)}}$, ${M_{2\wedge,j}\subseteq D_{2\wedge,j}}$, ${j=1,...,\nu-2}$, ${M_{2(\nu-1)}=M_{2(\nu-1)}^{(\nu)}\subseteq D_{2(\nu-1)}}$ such that
 \begin{align*}
& Q_{2s}^{(s+1)}f\in C^{\nu-2}\bigg(\R_+\times \bigg(\dotpl\limits_{i=s}^{\nu-1}M_{2i}^{(i+1)}\bigg),Y_{2s}^{(s+1)}\bigg)\!,\quad
 Q_{2\Sigma,s}f\in C^s\bigg(\R_+\times \bigg(\dotpl\limits_{i=1}^{\nu-1}M_{2i}^{(i+1)}\dotpl \dotpl\limits_{i=s}^{\nu-2}M_{2\wedge,i}\bigg),Y_{2\Sigma,s}\bigg)\!,   \\
&  s=1,...,\nu-2, \qquad
 Q_{2(\nu-1)}f\in C^{\nu-1}(\R_+\times M_{2(\nu-1)},Y_{2(\nu-1)}),
 \end{align*}
and the following conditions where \,${M_1=\widetilde{A}^{-1}\widetilde{M}_1\subseteq D_1}$,
\,${M_{2\Sigma}= \dotpl\limits_{i=1}^{\nu-1}M_{2i}^{(i+1)}}\dotpl  \dotpl\limits_{i=1}^{\nu-2}M_{2\wedge,i}$ and
${M=M_1\dotpl M_{20}\dotpl M_{2\Sigma}}$\,  hold:
  \begin{enumerate}[label={\upshape{1.\alph*.}}, ref={\upshape{1.\alph*}},itemsep=4pt,parsep=0pt,topsep=5pt,leftmargin=1cm]
 \item\label{Sogl_set-1a}  For each  ${t\in \R_+}$ there exists a unique ${x_{2(\nu-1)}\in M_{2(\nu-1)}}$ such that \,$t,\,x_{2(\nu-1)}$ satisfy \eqref{x_2(nu-1)}.

 \item\label{Sogl_set-1b} For each ${t\in \R_+}$, ${x_{2j}^{(j+1)}\in M_{2j}^{(j+1)}}$, $j=s+1,...,\nu-1$, there exists a unique ${x_{2s}^{(s+1)}\in M_{2s}^{(s+1)}}$ such that \,$t,\,x_{2j}^{(j+1)}$, $j=s,...,\nu-1$, satisfy \eqref{x_2s_(s+1)}. This holds for each $s$, $s=1,...,\nu-2$.

 \item\label{Sogl_set-1c} For each ${t\in \R_+}$, ${x_{2j}^{(j+1)}\in M_{2j}^{(j+1)}}$, $j=1,...,\nu-1$, $d_{2(\nu-1)}\in D_{2(\nu-1)}$ there exists a unique $x_{2\wedge,\nu-2}\in M_{2\wedge,\nu-2}$ such that $t$, $x_{2\wedge,\nu-2}$, $x_{2j}^{(j+1)}$, $j=1,...,\nu-1$, $d_{2(\nu-1)}$ satisfy  \eqref{x_2wedge,nu-2+}.

 \item\label{Sogl_set-1d}  For each ${t\in \R_+}$, ${x_{2j}^{(j+1)}\in M_{2j}^{(j+1)}}$, $j=1,...,\nu-1$, $x_{2\wedge,i}\in M_{2\wedge,i}$, $i=s+1,...,\nu-2$, $d_{2\wedge,s+1}\in D_{2\wedge,s+1}$, $d_{2(s+1)}^{(s+2)}\in D_{2(s+1)}^{(s+2)}$ there exists a unique $x_{2\wedge,s}\in M_{2\wedge,s}$ such that $t$, $x_{2\wedge,i}$, $i=s,...,\nu-2$, $x_{2j}^{(j+1)}$, $j=1,...,\nu-1$, $d_{2\wedge,s+1}$, $d_{2(s+1)}^{(s+2)}$  satisfy  \eqref{x_2wedge,s+}. This holds for each $s$, $s=1,...,\nu-3$.

 \item\label{Sogl_set-1e}  For each ${t\in \R_+}$, ${x_1\in M_1}$, ${x_{2\Sigma}\in M_{2\Sigma}}$, ${d_{2\wedge,1}\in D_{2\wedge,1}}$, ${d_{21}^{(2)}\in D_{21}^{(2)}}$ there exists a unique ${x_{20}\in M_{20}}$ such that \,$t$, $x_1$, $x_{20}$, $x_{2\Sigma}$, $d_{2\wedge,1}$, $d_{21}^{(2)}$  satisfy  \eqref{x_20+}.
  \end{enumerate}

    \begin{enumerate}[label={\upshape{2.\alph*.}}, ref={\upshape{2.\alph*}}, itemsep=4pt,parsep=0pt,topsep=5pt,leftmargin=1cm]
 \item\label{Inv-set-2a} For any fixed ${\hat{t}\in \R_+}$, ${\hat{x}_{2(\nu-1)}\in M_{2(\nu-1)}}$ satisfying \eqref{x_2(nu-1)}, the operator
   \begin{equation*}%\label{F_2(nu-1)Inv}
  \partial_{x_{2(\nu-1)}}F_{2(\nu-1)}\big(\hat{t},\hat{x}_{2(\nu-1)}\big)\!=\! \left[\partial_x (Q_{2(\nu-1)}f)(\hat{t},\hat{x}_{2(\nu-1)})-Q_{2(\nu-1)}B\right] P_{2(\nu-1)}\big|_{D_{2(\nu-1)}} \colon D_{2(\nu-1)}\to Y_{2(\nu-1)}
   \end{equation*}
  has the inverse $\left[\partial_{x_{2(\nu-1)}}F_{2(\nu-1)} \big(\hat{t},\hat{x}_{2(\nu-1)}\big)\right]^{-1} \in \spL(Y_{2(\nu-1)},D_{2(\nu-1)})$.

 \item\label{Inv-set-2b} For any fixed ${\hat{t}\in \R_+}$, ${\hat{x}_{2i}^{(i+1)}\in M_{2i}^{(i+1)}}$, $i=s,...,\nu-1$ \textup{(}recall that  $\hat{x}_{2(\nu-1)}= \hat{x}_{2(\nu-1)}^{(\nu)}$\textup{)},  satisfying \eqref{x_2s_(s+1)}, the operator
   \begin{multline*}%\label{F_2s_(s+1)Inv}
 \partial_{x_{2s}^{(s+1)}}F_{2s}^{(s+1)} \big(\hat{t},\hat{x}_{2i}^{(i+1)},i=s,...,\nu-1\big) = \\ =\left[\partial_x (Q_{2s}^{(s+1)}f)\bigg(\hat{t}, \sum\limits_{i=s}^{\nu-1}\hat{x}_{2i}^{(i+1)}\bigg)- Q_{2s}^{(s+1)}B\right] P_{2s}^{(s+1)}\big|_{D_{2s}^{(s+1)}} \colon D_{2s}^{(s+1)}\to Y_{2s}^{(s+1)}
   \end{multline*}
  has an inverse belonging to $\spL\big(Y_{2s}^{(s+1)},D_{2s}^{(s+1)}\big)$.
  This holds for each $s$, $s=1,...,\nu-2$.

 \item\label{Inv-set-2c} For any fixed ${\hat{t}\in \R_+}$, ${\hat{x}_{2j}^{(j+1)}\in M_{2j}^{(j+1)}}$, $j=1,...,\nu-1$,  $\hat{x}_{2\wedge,\nu-2}\in M_{2\wedge,\nu-2}$, $\hat{d}_{2(\nu-1)}\in D_{2(\nu-1)}$ satisfying \eqref{x_2wedge,nu-2+},   the operator
   \begin{multline*}%\label{F_2wedge,nu-2Inv}
  \partial_{x_{2\wedge,\nu-2}} F_{2\Sigma,\nu-2} \Big(\hat{t}, \hat{x}_{2j}^{(j+1)},j=1,...,\nu-1, \hat{x}_{2\wedge,\nu-2}, \hat{d}_{2(\nu-1)}\Big)= \\ \left[\partial_x (Q_{2\Sigma,\nu-2}f) \bigg(\hat{t}, \sum\limits_{j=1}^{\nu-1}\hat{x}_{2j}^{(j+1)}+\hat{x}_{2\wedge,\nu-2}\bigg)- Q_{2\Sigma,\nu-2}B\right] P_{2\wedge,\nu-2}\big|_{D_{2\wedge,\nu-2}} \colon D_{2\wedge,\nu-2}\to Y_{2\Sigma,\nu-2}
   \end{multline*}
  has an inverse belonging to $\spL(Y_{2\Sigma,\nu-2},D_{2\wedge,\nu-2})$.

 \item\label{Inv-set-2d} For any fixed ${\hat{t}\in \R_+}$, ${\hat{x}_{2j}^{(j+1)}\in M_{2j}^{(j+1)}}$, $j=1,...,\nu-1$,  $\hat{x}_{2\wedge,i}\in M_{2\wedge,i}$, $i=s,...,\nu-2$, $\hat{d}_{2\wedge,s+1}\in D_{2\wedge,s+1}$, $\hat{d}_{2(s+1)}^{(s+2)}\in D_{2(s+1)}^{(s+2)}$ satisfying \eqref{x_2wedge,s+},   the operator
   \begin{multline*}%\label{F_2Sigma,sInv}
 \partial_{x_{2\wedge,s}} F_{2\Sigma,s}\Big(\hat{t}, \hat{x}_{2j}^{(j+1)},j=1,...,\nu-1, \hat{x}_{2\wedge,i},i=s,...,\nu-2, \hat{d}_{2(s+1)}^{(s+2)},\hat{d}_{2\wedge,s+1}\Big) = \\ =\left[\partial_x (Q_{2\Sigma,s}f)\bigg(\hat{t}, \sum\limits_{j=1}^{\nu-1}\hat{x}_{2j}^{(j+1)}+  \sum\limits_{i=s}^{\nu-2}\hat{x}_{2\wedge,i}\bigg)- Q_{2\Sigma,s}B\right] P_{2\wedge,s}\big|_{D_{2\wedge,s}} \colon D_{2\wedge,s}\to Y_{2\Sigma,s}
   \end{multline*}
  has an inverse belonging to $\spL(Y_{2\Sigma,s},D_{2\wedge,s})$.
  This holds for each $s$, $s=1,...,\nu-3$.

 \item\label{Inv-Lipsch_set-2e}
 The components $Q_1f(t,x_1+x_{20}+x_{2\Sigma})$ and $Q_{20}f(t,x_1+x_{20}+x_{2\Sigma})$ of the function $f(t,x)$ satisfy locally a Lipschitz condition with respect to $x_1+x_{20}$ on $\R_+\times M$.  \\
 For any fixed $\hat{t}\in \R_+$, $\hat{x}_1\in M_1$, $\hat{x}_{20}\in M_{20}$, $\hat{x}_{2\Sigma}\in M_{2\Sigma}$ \textup{(}accordingly,  $\hat{x}=\hat{x}_1+\hat{x}_{20}+\hat{x}_{2\Sigma}\in M$\textup{)},
 $\hat{d}_{21}^{(2)}\in D_{21}^{(2)}$, $\hat{d}_{2\wedge,1}\in D_{2\wedge,1}$ satisfying \eqref{x_20+}
 there exist open neighborhoods
 $U_{\delta}\big(\hat{t},\hat{x}_1,\hat{x}_{2\Sigma}, \hat{d}_{21}^{(2)},\hat{d}_{2\wedge,1}\big)= U_{\delta_1}\big(\hat{t}\big) \times U_{\delta_2}\big(\hat{x}_1\big) \times U_{\delta_3}\big(\hat{x}_{2\Sigma}\big)  \times U_{\delta_4}\big(\hat{d}_{21}^{(2)}\big)\times U_{\delta_5}\big(\hat{d}_{2\wedge,1}\big)\subset
 \R_+\times D_1\times D_{2\Sigma}\times D_{21}^{(2)}\times D_{2\wedge,1}$ and $U_\varepsilon\big(\hat{x}_{20}\big)\subset D_{20}$ and an operator $\mathrm{Z}= \mathrm{Z}_{\hat{t},\hat{x},\hat{d}_{21}^{(2)},\hat{d}_{2\wedge,1}} \in \spL(D_{20},Y_{20})$ having the inverse $\mathrm{Z}^{-1}\in \spL(Y_{20},D_{20})$ such that for each $t\in U_{\delta_1}\big(\hat{t}\big)$,
 $x_1\in U_{\delta_2}\big(\hat{x}_1\big)$,
 $x_{2\Sigma}\in U_{\delta_3}\big(\hat{x}_{2\Sigma}\big)$,
 $d_{21}^{(2)}\in U_{\delta_4}\big(\hat{d}_{21}^{(2)}\big)$, $d_{2\wedge,1}\in U_{\delta_5}\big(\hat{d}_{2\wedge,1}\big)$ and each $x'_{20},\, x''_{20}\in U_\varepsilon\big(\hat{x}_{20}\big)$ the mapping $F_{20}$ satisfies the inequality
  \begin{equation*}%\label{ContractiveMap-2}
 \big\|F_{20}\big(t,x_1,x'_{20},x_{2\Sigma},d_{21}^{(2)},d_{2\wedge,1}\big)-  F_{20}\big(t,x_1,x''_{20},x_{2\Sigma},d_{21}^{(2)},d_{2\wedge,1}\big)- \mathrm{Z}[x'_{20}-x''_{20}]\big\|\le k(\delta,\varepsilon)\big\|x'_{20}-x''_{20}\big\|,
  \end{equation*}
  where $k(\delta,\varepsilon)$ is such that $\lim\limits_{\delta,\,\varepsilon\to 0} k(\delta,\varepsilon)< \|\mathrm{Z}^{-1}\|^{-1}$  \,\textup{(}the numbers $\delta, \varepsilon>0$ depend on the choice of $\hat{t},\hat{x}$, $\hat{d}_{21}^{(2)}$, $\hat{d}_{2\wedge,1}$\textup{)}.
    \end{enumerate}
Then for each initial value $t_0\in \R_+$ and $x_{0,1}\in M_1$ there exists a $t_{max}\le \infty$ such that the IVP \eqref{ADAE}, \eqref{f-Appr-2}, \eqref{ini-2} has a unique solution $x(t)$ in $M$ on the maximal interval of existence $[t_0,t_{max})$.
 \end{theorem}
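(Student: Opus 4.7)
The idea is to solve the decoupled system \eqref{x_1}--\eqref{x_2(nu-1)} sequentially, starting from the last algebraic block and working back to the ODE for $x_1$. Because of the triangular structure \eqref{f-Appr-2}, equation \eqref{x_2(nu-1)} involves only $x_{2(\nu-1)}$, and each subsequent block introduces at most one new unknown, so the nonlocal implicit function theorems of Section~\ref{Sec-Nonloc impl func} can be applied in cascade. The outcome is explicit global expressions for all components of $x$ except $x_1$, leaving a single evolution equation for $x_1$ in the Banach space $X_1$.

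\textbf{Step 1 (purely algebraic chain).} Apply Theorem~\ref{Th_Nonloc_ImplFunc-3} to \eqref{x_2(nu-1)+}: condition~\ref{Sogl_set-1a} supplies unique existence, condition~\ref{Inv-set-2a} the required invertibility, and the assumption $Q_{2(\nu-1)}f\in C^{\nu-1}$ yields a solution $x_{2(\nu-1)}=\eta_{2(\nu-1)}(t)\in C^{\nu-1}(\R_+,M_{2(\nu-1)})$. Then, for $s=\nu-2,\nu-3,\dots,1$, substitute the already-found $x_{2j}^{(j+1)}=\eta_{2j}^{(j+1)}(t)$, $j>s$, into \eqref{x_2s_(s+1)+} and again invoke Theorem~\ref{Th_Nonloc_ImplFunc-3} via conditions~\ref{Sogl_set-1b} and~\ref{Inv-set-2b}. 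At most one order of smoothness is lost per step, but the bound $Q_{2s}^{(s+1)}f\in C^{\nu-2}$ is tailored so that each $\eta_{2s}^{(s+1)}$ is at least $C^1$ in $t$, which is all that is needed to let its time derivative appear as a ``$d$''-argument later.

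\textbf{Step 2 (differential-algebraic chain for the $x_{2\wedge,s}$).} With all $\eta_{2j}^{(j+1)}$ in hand, descend from $s=\nu-2$ to $s=1$. For $s=\nu-2$, substitute $x_{2j}^{(j+1)}=\eta_{2j}^{(j+1)}(t)$ and $d_{2(\nu-1)}=\dot\eta_{2(\nu-1)}(t)$ into \eqref{x_2wedge,nu-2+} and apply Theorem~\ref{Th_Nonloc_ImplFunc-3} via conditions~\ref{Sogl_set-1c} and~\ref{Inv-set-2c} to obtain $\eta_{2\wedge,\nu-2}(t)\in C^1$. Inductively, once $\eta_{2\wedge,s+1}$ has been produced with $C^1$ smoothness, substitute $d_{2(s+1)}^{(s+2)}=\dot\eta_{2(s+1)}^{(s+2)}(t)$ and $d_{2\wedge,s+1}=\dot\eta_{2\wedge,s+1}(t)$ into \eqref{x_2wedge,s+}, and use conditions~\ref{Sogl_set-1d} and~\ref{Inv-set-2d} together with $Q_{2\Sigma,s}f\in C^s$ to produce $\eta_{2\wedge,s}(t)\in C^1$. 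This determines the full component $x_{2\Sigma}(t)=\sum_{j=1}^{\nu-1}\eta_{2j}^{(j+1)}(t)+\sum_{j=1}^{\nu-2}\eta_{2\wedge,j}(t)$.

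\textbf{Step 3 ($x_{20}$-equation and ODE for $x_1$).} For $x_{20}$ differentiability of $F_{20}$ is not assumed, so only Theorem~\ref{Th-Nonloc_ImplFunc} applies: condition~\ref{Sogl_set-1e} yields unique existence, and condition~\ref{Inv-Lipsch_set-2e} provides exactly the contractive estimate~\eqref{Contractive_F} needed, producing, after substituting the already-known $x_{2\Sigma}(t)$ and $d$-data, a function $\eta_{20}(t,x_1)$ continuous on $\R_+\times M_1$ and locally Lipschitz in $x_1$. Plugging everything into \eqref{x_1} gives the closed evolution equation
\begin{equation*}
\dot x_1+\widetilde A^{-1}Bx_1=\widetilde A^{-1}Q_1f\bigl(t,x_1+\eta_{20}(t,x_1)+x_{2\Sigma}(t)\bigr),\qquad x_1(t_0)=x_{0,1},
\end{equation*}
in the Banach space $X_1$. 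Since $\widetilde A^{-1}BP_1\in\spL(X_1)$ and the right-hand side is continuous in $t$ and locally Lipschitz in $x_1$ on $\R_+\times M_1$ (by local Lipschitz continuity of $Q_1f$ from~\ref{Inv-Lipsch_set-2e} and of $\eta_{20}$), the classical ODE existence and extension results \cite[p.~9, Theorem~1; p.~16--17, Corollary]{Schwartz2} yield a unique maximal solution $x_1\in C^1([t_0,t_{max}),M_1)$. Reconstructing $x(t)=x_1(t)+\eta_{20}(t,x_1(t))+x_{2\Sigma}(t)$ gives the unique solution of the IVP \eqref{ADAE}, \eqref{f-Appr-2}, \eqref{ini-2} in $M$ on $[t_0,t_{max})$.

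The main obstacle is the regularity bookkeeping in Steps~1--2: at every descending step one must be certain that the just-constructed implicit function is $C^1$ in $t$, so that its derivative is legitimate as a ``$d$''-placeholder at the next step. The graded assumptions $Q_{2s}^{(s+1)}f\in C^{\nu-2}$ and $Q_{2\Sigma,s}f\in C^s$ are calibrated so that this never fails, but verifying the propagation of smoothness cleanly across the full chain, and checking that the composite right-hand side of the final ODE remains locally Lipschitz in $x_1$ on all of $\R_+\times M_1$, is the technical heart of the argument.
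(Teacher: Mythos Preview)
Your proposal is correct and follows essentially the same route as the paper: solve the algebraic chain \eqref{x_2(nu-1)+}, \eqref{x_2s_(s+1)+} via Theorem~\ref{Th_Nonloc_ImplFunc-3}, then the differential--algebraic chain \eqref{x_2wedge,nu-2+}, \eqref{x_2wedge,s+} by descending induction, then \eqref{x_20+} via Theorem~\ref{Th-Nonloc_ImplFunc}, and finally reduce to a single ODE for the $x_1$-component. One technical point is handled differently: the paper does not work with the ODE for $x_1$ directly in $X_1$, but first changes variables to $w_1=\widetilde{A}x_1\in\widetilde{M}_1\subseteq Y_1$, because $M_1=\widetilde{A}^{-1}\widetilde{M}_1\subseteq D_1$ need not be open in $X_1$ when $D\subsetneq X$ (recall $D_1=X_1\cap D$), whereas $\widetilde{M}_1$ is genuinely open in the Banach space $Y_1$; this is what makes the Schwartz existence and extension theorems directly applicable. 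Your formulation would go through verbatim if $A$ is bounded (so $D=X$), and otherwise the same substitution fixes it.
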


 \begin{corollary}[\textbf{Solvability}]\label{Corollary_Exist_set-2}
Theorem~\ref{Th_Exist-Lip_set-2} remains valid if condition \ref{Inv-Lipsch_set-2e} is replaced by
\begin{enumerate}[label={\upshape{2.\alph*.}}, ref={\upshape{2.\alph*}}, itemsep=4pt,parsep=0pt,topsep=5pt,leftmargin=1cm]
\addtocounter{enumi}{4}
\item\label{Inv_set-2e} The components $Q_1f(t,x)$ and $Q_{20}f(t,x)$ of the function $f(t,x)$ have the continuous partial derivatives with respect to $x$ on $\R_+\times M$.  \\
 For any fixed $\hat{t}\in \R_+$, $\hat{x}_1\in M_1$, $\hat{x}_{20}\in M_{20}$, $\hat{x}_{2\Sigma}\in M_{2\Sigma}$ \textup{(}accordingly,  $\hat{x}=\hat{x}_1+\hat{x}_{20}+\hat{x}_{2\Sigma}\in M$\textup{)},
  $\hat{d}_{21}^{(2)}\in D_{21}^{(2)}$, $\hat{d}_{2\wedge,1}\in D_{2\wedge,1}$ satisfying \eqref{x_20+}, the operator
     \begin{equation}\label{funcPhiInvReg-2}
 \mathrm{Z}_{\hat{t},\hat{x},\hat{d}_{21}^{(2)},\hat{d}_{2\wedge,1}}\!:=\! \partial_{x_{20}}F_{20}\big(\hat{t},\hat{x}_1,\hat{x}_{20},\hat{x}_{2\Sigma}, \hat{d}_{21}^{(2)},\hat{d}_{2\wedge,1}\big)\!=\! \Big[\partial_x (Q_{20}f)(\hat{t},\hat{x})-Q_{20}B\Big]P_{20}\big|_{D_{20}}\colon D_{20}\to Y_{20}
     \end{equation}
 has the inverse $\mathrm{Z}_{\hat{t},\hat{x},\hat{d}_{21}^{(2)},\hat{d}_{2\wedge,1}}^{-1}\in \spL(Y_{20},D_{20})$.
\end{enumerate}
 \end{corollary}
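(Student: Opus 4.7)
My plan is to reduce the corollary directly to Theorem \ref{Th_Exist-Lip_set-2} by showing that condition \ref{Inv_set-2e} implies condition \ref{Inv-Lipsch_set-2e} of that theorem. This mirrors the pattern already established in Remark \ref{Rem__Nonloc_ImplFunc-2} for Corollary \ref{Cor_Nonloc_ImplFunc-2}, so the argument should be short and essentially routine once the variable bookkeeping is sorted out.

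First, I would note that continuous differentiability of $Q_1f$ and $Q_{20}f$ with respect to $x$ on $\R_+\times M$, together with the boundedness of the projectors $P_1$, $P_{20}$, $P_{2\Sigma}$, immediately yields local Lipschitz continuity of the mappings $(t,x_1,x_{20},x_{2\Sigma})\mapsto (Q_1f)(t,x_1+x_{20}+x_{2\Sigma})$ and $(t,x_1,x_{20},x_{2\Sigma})\mapsto (Q_{20}f)(t,x_1+x_{20}+x_{2\Sigma})$ with respect to $(x_1,x_{20})$ on $\R_+\times M$, giving the first half of condition \ref{Inv-Lipsch_set-2e}. Then, for the contractive estimate in \ref{Inv-Lipsch_set-2e}, I would take the operator $\mathrm{Z}=\mathrm{Z}_{\hat{t},\hat{x},\hat{d}_{21}^{(2)},\hat{d}_{2\wedge,1}}$ to be precisely the partial derivative \eqref{funcPhiInvReg-2}, which by hypothesis has a bounded inverse in $\spL(Y_{20},D_{20})$.

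The key step is then the standard mean value identity in integral form: since $F_{20}$ is continuously differentiable in $x_{20}$, for $x'_{20},x''_{20}$ in a neighborhood of $\hat{x}_{20}$ and the remaining arguments in suitable neighborhoods of $\hat{t},\hat{x}_1,\hat{x}_{2\Sigma},\hat{d}_{21}^{(2)},\hat{d}_{2\wedge,1}$,
\begin{equation*}
F_{20}\bigl(t,x_1,x'_{20},x_{2\Sigma},d_{21}^{(2)},d_{2\wedge,1}\bigr) - F_{20}\bigl(t,x_1,x''_{20},x_{2\Sigma},d_{21}^{(2)},d_{2\wedge,1}\bigr) = \int_0^1 \partial_{x_{20}}F_{20}\bigl(t,x_1,x''_{20}+\theta(x'_{20}-x''_{20}),x_{2\Sigma},d_{21}^{(2)},d_{2\wedge,1}\bigr)\,d\theta\,[x'_{20}-x''_{20}].
\end{equation*}
Subtracting $\mathrm{Z}[x'_{20}-x''_{20}]$ and passing to norms yields the required inequality with
\begin{equation*}
k(\delta,\varepsilon) = \sup \bigl\| \partial_{x_{20}}F_{20}\bigl(t,x_1,x_{20},x_{2\Sigma},d_{21}^{(2)},d_{2\wedge,1}\bigr) - \mathrm{Z}_{\hat{t},\hat{x},\hat{d}_{21}^{(2)},\hat{d}_{2\wedge,1}} \bigr\|,
\end{equation*}
where the supremum is over the closed $\delta$-neighborhood of $(\hat{t},\hat{x}_1,\hat{x}_{2\Sigma},\hat{d}_{21}^{(2)},\hat{d}_{2\wedge,1})$ and the closed $\varepsilon$-neighborhood of $\hat{x}_{20}$. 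By continuity of $\partial_{x_{20}}F_{20}$ at the base point, $k(\delta,\varepsilon)\to 0$ as $\delta,\varepsilon\to 0$, so certainly $\lim_{\delta,\varepsilon\to 0}k(\delta,\varepsilon)<\|\mathrm{Z}^{-1}\|^{-1}$. With condition \ref{Inv-Lipsch_set-2e} thereby verified, all hypotheses of Theorem \ref{Th_Exist-Lip_set-2} hold and its conclusion transfers to the present corollary verbatim.

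The main obstacle is essentially bookkeeping: I must check that the operator $[\partial_x(Q_{20}f)(\hat{t},\hat{x})-Q_{20}B]P_{20}|_{D_{20}}$ in \eqref{funcPhiInvReg-2} really is the Fréchet partial derivative of $F_{20}$ in its $x_{20}$-slot alone, bearing in mind that $F_{20}$ depends on $x_{20}$ both through the composition $(Q_{20}f)(t,x_1+x_{20}+x_{2\Sigma})$ (contributing $\partial_x(Q_{20}f)(\hat{t},\hat{x})\circ P_{20}$ via the chain rule) and through the explicit term $-Bx_{20}$ (contributing $-Q_{20}BP_{20}|_{D_{20}}$, since $Bx_{20}\in Y_{20}$). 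Both contributions are captured by \eqref{funcPhiInvReg-2}, and the other arguments $x_1$, $x_{2\Sigma}$, $d_{21}^{(2)}$, $d_{2\wedge,1}$ enter only linearly or through $f$ evaluated at a point independent of $x_{20}$. The Bochner integrability of $\theta\mapsto\partial_{x_{20}}F_{20}(\cdots)$ required by the integral mean value formula follows from its continuity, which completes the reduction.
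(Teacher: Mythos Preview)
Your proof is correct and follows essentially the same approach as the paper: the paper's proof simply says the corollary ``is proved in the same way as Corollary~\ref{Corollary_Exist_set}'', which in turn reduces to verifying that the differentiability condition implies the Lipschitz/contractive condition via the integral mean value argument of Remark~\ref{Rem__Nonloc_ImplFunc-2}. You have spelled out exactly this reduction in detail, with the correct identification of $\mathrm{Z}$ as $\partial_{x_{20}}F_{20}$ and the correct computation of that derivative.
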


  \begin{proof}[Proof of Theorem \ref{Th_Exist-Lip_set-2}]
As shown above, the DAE \eqref{ADAE}, \eqref{f-Appr-2} is equivalent to the system the system \eqref{x_1}--\eqref{x_2(nu-1)}.  First, note that
 \begin{equation}\label{smooth_F_(2s)_(s+1)}
 \begin{split}
& F_{2(\nu-1)}\in C^{\nu-1}(\R_+\times M_{2(\nu-1)},Y_{2(\nu-1)}),\quad F_{2s}^{(s+1)}\in C^{\nu-2}\left(\R_+\times \timesO\limits_{i=s}^{\nu-1}M_{2i}^{(i+1)},Y_{2s}^{(s+1)}\right),  \\
& F_{2\Sigma,s}\in C^s\left(\R_+\times \timesO\limits_{i=1}^{\nu-1}M_{2i}^{(i+1)}\times \timesO\limits_{i=s}^{\nu-2}M_{2\wedge,i}\times D_{2(s+1)}^{(s+2)}\times D_{2\wedge,s+1},Y_{2\Sigma,s}\right),\quad s=1,...,\nu-2,
 \end{split}
 \end{equation}
due to smoothness of $Q_{2(\nu-1)}f$, $Q_{2s}^{(s+1)}f$ and $Q_{2\Sigma,s}f$.

Due to \eqref{smooth_F_(2s)_(s+1)}, conditions \ref{Sogl_set-1a},  \ref{Inv-set-2a}, \ref{Sogl_set-1b} and \ref{Inv-set-2b} and Theorem \ref{Th_Nonloc_ImplFunc-3}, there exist the functions $\eta_{2(\nu-1)}\in C^{\nu-1}(\R_+,M_{2(\nu-1)})$, $\eta_{2s}^{(s+1)}\in C^{\nu-2}\Big(\R_+\times M_{2(s+1)}^{(s+2)}\times ...\times M_{2(\nu-1)},M_{2s}^{(s+1)}\Big)$, $s=1,...,\nu-2$, which are unique solutions of equations \eqref{x_2(nu-1)+}, \eqref{x_2s_(s+1)+} (accordingly, \eqref{x_2(nu-1)}, \eqref{x_2s_(s+1)}), $s=1,...,\nu-2$, with respect to $x_{2(\nu-1)}$, $x_{2s}^{(s+1)}$, respectively.

Let us successively define
 $$
\widetilde{\eta}_{2(\nu-1)}(t)=\eta_{2(\nu-1)}(t),\; \widetilde{\eta}_{2(\nu-2)}^{(\nu-1)}(t)= \eta_{2(\nu-2)}^{(\nu-1)}\big(t,\widetilde{\eta}_{2(\nu-1)}(t)\big),\; ...,\; \widetilde{\eta}_{21}^{(2)}(t)= \eta_{21}^{(2)}\big(t,\widetilde{\eta}_{2i}^{(i+1)},i=s+1,...,\nu-1\big),
 $$
$t\in \R_+$, and set $\widetilde{\eta}_{2(\nu-1)}\equiv \widetilde{\eta}_{2(\nu-1)}^{(\nu)}$ \,(since $x_{2(\nu-1)}=x_{2(\nu-1)}^{(\nu)}$).
Then the functions $\widetilde{\eta}_{2s}^{(s+1)}\in C^{\nu-2}\big(\R_+,M_{2s}^{(s+1)}\big)$, $s=1,...,\nu-2$, $\widetilde{\eta}_{2(\nu-1)}\in C^{\nu-1}(\R_+,M_{2(\nu-1)})$ satisfy the system \eqref{x_2s_(s+1)+}, $s=1,...,\nu-2$, \eqref{x_2(nu-1)+},  \,i.e., $F_{2s}^{(s+1)}\big(t,\widetilde{\eta}_{2j}^{(j+1)}(t),j=s,...,\nu-1\big)=0$, $s=1,...,\nu-2$, $F_{2(\nu-1)}\big(t,\widetilde{\eta}_{2(\nu-1)}(t)\big)=0$,\, and, accordingly, the system \eqref{x_2s_(s+1)}, $s=1,...,\nu-2$, \eqref{x_2(nu-1)} for each $t\in \R_+$.

Substitute $x_{2j}^{(j+1)}=\widetilde{\eta}_{2j}^{(j+1)}(t)$, $j=1,...,\nu-1$, in equations \eqref{x_2wedge,nu-2+}, \eqref{x_2wedge,s+}, $s=1,...,\nu-3$.

Consider the equation
 \begin{equation}\label{x_2wedge,nu-2_Tilde}
\begin{split}
\widetilde{F}_{2\Sigma,\nu-2}(t,x_{2\wedge,\nu-2}) & =0,  \\
& \widetilde{F}_{2\Sigma,\nu-2}(t,x_{2\wedge,\nu-2}):= F_{2\Sigma,\nu-2} \!\left(t, \widetilde{\eta}_{2j}^{(j+1)}(t),j\!=\!1,...,\nu-1,x_{2\wedge,\nu-2}, \textstyle{\frac{d}{dt}}\widetilde{\eta}_{2(\nu-1)}(t)\right)\!,
\end{split}
 \end{equation}
equivalent to \eqref{x_2wedge,nu-2}, where $x_{2j}^{(j+1)}=\widetilde{\eta}_{2j}^{(j+1)}(t)$, $j=1,...,\nu-1$, and to \eqref{x_2wedge,nu-2+}, where $x_{2j}^{(j+1)}=\widetilde{\eta}_{2j}^{(j+1)}(t)$, $j=1,...,\nu-1$, and $d_{2(\nu-1)}=\frac{d}{dt}\widetilde{\eta}_{2(\nu-1)}(t)$.  $\widetilde{F}_{2\Sigma,\nu-2}\in C^{\nu-2}(\R_+\times M_{2\wedge,\nu-2}, Y_{2\Sigma,\nu-2})$ due to the mentioned properties of $F_{2\Sigma,\nu-2}$ and $\widetilde{\eta}_{2j}^{(j+1)}$. Here $\widetilde{F}_{2\Sigma,\nu-2}\in C^{\nu-2}(\R_+\times M_{2\wedge,\nu-2}, Y_{2\Sigma,\nu-2})$ due to the mentioned properties of $F_{2\Sigma,\nu-2}$ (see \eqref{smooth_F_(2s)_(s+1)}) and $\widetilde{\eta}_{2j}^{(j+1)}$.
By virtue of condition~\ref{Inv-set-2c}, for each (fixed) ${\hat{t}\in \R_+}$, $\hat{x}_{2\wedge,\nu-2}\in M_{2\wedge,\nu-2}$ satisfying \eqref{x_2wedge,nu-2_Tilde},  the operator
$$
\partial_{x_{2\wedge,\nu-2}} \widetilde{F}_{2\Sigma,\nu-2}(\hat{t},\hat{x}_{2\wedge,\nu-2})= \partial_{x_{2\wedge,\nu-2}} F_{2\Sigma,\nu-2} \Big(\hat{t}, \widetilde{\eta}_{2j}^{(j+1)}(\hat{t}),j=1,...,\nu-1, \hat{x}_{2\wedge,\nu-2}, \frac{d}{dt}\widetilde{\eta}_{2(\nu-1)}(\hat{t})\Big)
$$
has a bounded inverse.
By virtue of condition~\ref{Sogl_set-1c}, for each ${\hat{t}\in \R_+}$ there exists a unique ${\hat{x}_{2\wedge,\nu-2}\in M_{2\wedge,\nu-2}}$ such that $\hat{t}$, $\hat{x}_{2\wedge,\nu-2}$ satisfy \eqref{x_2wedge,nu-2_Tilde}.  Thus, by Theorem \ref{Th_Nonloc_ImplFunc-3} there exists the function
$$
\eta_{2\wedge,\nu-2}\in C^{\nu-2}(\R_+,M_{2\wedge,\nu-2})
$$
which is a unique solution of equation \eqref{x_2wedge,nu-2_Tilde} with respect to $x_{2\wedge,\nu-2}$.

Take an arbitrary fixed ${\hat{t}\in \R_+}$. As proved above, $\hat{t}$, $\hat{x}_{2j}^{(j+1)}=\widetilde{\eta}_{2j}^{(j+1)}\big(\hat{t}\big)$, $j=1,...,\nu-1$, and $\hat{x}_{2\wedge,\nu-2}=\eta_{2\wedge,\nu-2}\big(\hat{t}\big)$ satisfy the system \eqref{x_2(nu-1)}, \eqref{x_2s_(s+1)}, $s=1,...,\nu-2$, \eqref{x_2wedge,nu-2}.
Let us prove that there exist functions $\eta_{2\wedge,i}\in C^i(\R_+,M_{2\wedge,j})$, $i=1,...,\nu-3$, such that $\hat{t}$,  $\hat{x}_{2j}^{(j+1)}=\widetilde{\eta}_{2j}^{(j+1)}\big(\hat{t}\big)$, $j=1,...,\nu-1$, $\hat{x}_{2\wedge,i}=\eta_{2\wedge,i}\big(\hat{t}\big)$, $i=1,...,\nu-2$, satisfy the system \eqref{x_2(nu-1)}, \eqref{x_2s_(s+1)}, $s=1,...,\nu-2$, \eqref{x_2wedge,nu-2}, \eqref{x_2wedge,s}, $s=1,...,\nu-3$. These functions can be successively defined in a similar way as $\eta_{2\wedge,\nu-2}$. To do this, we will use the method of mathematical induction. Let us assume that there exist the functions $\eta_{2\wedge,i}$, $i=s+1,...,\nu-3$, with the above properties and prove that there exists the function $\eta_{2\wedge,s}$ that also satisfies them. First, consider the equations
\begin{subequations}\label{x_2wedge,sSyst_Tilde}
   \renewcommand{\theequation}{\theparentequation.s}
 \begin{equation}
\begin{split}
& \widetilde{F}_{2\Sigma,s} (t,x_{2\wedge,s})=0,   \qquad  \widetilde{F}_{2\Sigma,s} (t,x_{2\wedge,s}):=    \\
& := F_{2\Sigma,s} \!\left(\! t, \widetilde{\eta}_{2j}^{(j+1)}(t),j=1,...,\nu-1, x_{2\wedge,s},  \eta_{2\wedge,i}(t),i=s+1,...,\nu-2, \textstyle{\frac{d}{dt}}\widetilde{\eta}_{2(s+1)}^{(s+2)}(t), \textstyle{\frac{d}{dt}}\eta_{2\wedge,s+1}(t) \!\right) \\
&  \hspace{0.7\textwidth} (s=1,...,\nu-3).  \label{x_2wedge,s_Tilde}
\end{split}
 \end{equation}
   \end{subequations}
equivalent to \eqref{x_2wedge,s}, where $x_{2j}^{(j+1)}=\widetilde{\eta}_{2j}^{(j+1)}(t)$, $j=1,...,\nu-1$, and  $x_{2\wedge,i}=\eta_{2\wedge,i}(t)$, $i=s,...,\nu-2$, and to \eqref{x_2wedge,s+}, where $x_{2j}^{(j+1)}=\widetilde{\eta}_{2j}^{(j+1)}(t)$, $j=1,...,\nu-1$, $x_{2\wedge,i}=\eta_{2\wedge,i}(t)$, $i=s,...,\nu-2$, $d_{2\wedge,s+1}=\frac{d}{dt}\eta_{2\wedge,s+1}(t)$ and $d_{2(s+1)}^{(s+2)}=\frac{d}{dt}\widetilde{\eta}_{2(s+1)}^{(s+2)}(t)$.
As proved above, $\eta_{2\wedge,\nu-2}$ is a unique solution of equation \eqref{x_2wedge,nu-2_Tilde} with respect to $x_{2\wedge,\nu-2}$, and by assumption, $\eta_{2\wedge,j}$ ($j=\nu-3,\nu-4...,s+1$) is a unique solution of equation (\ref{x_2wedge,sSyst_Tilde}.j) with respect to $x_{2\wedge,j}$. The smoothness of $F_{2\Sigma,s}$, $\widetilde{\eta}_{2j}^{(j+1)}$ and $\eta_{2\wedge,i}$ yields $\widetilde{F}_{2\Sigma,s}\in C^s(\R_+\times M_{2\wedge,s}, Y_{2\Sigma,s})$.
Due to condition~\ref{Inv-set-2d}, for each (fixed) ${\hat{t}\in \R_+}$, $\hat{x}_{2\wedge,s}\in M_{2\wedge,s}$ satisfying \eqref{x_2wedge,s_Tilde},  the operator
 \begin{align*}
& \partial_{x_{2\wedge,s}} \widetilde{F}_{2\Sigma,s}(\hat{t},\hat{x}_{2\wedge,s})=   \\
& =\partial_{x_{2\wedge,s}} F_{2\Sigma,s} \Big(\hat{t}, \widetilde{\eta}_{2j}^{(j+1)}(\hat{t}),{j=1,...,\nu-1},  \hat{x}_{2\wedge,s},  \eta_{2\wedge,i}(\hat{t}),{i=s+1,...,\nu-2}, \textstyle{\frac{d}{dt}}\widetilde{\eta}_{2(s+1)}^{(s+2)}(\hat{t}), \textstyle{\frac{d}{dt}}\eta_{2\wedge,s+1}(\hat{t}) \Big)
 \end{align*}
has an inverse belonging to $\spL(Y_{2\Sigma,s},D_{2\wedge,s})$.
By virtue of condition~\ref{Sogl_set-1d}, for each ${\hat{t}\in \R_+}$ there exists a unique ${\hat{x}_{2\wedge,s}\in M_{2\wedge,s}}$ such that $\hat{t}$, $\hat{x}_{2\wedge,s}$ satisfy \eqref{x_2wedge,s_Tilde}.
Thus, using Theorem \ref{Th_Nonloc_ImplFunc-3}, we obtain that there exists the function
$$
\eta_{2\wedge,s}\in C^s(\R_+,M_{2\wedge,s})
$$
which is a unique solution of equation \eqref{x_2wedge,s_Tilde} with respect to $x_{2\wedge,s}$ for every $t\in\R_+$.
Thus, by induction, there exist the uniquely determined functions $\eta_{2\wedge,s}\in C^s(\R_+,M_{2\wedge,s})$, $s=1,...,\nu-3$, such that ${x_{2j}^{(j+1)}=\widetilde{\eta}_{2j}^{(j+1)}(t)}$, ${j=1,...,\nu-1}$, ${x_{2\wedge,i}=\eta_{2\wedge,i}(t)}$, ${i=1,...,\nu-2}$, satisfy the system \eqref{x_2(nu-1)}, \eqref{x_2s_(s+1)}, ${s=1,...,\nu-2}$, \eqref{x_2wedge,nu-2}, \eqref{x_2wedge,s}, ${s=1,...,\nu-3}$, for each $t\in \R_+$.

Introduce the function
 \begin{equation}\label{eta_2Sigma}
\eta_{2\Sigma}(t)=\sum\limits_{j=1}^{\nu-1}\widetilde{\eta}_{2j}^{(j+1)}(t)+ \sum\limits_{i=1}^{\nu-2}\eta_{2\wedge,i}(t),\quad \eta_{2\Sigma}\colon \R_+\to  M_{2\Sigma}.
 \end{equation}
Consider the equation
 \begin{equation}\label{x_20_Tilde}
\widetilde{F}_{20}(t,x_1,x_{20})=0,\quad
\widetilde{F}_{20}(t,x_1,x_{20}):= F_{20}\big(t,x_1,x_{20},\eta_{2\Sigma}(t), \textstyle{\frac{d}{dt}}\widetilde{\eta}_{21}^{(2)}(t), \textstyle{\frac{d}{dt}}\eta_{2\wedge,1}(t)\big),
 \end{equation}
where $\widetilde{F}_{20}\in C(\R_+\times M_1\times M_{20},Y_{20})$, which is equivalent to equation \eqref{x_20} where ${x_{2j}^{(j+1)}=\widetilde{\eta}_{2j}^{(j+1)}(t)}$, ${j=1,...,\nu-1}$, ${x_{2\wedge,i}=\eta_{2\wedge,i}(t)}$, ${i=1,...,\nu-2}$, and equation \eqref{x_20+} where ${x_{2\Sigma}=\eta_{2\Sigma}(t)}$, ${d_{2\wedge,1}=\frac{d}{dt}\eta_{2\wedge,1}(t)}$, $d_{21}^{(2)}=\frac{d}{dt}\widetilde{\eta}_{21}^{(2)}(t)$.
Take arbitrary fixed ${\hat{t}\in \R_+}$ and ${\hat{x}_1\in M_1}$. Then, due to condition \ref{Sogl_set-1e}, there exists a unique ${\hat{x}_{20}\in M_{20}}$ such that \,$\hat{t}$, $\hat{x}_1$, $\hat{x}_{20}$ satisfy  \eqref{x_20_Tilde}.
From condition \ref{Inv-Lipsch_set-2e} (taking into account the properties of $\eta_{2\wedge,i}$ and $\widetilde{\eta}_{2j}^{(j+1)}$) it follows that $\widetilde{F}_{20}(t,x_1,x_{20})$ satisfy locally a Lipschitz condition with respect to $(x_1,x_{20})$ on $\R_+\times M_1\times M_{20}$ and that there exist open neighborhoods
$U_\alpha\big(\hat{t},\hat{x}_1\big)=U_{\alpha_1}\big(\hat{t}\big)\times U_{\alpha_2}\big(\hat{x}_1\big)$ and $U_\beta\big(\hat{x}_{20}\big)$ and an operator $\widetilde{\mathrm{Z}}_{\hat{t},\hat{x},\hat{x}_{20}} \in \spL(D_{20},Y_{20})$ having the inverse $\widetilde{\mathrm{Z}}_{\hat{t},\hat{x},\hat{x}_{20}}^{-1}\in \spL(Y_{20},D_{20})$ such that for each $(t,x_1)\in U_\alpha\big(\hat{t},\hat{x}_1\big)$ and each $x'_{20},\, x''_{20}\in U_\beta\big(\hat{x}_{20}\big)$ the mapping $\widetilde{F}_{20}$ satisfies the inequality
 $$
\big\|\widetilde{F}_{20}(t,x_1,x'_{20})-\widetilde{F}_{20}(t,x_1,x''_{20})- \widetilde{\mathrm{Z}}_{\hat{t},\hat{x},\hat{x}_{20}}[x'_{20}-x''_{20}]\big\|\le k(\alpha,\beta)\big\|x'_{20}-x''_{20}\big\|,
 $$
where $k(\alpha,\beta)$ is such that $\lim\limits_{\alpha,\,\beta\to 0} k(\alpha,\beta)< \big\|\widetilde{\mathrm{Z}}_{\hat{t},\hat{x},\hat{x}_{20}}^{-1}\big\|^{-1}$.
Thus, by Theorem \ref{Th-Nonloc_ImplFunc}, there exists the function
\begin{equation}\label{eta_20}
\eta_{20}\in C(\R_+\times M_1,M_{20})
\end{equation}
which satisfies locally a Lipschitz condition with respect to $x_1$ on $\R_+\times M_1$ and is a unique solution of \eqref{x_20_Tilde} with respect to $x_{20}$.

Now we substitute ${x_{2\Sigma}=\eta_{2\Sigma}(t)}$ and $x_{20}=\eta_{20}(t,x_1)$ in equation \eqref{x_1}, multiply it by  $\widetilde{A}$ and obtain the equation
$\widetilde{A}\frac{d}{dt}x_1= Q_1f(t,x_1+\eta_{20}(t,x_1)+\eta_{2\Sigma}(t))-Bx_1$. We make the change of variables
$$
w_1=\widetilde{A}x_1,
$$
where $x_1\in M_1$, then $w_1\in \widetilde{M}_1$, $x_1=\widetilde{A}^{-1}w_1$ and we obtain the equation
 \begin{equation}\label{ADAE_DE_eta_w1}
\dot{w_1}=\breve{\Pi}(t,w_1),\quad \breve{\Pi}(t,w_1)= Q_1f\big(t,\widetilde{A}^{-1}w_1+\widehat{\eta}_{20}(t,w_1)+\eta_{2\Sigma}(t)\big)- Q_1B\widetilde{A}^{-1}w_1,
 \end{equation}
where $\widehat{\eta}_{20}(t,w_1):=\eta_{20}(t,\widetilde{A}^{-1}w_1)$. Since $\widetilde{A}^{-1}\in \spL(Y,X)$, $Q_1\in \spL(Y)$, $B\widetilde{A}^{-1}\in \spL(Y)$, then $\breve{\Pi}\in C(\R_+\times \widetilde{M}_1,Y_1)$ and, due to the properties of $Q_1f$ and $\eta_{20}$,   it satisfies locally a Lipschitz condition with respect to $w_1$ on $\R_+\times \widetilde{M}_1$. Using \cite[p.~9, Theorem~1]{Schwartz2}, we infer that for each initial point $(t_0,x_{0,1})\in \R_+\times M_1$ the IVP for equation \eqref{ADAE_DE_eta_w1} with the initial condition
 \begin{equation}\label{RegDEeta_ini-2}
w_1(t_0)=w_{0,1},\quad w_{0,1}=\widetilde{A}x_{0,1}=Ax_{0,1}\in \widetilde{M}_1,
 \end{equation}
has a unique solution $w_1(t)$ on some interval $J\subset \R_+$ that contains $t_0$. Further, by \cite[p.~16-17, Corollary]{Schwartz2} we obtain that the solution $w_1(t)$ can be extended over a maximal interval of existence $[t_0,t_{max})$ \,($t_{max}\le \infty$) of $w_1(t)$ in $\widetilde{M}_1$, and the extended solution is unique.  Notice that the function $x(t)=\widetilde{A}^{-1}w_1(t)+\widehat{\eta}_{20}(t,w_1(t))+\eta_{2\Sigma}(t)$ is continuous and  $Ax(t)=w_1(t)+A\eta_{2\Sigma}(t)$ has the continuous derivative $\dfrac{dAx}{dt}(t)= \breve{\Pi}(t,w_1(t))+AP_{2\Sigma}\dfrac{d\eta_{2\Sigma}}{dt}(t)$ on $[t_0,t_{max})$ (for $t_0=0$, $Ax(t)$ has the derivative on the right at $t_0$).  Thus, the IVP \eqref{ADAE}, \eqref{f-Appr-2}, \eqref{ini-2}  has the unique solution $x(t)=\widetilde{A}^{-1}w_1(t)+\widehat{\eta}_{20}(t,w_1(t))+\eta_{2\Sigma}(t)$  in $M$ on the maximal interval of existence $[t_0,t_{max})$.
 \end{proof}

 \begin{proof}[Proof of Corollary \ref{Corollary_Exist_set-2}]
Corollary \ref{Corollary_Exist_set-2} is proved in the same way as Corollary~\ref{Corollary_Exist_set}.
 \end{proof}

  \begin{lemma}\label{Lem_Alt-2Approach} %%{Lem-Alternative}
Let the conditions of Theorem~\ref{Th_Exist-Lip_set-2} (or Corollary \ref{Corollary_Exist_set-2}), where
${\widetilde{M}_1= Y_1}$ \textup{(}accordingly, ${M_1=D_1}$ and ${M=D_1\dotpl M_{20}\dotpl M_{2\Sigma}}$\textup{)},
and the following condition be satisfied:
 \begin{enumerate}[label={\upshape\arabic*.}, ref={\upshape\arabic*}, itemsep=4pt,parsep=0pt,topsep=4pt,leftmargin=0.6cm]
 \addtocounter{enumi}{2}
\item\label{Alt-2Appr}
For any fixed ${a_1,a_2>0}$,
  \begin{equation}\label{Alt-2Appr-2}
 \sup\limits_{t\,\in\,[0,a_1],\; x\,\in\,M,\;  \|x_1\|\,\le\, a_2} \|Q_1f(t,x)\|<\infty \qquad (x=x_1+x_{20}+x_{2\Sigma}),
  \end{equation}
 \end{enumerate}
Then for each $t_0\in \R_+$,\, $x_{0,1}\in D_1$ there exists a $t_{max}\le \infty$ such that the IVP \eqref{ADAE}, \eqref{f-Appr-2}, \eqref{ini-2} has a unique solution $x(t)$ in $M$ on the maximal interval of existence $[t_0,t_{max})$, and if $t_{max}<\infty$, then $\lim\limits_{t\to t_{max}-0}\|AP_1x(t)\|=\infty$. In addition, if $D=X$ or the operator $A$ is bounded, then $\lim\limits_{t\to t_{max}-0}\|x(t)\|=\infty$ when $t_{max}<\infty$.
 \end{lemma}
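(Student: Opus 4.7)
The plan is to piggyback on Theorem~\ref{Th_Exist-Lip_set-2} and then mirror the argument of Lemma~\ref{Lem-Alternative}. Because the hypotheses of Theorem~\ref{Th_Exist-Lip_set-2} hold with $\widetilde{M}_1=Y_1$, so that $M_1=D_1$, for every $(t_0,x_{0,1})\in\R_+\times D_1$ that theorem already delivers a unique solution
\[
x(t)=\widetilde{A}^{-1}w_1(t)+\widehat{\eta}_{20}(t,w_1(t))+\eta_{2\Sigma}(t)
\]
on a maximal interval $[t_0,t_{max})$, where $w_1(t)\in Y_1$ is the unique solution of the IVP \eqref{ADAE_DE_eta_w1}, \eqref{RegDEeta_ini-2}. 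Only the blow-up alternative still needs a new argument.

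The strategy is therefore to apply Lemma~\ref{Lem_Exist-Blow_up} to $w_1$ in the space $Y_1$. This requires the bound
\[
\sup_{t\in[0,b_1],\,\|w_1\|\le b_2}\|\breve{\Pi}(t,w_1)\|<\infty\qquad\text{for every fixed }b_1,b_2>0.
\]
The operator $Q_1B\widetilde{A}^{-1}$ lies in $\spL(Y)$, so the linear part of $\breve{\Pi}$ is automatically controlled by $\|Q_1B\widetilde{A}^{-1}\|\,b_2$. For the nonlinear part $Q_1f(t,x)$, with $x=\widetilde{A}^{-1}w_1+\widehat{\eta}_{20}(t,w_1)+\eta_{2\Sigma}(t)$, the three summands lie in $D_1$, $M_{20}$ and $M_{2\Sigma}$ respectively by construction, so $x\in M$; moreover $\|x_1\|=\|\widetilde{A}^{-1}w_1\|\le\|\widetilde{A}^{-1}\|\,b_2=:a_2$. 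Hypothesis~\ref{Alt-2Appr} with $a_1=b_1$ then yields exactly the desired uniform bound, and it does so independently of the (unknown) magnitudes of $\widehat{\eta}_{20}(t,w_1)$ and $\eta_{2\Sigma}(t)$.

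Consequently Lemma~\ref{Lem_Exist-Blow_up} produces $\lim_{t\to t_{max}-0}\|w_1(t)\|=\infty$ whenever $t_{max}<\infty$. Since $\widetilde{A}\colon D_1\to Y_1$ is a bijection with inverse $\widetilde{A}^{-1}|_{Y_1}$, we have $P_1x(t)=\widetilde{A}^{-1}w_1(t)$ and, using the identity $A\widetilde{A}^{-1}=Q_1+Q_{2\Sigma}$ (so that $A\widetilde{A}^{-1}w_1=w_1$ for $w_1\in Y_1$), $AP_1x(t)=w_1(t)$, which gives $\|AP_1x(t)\|\to\infty$. If $D=X$, the closed operator $A$ is bounded by the closed graph theorem; then the estimate $\|AP_1x(t)\|\le\|A\|\,\|P_1\|\,\|x(t)\|$ forces $\|x(t)\|\to\infty$, and the same conclusion holds if $A$ is directly assumed bounded.

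The point that deserves the most care is verifying that the implicit functions $\widehat{\eta}_{20}$ and $\eta_{2\Sigma}$ constructed inside the proof of Theorem~\ref{Th_Exist-Lip_set-2} are actually defined on all of $\R_+\times Y_1$ and $\R_+$ and take values in $M_{20}$ and $M_{2\Sigma}$, so that the substitution yielding $x$ keeps $x\in M$ and hypothesis~\ref{Alt-2Appr} can legitimately be invoked without any a~priori control on $\|\widehat{\eta}_{20}(t,w_1)\|$. The boundedness condition \eqref{Alt-2Appr-2} is precisely strong enough (uniform in $x_{20}\in M_{20}$ and $x_{2\Sigma}\in M_{2\Sigma}$) to compensate for this absence of an explicit bound on the implicit functions; the remainder of the argument is direct bookkeeping along the lines of Lemma~\ref{Lem-Alternative}.
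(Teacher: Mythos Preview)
Your proof is correct and follows essentially the same route as the paper's: invoke Theorem~\ref{Th_Exist-Lip_set-2} for the solution representation, use hypothesis~\ref{Alt-2Appr} to bound $\breve{\Pi}$ on bounded sets, apply Lemma~\ref{Lem_Exist-Blow_up} to $w_1$ in $Y_1$, and then identify $w_1(t)=AP_1x(t)$. Your version is in fact a bit more explicit (the identity $A\widetilde{A}^{-1}w_1=w_1$ for $w_1\in Y_1$, the closed graph argument when $D=X$), but the structure is the same.
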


 \begin{theorem}[\textbf{Global solvability}]\label{Th_GlobReg-2Approach}
Let the conditions of Lemma \ref{Lem_Alt-2Approach} and the following condition hold:
\begin{enumerate}[label={\upshape\arabic*.}, ref={\upshape\arabic*}, itemsep=4pt,parsep=0pt,topsep=4pt,leftmargin=0.6cm]
\addtocounter{enumi}{3}
\item\label{Extens-2Appr}   %{Extens}
  There exists a number ${R>0}$, a functional
  \begin{equation}\label{Vmax-2Appr} %{Vmax}
   V(w_1)=\max\limits_{i=1,...,m}V_i(w_1),
  \end{equation}
  where $w_1=\widetilde{A}x_1\in Y_1$ and $V_i\in C(Y_1,\R_+)$, $i=1,...,m$, are continuously differentiable functionals on $U_R^c(0)=\{w_1\in Y_1\mid \|w_1\|\ge R\}$, and functionals $U\in C(\R_+)$, $\psi\colon \R_+\to \R_+$ such that $\psi(t)$ is integrable on each finite interval in $\R_+$,
   \begin{equation}\label{Extens_lim-2Appr} %{Extens_lim}
  \int\limits^{\infty}\dfrac{du}{U(u)} =\infty,\qquad  \lim\limits_{\|w_1\|\to\infty}V(w_1)=\infty,
   \end{equation}
  and for each ${t\in \R_+}$, ${x_{20}\in M_{20}}$, ${x_{2\Sigma}\in M_{2\Sigma}}$, $w_1\in U_R^c(0)$ the following inequality holds:
   \begin{equation}\label{ExtensGlobReg-2Appr} %{ExtensGlobReg}
  \big\langle Q_1f\big(t,\widetilde{A}^{-1}w_1+x_{20}+x_{2\Sigma}\big)- Q_1B\widetilde{A}^{-1}w_1,\, \partial_{w_1}V_{j(w_1)}(w_1) \big\rangle \le U\big(V(w_1)\big)\psi(t),
   \end{equation}
  where $j(w_1)\in \{1,...,m\}$ is any index for which $V(w_1)=V_{j(w_1)}(w_1)$ for a given $w_1$.
\end{enumerate}
Then for each $t_0\in \R_+$, $x_{0,1}\in D_1$ there exists a unique global solution $x(t)$ of the IVP \eqref{ADAE},~\eqref{f-Appr-2},~\eqref{ini-2}.
 \end{theorem}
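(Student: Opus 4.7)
The plan is to mimic, \emph{mutatis mutandis}, the proof of Theorem \ref{Th_GlobReg}, but working only with the ``$Y_1$-component'' $w_1=\widetilde{A}x_1$, since in the second approach the reduction yields a single ODE \eqref{ADAE_DE_eta_w1} for $w_1$ (the other components $x_{20}$, $x_{2\Sigma}$ being already fixed as implicit functions $\widehat{\eta}_{20}(t,w_1)$ and $\eta_{2\Sigma}(t)$).

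Fix an arbitrary initial value $t_0\in\R_+$ and $x_{0,1}\in D_1$. Since all hypotheses of Lemma \ref{Lem_Alt-2Approach} are in force, the IVP \eqref{ADAE}, \eqref{f-Appr-2}, \eqref{ini-2} has a unique solution
$$
x(t)=\widetilde{A}^{-1}w_1(t)+\widehat{\eta}_{20}(t,w_1(t))+\eta_{2\Sigma}(t)
$$
on a maximal interval $[t_0,t_{max})$, where $w_1(t)$ solves the IVP \eqref{ADAE_DE_eta_w1}, \eqref{RegDEeta_ini-2} on $[t_0,t_{max})$; moreover, from the proof of Lemma \ref{Lem_Alt-2Approach} we have $\|AP_1x(t)\|=\|w_1(t)\|$, and $\lim_{t\to t_{max}-0}\|w_1(t)\|=\infty$ whenever $t_{max}<\infty$. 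Hence it suffices to prove that $t_{max}=\infty$.

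Suppose, by contradiction, that $t_{max}<\infty$, and choose a sequence $\{t_n\}\subset[t_0,t_{max})$ with $t_n\to t_{max}$ and $\|w_1(t_n)\|\to\infty$ as $n\to\infty$. By condition \ref{Extens-2Appr} applied along the trajectory (taking $x_{20}=\widehat{\eta}_{20}(t,w_1)\in M_{20}$ and $x_{2\Sigma}=\eta_{2\Sigma}(t)\in M_{2\Sigma}$), the inequality \eqref{ExtensGlobReg-2Appr} gives, for every $\varepsilon>0$ and all $t\in\R_+$, $w_1\in U_R^c(0)$,
$$
\big\langle \breve{\Pi}(t,w_1),\partial_{w_1}V_{j(w_1)}(w_1)\big\rangle\le \big[U(V(w_1))+\varepsilon\big]\psi(t).
$$
Since $V(w_1(t_n))\to\infty$ by the second part of \eqref{Extens_lim-2Appr}, one can pick $N$ so large that $w_1(t_n)\in U_R^c(0)$ for all $n\ge N$. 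Applying \cite[Lemma~2.2]{KK1956} along $w_1(\cdot)$ on $[t_N,t_n]$ and then the change-of-variables argument of \cite[Lemma~2.1]{KK1956} yields
$$
\int_{V(w_1(t_N))}^{V(w_1(t_n))}\frac{du}{U(u)+\varepsilon}\le \int_{t_N}^{t_n}\psi(\tau)\,d\tau,\qquad n\ge N,\ \varepsilon>0.
$$
Passing to the limits $\varepsilon\to+0$ and $n\to\infty$, the first part of \eqref{Extens_lim-2Appr} forces the left-hand side to $+\infty$, while the right-hand side is bounded by $\int_{t_N}^{t_{max}}\psi(\tau)\,d\tau<\infty$ because $\psi$ is integrable on every finite subinterval of $\R_+$ and $t_{max}<\infty$. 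This contradiction shows $t_{max}=\infty$, and hence $x(t)$ is global.

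The principal subtlety—essentially the only one—is the justification of the inequality along the trajectory when $V$ is merely the pointwise maximum of smooth functionals $V_i$, so that $\partial_{w_1}V_{j(w_1)}$ is not the derivative of $V$ itself. This is exactly the point handled by \cite[Lemma~2.2]{KK1956}, which allows one to integrate the differential inequality for $V\circ w_1$ along a curve even at ``switching'' times where the active index $j(w_1(t))$ changes. Apart from this, the argument is a direct transplantation of the scheme used for Theorem \ref{Th_GlobReg}, simplified by the fact that here we need to drive only the single variable $w_1$ to infinity, the remaining components being locally bounded through the implicit functions $\widehat{\eta}_{20}$ and $\eta_{2\Sigma}$.
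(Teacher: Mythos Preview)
Your proof is correct and follows essentially the same route as the paper, which simply states that the argument for Theorem~\ref{Th_GlobReg-2Approach} is carried out in the same way as for Theorem~\ref{Th_GlobReg}. You have accurately transplanted that scheme to the $w_1$-variable, invoking Lemma~\ref{Lem_Alt-2Approach} in place of Lemma~\ref{Lem-Alternative} and applying \cite[Lemmas~2.1,~2.2]{KK1956} to derive the contradiction.
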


  \begin{theorem}[\textbf{Global solvability}]\label{Th_GlobReg2-2Approach}
Theorem~\ref{Th_GlobReg-2Approach} remains valid if condition \ref{Extens-2Appr} is replaced by
 \begin{enumerate}[label={\upshape\arabic*.}, ref={\upshape\arabic*}, itemsep=4pt,parsep=0pt,topsep=4pt,leftmargin=0.6cm]
\addtocounter{enumi}{3}
\item\label{Extens2-2Appr}
  There exists a number ${R>0}$, a functional $V\in C(Y_1,\R_+)$, where $w_1=\widetilde{A}x_1\in Y_1$, and functionals $U\in C(\R_+)$, $\psi\colon \R_+\to \R_+$ such that $\psi(t)$ is integrable on each finite interval in $\R_+$, \eqref{Extens_lim-2Appr} holds,
  $V(w_1)$ satisfies a Lipschitz condition on $U_R^c(0)=\{w_1\in Y_1\mid \|w_1\|\ge R\}$   and for each ${t\in \R_+}$, ${x_{20}\in M_{20}}$, ${x_{2\Sigma}\in M_{2\Sigma}}$, $w_1\in U_R^c(0)$ the following inequality holds:
   \begin{equation}\label{ExtensGlobReg2-2Appr}
  \big\| Q_1f\big(t,\widetilde{A}^{-1}w_1+x_{20}+x_{2\Sigma}\big)- Q_1B\widetilde{A}^{-1}w_1\big\|\le U\big(V(w_1)\big)\psi(t).
   \end{equation}
 \end{enumerate}
 \end{theorem}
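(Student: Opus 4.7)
The plan is to mimic the passage from Theorem \ref{Th_GlobReg} to Theorem \ref{Th_GlobReg2} (i.e., the upgrade from a continuously differentiable max-type Lyapunov functional with a gradient inequality to a merely Lipschitz functional with a norm inequality), but applied to the reduced equation \eqref{ADAE_DE_eta_w1} used in the proof of Theorem \ref{Th_GlobReg-2Approach} rather than to \eqref{ADAE_DE_eta_w}. The opening is verbatim that of Theorem \ref{Th_GlobReg-2Approach}: Lemma \ref{Lem_Alt-2Approach} gives, for each $t_0\in\R_+$ and $x_{0,1}\in D_1$, a unique solution $x(t)=\widetilde{A}^{-1}w_1(t)+\widehat{\eta}_{20}(t,w_1(t))+\eta_{2\Sigma}(t)$ of the IVP \eqref{ADAE}, \eqref{f-Appr-2}, \eqref{ini-2} on a maximal interval $[t_0,t_{max})$, where $w_1(\cdot)$ solves the IVP \eqref{ADAE_DE_eta_w1}, \eqref{RegDEeta_ini-2}, and $\lim_{t\to t_{max}-0}\|w_1(t)\|=\infty$ whenever $t_{max}<\infty$.

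Then I would argue by contradiction: assume $t_{max}<\infty$ and pick $\{t_n\}\subset [t_0,t_{max})$ with $t_n\to t_{max}$ and $\|w_1(t_n)\|\to\infty$. Substituting $x_{20}=\widehat{\eta}_{20}(t,w_1)\in M_{20}$ and $x_{2\Sigma}=\eta_{2\Sigma}(t)\in M_{2\Sigma}$ into \eqref{ExtensGlobReg2-2Appr} yields, for any $\varepsilon>0$,
$$\|\breve{\Pi}(t,w_1)\|\le \bigl[U(V(w_1))+\varepsilon\bigr]\psi(t), \qquad t\in\R_+,\ w_1\in U_R^c(0).$$
Since $V$ is positive and Lipschitz on $U_R^c(0)$ with some constant $l>0$, and since $\|w_1(t_n)\|\to\infty$ forces $V(w_1(t_n))\to\infty$, there exists $N$ such that $w_1(t_n)\in U_R^c(0)$ and $V(w_1(t_n))\ge V(w_1(t_N))$ for all $n\ge N$. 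Applying \cite[Lemma~2.3]{KK1956} (as in the proof of Theorem \ref{Th_GlobReg2}) and then \cite[Lemma~2.1]{KK1956} gives
$$\dfrac{1}{l}\int\limits_{V(w_1(t_N))}^{V(w_1(t_n))}\dfrac{du}{U(u)+\varepsilon}\le \int\limits_{t_N}^{t_n}\psi(\tau)\,d\tau$$
for every $n\ge N$ and $\varepsilon>0$. Sending $\varepsilon\to 0^+$ and $n\to\infty$ and using \eqref{Extens_lim-2Appr} together with integrability of $\psi$ on finite intervals yields the contradiction
$$\infty=\dfrac{1}{l}\int\limits_{V(w_1(t_N))}^{\infty}\dfrac{du}{U(u)}\le \int\limits_{t_N}^{t_{max}}\psi(\tau)\,d\tau<\infty,$$
so $t_{max}=\infty$ and the solution is global.

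I expect no essential obstacle, since the whole argument is a near-mechanical transcription of the proof of Theorem \ref{Th_GlobReg2} with $\widehat{\Pi}$, $w$, $W$ replaced throughout by $\breve{\Pi}$, $w_1$, $Y_1$, and with the Lemma \ref{Lem_Alt-2Approach} blow-up dichotomy playing the role of the corresponding statement in Lemma \ref{Lem-Alternative}. The only point requiring explicit verification is that the bound \eqref{ExtensGlobReg2-2Appr}, stated uniformly in $x_{20}\in M_{20}$ and $x_{2\Sigma}\in M_{2\Sigma}$, applies to the specific choice $x_{20}=\widehat{\eta}_{20}(t,w_1)$, $x_{2\Sigma}=\eta_{2\Sigma}(t)$ arising from the implicit-function construction in the proof of Theorem \ref{Th_Exist-Lip_set-2}; this is immediate from $\widehat{\eta}_{20}\in C(\R_+\times Y_1,M_{20})$ and $\eta_{2\Sigma}\in C(\R_+,M_{2\Sigma})$ established there.
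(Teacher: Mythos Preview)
Your proposal is correct and follows exactly the approach indicated in the paper, which simply states that the proof is carried out in the same way as that of Theorem~\ref{Th_GlobReg2}; you have written out precisely that transcription, replacing $\widehat{\Pi}$, $w$, $W$, and Lemma~\ref{Lem-Alternative} by $\breve{\Pi}$, $w_1$, $Y_1$, and Lemma~\ref{Lem_Alt-2Approach}.
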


 \begin{proof}[Proof of Lemma \ref{Lem_Alt-2Approach}]
The proof is carried out by analogy with the proof of Lemma \ref{Lem-Alternative}. We provide it since it will be needed in what follows.

It follows from the proof of Theorem~\ref{Th_Exist-Lip_set-2} for the case when ${\widetilde{M}_1=Y_1}$ (note that if the conditions of Corollary~\ref{Corollary_Exist_set-2} hold, then the conditions of Theorem~\ref{Th_Exist-Lip_set-2} also hold) that for arbitrary initial values $t_0\in \R_+$, $x_{0,1}\in D_1$ the IVP \eqref{ADAE}, \eqref{f-Appr-2}, \eqref{ini-2} has the unique solution $x(t)=\widetilde{A}^{-1}w_1(t)+\widehat{\eta}_{20}(t,w_1(t))+\eta_{2\Sigma}(t)$ in ${M=D_1\dotpl M_{20}\dotpl M_{2\Sigma}}$ on the maximal interval of existence $[t_0,t_{max})$.

We will prove that if ${t_{max}<\infty}$, then ${\lim\limits_{t\to t_{max}-0}\|w_1(t)\|=\infty}$, where $w_1(t)$ is the solution of the IVP \eqref{ADAE_DE_eta_w1}, \eqref{RegDEeta_ini-2} on $[t_0,t_{max})$. The estimate \eqref{Alt-2Appr-2} implies that  ${\sup\limits_{t\,\in\,[0,b_1],\, \|w_1\|\,\le\,b_2} \|Q_1f\big(t, \widetilde{A}^{-1}w_1+ \widehat{\eta}_{20}(t,w_1)+ \eta_{2\Sigma}(t)\big)\| <\infty}$ for any fixed ${b_1,\,b_2>0}$. Hence,\,
 \begin{equation}\label{Bound_brevePi}
{\sup\limits_{t\,\in\, [0,b_1],\, \|w_1\|\,\le\, b_2} \|\breve{\Pi}(t,w_1)\|<\infty}\, \quad \text{for any fixed ${b_1,\,b_2>0}$}.
 \end{equation}
It follows from Lemma~\ref{Lem_Exist-Blow_up} that ${\lim\limits_{t\to t_{max}-0}\|w_1(t)\|=\infty}$ if ${t_{max}<\infty}$.  Since ${w_1(t)=AP_1x(t)}$, then ${\lim\limits_{t\to t_{max}-0}\|AP_1x(t)\|=\infty}$ if ${t_{max}<\infty}$.
When $A$ is bounded, this fact gives ${\lim\limits_{t\to t_{max}-0}\|x(t)\|=\infty}$.
\end{proof}

 \begin{proof}[Proofs of Theorem \ref{Th_GlobReg-2Approach} and  \ref{Th_GlobReg2-2Approach}]
The proofs of Theorem \ref{Th_GlobReg-2Approach} and  \ref{Th_GlobReg2-2Approach} are carried out in the same way as the proofs of Theorem \ref{Th_GlobReg} and \ref{Th_GlobReg2}, respectively.
\end{proof}

Note that if we multiply equations \eqref{x_2(nu-1)+}, \eqref{x_2s_(s+1)+} (or \eqref{x_2(nu-1)}, \eqref{x_2s_(s+1)}), $s=1,...,\nu-2$, equations \eqref{x_2wedge,nu-2+}, \eqref{x_2wedge,s+}, $s=1,...,\nu-3$, and equation \eqref{x_20+} on the left by the semi-inverse operator $\EuScript B_2^{(-1)}$ of $\EuScript B_2=BP_2$, we obtain the equivalent equations, respectively,
 \begin{equation}\label{Stab_x_2s_(s+1)}
x_{2s}^{(s+1)}=\EuScript B_2^{(-1)}Q_{2s}^{(s+1)} f\bigg(t,\sum\limits_{j=s}^{\nu-1}x_{2j}^{(j+1)}\bigg),\quad s=1,...,\nu-1,
 \end{equation}
 \begin{equation}\label{Stab_x_2wedge,s}
\begin{split}
x_{2\wedge,\nu-2} & =\EuScript B_2^{(-1)} \left(Q_{2\Sigma,\nu-2}f\bigg(t, \sum\limits_{j=1}^{\nu-1}x_{2j}^{(j+1)}+x_{2\wedge,\nu-2}\bigg)- Ad_{2(\nu-1)}\right),   \\
x_{2\wedge,s} & =\EuScript B_2^{(-1)} \left(Q_{2\Sigma,s}f\bigg(t, \sum\limits_{j=1}^{\nu-1}x_{2j}^{(j+1)}+ \sum\limits_{i=s}^{\nu-2}x_{2\wedge,i}\bigg)- A\Big[d_{2(s+1)}^{(s+2)}+d_{2\wedge,s+1}\Big]\right),\quad s=1,...,\nu-3.
\end{split}
 \end{equation}
 \begin{equation}\label{Stab_x_20}
x_{20}=\EuScript B_2^{(-1)} \left(Q_{20}f\big(t,x_1+x_{20}+x_{2\Sigma}\big)- Q_{20}A\Big[d_{21}^{(2)}+d_{2\wedge,1}\Big]\right).
 \end{equation}
The operator ${\EuScript B_2^{(-1)}\in \spL(Y,X)}$ is defined by the relations ${\EuScript B_2^{(-1)}\EuScript B_2=P_2}$,
${\EuScript B_2\EuScript B_2^{(-1)}=Q_2}$ and
${\EuScript B_2^{(-1)}= P_2\EuScript B_2^{(-1)}}$.

 \begin{theorem}[\textbf{Lagrange stability}]\label{Th_Lagr_Stab-2Appr}
Let the conditions of Theorem~\ref{Th_Exist-Lip_set-2} (or Corollary \ref{Corollary_Exist_set-2}), where   ${\widetilde{M}_1= Y_1}$ \textup{(}accordingly, ${M_1=D_1}$ and ${M=D_1\dotpl M_{20}\dotpl M_{2\Sigma}}$\textup{)}, and the following conditions be fulfilled:
 \begin{enumerate}[label={\upshape\arabic*.},ref={\upshape\arabic*}, itemsep=4pt,parsep=0pt,topsep=4pt,leftmargin=0.6cm]
 \addtocounter{enumi}{2}
\item\label{Lagr-Alt-2Appr}  %
  There exist constants $K_1,\,K_2\ge 0$, $C_1\ge \|\EuScript B_2^{(-1)}\| K_1$, $C_2\ge \|\EuScript B_2^{(-1)}\|K_2$ such that
  \begin{equation}\label{Lagr-Alt-x_2s_s+1}
 \sup\limits_{t\,\in\,\R_+,\;\; x_{2j}^{(j+1)}\,\in\, M_{2j}^{(j+1)},\; j\,=\,s,...,\nu-1} \left\|Q_{2s}^{(s+1)}f\bigg(t, \sum\limits_{j=s}^{\nu-1}x_{2j}^{(j+1)}\bigg)\right\|\!\le K_1,\;\; s=1,...,\nu-1;
  \end{equation}
   \begin{equation}\label{Lagr-Alt-x_2wedge,s-nu-2}
 \begin{split}
\hspace{-5mm}\sup_{\substack{ t\,\in\,\R_+,\;\;
 x_{2j}^{(j+1)}\,\in\, M_{2j}^{(j+1)},\; \|x_{2j}^{(j+1)}\|\,\le\, C_1,\; j\,=\,1,...,\nu-1,  \\
 x_{2\wedge,\nu-2}\,\in\, M_{2\wedge,\nu-2},\;\; d_{2(\nu-1)}\,\in\, D_{2(\nu-1)} } }
 \left\|Q_{2\Sigma,\nu-2}f\bigg(t, \sum\limits_{j=1}^{\nu-1}x_{2j}^{(j+1)}+x_{2\wedge,\nu-2}\bigg)- Ad_{2(\nu-1)}\right\|\!\le\! K_2,   \\
\hspace{-5mm}\sup_{\substack{ t\,\in\,\R_+, \\
 x_{2j}^{(j+1)}\,\in\, M_{2j}^{(j+1)},\; \|x_{2j}^{(j+1)}\|\,\le\,C_1,\; j\,=\,1,...,\nu-1,  \\
 x_{2\wedge,i}\,\in\, M_{2\wedge,i},\; \|x_{2\wedge,i}|\,\le\,C_2,\; i\,=\,s+1,...,\nu-2, \\
 x_{2\wedge,s}\,\in\, M_{2\wedge,s},  \\ d_{2\wedge,s+1}\,\in\, D_{2\wedge,s+1},\;\; d_{2(s+1)}^{(s+2)}\,\in\, D_{2(s+1)}^{(s+2)} } }
 \!\left\|Q_{2\Sigma,s}f\bigg(\!t, \sum\limits_{j=1}^{\nu-1}x_{2j}^{(j+1)}\!+\! \sum\limits_{i=s}^{\nu-2}x_{2\wedge,i}\bigg)\!-\! A\Big[d_{2(s+1)}^{(s+2)}\!+ d_{2\wedge,s+1}\Big]\right\|\!\le\! K_2,\! \\
 s=1,...,\nu-3.
 \end{split}
  \end{equation}
  For every fixed ${b>0}$, ${d>0}$ and some constants ${C_3\ge (\nu-1)C_1+(\nu-2)C_2}$ and ${K=K(b)>0}$, ${C_4\ge \|\EuScript B_2^{(-1)}\|K}$ \,\textup{(}$K$, $C_4$ depend on the choice of $b$\textup{)}, the following holds:
  \begin{equation}\label{Lagr-Alt-x_20} %{Lagr-Alt-1}
 \sup\limits_{t\,\in\,\R_+,\; x\,\in\,M,\; \|x_1\|\,\le\, b,\;  \|x_{2\Sigma}\|\,\le\,C_3} \!\left\|Q_{20}f\big(t,x_1+x_{20}+x_{2\Sigma}\big)- Q_{20}A\Big[d_{21}^{(2)}+d_{2\wedge,1}\Big]\right\|\le K,
  \end{equation}
  \begin{equation}\label{Lagr-Alt2-w1}
 \sup\limits_{t\,\in\,[0,d],\; x\,\in\,M,\; \|x_1\|\,\le\,b,\; \|x_{2\Sigma}\|\,\le\,C_3,\; \|x_{20}\|\,\le\,C_4} \|Q_1f(t,x)\|<\infty \qquad (x=x_1+x_{20}+x_{2\Sigma}).
  \end{equation}

 \item Let condition \ref{Extens-2Appr} of Theorem~\ref{Th_GlobReg-2Approach} or condition \ref{Extens2-2Appr} of Theorem \ref{Th_GlobReg2-2Approach} hold and let the functional $\psi$ defined in these conditions be integrable on $\R_+$, i.e., \eqref{Stab-psi} is fulfilled.
 \end{enumerate}
Then the ADAE \eqref{ADAE},~\eqref{f-Appr-2} is Lagrange stable \textup{(}for the initial points $(t_0,x_{0,1})\in \R_+\times D_1$\textup{)}.
 \end{theorem}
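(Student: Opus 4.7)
The plan is to mimic the structure of the proof of Theorem \ref{Th_Lagr_Stab}, adapting it to the decomposition used in the second approach. First, I verify that the hypotheses of Theorem \ref{Th_GlobReg-2Approach} (or \ref{Th_GlobReg2-2Approach}) are all met: since \eqref{Lagr-Alt2-w1} immediately implies \eqref{Alt-2Appr-2}, condition~\ref{Alt-2Appr} of Lemma \ref{Lem_Alt-2Approach} holds; the remaining items either coincide with the theorem's conditions or are assumed directly. Consequently, for every initial data $(t_0,x_{0,1})\in \R_+\times D_1$ the IVP \eqref{ADAE}, \eqref{f-Appr-2}, \eqref{ini-2} has a unique global solution
$$
x(t)=\widetilde{A}^{-1}w_1(t)+\widehat{\eta}_{20}(t,w_1(t))+\eta_{2\Sigma}(t),\qquad t\in [t_0,\infty),
$$
where $w_1(t)$ solves the IVP \eqref{ADAE_DE_eta_w1}, \eqref{RegDEeta_ini-2} on $[t_0,\infty)$ (cf.\ the proofs of Theorem~\ref{Th_Exist-Lip_set-2} and Lemma~\ref{Lem_Alt-2Approach}).

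Next, I show that $\sup_{t\ge t_0}\|w_1(t)\|<\infty$. If this were false, one could choose $t_n\to\infty$ with $\|w_1(t_n)\|\to\infty$; using condition~\ref{Extens-2Appr} (or \ref{Extens2-2Appr}) applied to $\breve{\Pi}(t,w_1)$ exactly as in the proofs of Theorems~\ref{Th_GlobReg} and \ref{Th_GlobReg2}, together with \cite[Lemma 2.2 or 2.3]{KK1956} and \cite[Lemma 2.1]{KK1956}, one would obtain
$$
\infty=\int_{V(w_1(t_N))}^{\infty}\dfrac{du}{U(u)}\le\int_{t_N}^{\infty}\psi(\tau)\,d\tau,
$$
which contradicts \eqref{Stab-psi}. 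Hence there exists $b>0$ with $\|w_1(t)\|\le b$, and therefore $\sup_{t\ge t_0}\|x_1(t)\|=\sup\|\widetilde{A}^{-1}w_1(t)\|\le\|\widetilde{A}^{-1}\|\,b=:b_*<\infty$.

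The third step bounds the $x_{2\Sigma}$-component uniformly in $t$. Using the equivalent form \eqref{Stab_x_2s_(s+1)} of \eqref{x_2s_(s+1)+} together with \eqref{Lagr-Alt-x_2s_s+1}, one gets
$$
\|\widetilde{\eta}_{2s}^{(s+1)}(t)\|\le \|\EuScript B_2^{(-1)}\|K_1\le C_1,\qquad s=1,\dots,\nu-1,\quad t\in\R_+.
$$
Then a downward induction on $s$, based on the equivalent form \eqref{Stab_x_2wedge,s} of \eqref{x_2wedge,nu-2+} and \eqref{x_2wedge,s+} and on \eqref{Lagr-Alt-x_2wedge,s-nu-2}, yields $\|\eta_{2\wedge,s}(t)\|\le \|\EuScript B_2^{(-1)}\|K_2\le C_2$ for $s=\nu-2,\dots,1$; at each step the already-established bounds on $\widetilde{\eta}_{2j}^{(j+1)}$ and on $\eta_{2\wedge,i}$ for $i>s$ supply the arguments required by the sup-domain in \eqref{Lagr-Alt-x_2wedge,s-nu-2}, whereas the sup is taken over all possible $d_{2(s+1)}^{(s+2)}$, $d_{2\wedge,s+1}$ (respectively $d_{2(\nu-1)}$) so that the derivatives $\frac{d}{dt}\widetilde{\eta}_{2(s+1)}^{(s+2)}(t)$, $\frac{d}{dt}\eta_{2\wedge,s+1}(t)$ (respectively $\frac{d}{dt}\widetilde{\eta}_{2(\nu-1)}(t)$) are automatically admissible. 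Summing via \eqref{eta_2Sigma} gives $\|\eta_{2\Sigma}(t)\|\le (\nu-1)C_1+(\nu-2)C_2\le C_3$.

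Finally, with $\|x_1(t)\|\le b_*$ and $\|\eta_{2\Sigma}(t)\|\le C_3$ on $[t_0,\infty)$, the equivalent form \eqref{Stab_x_20} of \eqref{x_20+} together with \eqref{Lagr-Alt-x_20} gives
$$
\|\widehat{\eta}_{20}(t,w_1(t))\|=\|\eta_{20}(t,x_1(t))\|\le\|\EuScript B_2^{(-1)}\|K\le C_4,\qquad t\in [t_0,\infty),
$$
and therefore $\sup_{t\ge t_0}\|x(t)\|\le b_*+C_4+C_3<\infty$. Since $(t_0,x_{0,1})$ was arbitrary and any solution of the IVP has the above form, the ADAE \eqref{ADAE}, \eqref{f-Appr-2} is Lagrange stable.

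I expect the only delicate point to be the downward induction in the third step: one must make sure that, once $\|\widetilde{\eta}_{2j}^{(j+1)}(t)\|\le C_1$ has been established for all $j$, the estimate \eqref{Lagr-Alt-x_2wedge,s-nu-2} is applied with the arguments $\widetilde{\eta}_{2j}^{(j+1)}(t)$ and the previously-bounded $\eta_{2\wedge,i}(t)$, $i>s$, while the arbitrariness of the $d$-variables in the sup covers the a priori uncontrolled time derivatives. The remaining estimates are then routine consequences of the boundedness of $\EuScript B_2^{(-1)}$ and $\widetilde{A}^{-1}$.
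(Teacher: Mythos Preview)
Your overall strategy matches the paper's, and Steps~2--4 are essentially identical to the estimates the paper uses. There is, however, a genuine gap in Step~1: the implication ``\eqref{Lagr-Alt2-w1} immediately implies \eqref{Alt-2Appr-2}'' is false. The supremum in \eqref{Lagr-Alt2-w1} is taken over the \emph{smaller} set with the additional restrictions $\|x_{2\Sigma}\|\le C_3$ and $\|x_{20}\|\le C_4$, whereas \eqref{Alt-2Appr-2} requires the bound over all $x_{2\Sigma}\in M_{2\Sigma}$, $x_{20}\in M_{20}$. So you cannot verify condition~\ref{Alt-2Appr} of Lemma~\ref{Lem_Alt-2Approach} this way, and hence cannot invoke Theorem~\ref{Th_GlobReg-2Approach} directly.

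The paper closes this gap by appealing to Lemma~\ref{Lem_Alt_Lagr-2Approach}, whose proof does precisely what you do in Steps~3 and~4, but \emph{before} global existence is claimed: first the uniform bounds $\|\widetilde{\eta}_{2s}^{(s+1)}(t)\|\le C_1$, $\|\eta_{2\wedge,s}(t)\|\le C_2$, $\|\eta_{2\Sigma}(t)\|\le C_3$ are established (these depend only on $t$), then $\sup_{t\in[0,b_1],\,\|w_1\|\le b_2}\|\widehat{\eta}_{20}(t,w_1)\|\le C_4$ is obtained from \eqref{Lagr-Alt-x_20}, and only then is \eqref{Lagr-Alt2-w1} used to bound $\breve{\Pi}$ on bounded sets, giving \eqref{Bound_brevePi} and hence the alternative of Lemma~\ref{Lem_Exist-Blow_up}. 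With that in hand, the proof of Theorem~\ref{Th_GlobReg-2Approach} goes through verbatim. In short: your Steps~3 and~4 contain exactly the needed estimates, but they must precede (and justify) the global-existence claim rather than follow it; after this reordering your argument coincides with the paper's.
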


\begin{remark}\label{Rem_Lagr_Stab-2Appr}
The condition \ref{Lagr-Alt-2Appr} in Theorem \ref{Th_Lagr_Stab-2Appr} can be replaced by the following:
 \begin{enumerate}[label={\upshape\arabic*.},ref={\upshape\arabic*}, itemsep=4pt,parsep=0pt,topsep=4pt,leftmargin=0.6cm]
 \addtocounter{enumi}{2}
\item\label{Lagr-Alt-2Appr_2}
  There exist constants $C_1,\, C_2,\, C_3\ge 0$ such that
\begin{itemize}
  \item $\big\|x_{2s}^{(s+1)}\big\|\le C_1$ for each $t\in\R_+$, $x_{2j}^{(j+1)}\in M_{2j}^{(j+1)}$, $j=s,...,\nu-1$, $s=1,...,\nu-1$;
  \item $\big\|x_{2\wedge,\nu-2}\big\|\le C_2$ for each $t\in\R_+$, $x_{2j}^{(j+1)}\in M_{2j}^{(j+1)}$, $\|x_{2j}^{(j+1)}\|\le C_1$, $j=1,...,\nu-1$, $x_{2\wedge,\nu-2}\in M_{2\wedge,\nu-2}$, $d_{2(\nu-1)}\in D_{2(\nu-1)}$;
  \item $\big\|x_{2\wedge,s}\big\|\le C_2$ for each $t\in\R_+$, $x_{2j}^{(j+1)}\in M_{2j}^{(j+1)}$, $\|x_{2j}^{(j+1)}\|\le C_1$, $j=1,...,\nu-1$, $x_{2\wedge,i}\in M_{2\wedge,i}$, $\|x_{2\wedge,i}|\le C_2$, $i=s+1,...,\nu-2$, $x_{2\wedge,s}\in M_{2\wedge,s}$, $d_{2\wedge,s+1}\in D_{2\wedge,s+1}$, $d_{2(s+1)}^{(s+2)}\in D_{2(s+1)}^{(s+2)}$, $s=1,...,\nu-3$.
\end{itemize}
 For every fixed ${b>0}$ there exists constants ${C_3\ge (\nu-1)C_1+(\nu-2)C_2}$ and ${C_4=C_4(b)>0}$ such that $\big\|x_{20}\big\|\le C_4$ for each $t\in\R_+$, $x\in M$, $\|x_1\|\le b$, $\|x_{2\Sigma}\|\le C_3$. \\
 For every fixed ${b>0}$, ${d>0}$ the estimate \eqref{Lagr-Alt2-w1} holds.
 \end{enumerate}
\end{remark}

 \begin{lemma}\label{Lem_Alt_Lagr-2Approach}
Lemma~\ref{Lem_Alt-2Approach} remains valid if condition \ref{Alt-2Appr} of this lemma is replaced by condition \ref{Lagr-Alt-2Appr} of Theorem~\ref{Th_Lagr_Stab-2Appr}.
 \end{lemma}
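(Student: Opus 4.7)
The plan is to follow the structure of the proof of Lemma~\ref{Lem_Alt-2Approach} verbatim, replacing the single step where condition~\ref{Alt-2Appr} is invoked (namely, establishing the local boundedness \eqref{Bound_brevePi} of $\breve{\Pi}(t,w_1)$) by a sequential chain of a~priori bounds on the implicit functions $\widetilde{\eta}_{2s}^{(s+1)}$, $\eta_{2\wedge,s}$, $\eta_{2\Sigma}$ and $\eta_{20}$ constructed in the proof of Theorem~\ref{Th_Exist-Lip_set-2}. Once \eqref{Bound_brevePi} is reestablished under condition~\ref{Lagr-Alt-2Appr} of Theorem~\ref{Th_Lagr_Stab-2Appr}, the remainder of the proof (invocation of Lemma~\ref{Lem_Exist-Blow_up}, the identity $w_1(t)=AP_1x(t)$, and the conclusion for bounded $A$) is identical to that of Lemma~\ref{Lem_Alt-2Approach} and need not be rewritten.

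First, I would note that the existence and uniqueness of the solution $x(t)=\widetilde{A}^{-1}w_1(t)+\widehat{\eta}_{20}(t,w_1(t))+\eta_{2\Sigma}(t)$ on the maximal interval $[t_0,t_{max})$ is provided by Theorem~\ref{Th_Exist-Lip_set-2} itself (whose hypotheses are untouched) for arbitrary $t_0\in\R_+$, $x_{0,1}\in D_1$. Thus the only task is to show $\sup_{t\in[0,b_1],\,\|w_1\|\le b_2}\|\breve{\Pi}(t,w_1)\|<\infty$ for every $b_1,b_2>0$. I would exploit the equivalent forms \eqref{Stab_x_2s_(s+1)}, \eqref{Stab_x_2wedge,s}, \eqref{Stab_x_20} of the defining equations for the implicit functions.

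The heart of the argument is a descending induction on $s$ (matching the order in which the functions are constructed in the proof of Theorem~\ref{Th_Exist-Lip_set-2}): (i)~using \eqref{Stab_x_2s_(s+1)} and \eqref{Lagr-Alt-x_2s_s+1} we get $\|\widetilde{\eta}_{2s}^{(s+1)}(t)\|\le\|\EuScript B_2^{(-1)}\|K_1\le C_1$ for all $t\in\R_+$ and all $s=1,\ldots,\nu-1$; (ii)~using \eqref{Stab_x_2wedge,s}, together with \eqref{Lagr-Alt-x_2wedge,s-nu-2} evaluated at $d_{2(i+1)}^{(i+2)}=\tfrac{d}{dt}\widetilde{\eta}_{2(i+1)}^{(i+2)}(t)$ and $d_{2\wedge,i+1}=\tfrac{d}{dt}\eta_{2\wedge,i+1}(t)$, we obtain $\|\eta_{2\wedge,s}(t)\|\le\|\EuScript B_2^{(-1)}\|K_2\le C_2$ for all $t\in\R_+$ and $s=1,\ldots,\nu-2$ (after verifying at each inductive step that the previously bounded functions fit in the sup domain of \eqref{Lagr-Alt-x_2wedge,s-nu-2}); (iii)~summing gives $\|\eta_{2\Sigma}(t)\|\le(\nu-1)C_1+(\nu-2)C_2\le C_3$ for every $t\in\R_+$; and (iv)~using \eqref{Stab_x_20} together with \eqref{Lagr-Alt-x_20} evaluated at $d_{21}^{(2)}=\tfrac{d}{dt}\widetilde{\eta}_{21}^{(2)}(t)$, $d_{2\wedge,1}=\tfrac{d}{dt}\eta_{2\wedge,1}(t)$, $x_{2\Sigma}=\eta_{2\Sigma}(t)$, we obtain $\|\widehat{\eta}_{20}(t,w_1)\|=\|\eta_{20}(t,\widetilde{A}^{-1}w_1)\|\le\|\EuScript B_2^{(-1)}\|K\le C_4$ uniformly for $t\in\R_+$ and $\|w_1\|\le b_2$, where $K=K(\|\widetilde{A}^{-1}\|b_2)$.

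Finally, combining these bounds with \eqref{Lagr-Alt2-w1} applied at $x=\widetilde{A}^{-1}w_1+\widehat{\eta}_{20}(t,w_1)+\eta_{2\Sigma}(t)$, we deduce that $\sup_{t\in[0,b_1],\,\|w_1\|\le b_2}\|Q_1f(t,\widetilde{A}^{-1}w_1+\widehat{\eta}_{20}(t,w_1)+\eta_{2\Sigma}(t))\|<\infty$ for every fixed $b_1,b_2>0$; adding the trivially bounded term $Q_1B\widetilde{A}^{-1}w_1$ on $\|w_1\|\le b_2$ yields \eqref{Bound_brevePi} and the proof closes exactly as in Lemma~\ref{Lem_Alt-2Approach}. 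The main technical obstacle I anticipate is the bookkeeping in step~(ii): one must check that at each stage of the descending induction the arguments $x_{2j}^{(j+1)}$, $x_{2\wedge,i}$ fall into the domain of the sup in \eqref{Lagr-Alt-x_2wedge,s-nu-2} (i.e., that the previously established bounds $C_1$, $C_2$ are compatible with the sup conditions $\|x_{2j}^{(j+1)}\|\le C_1$, $\|x_{2\wedge,i}\|\le C_2$), which is precisely why the constants in condition~\ref{Lagr-Alt-2Appr} were chosen so that $C_1\ge\|\EuScript B_2^{(-1)}\|K_1$ and $C_2\ge\|\EuScript B_2^{(-1)}\|K_2$. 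With that consistency in place the chain of estimates closes and condition~\ref{Alt-2Appr} is never needed.
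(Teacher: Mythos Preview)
Your proposal is correct and follows essentially the same approach as the paper: you recover the bound \eqref{Bound_brevePi} by chaining the estimates \eqref{Lagr-Alt-x_2s_s+1}, \eqref{Lagr-Alt-x_2wedge,s-nu-2}, \eqref{Lagr-Alt-x_20}, \eqref{Lagr-Alt2-w1} through the representations \eqref{Stab_x_2s_(s+1)}, \eqref{Stab_x_2wedge,s}, \eqref{Stab_x_20} to bound $\widetilde{\eta}_{2s}^{(s+1)}$, $\eta_{2\wedge,s}$, $\eta_{2\Sigma}$, $\widehat{\eta}_{20}$ successively, then conclude exactly as in Lemma~\ref{Lem_Alt-2Approach}. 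The paper's proof is the same chain of estimates, with the same use of the compatibility $C_1\ge\|\EuScript B_2^{(-1)}\|K_1$, $C_2\ge\|\EuScript B_2^{(-1)}\|K_2$ that you flag.
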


 \begin{proof}[Proof of Lemma \ref{Lem_Alt_Lagr-2Approach}]
First recall that (as stated in the proof of Theorem~\ref{Th_Exist-Lip_set-2}) for arbitrary $t_0\in \R_+$, $x_{0,1}\in D_1$ the IVP \eqref{ADAE}, \eqref{f-Appr-2}, \eqref{ini-2} has the unique solution $x(t)=\widetilde{A}^{-1}w_1(t)+\widehat{\eta}_{20}(t,w_1(t))+\eta_{2\Sigma}(t)$
in ${M=D_1\dotpl M_{20}\dotpl M_{2\Sigma}}$ on the maximal interval of existence $[t_0,t_{max})$.
Here $w_1(t)$ is the solution of the IVP \eqref{ADAE_DE_eta_w1}, \eqref{RegDEeta_ini-2} on $[t_0,\infty)$, $\widehat{\eta}_{20}\in C(\R_+\times Y_1,M_{20})$ (see \eqref{ADAE_DE_eta_w1} and \eqref{eta_20}) and $\eta_{2\Sigma}\in C^1(\R_+,M_{2\Sigma})$ (see \eqref{eta_2Sigma}).

The representations \eqref{Stab_x_2s_(s+1)} and estimate  \eqref{Lagr-Alt-x_2s_s+1} imply that $\sup\limits_{t\,\in\,\R_+}\big\|\widetilde{\eta}_{2s}^{(s+1)}(t)\big\|\le \|\EuScript B_2^{(-1)}\|K_1\le C_1=\const$, $s=1,...,\nu-1$. Therefore, the representations \eqref{Stab_x_2wedge,s} and estimate  \eqref{Lagr-Alt-x_2wedge,s-nu-2} yield  $\sup\limits_{t\,\in\,\R_+}\|\eta_{2\wedge,s}(t)\|\le \|\EuScript B_2^{(-1)}\|K_2 \le C_2=\const$, $s=1,...,\nu-2$. Then $\sup\limits_{t\,\in\,\R_+}\|\eta_{2\Sigma}(t)\|\le (\nu-1)C_1+(\nu-2)C_2\le C_3=\const$.
From this estimate, \eqref{Lagr-Alt-x_20} and the representation \eqref{Stab_x_20} we obtain that
$\sup\limits_{t\,\in\,\R_+,\; \|w_1\|\,\le\,b_2} \|\widehat{\eta}_{20}(t,w_1)\|\le \|\EuScript B_2^{(-1)}\|K\le C_4$ for any fixed $b_2>0$ ($\|x_1\|\le \|\widetilde{A}^{-1}\|b_2$).

It follows from the obtained estimates and \eqref{Lagr-Alt2-w1} that $\sup\limits_{t\,\in\,[0,b_1],\, \|w_1\|\,\le\,b_2} \|Q_1f\big(t, \widetilde{A}^{-1}w_1+ \widehat{\eta}_{20}(t,w_1)+ \eta_{2\Sigma}(t)\big)\| <\infty$  for any fixed ${b_1=d>0}$, ${b_2>0}$, and hence \eqref{Bound_brevePi} holds. The rest of the proof coincides with the part of the proof of Lemma~\ref{Lem_Alt-2Approach} after \eqref{Bound_brevePi}.
 \end{proof}

 \begin{proof}[Proof of Theorem \ref{Th_Lagr_Stab-2Appr}]
Due to Lemma \ref{Lem_Alt_Lagr-2Approach}, all conditions of Theorem~\ref{Th_GlobReg-2Approach} (or Theorem \ref{Th_GlobReg2-2Approach}) are satisfied.
Hence, for arbitrary $t_0\in \R_+$, $x_{0,1}\in D_1$ the IVP \eqref{ADAE}, \eqref{f-Appr-2}, \eqref{ini-2} has the unique solution $x(t)=\widetilde{A}^{-1}w_1(t)+\widehat{\eta}_{20}(t,w_1(t))+\eta_{2\Sigma}(t)$ on $[t_0,\infty)$.

Assume that $\sup\limits_{t\in [t_0,\infty)}\|w_1(t)\|=\infty$.
Then there exists a sequence $\{t_n\}$ such that $t_n\ge t_0$, $t_n\to \infty$ and $\|w_1(t_n)\|\to \infty$ as $n\to \infty$. Then, in the same way as in the proof of Theorem~\ref{Th_Lagr_Stab}, we obtain that $\infty=\int\limits_{0}^{\infty}\psi(\tau)\, d\tau$, which contradicts \eqref{Stab-psi}. Thus, $\sup\limits_{t\in [t_0,\infty)}\|w_1(t)\|<\infty$ and, hence, $\sup\limits_{t\in [t_0,\infty)}\|\widetilde{A}^{-1}w_1(t)\|<\infty$.
Therefore, taking into account the estimates obtained for $\widehat{\eta}_{20}(t,w_1)$ and $\eta_{2\Sigma}(t)$ in the proof of Lemma \ref{Lem_Alt_Lagr-2Approach}, we obtain that $\sup\limits_{t\in [t_0,\infty)}\|x(t)\|=\sup\limits_{t\in [t_0,\infty)}\|\widetilde{A}^{-1}w_1(t)+ \widehat{\eta}_{20}(t,w_1(t))+\eta_{2\Sigma}(t)\| <\infty$.

Thus, the ADAE \eqref{ADAE},~\eqref{f-Appr-2} is Lagrange stable for each initial point $(t_0,x_{0,1})\in \R_+\times D_1$.
 \end{proof}

 \begin{theorem}[\textbf{Lagrange instability (blow-up of solutions)}]\label{Th_Lagr_Instab-2Appr} %{Th_Lagr_Instab}
Let the conditions of Lemma \ref{Lem_Alt-2Approach} and the following conditions hold:
 \begin{enumerate}[label={\upshape\arabic*.}, ref={\upshape\arabic*}, itemsep=4pt,parsep=0pt,topsep=4pt,leftmargin=0.6cm]
\addtocounter{enumi}{3}
\item\label{InstLagr-a-2Appr}
  There exists an open set $\widetilde{\Omega}_1\subseteq Y_1$ such that the component $P_1x(t)=x_1(t)$ of each solution $x(t)$ with the initial values $t_0\in \R_+$, $x_{0,1}\in \Omega_1= \widetilde{A}^{-1}\widetilde{\Omega}_1$, remains in $\Omega_1$ for all $t$ from the maximal interval of existence of $x(t)$.

\item\label{InstLagr-b-2Appr}
  There exists a functional
  \begin{equation}\label{Vmin-2Appr}
   V(w_1)=\min\limits_{i=1,...,m}V_i(w_1),
  \end{equation}
  where $w_1=\widetilde{A}x_1\in Y_1$  and $V_i\in C(Y_1,\R_+)$, $i=1,...,m$, are positive and continuously differentiable functionals on $\widetilde{\Omega}_1$, and functionals $U\in C(0,\infty)$, $\psi\colon \R_+\to \R_+$ such that $\psi(t)$ is integrable on each finite interval in $\R_+$,
   \begin{equation}\tag{\ref{Blow-up}}
  \int\limits^{\infty}\dfrac{du}{U(u)}<\infty,\qquad   \int\limits^{\infty}\psi(t)dt=\infty,
    \end{equation}
   and for each ${t\in \R_+}$, ${x_{20}\in M_{20}}$, ${x_{2\Sigma}\in M_{2\Sigma}}$, $w_1\in \widetilde{\Omega}_1$, the following inequality holds:
   \begin{equation}\label{InstLagrReg-2Appr}
  \big\langle Q_1f\big(t,\widetilde{A}^{-1}w_1+x_{20}+x_{2\Sigma}\big)- Q_1B\widetilde{A}^{-1}w_1,\, \partial_{w_1}V_{j(w_1)}(w_1) \big\rangle \ge U\big(V(w_1)\big)\psi(t),
   \end{equation}
  where $j(w_1)\in \{1,...,m\}$ is any index for which $V(w_1)=V_{j(w_1)}(w_1)$ for a given $w_1$.
 \end{enumerate}
Then for each $t_0\in \R_+$,\, $x_{0,1}\in \Omega_1$, there exists a $t_{max}<\infty$ such that the IVP \eqref{ADAE}, \eqref{f-Appr-2}, \eqref{ini-2} has a unique solution $x(t)$ on the maximal interval of existence $[t_0,t_{max})$ and $\lim\limits_{t\to t_{max}-0}\|AP_1x(t)\|=\infty$. In addition, if $D=X$ or the operator $A$ is bounded, then $\lim\limits_{t\to t_{max}-0}\|x(t)\|=\infty$ $($the solution blows up in the finite time $[t_0,t_{max})$$)$.
 \end{theorem}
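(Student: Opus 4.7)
The proof will closely mirror that of Theorem~\ref{Th_Lagr_Instab} from the first approach, translated to the ``$w_1$ coordinates'' of the second approach. The plan is to argue by contradiction: assume $t_{max}=\infty$ and derive a contradiction from the differential inequality \eqref{InstLagrReg-2Appr} via the Bihari-type estimates of \cite{KK1956}.

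First I would invoke Lemma~\ref{Lem_Alt-2Approach} (whose hypotheses are assumed) to obtain, for any initial values $t_0\in\R_+$, $x_{0,1}\in\Omega_1\subseteq D_1$, the unique solution
\[
x(t)=\widetilde{A}^{-1}w_1(t)+\widehat{\eta}_{20}(t,w_1(t))+\eta_{2\Sigma}(t)
\]
on the maximal interval $[t_0,t_{max})$, where $w_1(t)$ solves the IVP \eqref{ADAE_DE_eta_w1}, \eqref{RegDEeta_ini-2}; the alternative from that lemma gives $\|AP_1x(t)\|=\|w_1(t)\|\to\infty$ whenever $t_{max}<\infty$. Hence it suffices to establish $t_{max}<\infty$, after which the stated blow-up of $\|Ax(t)\|$ (and of $\|x(t)\|$ when $A$ is bounded or $D=X$) follows immediately.

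Suppose, for contradiction, that $t_{max}=\infty$, and choose any sequence $\{t_n\}$ with $t_n>t_0$, $t_n\to\infty$. By condition~\ref{InstLagr-a-2Appr}, $P_1x(t)\in\Omega_1$ for all $t\in[t_0,\infty)$, hence $w_1(t)=\widetilde{A}P_1x(t)\in\widetilde{\Omega}_1$ for all such $t$; in particular $w_1(t_n)\in\widetilde{\Omega}_1$ for every $n$. Since $w_1$ satisfies $\dot w_1=\breve{\Pi}(t,w_1)$ with $\breve{\Pi}$ given by \eqref{ADAE_DE_eta_w1}, condition~\ref{InstLagr-b-2Appr} (applied with $x_{20}=\widehat{\eta}_{20}(t,w_1(t))\in M_{20}$ and $x_{2\Sigma}=\eta_{2\Sigma}(t)\in M_{2\Sigma}$, which are guaranteed by the existence construction of Theorem~\ref{Th_Exist-Lip_set-2}) yields
\[
\langle\breve{\Pi}(t,w_1),\partial_{w_1}V_{j(w_1)}(w_1)\rangle\ge U\bigl(V(w_1)\bigr)\psi(t),\qquad t\in\R_+,\ w_1\in\widetilde{\Omega}_1.
\]
Applying Lemma~\ref{Lem_2.2-ineq_ge} along the trajectory on $[t_1,t_n]$ (on which $w_1(\tau)\in\widetilde{\Omega}_1$) then gives, for all $n\ge 1$,
\[
V(w_1(t_n))-V(w_1(t_1))\ge\int_{t_1}^{t_n}U\bigl(V(w_1(\tau))\bigr)\psi(\tau)\,d\tau,
\]
which, by the substitution argument of \cite[Lemma~2.1]{KK1956}, is equivalent to
\[
\int_{V(w_1(t_1))}^{V(w_1(t_n))}\frac{du}{U(u)}\ge\int_{t_1}^{t_n}\psi(\tau)\,d\tau.
\]

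Finally, passing to the limit $n\to\infty$ yields the contradiction
\[
\infty>\int_{V(w_1(t_1))}^{\infty}\frac{du}{U(u)} \ge \lim_{n\to\infty}\int_{V(w_1(t_1))}^{V(w_1(t_n))}\frac{du}{U(u)} \ge \int_{t_1}^{\infty}\psi(\tau)\,d\tau=\infty,
\]
by virtue of both conditions in \eqref{Blow-up}. Therefore $t_{max}<\infty$, and the concluding assertions follow from Lemma~\ref{Lem_Alt-2Approach}. The only delicate point is to observe that the ``bounded orbit'' argument used in Theorem~\ref{Th_Lagr_Stab-2Appr} here runs in reverse: the differential inequality is a \emph{lower} bound (with $V$ a minimum-of-functionals instead of maximum), so one uses Lemma~\ref{Lem_2.2-ineq_ge} rather than \cite[Lemma~2.2]{KK1956}; everything else is a routine adaptation of the proof of Theorem~\ref{Th_Lagr_Instab}.
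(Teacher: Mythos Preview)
Your proof is correct and follows essentially the same route as the paper: invoke Lemma~\ref{Lem_Alt-2Approach} for existence and the blow-up alternative, assume $t_{max}=\infty$, use condition~\ref{InstLagr-a-2Appr} to keep $w_1(t)\in\widetilde{\Omega}_1$, apply Lemma~\ref{Lem_2.2-ineq_ge} and the \cite[Lemma~2.1]{KK1956} argument to the differential inequality from condition~\ref{InstLagr-b-2Appr}, and derive the contradiction from \eqref{Blow-up}. The only slip is cosmetic: in your second paragraph you write ``blow-up of $\|Ax(t)\|$'' where the statement (and your own preceding line) has $\|AP_1x(t)\|$.
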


 \begin{proof}%[Proof of Theorem \ref{Th_Lagr_Instab-2Appr}]
The proof is carried out by analogy with the proof of Theorem \ref{Th_Lagr_Instab}.

It follows from the proof of Lemma \ref{Lem_Alt-2Approach} that for arbitrary $t_0\in \R_+$ and $x_{0,1}\in D_1$ the IVP \eqref{ADAE}, \eqref{f-Appr-2}, \eqref{ini-2} has the unique solution $x(t)=\widetilde{A}^{-1}w_1(t)+\widehat{\eta}_{20}(t,w_1(t))+\eta_{2\Sigma}(t)$ on $[t_0,t_{max})$, where $w_1(t)$ is the solution of the IVP \eqref{ADAE_DE_eta_w1}, \eqref{RegDEeta_ini-2} on $[t_0,t_{max})$. Also, by Lemma \ref{Lem_Alt-2Approach}, the theorem statement is fulfilled if $t_{max}<\infty$.

Assume that $t_{max}=\infty$. Consider a sequence $\{t_n\}$ such that $t_n> t_0$ and $t_n\to \infty$ as $n\to \infty$. Due to condition \ref{InstLagr-a}, $w_1(t_n)\in \widetilde{\Omega}_1$ for all $n$.
It follows from condition~\ref{InstLagr-b-2Appr} that the inequality $\big\langle \breve{\Pi}(t,w_1), \partial_{w_1}V_{j(w_1)}(w_1)\big\rangle\ge U\big(V(w_1)\big)\psi(t)$, where $V$, $U$ and $\psi$ are some functionals satisfying condition~\ref{InstLagr-b-2Appr}, holds for each $t\in \R_+$, $w_1\in \widetilde{\Omega}_1$. Then, due to Lemma \ref{Lem_2.2-ineq_ge},
$V(w_1(t_n))-V(w_1(t_1))\ge \int\limits_{t_1}^{t_n}U\big(V(w_1(\tau))\big)\psi(\tau)\, d\tau$
for all $n\ge 1$.  From this inequality we obtain that
  %%\begin{equation}\label{InstLagr-ineq-2-2Appr}%{InstLagr-ineq-2}
$\int\limits_{V(w_1(t_1))}^{V(w_1(t_n))} \dfrac{du}{U(u)}\ge \int\limits_{t_1}^{t_n}\psi(\tau)\, d\tau$
for all $n\ge 1$, which is proved in the same way as \cite[Lemma~2.1]{KK1956}.
Further, passing to the limit as $n\to \infty$, we obtain
$$
\infty>\int\limits_{V(w_1(t_1))}^\infty \dfrac{du}{U(u)}\ge \lim\limits_{n\to\infty}\int\limits_{V(w_1(t_1))}^{V(w_1(t_n))} \dfrac{du}{U(u)}\ge \int\limits_{t_1}^{\infty}\psi(\tau)\, d\tau = \infty
$$
by virtue of \eqref{Blow-up}, which is a contradiction. Hence $t_{max}<\infty$ and the statement of the theorem holds.
\end{proof}

 \begin{theorem}[\textbf{Lagrange instability}]\label{Th_Lagr_Instab_spec-2Appr} %{Th_Lagr_Instab_specify}
Let the conditions of Theorem~\ref{Th_Exist-Lip_set-2} (or Corollary \ref{Corollary_Exist_set-2}) hold, and let the following conditions, where $\widetilde{M}_1$, $M_{20}$, $M_{2\Sigma}$, ${M_1=\widetilde{A}^{-1}\widetilde{M}_1}$ and ${M=M_1\dotpl M_{20}\dotpl M_{2\Sigma}}$ are defined in Theorem~\ref{Th_Exist-Lip_set-2}, be satisfied:
 \begin{enumerate}[label={\upshape\arabic*.}, ref={\upshape\arabic*}, itemsep=4pt,parsep=0pt,topsep=4pt,leftmargin=0.6cm]
\addtocounter{enumi}{2}
 \item\label{Alternative_set-2Appr}
 For any fixed ${a>0}$ and any closed bounded set $\widetilde{S}_1\subset \widetilde{M}_1$ such that $\rho(\widetilde{S}_1,\partial \widetilde{M}_1)>0$ the following holds:
  \begin{equation}\label{Alt-2_set-2Appr}
 \sup\limits_{t\,\in\,[0,a],\; x\,\in\, M,\; w_1\,\in\, \widetilde{S}_1} \|Q_1f(t,x)\|<\infty \qquad (x=\widetilde{A}^{-1}w_1+x_{20}+x_{2\Sigma}).
  \end{equation}

\item\label{InstLagr-a_set-2Appr}
  The component $P_1x(t)=x_1(t)$ of each solution $x(t)$ with the initial values $t_0\in \R_+$, $x_{0,1}\in M_1$ remains in $M_1$ for all $t$ from the maximal interval of existence of $x(t)$.

\item\label{InstLagr-b_set-2Appr}
  There exists a functional of the form \eqref{Vmin-2Appr}, where ${w_1=\widetilde{A}x_1\in Y_1}$  and ${V_i\in C(Y_1,\R_+)}$, $i=1,...,m$, are positive and continuously differentiable functionals on $\widetilde{M}_1$, and functionals ${U\in C(0,\infty)}$, ${\psi\colon \R_+\to \R_+}$ such that $\psi(t)$ is integrable on each finite interval in $\R_+$, \eqref{Blow-up} is fulfilled, and for each ${t\in \R_+}$, ${x_{20}\in M_{20}}$, ${x_{2\Sigma}\in M_{2\Sigma}}$, $w_1\in \widetilde{M}_1$ the  inequality \eqref{InstLagrReg-2Appr}, where $j(w_1)\in \{1,...,m\}$ is any index for which $V(w_1)=V_{j(w_1)}(w_1)$ for a given $w_1$, holds.
 \end{enumerate}
Then for each $t_0\in \R_+$,\, $x_{0,1}\in M_1$ there exists a $t_{max}<\infty$ such that the IVP \eqref{ADAE}, \eqref{f-Appr-2}, \eqref{ini-2} has a unique solution $x(t)$ on the maximal interval of existence $[t_0,t_{max})$ and $\lim\limits_{t\to t_{max}-0}\|AP_1x(t)\|=\infty$. In addition, if $D=X$  or the operator $A$ is bounded, then $\lim\limits_{t\to t_{max}-0}\|x(t)\|=\infty$ $($the solution blows up in the finite time $[t_0,t_{max})$$)$.
 \end{theorem}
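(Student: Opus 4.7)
The plan is to mimic the structure of the proof of Theorem~\ref{Th_Lagr_Instab_specify} (first approach), but adapted to the reduced scalar-type ODE \eqref{ADAE_DE_eta_w1} on $Y_1$ that arises in the second approach. First, I invoke Theorem~\ref{Th_Exist-Lip_set-2} (or Corollary~\ref{Corollary_Exist_set-2}): for the initial data $t_0\in \R_+$, $x_{0,1}\in M_1$ there exists a unique solution
$$
x(t)=\widetilde{A}^{-1}w_1(t)+\widehat{\eta}_{20}(t,w_1(t))+\eta_{2\Sigma}(t)
$$
in $M$ on its maximal interval of existence $[t_0,t_{max})$, where $w_1(t)$ is the unique solution of the IVP \eqref{ADAE_DE_eta_w1},~\eqref{RegDEeta_ini-2} in $\widetilde{M}_1$, and the functions $\widehat{\eta}_{20}$, $\eta_{2\Sigma}$ are the ones constructed in the proof of Theorem~\ref{Th_Exist-Lip_set-2}. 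By condition~\ref{InstLagr-a_set-2Appr} the component $x_1(t)=\widetilde{A}^{-1}w_1(t)$ never leaves $M_1$, equivalently $w_1(t)\in \widetilde{M}_1$ for all $t\in [t_0,t_{max})$.

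Next, I check that the reduced RHS $\breve{\Pi}(t,w_1)$ from \eqref{ADAE_DE_eta_w1} is bounded on cylinders $[t_0,b]\times \widetilde{S}_1$, where $\widetilde{S}_1\subset \widetilde{M}_1$ is closed bounded with $\rho(\widetilde{S}_1,\partial \widetilde{M}_1)>0$. Since $\widehat{\eta}_{20}\in C(\R_+\times Y_1, M_{20})$ and $\eta_{2\Sigma}\in C^1(\R_+, M_{2\Sigma})$, both are bounded on $[0,b]\times \widetilde{S}_1$ (respectively $[0,b]$), so that condition~\ref{Alternative_set-2Appr} together with boundedness of $\widetilde{A}^{-1}$ and $Q_1B\widetilde{A}^{-1}$ yields $\sup_{t\in[0,b],\, w_1\in \widetilde{S}_1}\|\breve{\Pi}(t,w_1)\|<\infty$. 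Then, since $w_1(t)$ remains in the region $\widetilde{M}_1$, Lemma~\ref{Lem_Blow_up} applies to \eqref{ADAE_DE_eta_w1}, giving that if $t_{max}<\infty$ then $\lim_{t\to t_{max}-0}\|w_1(t)\|=\infty$. Since $w_1(t)=AP_1x(t)$, this yields $\lim_{t\to t_{max}-0}\|AP_1x(t)\|=\infty$, and, when $A$ is bounded (in particular when $D=X$), also $\lim_{t\to t_{max}-0}\|x(t)\|=\infty$.

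It remains to exclude the possibility $t_{max}=\infty$. Assume the contrary and fix a sequence $t_n\to\infty$ with $t_n>t_0$. By condition~\ref{InstLagr-a_set-2Appr}, $w_1(t_n)\in \widetilde{M}_1$ for all $n$, so condition~\ref{InstLagr-b_set-2Appr} and the definition of $\breve{\Pi}$ give the differential inequality
$$
\bigl\langle \breve{\Pi}(t,w_1),\partial_{w_1}V_{j(w_1)}(w_1)\bigr\rangle \ge U\bigl(V(w_1)\bigr)\psi(t)
$$
for every $t\in \R_+$ and every $w_1\in \widetilde{M}_1$. Applying Lemma~\ref{Lem_2.2-ineq_ge} along the trajectory $w_1(t)$ produces
$$
V(w_1(t_n))-V(w_1(t_1))\ge \int_{t_1}^{t_n} U\bigl(V(w_1(\tau))\bigr)\psi(\tau)\,d\tau,
$$
and the argument of \cite[Lemma~2.1]{KK1956} (as used in the proofs of Theorems~\ref{Th_Lagr_Instab} and \ref{Th_Lagr_Instab-2Appr}) converts this into the comparison
$$
\int_{V(w_1(t_1))}^{V(w_1(t_n))} \frac{du}{U(u)} \ge \int_{t_1}^{t_n}\psi(\tau)\,d\tau.
$$
Passing $n\to\infty$, the left side stays finite by \eqref{Blow-up}, while the right side diverges, also by \eqref{Blow-up}. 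This contradiction forces $t_{max}<\infty$, and the conclusion follows from the second paragraph.

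The main obstacle I anticipate is purely bookkeeping: verifying that the auxiliary functions $\widehat{\eta}_{20}$ and $\eta_{2\Sigma}$ are well-defined and bounded on the closed bounded subsets $\widetilde{S}_1$ of $\widetilde{M}_1$ with positive distance to $\partial\widetilde{M}_1$, so that the boundedness hypothesis of Lemma~\ref{Lem_Blow_up} genuinely applies to \eqref{ADAE_DE_eta_w1}. Here one has to recall that $\eta_{2\Sigma}$ does not depend on $w_1$ at all, and $\widehat{\eta}_{20}$ was constructed through the chain of implicit functions $\widetilde{\eta}_{2j}^{(j+1)}$, $\eta_{2\wedge,j}$, $\eta_{20}$ in the proof of Theorem~\ref{Th_Exist-Lip_set-2}; none of these components is further restricted here, so standard compactness-continuity arguments suffice. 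Everything else is essentially a direct transcription of the arguments in Theorems~\ref{Th_Lagr_Instab_specify} and \ref{Th_Lagr_Instab-2Appr}.
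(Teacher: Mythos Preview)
Your proposal is correct and follows essentially the same route as the paper's proof: reduce to the ODE \eqref{ADAE_DE_eta_w1} for $w_1$, use condition~\ref{Alternative_set-2Appr} to bound $\breve{\Pi}$ on cylinders $[0,b]\times\widetilde{S}_1$ so that Lemma~\ref{Lem_Blow_up} applies, and then rule out $t_{max}=\infty$ via Lemma~\ref{Lem_2.2-ineq_ge} and the comparison argument from \cite{KK1956}. One small remark: your discussion of boundedness of $\widehat{\eta}_{20}$ and $\eta_{2\Sigma}$ on $[0,b]\times\widetilde{S}_1$ is unnecessary, since \eqref{Alt-2_set-2Appr} already takes the supremum over all $x_{20}\in M_{20}$ and $x_{2\Sigma}\in M_{2\Sigma}$; also $\widehat{\eta}_{20}$ is defined on $\R_+\times\widetilde{M}_1$, not $\R_+\times Y_1$, in this setting.
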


 \begin{proof}%[Proof of Theorem \ref{Th_Lagr_Instab_spec-2Appr}]
It follows from the proof of Theorem~\ref{Th_Exist-Lip_set-2} that for an arbitrary initial point $(t_0,x_{0,1})\in \R_+\times M_1$ the IVP \eqref{ADAE},~\eqref{f-Appr-2},~\eqref{ini-2} has the unique solution $x(t)=\widetilde{A}^{-1}w_1(t)+\widehat{\eta}_{20}(t,w_1(t))+\eta_{2\Sigma}(t)$ in $M$ on $[t_0,t_{max})$, where $w_1(t)$ is the solution of the IVP \eqref{ADAE_DE_eta_w1}, \eqref{RegDEeta_ini-2} on $[t_0,t_{max})$.

Using \eqref{Alt-2_set-2Appr}, we obtain that
$\sup\limits_{t\,\in\,[0,b],\; w\,\in\,\widetilde{S}_1} \|Q_1f\big(t, \widetilde{A}^{-1}w_1+ \widehat{\eta}_{20}(t,w_1)+ \eta_{2\Sigma}(t)\big)\| <\infty$  and, hence,
$\sup\limits_{t\,\in\, [0,b],\; w\,\in\,\widetilde{S}_1} \|\breve{\Pi}(t,w_1)\|<\infty$ for any fixed ${b>0}$ and any closed bounded set $\widetilde{S}_1\subset \widetilde{M}_1$ such that $\rho(\widetilde{S}_1,\partial \widetilde{M}_1)>0$.
Thus, it follows from Lemma \ref{Lem_Blow_up} that if $t_{max}<\infty$, then $\lim\limits_{t\to t_{max}-0}\|w_1(t)\|=\infty$.

Further, in the same way as in the proof of Theorem \ref{Th_Lagr_Instab_specify}, we prove that $t_{max}<\infty$.
\end{proof}

\begin{remark}\label{Rem_f-2-Appr-2}
Instead of the function $f$ of the form \eqref{f-Appr-2} we can consider the one having the structure
 \begin{equation}\label{f-2-Appr-2}
f(t,x)=\big(Q_1+Q_{20}\big)f(t,x)+  \sum\limits_{s=1}^{\nu-2}Q_{2\Sigma,s} f\bigg(t,x_{2\Sigma}^{(1)}+\sum\limits_{j=s}^{\nu-2}x_{2\wedge,j}\bigg)+ Q_{2*}^{(2)}f\bigg(t,x_{2\Sigma}^{(1)}\bigg)
 \end{equation}
where $Q_{2*}^{(2)}=\sum\limits_{s=1}^{\nu-1}Q_{2s}^{(s+1)}$ and $x_{2\Sigma}^{(1)}=\sum\limits_{j=1}^{\nu-1}x_{2j}^{(j+1)}=P_{2\Sigma}^{(1)}x$ (see \eqref{Q_2*_(i),P_2Sigma_(i)}),
that is, the components $Q_{2s}^{(s+1)}f$  \,($s=2,...,\nu-1$) of the function $f$ depend on the sum $\sum\limits_{j=1}^{\nu-1}x_{2j}^{(j+1)}$ instead of the sum from $s$ to $\nu-1$ in \eqref{f-Appr-2}.
Therefore, the system \eqref{1_1}, \eqref{2_20},  \eqref{2_2Sigma_s-}, \eqref{2_2Sigma_nu-2}, \eqref{3_2*_s-}, \eqref{3_2*_nu-1} for $f$ of the form  \eqref{f-2-Appr-2} takes the form \eqref{x_1}--\eqref{x_2(nu-1)} where $Q_{2s}^{(s+1)}f$, $s=2,...,\nu-1$, depend on $t,x_{2\Sigma}^{(1)}$. In the rest of the equations of this system $\sum\limits_{j=1}^{\nu-1}x_{2j}^{(j+1)}$ can also be redesignated as $x_{2\Sigma}^{(1)}$. Further, we sum those equations of the resulting system that correspond to equations \eqref{x_2s_(s+1)}, $s=1,...,\nu-2$, \eqref{x_2(nu-1)}, and obtain the equation
 \begin{equation}\label{x_2Sigma_(1)}
Bx_{2\Sigma}^{(1)}=Q_{2*}^{(2)}f\big(t,x_{2\Sigma}^{(1)}\big).
 \end{equation}

Thus, the system \eqref{x_1}--\eqref{x_2wedge,nu-2}, \eqref{x_2Sigma_(1)} is equivalent to the ADAE  \eqref{ADAE} with $f$ of the form \eqref{f-2-Appr-2}.

Equation \eqref{x_2Sigma_(1)} can be represented as the equation
 \begin{equation}\label{x_2Sigma_(1)+}
F_{2\Sigma}^{(1)}\big(t,x_{2\Sigma}^{(1)})=0,\qquad
F_{2\Sigma}^{(1)}\big(t,x_{2\Sigma}^{(1)})):= Q_{2*}^{(2)}f\big(t,x_{2\Sigma}^{(1)}\big)-Bx_{2\Sigma}^{(1)},
 \end{equation}
which is equivalent to the sum of equation \eqref{x_2s_(s+1)+}, $s=1,...,\nu-2$, \eqref{x_2(nu-1)+} where $Q_{2s}^{(s+1)}f$, $s=2,...,\nu-1$, depend on $t,x_{2\Sigma}^{(1)}$. Notice that $P_{2j}^{(j+1)}x_{2\Sigma}^{(1)} =x_{2j}^{(j+1)}$.

It can be readily verified that for the IVP \eqref{ADAE}, \eqref{f-2-Appr-2}, \eqref{ini-2}, Theorem \ref{Th_Exist-Lip_set-2} with the following changes is true:
\begin{itemize}
  \item $Q_{2*}^{(2)}f\big(t,x_{2\Sigma}^{(1)}\big)\in C^{\nu-1}\bigg(\R_+\times M_{2\Sigma}^{(1)},Y_{2s}^{(s+1)}\bigg)$ where ${M_{2\Sigma}^{(1)}=\dotpl\limits_{i=1}^{\nu-1}M_{2i}^{(i+1)}}$, that is,  \\
      $Q_{2s}^{(s+1)}f\in C^{\nu-1}\bigg(\R_+\times  \bigg(\dotpl\limits_{i=1}^{\nu-1}M_{2i}^{(i+1)}\bigg),Y_{2s}^{(s+1)}\bigg)$, $s=2,...,\nu-1$.

  \item Instead of conditions \ref{Sogl_set-1a}, \ref{Sogl_set-1b}, it is required that for each  ${t\in \R_+}$ there exists a unique ${x_{2\Sigma}^{(1)}\in M_{2\Sigma}^{(1)}}$ such that \,$t,\,x_{2\Sigma}^{(1)}$ satisfy \eqref{x_2Sigma_(1)+}.
  \item Instead of conditions \ref{Inv-set-2a}, \ref{Inv-set-2b}, it is required that for any fixed ${\hat{t}\in \R_+}$, ${\hat{x}_{2\Sigma}^{(1)}\in M_{2\Sigma}^{(1)}}$ satisfying \eqref{x_2Sigma_(1)+}, the operator
 \begin{equation*}
\partial_{x_{2\Sigma}^{(1)}}F_{2\Sigma}^{(1)}\big(\hat{t}, \hat{x}_{2\Sigma}^{(1)}\big)=\! \left[\partial_x \big(Q_{2*}^{(2)}f\big)\big(\hat{t},\hat{x}_{2\Sigma}^{(1)}\big)- Q_{2*}^{(2)}B\right]\! P_{2\Sigma}^{(1)}\big|_{D_{2\Sigma}^{(1)}}\colon D_{2\Sigma}^{(1)}\to Y_{2*}^{(2)}
\end{equation*}
 has an inverse belonging to $\spL\big(Y_{2*}^{(2)},D_{2\Sigma}^{(1)}\big)$.
\end{itemize}

Analogues of the remaining theorems and corollaries which were proved for the ADAE  \eqref{ADAE}, \eqref{f-Appr-2} are also easy to obtain for the ADAE  \eqref{ADAE}, \eqref{f-2-Appr-2}. Therefore, we will not provide them here.
\end{remark}

 \begin{remark}\label{Rem_Appr2_Semi-inv}
As in Section \ref{Approach_1} (see Remark \ref{Rem_Appr1_Semi-inv}), in the above theorems, lemmas and corollaries one can use the semi-inverse operator $A^{(-1)}$ defined by \eqref{Semi-inverse} instead  of the inverse $\widetilde{A}^{-1}$ of \eqref{widetilde_A}, taking into account that ${A^{(-1)}=\widetilde{A}^{-1}(Q_1+Q_{2\Sigma})}$,\; ${\widetilde{A}^{-1}\widetilde{M}_1=A^{(-1)}\widetilde{M}_1}$ \;($\widetilde{M}_1\subseteq Y_1$),\; $\widetilde{A}x_1=Ax_1$,\; $\widetilde{A}^{-1}w_1=A^{(-1)}w_1$ \;($w\in Y_1$),\;
$\widetilde{A}^{-1}\widetilde{\Omega}_1=A^{(-1)}\widetilde{\Omega}_1$ \;($\widetilde{\Omega}_1\subseteq Y_1$).
 \end{remark}

 \subsection{Cases of the characteristic pencil of index 1 and 2}

In applications, DAEs of index 1 most often occur. DAEs of index 2 often arise in gas industry, control problems, mechanics of multilink mechanisms (e.g., robotics) and chemical kinetics (see, e.g., \cite{Brenan-C-P,Vlasenko1,Riaza,FR999,Rutkas2007,Lamour-Marz-Tisch}). In robotics, chemical kinetics and control problems, DAEs of index 3 arise as well. DAEs with an arbitrarily high index occur, e.g., in the study of electrical networks \cite{Riaza}.

Below we specify what form the theorems and other statements from Section \ref{Sec-Main} will have for the ADAE \eqref{ADAE} with the regular pencil of index 1 and~2.

 \paragraph{The index-2 case.} \quad

Consider the case when the regular pencil \eqref{Pencil} has index 2 (that is, the order of a pole of $R(\la)$ at $\la=\infty$ is $r=1$, or $\widehat{R}(\mu)$ has a pole of order $\nu=2$ at $\mu=0$). Then the canonical system $\{\varphi_i^j\}_{i=1,...,n}^{j=1,...,m_i}$ ($1\le m_i\le 2$) contains the eigenvectors (satisfying $A\varphi_i^1=0$, $i=1,...,n$) and the adjoined vectors of order 1 (satisfying $A\varphi_l^2=-B\varphi_l^1$,  where $l\in \{1,...,n\}$ are those indices for which $m_l=2$).
In this case, the subspaces from the direct decompositions of $D$ and $Y$, which are described in Section \ref{Sec_DirDec}, satisfy the equalities
 \begin{equation}\label{D2_Y2_ind2}
\begin{split}
& D_{20}=D_{20}^{(1)}\dotpl D_{20}^{(2)}, \quad  D_{20}^{(2)}=D_{2\wedge,0}, \quad D_{21}=D_{2\Sigma}=D_{2\Sigma}^{(1)},    \\
& Y_{20}=Y_{20}^{(1)}\dotpl Y_{20}^{(2)}, \quad Y_{20}^{(2)}=Y_{2\Sigma}=Y_{2\Sigma,0}, \quad Y_{20}^{(1)}=Y_{2*}^{(1)}, \quad  Y_{21}=Y_{21}^{(2)}=Y_{2*}^{(2)}.
\end{split}
\end{equation}

Hence, ${P_{21}=P_{2\Sigma}=P_{2\Sigma}^{(1)}}$,\;
${P_{20}^{(2)}=P_{2\wedge,0}}$,\; ${Q_{20}^{(2)}=Q_{2\Sigma}=Q_{2\Sigma,0}}$,\;
${Q_{20}^{(1)}=Q_{2*}^{(1)}}$,\;
${Q_{21}=Q_{21}^{(2)}=Q_{2*}^{(2)}}$ and,
accordingly,
$$
x_{2\Sigma}=x_{2\Sigma}^{(1)}=x_{21}, \quad x_{20}=x_{20}^{(1)}+x_{20}^{(2)},\quad x_{20}^{(2)}=x_{2\wedge,0}.
$$
In addition, ${A_{2\Sigma}=A_{21}=A\big|_{D_{21}}\colon D_{21}\to Y_{20}^{(2)}}$, ${Q_{2\Sigma}Bx=Q_{20}^{(2)}Bx=BP_{20}^{(2)}x=BP_{2\wedge,0}x}$ and ${Q_{2*}^{(2)}Bx=Q_{21}Bx=BP_{21}x=BP_{2\Sigma}x}$. Also, recall that by construction $x_{21}=x_{21}^{(2)}$ and $Q_{21}=Q_{21}^{(2)}$.

 \medskip
First, we specify the results of \textbf{Section \ref{Approach_1}}.

The ADAE \eqref{ADAE} with the regular pencil $P(\la)$ of index 2 is equivalent to the system  \eqref{ADAE_DE_Pi},~\eqref{ADAE_AE_F2*} (or \eqref{DE1+2}, \eqref{AE}), where ${x_{2\Sigma}=x_{21}}$, ${Q_{2*}=Q_{20}^{(1)}+Q_{21}}$ and ${Q_{2\Sigma}=Q_{20}^{(2)}}$, i.e.,
 \begin{align}
& \frac{d}{dt}[x_1+x_{21}]=\Pi(t,x), \quad \Pi(t,x):=\widetilde{A}^{-1}(Q_1+Q_{20}^{(2)})\big[f(t,x)-Bx\big],  \label{ADAE_DE_Pi_ind2} \\
& F_{2*}(t,x_1,x_{21},x_{20})=0, \quad
  F_{2*}(t,x_1,x_{21},x_{20}):=(Q_{20}^{(1)}+Q_{21})[f(t,x)-B(x_{21}+x_{20})].  \label{ADAE_AE_F2*_ind2}
 \end{align}
In addition,  for the theorems and other results of Section \ref{Approach_1} we have ${D_{2\Sigma}=D_{21}}$, ${Y_{2*}= Y_{20}^{(1)}\dotpl Y_{21}}$,  ${Y_{2\Sigma}=Y_{20}^{(2)}}$, ${P_{2\Sigma}=P_{21}}$, and it is convenient to denote ${M_{21}:=M_{2\Sigma}}$. Recall that in Section \ref{Approach_1} we consider the case when the component $x_{20}$ can be defined as an implicit function from \eqref{ADAE_AE_F2*}, thus the function $F_{2*}$ should depend on $x_{20}$. Equation \eqref{ADAE_AE_F2*} takes the form \eqref{ADAE_AE_F2*_ind2} for the ADAE with the index-2 regular pencil.  \emph{Taking into account the mentioned relations, it is easy to formulate the results (theorems, corollaries, etc.) of Section \ref{Approach_1} in an appropriate way.}

 \medskip
Now let us specify the results of \textbf{Section \ref{Approach_2}}.

The function $f$ having the structure \eqref{f-Appr-2} as well as \eqref{f-2-Appr-2} (recall that by construction $x_{21}=x_{21}^{(2)}$ and $Q_{21}=Q_{21}^{(2)}$) takes the form
\begin{equation}\label{f-Appr-2_ind2}
f(t,x)=\big(Q_1+Q_{20}\big)f(t,x)+Q_{21}f(t,x_{21}),
\end{equation}
and the system \eqref{x_1}--\eqref{x_2(nu-1)} as well as the system  \eqref{x_1}--\eqref{x_2wedge,nu-2}, \eqref{x_2Sigma_(1)} takes the form
 \begin{align}
\frac{d}{dt}x_1+\widetilde{A}^{-1}Bx_1 &=\widetilde{A}^{-1}Q_1f(t,x), \label{x_1_ind2}\\
\frac{d}{dt}\big[Ax_{21}\big]+ Bx_{20} &= Q_{20}f(t,x), \label{x_20_ind2}  \\
Bx_{21} &=Q_{21}f\big(t,x_{21}\big). \label{x_2(nu-1)_ind2}
 \end{align}
Correspondingly, equation \eqref{x_2(nu-1)+}, equivalent to \eqref{x_2(nu-1)} and, accordingly, to \eqref{x_2(nu-1)_ind2}, has the form
\begin{equation}\label{x_2(nu-1)_ind2+}
F_{21}\big(t,x_{21}\big)=0,\qquad
F_{21}\big(t,x_{21}\big)= Q_{21}f\big(t,x_{21}\big)-Bx_{21}.
\end{equation}
In equation \eqref{x_20+} $d_{21}^{(2)}=d_{21}$, $d_{2\wedge,1}=0$ and ${x_{2\Sigma}=x_{21}}$, and the corresponding equation is equivalent to \eqref{x_20} and to \eqref{x_20_ind2} where $\frac{d}{dt}x_{21}^{(2)}=\frac{d}{dt}x_{21}$ is replaced by $d_{21}^{(2)}=d_{21}$ \,($x_{2\wedge,1}=0$), that is,
\begin{equation}\label{x_20_ind2+}
F_{20} \big(t,x_1,x_{20},x_{21},d_{21}\big)=0, \;
 F_{20}\big(t,x_1,x_{20},x_{21},d_{21}\big)\!= Q_{20}f\big(t,x_1+x_{20}+x_{21}\big) - Bx_{20} -Q_{20}A\,d_{21}.
\end{equation}

As mentioned in Section \ref{Intro}, an ADAE of the form \eqref{ADAE} with the pencil of index 2 has been considered in \cite{Rut-Khudoshin}. This ADAE also has the right-hand side of a special structure and was reduced to the system that can be transformed into the system \eqref{x_1_ind2}, \eqref{x_20_ind2}, \eqref{x_2(nu-1)_ind2} where $Q_{20}f$ depends only on $t,\, x_2$.

Theorem \ref{Th_Exist-Lip_set-2} and Corollary \ref{Corollary_Exist_set-2} for the ADAE \eqref{ADAE} with the regular pencil of index 2 take the following form.

 \begin{theorem}\label{Th_Exist-Lip_set-2_ind2}
Let the function $f\in C(\R_+\times D,Y)$ have the structure \eqref{f-Appr-2_ind2}, ${D_B\supseteq D_A=D}$, ${\dim\ker A=n<\infty}$, and let ${\la A+B}$ be a regular pencil of index 2.
Assume that there exists open sets ${\widetilde{M}_1\subseteq Y_1}$ and ${M_{20}\subseteq D_{20}}$, ${M_{21}\subseteq D_{21}}$ such that
\,$Q_{21}f\in C^1(\R_+\times M_{21},Y_{21})$\, and the following conditions where \,${M_1=\widetilde{A}^{-1}\widetilde{M}_1\subseteq D_1}$ and
${M=M_1\dotpl M_{20}\dotpl M_{21}}$\,  hold:
  \begin{enumerate}[label={\upshape{1.\alph*.}}, ref={\upshape{1.\alph*}},itemsep=4pt,parsep=0pt,topsep=5pt,leftmargin=1cm]
 \item\label{Sogl_set-1a_ind2}  For each  ${t\in \R_+}$ there exists a unique ${x_{21}\in M_{21}}$ such that \,$t,\,x_{21}$ satisfy \eqref{x_2(nu-1)_ind2}.
 \end{enumerate}

 \begin{enumerate}[label={\upshape{1.\alph*.}}, ref={\upshape{1.\alph*}},itemsep=4pt,parsep=0pt,topsep=5pt,leftmargin=1cm]
   \addtocounter{enumi}{4}
 \item\label{Sogl_set-1e_ind2}  For each ${t\in \R_+}$, ${x_1\in M_1}$, ${x_{21}\in M_{21}}$, ${d_{21}\in D_{21}}$ there exists a unique ${x_{20}\in M_{20}}$ such that \,$t$, $x_1$, $x_{20}$, $x_{21}$, $d_{21}$  satisfy  \eqref{x_20_ind2+}.
 \end{enumerate}

    \begin{enumerate}[label={\upshape{2.\alph*.}}, ref={\upshape{2.\alph*}}, itemsep=4pt,parsep=0pt,topsep=5pt,leftmargin=1cm]
 \item\label{Inv-set-2a_ind2} For any fixed ${\hat{t}\in \R_+}$, ${\hat{x}_{21}\in M_{21}}$ satisfying \eqref{x_2(nu-1)_ind2}, the operator
   \begin{equation*}
  \partial_{x_{21}}F_{21}\big(\hat{t},\hat{x}_{21}\big)\!=\! \left[\partial_x (Q_{21}f)(\hat{t},\hat{x}_{21})-Q_{21}B\right] P_{21}\big|_{D_{21}} \colon D_{21}\to Y_{21}
   \end{equation*}
  has the inverse $\left[\partial_{x_{21}}F_{21} \big(\hat{t},\hat{x}_{21}\big)\right]^{-1} \in \spL(Y_{21},D_{21})$.
    \end{enumerate}

  \begin{enumerate}[label={\upshape{2.\alph*.}}, ref={\upshape{2.\alph*}}, itemsep=4pt,parsep=0pt,topsep=5pt,leftmargin=1cm]
    \addtocounter{enumi}{4}
 \item\label{Inv-Lipsch_set-2e_ind2}
 The components $Q_1f(t,x_1+x_{20}+x_{21})$ and $Q_{20}f(t,x_1+x_{20}+x_{21})$ of the function $f(t,x)$ satisfy locally a Lipschitz condition with respect to $x_1+x_{20}$ on $\R_+\times M$.  \\
 For any fixed $\hat{t}\in \R_+$, $\hat{x}_1\in M_1$, $\hat{x}_{20}\in M_{20}$, $\hat{x}_{21}\in M_{21}$ \textup{(}accordingly,  $\hat{x}=\hat{x}_1+\hat{x}_{20}+\hat{x}_{21}\in M$\textup{)},
 $\hat{d}_{21}\in D_{21}$ satisfying \eqref{x_20_ind2+}
 there exist open neighborhoods
 $U_{\delta}\big(\hat{t},\hat{x}_1,\hat{x}_{21},\hat{d}_{21}\big)= U_{\delta_1}\big(\hat{t}\big) \times U_{\delta_2}\big(\hat{x}_1\big) \times U_{\delta_3}\big(\hat{x}_{21}\big)  \times U_{\delta_4}\big(\hat{d}_{21}\big)\subset
 \R_+\times D_1\times D_{21}\times D_{21}$ and $U_\varepsilon\big(\hat{x}_{20}\big)\subset D_{20}$ and an operator $\mathrm{Z}= \mathrm{Z}_{\hat{t},\hat{x},\hat{d}_{21}} \in \spL(D_{20},Y_{20})$ having the inverse $\mathrm{Z}^{-1}\in \spL(Y_{20},D_{20})$ such that for each $t\in U_{\delta_1}\big(\hat{t}\big)$,
 $x_1\in U_{\delta_2}\big(\hat{x}_1\big)$,
 $x_{21}\in U_{\delta_3}\big(\hat{x}_{21}\big)$,
 $d_{21}\in U_{\delta_4}\big(\hat{d}_{21}\big)$ and each $x'_{20},\, x''_{20}\in U_\varepsilon\big(\hat{x}_{20}\big)$ the mapping $F_{20}$ satisfies the inequality
  \begin{equation*}
 \big\|F_{20}\big(t,x_1,x'_{20},x_{21},d_{21}\big)-  F_{20}\big(t,x_1,x''_{20},x_{21},d_{21}\big)- \mathrm{Z}[x'_{20}-x''_{20}]\big\|\le k(\delta,\varepsilon)\big\|x'_{20}-x''_{20}\big\|,
  \end{equation*}
  where $k(\delta,\varepsilon)$ is such that $\lim\limits_{\delta,\,\varepsilon\to 0} k(\delta,\varepsilon)< \|\mathrm{Z}^{-1}\|^{-1}$  \,\textup{(}the numbers $\delta, \varepsilon>0$ depend on the choice of $\hat{t},\hat{x}$, $\hat{d}_{21}$\textup{)}.
    \end{enumerate}
Then for each initial value $t_0\in \R_+$ and $x_{0,1}\in M_1$ there exists a $t_{max}\le \infty$ such that the IVP \eqref{ADAE}, \eqref{f-Appr-2_ind2}, \eqref{ini-2} has a unique solution $x(t)$ in $M$ on the maximal interval of existence $[t_0,t_{max})$.
 \end{theorem}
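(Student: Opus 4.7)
The plan is to obtain Theorem \ref{Th_Exist-Lip_set-2_ind2} as a direct specialization of Theorem \ref{Th_Exist-Lip_set-2} to the index $\nu=2$. I would first verify that the general system \eqref{x_1}--\eqref{x_2(nu-1)} collapses into the three equations \eqref{x_1_ind2}, \eqref{x_20_ind2}, \eqref{x_2(nu-1)_ind2}. Indeed, for $\nu=2$ the index ranges $s=1,\ldots,\nu-2$ and $s=1,\ldots,\nu-3$ are empty, so \eqref{x_2s_(s+1)} and \eqref{x_2wedge,s} are vacuous; equation \eqref{x_2wedge,nu-2} requires $\nu\ge 3$ and hence does not appear; the space $D_{2\wedge,1}$ is absent so $x_{2\wedge,1}=0$, and since $P_{21}=P_{21}^{(2)}$ by construction one has $x_{21}=x_{21}^{(2)}$, whence \eqref{x_20} reduces to \eqref{x_20_ind2}. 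Likewise \eqref{x_2(nu-1)} becomes \eqref{x_2(nu-1)_ind2}, and \eqref{x_20+} with $d_{2\wedge,1}=0$, $d_{21}^{(2)}=d_{21}$ and $x_{2\Sigma}=x_{21}$ becomes \eqref{x_20_ind2+}.

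Next I would match the hypotheses term by term. Condition \ref{Sogl_set-1a_ind2} is condition \ref{Sogl_set-1a} with $\nu-1=1$; condition \ref{Sogl_set-1e_ind2} is condition \ref{Sogl_set-1e} after using that the decomposition of $M_{2\Sigma}$ collapses to $M_{21}$ and that the derivatives in the $\wedge$-directions are absent; condition \ref{Inv-set-2a_ind2} is condition \ref{Inv-set-2a}; condition \ref{Inv-Lipsch_set-2e_ind2} is condition \ref{Inv-Lipsch_set-2e} under the same substitutions. Conditions \ref{Sogl_set-1b}, \ref{Sogl_set-1c}, \ref{Sogl_set-1d} and \ref{Inv-set-2b}, \ref{Inv-set-2c}, \ref{Inv-set-2d} of Theorem \ref{Th_Exist-Lip_set-2} are vacuously satisfied, because their index ranges are empty when $\nu=2$ and the equations they refer to are not part of the index-$2$ system. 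The smoothness requirement of Theorem \ref{Th_Exist-Lip_set-2} reduces precisely to $Q_{21}f\in C^1(\R_+\times M_{21},Y_{21})$, which is exactly the regularity hypothesis assumed here.

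Having verified all hypotheses of Theorem \ref{Th_Exist-Lip_set-2}, its conclusion delivers the existence of a $t_{max}\le\infty$ and a unique solution $x(t)\in M$ of the IVP \eqref{ADAE}, \eqref{f-Appr-2_ind2}, \eqref{ini-2} on the maximal interval $[t_0,t_{max})$. Tracing back through the construction in the proof of Theorem \ref{Th_Exist-Lip_set-2}, this solution has the explicit form $x(t)=\widetilde{A}^{-1}w_1(t)+\widehat{\eta}_{20}(t,w_1(t))+\eta_{21}(t)$, where $\eta_{21}\in C^1(\R_+,M_{21})$ is the implicit function furnished by Theorem \ref{Th_Nonloc_ImplFunc-3} applied to \eqref{x_2(nu-1)_ind2+}, $\eta_{20}\in C(\R_+\times M_1,M_{20})$ is the implicit function furnished by Theorem \ref{Th-Nonloc_ImplFunc} applied to \eqref{x_20_ind2+} with $x_{21}=\eta_{21}(t)$ and $d_{21}=\dot\eta_{21}(t)$ substituted, and $w_1(t)$ solves the explicit evolution equation \eqref{ADAE_DE_eta_w1}. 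There is no genuine analytic obstacle beyond this bookkeeping: the only thing that requires care is to check that the recursion constructing the $\widetilde{\eta}_{2j}^{(j+1)}$ and $\eta_{2\wedge,i}$ in the proof of Theorem \ref{Th_Exist-Lip_set-2} terminates at the first step (since $\nu-2=0$), which reduces the construction of $\eta_{2\Sigma}$ to the single function $\eta_{21}$.
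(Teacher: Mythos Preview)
Your proposal is correct and follows exactly the paper's approach: the paper presents Theorem~\ref{Th_Exist-Lip_set-2_ind2} explicitly as the index-$2$ specialization of Theorem~\ref{Th_Exist-Lip_set-2} without giving a separate proof, and your bookkeeping verifying that the general hypotheses collapse to the stated ones (with conditions \ref{Sogl_set-1b}--\ref{Sogl_set-1d} and \ref{Inv-set-2b}--\ref{Inv-set-2d} vacuous and the smoothness requirements reducing to $Q_{21}f\in C^1$) is accurate.
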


 \begin{corollary}\label{Cor_Exist_set-2_ind2}
Theorem~\ref{Th_Exist-Lip_set-2_ind2} remains valid if condition \ref{Inv-Lipsch_set-2e_ind2} is replaced by
\begin{enumerate}[label={\upshape{2.\alph*.}}, ref={\upshape{2.\alph*}}, itemsep=4pt,parsep=0pt,topsep=5pt,leftmargin=1cm]
\addtocounter{enumi}{4}
\item\label{Inv_set-2e_ind2} The components $Q_1f(t,x)$ and $Q_{20}f(t,x)$ of the function $f(t,x)$ have the continuous partial derivatives with respect to $x$ on $\R_+\times M$.  \\
 For any fixed $\hat{t}\in \R_+$, $\hat{x}_1\in M_1$, $\hat{x}_{20}\in M_{20}$, $\hat{x}_{21}\in M_{21}$ \textup{(}accordingly,  $\hat{x}=\hat{x}_1+\hat{x}_{20}+\hat{x}_{21}\in M$\textup{)}, $\hat{d}_{21}\in D_{21}$ satisfying \eqref{x_20_ind2+}, the operator
     \begin{equation*}%\label{funcPhiInvReg-2_ind2}
 \mathrm{Z}_{\hat{t},\hat{x},\hat{d}_{21}}\!:=\! \partial_{x_{20}}F_{20}\big(\hat{t},\hat{x}_1,\hat{x}_{20},\hat{x}_{21}, \hat{d}_{21}\big)\!=\! \Big[\partial_x (Q_{20}f)(\hat{t},\hat{x})-Q_{20}B\Big]P_{20}\big|_{D_{20}}\colon D_{20}\to Y_{20}
     \end{equation*}
 has the inverse $\mathrm{Z}_{\hat{t},\hat{x},\hat{d}_{21}}^{-1}\in \spL(Y_{20},D_{20})$.
\end{enumerate}
 \end{corollary}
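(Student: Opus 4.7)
The plan is to reduce this to Corollary \ref{Corollary_Exist_set-2} (or equivalently, to the index-2 specialization of the argument already used there), by showing that the strong regularity assumption \ref{Inv_set-2e_ind2} implies the weaker Lipschitz-type assumption \ref{Inv-Lipsch_set-2e_ind2}, so that all hypotheses of Theorem~\ref{Th_Exist-Lip_set-2_ind2} are in force. The machinery is exactly the same as for the $\nu$-independent statement; only the notation collapses via $x_{2\Sigma} = x_{21}$, $d_{21}^{(2)} = d_{21}$, $d_{2\wedge,1} = 0$, so no essentially new work is required.

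First, I would observe that continuous partial differentiability of $Q_1f$ and $Q_{20}f$ with respect to $x$ on $\R_+ \times M$ implies local Lipschitz continuity with respect to $x_1 + x_{20}$ on $\R_+ \times M$, which covers the Lipschitz part of condition \ref{Inv-Lipsch_set-2e_ind2}. Next, for any fixed $(\hat t, \hat x_1, \hat x_{20}, \hat x_{21}, \hat d_{21})$ satisfying \eqref{x_20_ind2+}, I would take the operator $\mathrm{Z}_{\hat t, \hat x, \hat d_{21}}$ of condition \ref{Inv_set-2e_ind2} as the operator $\mathrm{Z}$ appearing in \ref{Inv-Lipsch_set-2e_ind2}; by hypothesis it has a bounded inverse in $\spL(Y_{20}, D_{20})$.

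The core estimate is then the standard mean-value-type computation already recorded in Remark~\ref{Rem__Nonloc_ImplFunc-2}: for $(t, x_1, x_{21}, d_{21})$ in a sufficiently small neighborhood $U_\delta(\hat t, \hat x_1, \hat x_{21}, \hat d_{21})$ and $x_{20}^1, x_{20}^2 \in U_\varepsilon(\hat x_{20})$,
\begin{equation*}
\bigl\| F_{20}(t, x_1, x_{20}^1, x_{21}, d_{21}) - F_{20}(t, x_1, x_{20}^2, x_{21}, d_{21}) - \mathrm{Z}[x_{20}^1 - x_{20}^2] \bigr\| \le k(\delta, \varepsilon)\,\|x_{20}^1 - x_{20}^2\|,
\end{equation*}
with
\begin{equation*}
k(\delta, \varepsilon) = \sup \bigl\| \partial_{x_{20}} F_{20}(t, x_1, x_{20}, x_{21}, d_{21}) - \mathrm{Z}_{\hat t, \hat x, \hat d_{21}} \bigr\|,
\end{equation*}
the supremum being taken over the closed neighborhoods. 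Continuity of $\partial_{x_{20}} F_{20} = [\partial_x(Q_{20}f)\,\circ\,(\cdot) - Q_{20}B] P_{20}|_{D_{20}}$ at $(\hat t, \hat x, \hat d_{21})$ forces $k(\delta, \varepsilon) \to 0$ as $\delta, \varepsilon \to 0$, so in particular $\lim_{\delta, \varepsilon \to 0} k(\delta, \varepsilon) < \|\mathrm{Z}^{-1}\|^{-1}$. This verifies inequality required in \ref{Inv-Lipsch_set-2e_ind2}.

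Hence all hypotheses of Theorem~\ref{Th_Exist-Lip_set-2_ind2} hold, and the conclusion on existence, uniqueness, and the maximal interval $[t_0, t_{max})$ in $M$ follows at once. There is no serious obstacle here; the only thing to be a little careful about is that one must recognise that in the index-2 specialization, the absent direct summands $D_{2\wedge, s}$ for $s \ge 1$ simply do not appear, so the mean-value argument has only one scalar/operator difference to bound, precisely the continuity of $\partial_{x_{20}} F_{20}$ at the reference point. Alternatively, one can bypass Theorem~\ref{Th_Exist-Lip_set-2_ind2} and instead quote Theorem~\ref{Th_Nonloc_ImplFunc-3} (or Theorem~\ref{Th-Nonloc_ImplFunc-2}) directly for equation \eqref{x_20_ind2+} in the role that Theorem~\ref{Th-Nonloc_ImplFunc} played in the proof of Theorem~\ref{Th_Exist-Lip_set-2}, obtaining the requisite continuous (indeed continuously differentiable) implicit function $\eta_{20}(t, x_1)$ for the $x_{20}$-component and then proceeding verbatim with the remaining reduction to the ODE \eqref{ADAE_DE_eta_w1}.
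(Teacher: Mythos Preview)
Your proposal is correct and follows essentially the same approach as the paper. The paper does not give a separate proof for this corollary (it is the index-2 specialization of Corollary~\ref{Corollary_Exist_set-2}, whose one-line proof reads ``proved in the same way as Corollary~\ref{Corollary_Exist_set}''), and that proof in turn is exactly the argument you outline: either invoke Theorem~\ref{Th-Nonloc_ImplFunc-2}/Corollary~\ref{Cor_Nonloc_ImplFunc-2} directly, or verify via the mean-value estimate of Remark~\ref{Rem__Nonloc_ImplFunc-2} that the differentiability condition~\ref{Inv_set-2e_ind2} implies the Lipschitz-type condition~\ref{Inv-Lipsch_set-2e_ind2}, so that Theorem~\ref{Th_Exist-Lip_set-2_ind2} applies.
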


Given the above, \emph{it is easy to formulate the remaining results of Section \ref{Approach_2} for the ADAE \eqref{ADAE} with the regular pencil of index 2.}

   \paragraph{The index-1 case.} \quad

As stated in both Section \ref{Approach_1} and \ref{Approach_2}, the ADAE \eqref{ADAE} with the regular pencil $P(\la)$ of index 1 is equivalent to the system \eqref{DE_ind-1}, \eqref{AE_ind-1}. Thus, in the index-1 case, the results (theorems, corollaries, etc.) of Section \ref{Sec-Main} have the form of the results of Section \ref{Approach_1} where $D_{20}=D_2$, $D_{2\Sigma}=\emptyset$, $Y_{2*}=Y_2$,  $P_{20}=P_2$, $P_{2\Sigma}=0$,  $Q_{2*}=Q_2$, $Q_{2\Sigma}=0$, $x_{20}=x_2$ and $x_{2\Sigma}=0$.

\section{Applications}

 \subsection{Example}

Consider the initial boundary value problem (IBVP) from \cite[Section 5.2]{Rut-Khudoshin}:
 \begin{equation}\label{Ex1}
\begin{split}
& \partial_t \partial^2_x u + \left(\partial^2_x +1\right)u=f(t,u) \\
&2u(t,1)-2u(t,0)-\partial_x u(t,1)=0, \\
&\partial_x u(t,0)=0,  \\
&u(0,x)=0,
\end{split}
 \end{equation}
where $t\in \R_+$, $x\in [0,1]$. It can be represented as the following IVP for the ADAE of the type \eqref{ADAE} in $L_2[0,1]$:
 \begin{equation}\label{Ex1-IVP}
\begin{split}
& \frac{d}{dt}[Au]+Bu=f(t,u), \quad A=\partial^2_x,\quad B=\partial^2_x+1,  \\
& u(0)=0,
\end{split}
 \end{equation}
where $u\in L_2[0,1]$ and $A$, $B$ are defined on $D_A=D_B=D=\{{y\in L_2[0,1]}\mid y'(x) \text{ is absolutely continuous on $[0,1]$}, \; y'(0)=0, \; 2y(1)-2y(0)-y'(1)=0\}$.

The resolvent $(A+\mu B)^{-1}$ has a pole of order 2 at the point $\mu=0$. Therefore, $P(\la)$ is a \emph{regular pencil of index 2}. Since $\ker A= \Span\{\varphi_1^1\}$, where $\varphi_1^1=1$, and the index $\nu=2$, then the canonical system of root vectors of the pencil $A+\mu B$ at $\mu =0$ consists of the vectors $\varphi_1^1$, $\varphi_1^2$, where $\varphi_1^2$ can be found from the relation $A\varphi_1^2=-B\varphi_1^1$  (see \eqref{eavA}).
The projectors take form
\begin{align*}
& P_{20}u=(u,p^1_1)\varphi_1^1,\quad P_{21}u=(u,p_1^2)\varphi_1^2, \quad P_2=P_{20}+P_{21}, \quad P_1=I_X-P_2, \\
& Q_{20}v=(v,q_1^1)B\varphi_1^1,\quad Q_{21}v=(v,q_1^2)B\varphi_1^2, \quad Q_2=Q_{20}+Q_{21}, \quad Q_1=I_Y-Q_2,
\end{align*}
where
\begin{align*}
& \varphi_1^1=1, \quad \varphi_1^2=\sqrt{5} x^2,  \\
& p_1^1=B^*q_1^1=4x^3-6x^2-\frac{8}{5}\left(x-\frac{1}{2}\right)+2,  \\
& p_1^2=B^*q_1^2=\frac{12}{\sqrt{5}}\left(x-\frac{1}{2}\right)=q_1^2,  \\
& B\varphi_1^1=1, \quad B\varphi_1^2=\sqrt{5} (x^2+2),  \\
& q_1^1=4x^3-6x^2-\frac{128}{5}\left(x-\frac{1}{2}\right)+2, \\
& q_1^2=\frac{12}{\sqrt{5}}\left(x-\frac{1}{2}\right).
\end{align*}
and the subspaces from the direct decompositions \eqref{XDYrr}, \eqref{DY2rr}, \eqref{D2_Appr-2}, \eqref{D2_Y2_ind2} take the form $D_{20}=\Span\{\varphi_1^1\}$,  $D_{21}=D_{2\Sigma}=\Span\{\varphi_1^2\}$,  $Y_{20}=Y_{2\Sigma}=\Span\{B\varphi_1^1\}$, and $Y_{21}=Y_{2*}=\Span\{B\varphi_1^2\}$.

Using the relations \eqref{Semi-inverse}, we obtain that the semi-inverse operator of $A$ has the form $A^{(-1)}y(v)=\int\limits_0^v\int\limits_0^s y(\zeta)\,d\zeta\, ds -(v^4+12v^2)\int\limits_0^1\left(s-\frac{1}{2}\right)y(s)\,ds$. Thus, the inverse of \eqref{widetilde_A} has the form $\widetilde{A}^{-1}=A^{(-1)}+\langle \cdot,q_1^2\rangle \varphi_1^1$.

Further, verifying the conditions of the theorems proved above, we find the requirements necessary for the global solvability and Lagrange stability of the ADAE \eqref{Ex1}, or the conditions under which the solutions blow up in finite time.

\section*{Acknowledgments}
This work has received funding from the European Research Council (ERC) under the European Union's Horizon 2020 research and innovation programme, ERC Advanced Grant NEUROMORPH, no. 101018153.

 \small{

}
\end{document}